\documentclass[12pt]{article}
\usepackage[utf8]{inputenc}
\usepackage[T1]{fontenc}
\usepackage{amsmath}
\usepackage{amsfonts}
\usepackage{amssymb}
\usepackage[version=4]{mhchem}
\usepackage{stmaryrd}
\usepackage{bbold}
\usepackage{amstext}
\usepackage{amsthm}
\usepackage{colortbl}
 
\usepackage{geometry}

\usepackage{comment}

\usepackage[colorlinks=true,citecolor=blue,urlcolor=blue]{hyperref}
\usepackage[numbers]{natbib}
\newtheorem{theorem}{Theorem}

\newtheorem{corollary}[theorem]{Corollary}

\newtheorem{definition}[theorem]{Definition}

\newtheorem{lemma}[theorem]{Lemma}

\newtheorem{proposition}[theorem]{Proposition}

\DeclareMathOperator{\var}{var}
\DeclareMathOperator{\cov}{cov}

\newcommand{\numberthis}{\addtocounter{equation}{1}\tag{\theequation}}
\newcommand{\cX}{\mathcal{X}}
\newcommand{\cZ}{\mathcal{Z}}

\newcommand{\cT}{\mathcal{T}}

\newcommand{\cB}{\mathcal{B}}
\newcommand{\cH}{\mathcal{H}}
\newcommand{\R}{\mathbb{R}}

\newcommand{\cF}{\mathcal{F}}
\newcommand{\PP}{\mathbb{P}}
\newcommand{\NN}{\mathbb{N}}
\newcommand{\EE}{\mathbb{E}}
\newcommand{\norm}[1]{\left\| #1 \right\|}

\title{Concentration of the bootstrap empirical process, with applications to statistical inference}

\author{Guillaume Maillard, Adrien Saumard\\
\large \href{mailto:guillaume.maillard@ensai.fr}{guillaume.maillard@ensai.fr} ; \href{mailto:adrien.saumard@ensai.fr}{adrien.saumard@ensai.fr} \\
\large University of Rennes, ENSAI, CNRS, CREST-UMR 9194, F-35000 Rennes, France}

\begin{document}

\maketitle

\begin{abstract}
Considering a general framework of bootstrap with exchangeable weights, we show some concentration inequalities for the supremum of the bootstrap empirical process.  On the one hand, we discuss the concentration of the bootstrap empirical process around its conditional expectation with respect to the original data, and on the other hand, the concentration of the latter quantity around its mean. For the concentration conditional on data, we build on Chatterjee's exchangeable pairs approach to concentration. To attain optimal concentration rates, we develop some refined arguments for the convergence of transposition walks on the symmetric group. The conditional expectation of the bootstrap empirical process is proved to be self-bounding, thus extending a well-known property for conditional Rademacher averages. To illustrate the interest of these concentration inequalities, we provide some new results pertaining to confidence regions for the estimation of a mean vector, as well as non-asymptotic bounds for the two-sample permutation test.
\end{abstract}

{\small \textbf{Keywords:} Exchangeable bootstrap, Empirical process, Exchangeable pair, Self-bounding function, Confidence region, Two-sample test, Permutation test. } 

\tableofcontents

\section{Introduction}

Bootstrap techniques are central tools for statistical inference, since they provide generic, data-dependent ways of approximating the sampling distribution of a statistic. Such information indeed allows one to build confidence intervals or to calibrate statistical tests, especially when the limiting sampling distribution of the statistic of interest is unknown or intractable. 

The asymptotic validity of non-parametric bootstrap schemes has been addressed in wide generality in the context of the classical empirical process theory, where the ambiant dimension is fixed. More precisely, considering Efron's empirical bootstrap (\cite{Efron79}), Giné and Zinn \cite{GinZinn:90} characterized the functional central limit theorem and the uniform law of large numbers for the bootstrap empirical process indexed by a general class of functions, conditional on the data sample (both almost surely and in probability). Pr{\ae}stgaard and Wellner \cite{PraestWellner:93} -- see also \cite{vandervaartWellner:23} -- also proved conditional (again, almost surely and in probability) Donsker-type theorems and uniform laws of large numbers for the exchangeable bootstrap, under general moment conditions. 

Recently, efforts have been made to extend the bootstrap theory to high-dimensional situations, where the dimension grows with the sample size. In a series of papers,  Chernozhukov, Chetverikov, Kato and Koike notably obtained central limit theorems and non-asymptotic approximation rates for the wild and empirical bootstraps, with logarithmic dependence in the dimension (\cite{CCK13,CCK15,CCK16,CCK17,CCK23}, see also \cite{CCKK2023} for a recent survey with further references). The proofs developed by these authors are based on Gaussian approximations of suprema of empirical processes, Gaussian anti-concentration and comparison inequalities. Applications include multiple testing, confidence regions for high-dimensional vectors, post-selection inference, to name but a few. Further statistical outcomes are discussed \cite{CCKK2023}.

In this article, we prove concentration inequalities for the supremum of the bootstrap empirical process, for both Efron's empirical bootstrap and the exchangeable bootstrap. To our best knowledge, such concentration inequalities are new. Indeed, as further explained in Section \ref{sec_general_concen}, this amounts to proving concentration inequalities for suprema of weighted sums of random variables, where the weights have the essential property of being exchangeable. Considering a weighted sum of exchangeable variables, Foygel Barber \cite{Foy24} recently obtained Hoeffding and Bernstein-type concentration inequalities. In terms of exchangeable bootstrap, the framework considered in \cite{Foy24} corresponds to analyzing the concentration conditionnally on data. 
But the fundamental difference with our problem is that there is only one sum that is considered in \cite{Foy24}, corresponding to the degenerate situation where the supremum of the bootstrap empirical process is taken over a singleton. Nonetheless, we recover in a bounded setting a sub-Gaussian behavior (Hoeffding-type concentration) for the supremum of the exchangeable bootstrap empirical process, as can be seen in Theorem \ref{thm_intro_concen_exchange} below.


In order to give a clear idea of the concentration inequalities obtained in this article, let us state here two of our main results. We start with the concentration of the expectation of the supremum of the exchangeable weighted empirical process, conditioned on data. 

When the weights are i.i.d. Rademacher variables, it is well-known that such a statistic, usually termed the conditional Rademacher average, is a self-bounding function \cite{BoucLugMas00, BouLugMas_SB_09} (see also \cite{McDiaReed06,Maurer06, BoucheronLugosiMassart:2013}). Consequently, it satisfies a Poisson-type concentration inequality, the variance proxy for the sub-Gaussian regime being proportional to the expectation. For a definition of the self-bounding property, see Section \ref{sec_general_concen} below, following \cite{BoucheronLugosiMassart:2013}. Considering exchangeable weights having a finite first moment, we extend the self-bounding property to the conditional expectation of the supremum of the exchangeable bootstrap process.

More precisely, let $(\xi_i)_{1\leq i \leq n}$ be exchangeable weights such that $\EE[\vert \xi_1 \vert]=\kappa<+\infty$ and
\[ \sum_{i = 1}^n \xi_i = 0\quad a.s. \]
Let $(E,\cB)$ be a measure space and let $\cT$ be a set of measurable functions from $E$ to $[-1,1]$. We define the function $\overline{g}$ on the set $E^n$ by
\[ \overline{g}((x_i)_{1 \leq i \leq n}) = \mathbb{E} \left[ \sup_{t \in \cT} \sum_{i = 1}^n \xi_i t(x_{i}) \right]. \]
Note that this expectation is indeed well-defined.
If $(x_{i})_{1 \leq i \leq n}$ is a realization of $n$ i.i.d. random variables, then $\overline{g}((x_{i})_{1 \leq i \leq n})$ represents the bootstrap estimate of the expected supremum.

\begin{theorem} \label{thm_intro_cond_exp}
    If $X = (X_{i})_{1 \leq i \leq n}$ is a collection of $n$ independent random variables valued in $E$ and such that $\overline{g}(X)$ is measurable, then for any $x > 0$,
    \[ \overline{g}(X) \leq \mathbb{E}\left[ \overline{g}(X) \right] + \sqrt{12\kappa x \mathbb{E}[\overline{g}(X)]} + 5\kappa x \]
    with probability at least $1 - e^{-x}$. Moreover,
    \[ \overline{g}(X) \geq \mathbb{E}\left[ \overline{g}(X) \right] - \sqrt{12\kappa x \mathbb{E}\left[ \overline{g}(X) \right]} \]
    with probability at least $1 - e^{-x}$.
\end{theorem}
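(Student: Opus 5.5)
The plan is to show that $\overline{g}/(c\kappa)$, for a suitable constant $c$, is a weakly $(a,b)$-self-bounding function in the sense of \cite{BoucheronLugosiMassart:2013}, and then invoke the standard concentration inequalities for such functions. Upper tail: if $f$ satisfies $0\le f-f_i\le 1$ and $\sum_i (f-f_i)^2\le af+b$, then $f\le \EE f+\sqrt{2(a\EE f+b)x}+\max(a,1)x/3$ roughly. Lower tail: for self-bounding functions with $a\ge 1/3$, one gets $\EE f - f\le \sqrt{2a x\EE f}$. Rescaling by the constant $c\kappa$ should produce the numerical constants $12$ and $5$.

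Concretely, let $f_i$ be $\overline{g}$ computed with $x_i$ removed. The natural choice is $f_i(x)=\inf_{x_i'}\overline{g}(x^{(i)})$, where $x^{(i)}$ replaces $x_i$ by $x_i'$. A cleaner alternative is to define $f_i$ directly through a process not depending on $x_i$, for instance $\EE\sup_t\sum_{j\ne i}\xi_j t(x_j)$ with the same weights. Fix a near-maximizer $\hat t$; for simplicity, take $\cT$ finite first and then pass to a limit.
\begin{itemize}
\item \emph{Bounded differences:} changing $x_i$ alters the supremum by at most $2|\xi_i|$. Hence $\overline{g}-f_i\le 2\EE|\xi_i|=2\kappa$.
\item \emph{Sum of squares:} the key bound is
\[\overline{g}(x)-f_i(x)\le \EE\bigl[\xi_i (\hat t(x_i)-\hat t(x_i'))\bigr]\]
for an appropriate comparison point.
\end{itemize}
The self-bounding sum needs $\sum_i(\overline g-f_i)\lesssim \overline g$. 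I would get this from the identity $\sum_i \xi_i \hat t(x_i)=\sup_t\sum_i\xi_i t(x_i)$, together with the centering $\sum_i\xi_i=0$. The centering lets a constant shift be subtracted, so that $\EE\sum_i \xi_i\hat t(x_i')$-type terms vanish or are controlled by exchangeability. Exchangeability gives $\EE[\xi_i\mid \text{permutation-invariant data}]=0$, which should make the "removed" version $f_i$ a genuine lower bound, i.e. $f_i\le \overline{g}$, via Jensen: conditioning on the other weights and using $\EE[\xi_i\mid (\xi_j)_{j\neq i}]=-\frac{1}{n-1}\sum_{j\ne i}\xi_j$ type relations.

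Then I would combine the two bounds:
\[\sum_i(\overline g-f_i)^2\le 2\kappa\sum_i(\overline g-f_i)\le 2\kappa\cdot C\,\overline g.\]
This makes $\overline g/(2\kappa)$ satisfy the self-bounding conditions with $a$ a small constant and $b=0$.

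The main obstacle is the nonnegativity $f_i\le\overline g$ together with $\sum_i(\overline g-f_i)\le C\overline g$ for exchangeable (non-independent) weights. In the Rademacher case one simply drops a coordinate, but here deleting $x_i$ breaks the sum-zero structure. My first attempt would be to define $f_i$ by replacing $x_i$ by an independent copy averaged in, or equivalently by replacing $t(x_i)$ by $\frac1{n-1}\sum_{j\neq i}t(x_j)$. Since $\sum_j\xi_j=0$, this corresponds to redistributing weight $\xi_i$ evenly among the others. Exchangeability then makes each $f_i$ an average of the supremum over a reweighting, and summing over $i$ telescopes back to $\overline g$ up to a factor like $n/(n-1)$. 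This is exactly the step I expect to determine the constants $12$ and $5$.
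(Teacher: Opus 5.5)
There is a genuine gap, at exactly the step you flag as the main obstacle: you never exhibit an $f_i$ that depends only on $x_{(i)}$ and satisfies $f_i\le\overline g$.

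\textbf{Your candidate is not a lower bound.} You replace $t(x_i)$ by $\frac{1}{n-1}\sum_{j\neq i}t(x_j)$, i.e.\ you put weights $\xi_j+\frac{\xi_i}{n-1}$ on $j\neq i$. Here is a counterexample.
\begin{itemize}
\item Take $n=4$ and $\xi$ uniform on the six arrangements of $(1,1,-1,-1)$. These are $\pm u_1,\pm u_2,\pm u_3$, with $u_1=(1,1,-1,-1)$, $u_2=(1,-1,1,-1)$, $u_3=(1,-1,-1,1)$ mutually orthogonal.
\item Let $x=(1,2,3,4)$ and let $\mathcal{T}$ consist of the eight functions $k\mapsto\frac14(\epsilon_1u_1+\epsilon_2u_2+\epsilon_3u_3)_k$, $\epsilon\in\{-1,1\}^3$, with values in $[-\frac34,\frac34]$.
\item Then $\sup_t\sum_k v_kt(x_k)=\frac14\sum_l|\langle v,u_l\rangle|$, so $\overline g(x)=1$.
\item Your reweighted vector is, up to sign and a permutation of the first three coordinates, $(\frac43,-\frac23,-\frac23,0)$. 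This gives $f_4=\frac14(\frac43+\frac43+\frac83)=\frac43>\overline g(x)$.
\end{itemize}
The underlying reason is that $(\xi_j+\xi_i/(n-1))_j$ is not an average of permutations of $\xi$, so Jensen cannot be invoked. The relation you want to condition on is also false: given $(\xi_j)_{j\neq i}$, the weight $\xi_i=-\sum_{j\neq i}\xi_j$ is deterministic.

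\textbf{Your other routes also stall.} Deleting $x_i$ with the same weights breaks positivity, as you note yourself. With $f_i=\inf_{x_i'}\overline g(x^{(i)})$, positivity and the $2\kappa$ bound come for free, but your sum argument does not go through. The quantity $\sum_i\mathbb{E}[\xi_i\hat t(x_i')]$ involves a $\xi$-dependent maximizer and $i$-dependent points $x_i'$, and centering does not make it vanish.

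\textbf{The missing idea.} Randomize over the transposition $\tau_{i,J}$ with $J$ uniform on all of $\{1,\dots,n\}$, including $J=i$.
\begin{itemize}
\item By exchangeability, $\overline g(x)=\mathbb{E}\sup_t\{\sum_{k\neq i}\xi_kt(x_k)+(\xi_i-\xi_J)t(x_J)+\xi_Jt(x_i)\}$.
\item Jensen in $J$ given $\xi$ removes $x_i$, because its coefficient becomes $\mathbb{E}[\xi_J\mid\xi]=\frac1n\sum_j\xi_j=0$.
\item This yields $\overline g_i(x_{(i)})=\mathbb{E}\sup_t\{\sum_{k\neq i}\xi_kt(x_k)+\frac1n\sum_{j\neq i}(\xi_i-\xi_j)t(x_j)\}\le\overline g(x)$.
\item Its weights $\frac{n-1}{n}\xi_j+\frac1n\xi_i$ are exactly $\frac{n-1}{n}$ times yours. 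In the example above this equals $1=\overline g(x)$, which is tight.
\end{itemize}
With a near-maximizer $\hat a(\xi)$, one gets $\overline g-\overline g_i\le\mathbb{E}[\xi_i\hat a_i+\frac1n\sum_j(\xi_j-\xi_i)\hat a_j]$. This is at most $(3-\frac2n)\kappa$. Summed over $i$, it gives $2\mathbb{E}\langle\xi,\hat a\rangle\le 2\overline g$, because $\sum_i(\xi_j-\xi_i)=n\xi_j$. Hence $\overline g/((3-\frac2n)\kappa)$ is $(2,0)$-self-bounding, and Theorem 6.21 of Boucheron, Lugosi and Massart gives the constants $12$ and $5$.

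Incidentally, since $\overline g_i\le\inf_{x_i'}\overline g(x^{(i)})$, your inf-type $f_i$ then becomes valid too. Its increments are at most $2\kappa$ and their sum is at most $2\overline g$.
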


We consider now the concentration of the supremum of the bootstrap empirical process, conditionally on the data. Note that for Efron's bootstrap, the bootstrap empirical process is, conditionally on the data, a standard empirical process (see Section \ref{sec_bootep}), the concentration of which is alrady captured by classical results, such as Bousquet's (\cite{Bous02}) and Klein-Rio's inequalities (\cite{KleinRio2005}).
Hence, we focus on the generic exchangeable bootstrap, that necessitates new inequalities. 

First note that exchangeability of the weights $(\xi_i)_{1\leq i \leq n}$ ensures that, for any random permutation $\sigma$ independent of the weights, the distribution of $(\xi_{\sigma(i)})_{1\leq i \leq n}$ is identical to the distribution of $(\xi_i)_{1\leq i \leq n}$. Actually, letting a uniform random permutation act on the weights indices and conditioning on the values of the weights will allow us to prove a sub-Gaussian concentration inequality. For convenience, we rather denote in this context the (fixed) values of weights by $w=(w_i)_{1\leq i \leq n}$.

\begin{theorem}\label{thm_intro_concen_exchange}
    Let $a < 0 < b$ be two real numbers and let $w \in [a;b]^n$ be such that $\sum_{i = 1}^n w_i = 0$. For any $(x_{i})_{1\leq i \leq n} \in E^n$, let
    \[ g_x(\sigma) = \sup_{t \in \cT} \left\{ \sum_{i = 1}^n w_{\sigma(i)} t(x_{i}) \right\}. \]
    Let $\sigma \in \mathfrak{S}_n$ be a uniform random permutation.
    For any $u > 0,$  with probability at least $1-e^{-u}$,
    \begin{equation*}
        g_x(\sigma) - \mathbb{E}[g_x(\sigma)]\leq c \min\left\{(b-a)\sqrt{v_+(x)},\Vert w\Vert_2\right\}\sqrt{u}\;,
    \end{equation*}
   where $c$ is a numerical constant ($c=9$ holds)
    and $v_+(x)$ is the so-called ``weak empirical variance'' linked to the empirical process,
    \[ v_+(x) = \sup_{t \in \cT} \left\{ \sum_{i = 1}^n \left(t(x_{i}) - \frac{1}{n} \sum_{j = 1}^n t(x_{j}) \right)^2 \right\}. \]
\end{theorem}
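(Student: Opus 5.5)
We would follow Chatterjee's exchangeable pairs method for concentration, as announced in the abstract. Let $\sigma$ be uniform on $\mathfrak{S}_n$, and let $\tau=(I\,J)$ be a uniformly chosen transposition, independent of $\sigma$. Set $\sigma'=\sigma\circ\tau$. Then $(\sigma,\sigma')$ is an exchangeable pair. Write $f=g_x$. Chatterjee's theorem asks for an antisymmetric function $F(\sigma,\sigma')$ with $\EE[F(\sigma,\sigma')\mid\sigma]=f(\sigma)-\EE f$. It then bounds
\[
v(\sigma)=\tfrac12\EE\bigl[|f(\sigma)-f(\sigma')|\,|F(\sigma,\sigma')|\bigm|\sigma\bigr],
\]
and concludes: if $v\le V$ almost surely, then $\PP(f-\EE f\ge s)\le e^{-s^2/(2V)}$.

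To build $F$ we use the transposition random walk. Let $(\sigma_k)_{k\ge0}$ be the walk started at $\sigma_0=\sigma$ and $(\sigma'_k)$ the walk started at $\sigma'$. Define
\[
F(\sigma,\sigma')=\sum_{k\ge0}\EE\bigl[f(\sigma_k)-f(\sigma'_k)\bigr],
\]
with the two walks coupled so that they use the same transpositions (a common right multiplication). The main difficulty is to show that this series converges and that $|F|$ is bounded by a constant times $\sup|f(\pi)-f(\pi\circ\tau)|$, uniformly over transpositions. A crude mixing-time bound would give $|F|\lesssim n\log n\cdot\Delta$, which loses a logarithm and kills the optimal rate; this is where the "refined arguments for the convergence of transposition walks" come in. The idea is to run the two walks coupled as above, so that $\sigma_k^{-1}\sigma'_k$ stays conjugate to a single transposition. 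We then exploit the linear structure inside the supremum. Write $\Delta_k$ for the difference of the two weight vectors $w_{\sigma_k(\cdot)}$ and $w_{\sigma'_k(\cdot)}$; it is supported on two coordinates $\{p_k,q_k\}$, with entries $\pm(w_{\sigma(I)}-w_{\sigma(J)})$. The walk moves these coordinates, and the expected contribution of step $k$ reduces to a mean-zero averaging over positions, which decays geometrically like $(1-2/n)^k$. Summing the series then gives $|F|\lesssim n\cdot(\text{size of one swap})$. To get this decay we would use $|\sup_t A_t-\sup_t B_t|\le\sup_t|A_t-B_t|$ together with an explicit coupling. Concretely, we choose a coupling in which the two walks coalesce at rate about $2/n$ per step: this happens when the random transposition hits the discrepancy pair. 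The expected coupling time is then $O(n)$, and each step where the walks have not yet coalesced costs at most the one-swap increment.

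Next we bound the one-swap increment. Let $t^*$ attain (or nearly attain) the supremum at $\sigma$. Then
\[
f(\sigma)-f(\sigma\circ(I\,J))\le(w_{\sigma(I)}-w_{\sigma(J)})\bigl(t^*(x_I)-t^*(x_J)\bigr),
\]
and symmetrically for the reverse difference. Because the weights sum to zero, we may replace $t(x_i)$ by the centered values $\bar t_i=t(x_i)-\frac1n\sum_j t(x_j)$ without changing anything. For the first bound, the weight factor is at most $b-a$. The average over $(I,J)$ of $(\bar t_I-\bar t_J)^2$ is at most $\frac{4}{n}v_+(x)$, and averaging over $(I,J)$ carries a factor $1/n$ that cancels the factor $n$ from $|F|$. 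Combining these gives $v\lesssim(b-a)^2v_+(x)$, since one side of the product is controlled uniformly through $t^*$ while the other uses the near-maximizer of $\sigma'$. For the second bound, we instead bound $|t^*(x_I)-t^*(x_J)|\le2$ and average $(w_{\sigma(I)}-w_{\sigma(J)})^2$, which gives $\frac{2}{n}\|w\|_2^2$. This yields $v\lesssim\|w\|_2^2$.

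A one-sided version suffices, since only the upper tail is claimed. Using $(f-f')_+$ in Chatterjee's bound, only the maximizer at $\sigma$ enters, which avoids any measurability issues with $t^*$. Finally, we take the smaller of the two variance proxies and track constants to reach $c=9$. We expect the main obstacle to be the $O(n)$, logarithm-free, bound on $|F|$ from the transposition walk.
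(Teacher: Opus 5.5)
Your framework matches the paper's: the pair $(\sigma,\sigma\tau_{I,J})$, $F$ built from the right transposition walk, positive parts so that only the maximizer at the current permutation enters, and the two one-swap bounds. The step you identify as the crux, however, does not work as proposed.

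\textbf{The coupling step.} With both walks driven by the same right transpositions, $\sigma_k'=\sigma_k\tau_{p_k,q_k}$ for every $k$, where $p_k=\pi_k^{-1}(I)$ and $q_k=\pi_k^{-1}(J)$. So the walks never meet.

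You also cannot switch to a Markovian coupling that merges at rate $2/n$. Suppose $\sigma_k^{-1}\sigma_k'=\tau_{p,q}$ and the two chains apply $\tau_1,\tau_2$. Merging requires $\tau_1\tau_2=\tau_{p,q}$. By parity, one step must be trivial and the other exactly $\tau_{p,q}$, which has probability $O(n^{-2})$. The same holds from any other discrepancy. A transposition that merely touches $p_k$ or $q_k$ (probability $\approx 4/n$) only relocates the discrepancy. This is the $O(n^2)$ obstruction the paper mentions (\cite{Bormashenko2011}); it costs a factor $n$, which is worse than the $\log n$ you wanted to avoid.

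A maximal, non-Markovian coupling would have expected merging time $\sum_k\|\mathcal{L}(\pi_k)-\mathcal{L}(\tau_{i,j}\pi_k)\|_{TV}$. But proving that this sum is $O(n)$ is exactly the hard estimate. Your ``mean-zero averaging decaying like $(1-2/n)^k$'' gives no mechanism for it when $f$ is a supremum.

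The missing idea is the paper's \emph{strong convergence time}. This is a time $T_{i,j}$, measurable with respect to $(\pi_k)$, such that conditionally on $\{k\ge T_{i,j}\}$ the laws of $\pi_k$ and $\tau_{i,j}\pi_k$ coincide. It gives $F(\sigma,\sigma\tau_{I,J})=\EE[\sum_k (g(\sigma\pi_k)-g(\sigma\tau_{I,J}\pi_k))\mathbb{I}\{k<T_{I,J}\}\mid\sigma,I,J]$ even though the chains never meet. The $O(n)$ bound then needs an explicit construction: the first time the maximal block in White's marking scheme D exceeds $n/2$ and contains $\{i,j\}$. Generating-function estimates for this time give $\sum_k\sqrt{\PP(k<T_{1,2})}\le\max(615,18.42n)$.

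\textbf{The combination step.} Even granting $|F|\lesssim n$ times the worst one-swap increment (which is at most $2(b-a)$), this step loses up to a factor $\sqrt n$. Multiplying by $\EE[(f(\sigma)-f(\sigma'))_+\mid\sigma]\le(b-a)\EE|\hat t_I-\hat t_J|$ yields $v\lesssim(b-a)^2\,n\,\EE|\hat t_I-\hat t_J|$. This can be of order $(b-a)^2\sqrt{n\,v_+(x)}$ rather than $(b-a)^2v_+(x)$. For example, take a single function with $t(x_1)=1$ and $t(x_i)=\pm n^{-1/2}$ (balanced signs) for $i\ge 2$. Then $v_+(x)\approx 2$, while $n\,\EE|t_I-t_J|\approx 2+\sqrt n$.

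The paper avoids this by never bounding the later increments uniformly.
\begin{itemize}
\item It applies Cauchy--Schwarz along the walk with weights $\sqrt{\PP(k<T)}$, then uses $xy\le (x^2+y^2)/2$.
\item It uses that, conditionally on $(\sigma,\pi_k)$, the pair $(\pi_k^{-1}(I),\pi_k^{-1}(J))$ has the law of $(I,J)$. Hence $(\sigma\pi_k,\sigma'\pi_k)$ has the law of $(\sigma,\sigma')$.
\item Each later term is therefore again $\EE_{I,J}[(g(\pi)-g(\pi\tau_{I,J}))_+^2]$ at $\pi=\sigma\pi_k$. The computation of Proposition \ref{prop_bound_Vplus_perm} bounds this uniformly in $\pi$ by a constant times $(b-a)^2v_+(x)/n$ or $\norm{w}^2/n$.
\item The factor $r=\sum_k\sqrt{\PP(k<T)}=O(n)$ then cancels the $1/n$.
\end{itemize}
Without these two ingredients, your argument yields neither the log-free rate nor the $v_+(x)$ variance proxy.
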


Compared to classical results pertaining to the concentration of the supremum of the empirical process around its mean, difficulties for proving concentration of its exchangeable bootstrap version are essentially due to the dependencies between the weights. Indeed, as exposed extensively in \cite{BoucheronLugosiMassart:2013}, classical techniques based on modified log-Sobolev inequalities, such as the one used in the proofs of Bousquet's and Klein-Rio's inequalities (\cite{Bous02, KleinRio2005}) for the supremum of the empirical process, heavily rely on independence between random variables in the sums, and on tensorization properties in particular.

As further explained in Section \ref{sec_general_concen}, we prove Theorem \ref{thm_intro_concen_exchange} by making use of a so-called ``exchangeable pair'' technique, which has its origin in the work of Stein (\cite{Stein86,Stein92}, see also \cite{ChenGoldShao11}). In the context of concentration inequalities, the exchangeable pair approach was initiated  by Chatterjee in his pathbreaking PhD thesis (\cite{Chatterjee2005thesis}, see also \cite{Chatterjee07,ChatDey:10}). As emphasized for instance in \cite{MR3727600}, one major strength of the exchangeable pair approach to concentration of measure is that it typically allows one to deal with dependent and complex systems of random variables, such as in statistical mechanics or in random graph theory (\cite{ChatDey:10,Paulin14}). Chatterjee's approach has also been successfully adapted to concentration of random matrices \cite{Mackeyetal14,PaulinMackTropp16}.  

In Theorem \ref{thm_intro_concen_exchange} above, the statistic of interest is a functional of a random permutation. As detailed in Section \ref{sec_cond_X}, we implement the exchangeable pair technique through the use of refined arguments by White \cite{White2019}, who studies the strong mixing time of a random walk on the symmetric group converging to the uniform distribution. Actually, using that strong mixing time would induce an extra logarithmic factor depending on the sample size in our concentration bounds, so we rather exploit a weaker notion, that we call the \textit{strong convergence time}. 



Theorems \ref{thm_intro_cond_exp} and \ref{thm_intro_concen_exchange}  are general devices, that may be helpful in many statistical contexts. As a proof of concept, we use these results to obtain new insights in two applications: multivariate mean confidence intervals and two-sample testing, see Sections \ref{sec_conf_reg} and \ref{sec_two_sample_test} respectively. 


The paper is organized as follows. We present in Section \ref{sec_bootep} the objects of interest and some known results, especially concerning bounds for the mean of the bootstrap empirical process. Then Sections \ref{sec_exch_self_bounding} and \ref{sec_cond_X} focus respectively on the concentration of the conditional mean, with respect to the data, of the supremum of the bootstrap empirical process and the concentration conditional to the data. In particular, we describe in Section \ref{sec_general_concen} some essential steps of our strategy for proving a concentration inequality, conditional to the data, for the supremum of the bootstrap empirical process. In addition, we provide a general bound combining these two aspects in Section \ref{sec_gen_bound}. Finally, in Sections \ref{sec_conf_reg} and \ref{sec_two_sample_test} respectively, we establish new multivariate confidence regions for the mean and new non-asymptotic bounds for the power of various non-parametric two-sample tests. Technical parts of the proofs and further remarks are deferred to the Supplementary Material.

\section{The bootstrap empirical process} \label{sec_bootep}

Let $(E,\cB)$ be a measurable space and let $\cT$ be a bounded class of measurable functions from $E$ to $\mathbb{R}$. Define a ``distance'' between two probability distributions $P,Q$ on $(E,\cB)$ by
\[ d_{\cT}(P,Q) = \sup_{t \in \cT} \left\{ \int t dP - \int t dQ \right\}. \]
Note that if $\cT$ is symmetric and ``large enough'', so that the separation axiom is satisfied for $d_\cT$, then $d_\cT$ is a distance. 
In that case, the distance $d_\cT$ is known as an ``integral probability metric'' and can be used to construct two-sample tests (known as MMD tests \cite{MMD-Gretton2012}, see also Section \ref{sec_MMD_tests} below) as well as \emph{minimum distance estimators} based on minimizing the distance of the empirical measure to a model, such as the \emph{minimum Wasserstein estimator} \cite{mwde2019}. 
Let $X_1,\ldots,X_n$ be i.i.d. random variables belonging to $E$ and denote by $P_n$ the empirical measure.

Consider the quantity
\[ d_\cT(P_n,P) = \sup_{t \in \cT} \left\{ (P_n - P)(t) \right\}\, \]
\textit{i.e.} the supremum of the empirical process based on the class $\cT$, where for any probability measure $Q$ with an integrable function $t$, we set $Q(t)=\int t dQ$.
The quantity $d_\cT(P_n,P)$ plays a key role in controlling the statistical error of the MMD tests and minimum distance estimators based on the distance $d_\cT$. It also controls the generalization error of M-estimators in statistical learning, when $\cT$ is the class of functions
\[  \left\{ (x,y) \mapsto c(s(x),y) : s \in m \right\} \; ,\]
where $c$ is the contrast function and $m$ is some model for the predictor (classifier or regression function). For these applications, it is important to be able to estimate the expectation and/or the quantiles of the random variable $d_\cT(P_n,P)$.

In this article, we consider the use of the bootstrap to approximate the (unknown) distribution of $d_\cT(P_n,P)$. The bootstrap heuristic applied to $d_\cT(P_n,P)$ consists in using the conditional distribution of
\[ c_W d_\cT(P_n^W,P_n) = c_W\sup_{t \in \cT} \left\{ (P_n^W - P_n)(t) \right\}  \]
knowing $P_n$ as a proxy for the unconditional distribution of $d_\cT(P_n,P)$. Here, $P_n^W$ is a \emph{bootstrap empirical measure} based on the empirical measure $P_n$ and auxiliary (independent) randomization $W$, while $c_W$ is a constant depending only on the distribution of $W$. In Efron's original approach \cite{Efron79}, $P_n^W$ is obtained by sampling $n$ times with replacement from $P_n$, leading to a bootstrap empirical measure of the form
\begin{equation} \label{eq_bootstrap_distrib}
   P_n^W = \frac{1}{n} \sum_{i = 1}^n W_i \delta_{X_i}, 
\end{equation}
where $W$ is a multinomial random vector with success probabilities $1/n$. More generally, the \emph{exchangeably weighted bootstrap} \cite{MasonNewton1992, PraestWellner:93} consists in any measure of the form \eqref{eq_bootstrap_distrib} with weights $W$ such that
\begin{itemize}
    \item $\frac{1}{n} W$ is a probability vector,
    \item $W$ is independent from the data,
    \item $W$ is exchangeable.
\end{itemize}

We can therefore rewrite $d_\cT(P_n^W,P_n)$ as
\[ d_{\cT}(P_n^W,P_n) = \frac{1}{n} \sup_{t \in \cT} \left\{ \sum_{i = 1}^n (W_i-1) t(X_i)  \right\} = \frac{1}{n} \sup_{t \in \cT} \left\{ \sum_{i = 1}^n \xi_i t(X_i)  \right\} \]
where $\xi = W-1$ is an exchangeable random vector which sums to $0$. This leads to the following definition.

\begin{definition} \label{def_g_bootstrap}
We denote by    $X = (X_1,\ldots,X_n)$ a sample of $n$ independent random variables valued in $(E,\cB)$. the random vector $\xi \in \mathbb{R}^n$ is exchangeable, independent from $X$, satisfying $\EE[\vert \xi_1 \vert]<+\infty$ and $\sum_{i = 1}^n \xi_i = 0$ almost surely. We also set
    \[ g(X,\xi) = \sup_{t \in \cT} \left\{ \sum_{i = 1}^n \xi_i t(X_i) \right\} . \]
\end{definition}

Note that we do not require $X$ to be an i.i.d. vector, unlike the bootstrap setting described above. This generalization allows to treat the two-sample case, where variables are not i.i.d. under the alternative. Measurability issues seem similar to those that arise for the empirical process, let us only mention the following sufficient condition.

\begin{lemma}\label{lemma_meas}
    If $\cT$ is a set of measurable functions from $E$ to $[-1,1]$ which is separable in the product topology on $\left[-1, 1\right]^E$, then $g$ is measurable from $E^n \times \mathbb{R}^n$ to $\mathbb{R}$ and in particular, $g(X,\xi)$ is a random variable. 
\end{lemma}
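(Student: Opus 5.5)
The strategy is to reduce the supremum over the possibly uncountable class $\cT$ to a supremum over a countable subclass, so that $g$ is a countable supremum of measurable functions. By assumption, $\cT$ is separable in the product topology on $[-1,1]^E$, so there is a countable set $\cT_0 \subset \cT$ that is dense in $\cT$ for this topology. Recall that convergence in the product topology is pointwise convergence: a net $(t_\alpha)$ converges to $t$ if and only if $t_\alpha(y) \to t(y)$ for every $y \in E$. Since only finitely many points of $E$ are involved in $g$, nets suffice (we do not need sequences). We will show that for every $(x,\xi) \in E^n \times \R^n$,
\[ g(x,\xi) = \sup_{t \in \cT_0} \sum_{i=1}^n \xi_i t(x_i). \]

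The inequality ``$\geq$'' holds because $\cT_0 \subset \cT$. For ``$\leq$'', fix $(x,\xi)$, $t \in \cT$ and $\varepsilon>0$. The set
\[ U = \{ s \in [-1,1]^E : |s(x_i) - t(x_i)| < \varepsilon \text{ for } i=1,\dots,n \} \]
is a basic open neighbourhood of $t$ in the product topology, since it constrains only finitely many coordinates. By density, $U$ contains some $s \in \cT_0$, and then
\[ \sum_i \xi_i s(x_i) \geq \sum_i \xi_i t(x_i) - \varepsilon \|\xi\|_1. \]
Letting $\varepsilon \to 0$ and then taking the supremum over $t \in \cT$ gives the claim. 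Note that $g$ is finite, since $|g(x,\xi)| \leq \|\xi\|_1$ because every $t$ takes values in $[-1,1]$.

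Next, for each fixed $t \in \cT_0$, the map $(x,\xi) \mapsto \sum_i \xi_i t(x_i)$ is measurable on $E^n \times \R^n$ with respect to the product $\sigma$-algebra $\cB^{\otimes n} \otimes \mathcal{B}(\R^n)$. Indeed, each $(x,\xi) \mapsto t(x_i)$ is measurable as the composition of a coordinate projection with the measurable $t$, each $(x,\xi)\mapsto \xi_i$ is measurable, and finite sums and products of real measurable functions are measurable. A countable supremum of measurable real functions is measurable, so $g$ is measurable. Finally, $(X,\xi)$ is a random element of $E^n \times \R^n$, being measurable into the product $\sigma$-algebra since each of its coordinates is. Hence $g(X,\xi)$ is a random variable as a composition of measurable maps.

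There is no serious obstacle. The only point requiring care is the density step: separability must be used through the product topology, which gives control only at finitely many points. This is exactly enough here, because $g$ depends on $t$ only through $(t(x_1),\dots,t(x_n))$.
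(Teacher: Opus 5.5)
Your proof is correct and follows essentially the same route as the paper. Both arguments replace $\cT$ by a countable dense subclass $\cT_0$, and both use that product-topology neighbourhoods control $t$ only at the finitely many points $x_1,\ldots,x_n$, which is exactly what is needed for the two suprema to coincide. Both then conclude because a countable supremum of measurable functions is measurable; the paper phrases this last step as a pointwise limit of the finite maxima $h_N$, which is equivalent.
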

For completeness, we give a proof of Lemma \ref{lemma_meas} in Section \ref{sec_proof_meas} of the Supplementary Material. By default, we will always assume in this article that $\cT$ is indeed separable for the product topology in $\left[-1, 1\right]^E$, which ensures the measurability of $g$.

According to the bootstrap heuristic, the distribution of $g(X,\xi)$ knowing the sample $X$ approximates the distribution of $nd_\cT(P_n,P)$ up to a universal constant. In particular, the conditional expectation $\mathbb{E}\left[ g(X,\xi) | X \right]$ should approximate $n \mathbb{E}[d_\cT(P_n,P)]$, the expected supremum of the empirical process, up to a universal constant. Let us introduce notation for these quantities.

\begin{definition} \label{def_gbar_bootstrap}
    For any integer $n$ and distribution $P$ on $(E,\cB)$, let
    \begin{equation}
        M_n(P) = \mathbb{E} \left[ n d_\cT(P_n,P) \right] = \EE\left[\sup_{t\in\cT}\left\{\sum_{i=1}^n t(X_i)-\EE[t(X)]\right\}\right]\;,
    \end{equation} 
    where $P_n$ is the empirical measure based on an i.i.d. sample of size $n$ from $P$. Let also $\overline{g}$ be the function defined on $E^n$ by
    \[ \overline{g}(x) = \mathbb{E} \left[ \sup_{t \in \cT} \left\{ \sum_{i = 1}^n \xi_i t(x_i) \right\} \right],  \]
    where $\xi$ satisfies the assumptions of definition \ref{def_g_bootstrap}. 
\end{definition}

Let us now give a brief overview of the problem and known results.
Note that $\overline{g}(X) = \mathbb{E}[g(X,\xi) | X]$ for $X,\xi,g$ given by Definition \ref{def_g_bootstrap}. In that case, the bootstrap heuristic
states that $c_W \overline{g}(X)$ approximates the constant $M_n(P)$ in distribution. This assertion can be decomposed into two claims: firstly, the expectation $\mathbb{E}[\overline{g}(X)] = \mathbb{E}[g(X,\xi)]$ is approximately $c^{-1}_W M_n(P)$ for some universal constant $c_W$, secondly $\overline{g}(X)$ concentrates around its expectation.  

Concerning the first claim, upper and lower bounds for $\mathbb{E}[g(X,\xi)]$ as a function of $M_n(P)$ have been investigated in the literature. In the i.i.d. setting, Fromont \cite{Fromont2007} establishes the universal lower bound $\mathbb{E}[g(X,\xi)] \geq \mathbb{E}[(\xi_1)_+] M_n(P)$. Arlot \cite{arlot-these} observes that this lower bound can be improved to $\mathbb{E}[g(X,\xi)] \geq \mathbb{E}[|\xi_1|] M_n(P)$ under a symmetry assumption on the class $\mathcal{T}$ but that it is tight in general.   For i.i.d. data,  Han and Wellner \cite{HanWell:19} provide a sharp upper bound that only depends on the tails of the weights and on the supremum of the corresponding Rademacher process, where the original weights are replaced by independent Rademacher variables. Exchangeability of the weights is not actually needed for that bound.  For the sake of completeness, we provide in Section \ref{ssec_mean} of the Supplementary Material, some upper and lower bounds that are instrumental in our applications. 

The second claim, \textit{i.e.} the concentration of $\overline{g}(X)$ around its expectation, is the subject of section \ref{sec_exch_self_bounding} of this article. Together with the results described in the latter paragraph, it validates to some extent the bootstrap heuristic for the approximation of the expected supremum $M_n(P)$ in the non-asymptotic setting.

The bootstrap heuristic applies in principle to the whole distribution of $n d_\cT(P_n,P)$, not just its expectation. Bousquet's inequality (\cite{Bous02}) shows that $n d_\cT(P_n,P)$ concentrates at the right of its expectation $M_n(P)$, so by the bootstrap heuristic, the same ought to be true of the law of $g(X,\xi)$ knowing $X$. In the case of Efron's bootstrap, this is indeed the case and can easily be proved, actually using Bousquet's inequality and the fact that $P_n^W$ is the empirical measure of an i.i.d. sample drawn from $P_n$. More precisely, we have the following result.

\begin{theorem} \label{thm_bootstrap_efron}
Let $x \in (\mathbb{R}^{\cT})^n$ and let $\xi$ be the Efron weights, \textit{i.e.} $\xi_i=W_i-1$ for any $i\in \left\{ 1,\ldots, n\right\}$, where the random vector $W$ follows a multinomial distribution with success probabilities $1/n$.
    Define the ``empirical weak variance'',
    \[ v_+(x) = \sup_{t \in \cT} \left\{ \sum_{i = 1}^n \left(t(x_{i}) - \frac{1}{n} \sum_{j = 1}^n t(x_{j}) \right)^2  \right\}\;. \]
    For any $\lambda > 0$,
    \[ \mathbb{E} \left[ \exp(\lambda (g(x,\xi) - \overline{g}(x))) \right] \leq \exp \left( [2\overline{g}(x) + v_+(x)] \phi(\lambda)  \right), \]
    where $\phi$ is the function $\phi(u) = e^u - u - 1$.
\end{theorem}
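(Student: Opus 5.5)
We use the representation of Efron's bootstrap as an ordinary empirical process and invoke Bousquet's inequality. If $W$ is multinomial with parameters $n$ and $(1/n,\dots,1/n)$, then $W_i=\sum_{k=1}^n \mathbb{1}\{I_k=i\}$, where $I_1,\dots,I_n$ are i.i.d. uniform on $\{1,\dots,n\}$. Hence
\[ g(x,\xi)=\sup_{t\in\cT}\sum_{i=1}^n (W_i-1)t(x_i)=\sup_{t\in\cT}\sum_{k=1}^n \Big( t(x_{I_k})-\frac1n\sum_{j=1}^n t(x_j)\Big) . \]
Set $Y_k=I_k$ and, for each $t\in\cT$, let $f_t(i)=t(x_i)-\frac1n\sum_j t(x_j)$. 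Then $g(x,\xi)=\sup_t\sum_{k=1}^n f_t(Y_k)$ is the supremum of a centred empirical process built on $n$ i.i.d. variables, and $\overline g(x)$ is its expectation. Measurability causes no difficulty here, since $Y$ takes finitely many values.

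We then apply Bousquet's inequality \cite{Bous02} in its Laplace-transform form. For i.i.d. $Y_k$ and a countable class of functions with $\EE f(Y_1)=0$ and $f\le 1$, set $Z=\sup_f\sum_k f(Y_k)$. Then for all $\lambda>0$,
\[ \log\EE e^{\lambda(Z-\EE Z)}\le (2\EE Z+\sigma^2)\,\phi(\lambda), \qquad \sigma^2=\sup_f\sum_k \EE f(Y_k)^2 . \]
The class here is finite up to the values on $\{1,\dots,n\}$, so countability is not a concern: restricting to a countable dense subset, which exists by the separability assumed on $\cT$, leaves the supremum unchanged. The variance term is
\[ \sum_{k=1}^n \EE f_t(Y_k)^2 = n\cdot\frac1n\sum_{i=1}^n f_t(i)^2=\sum_{i=1}^n\Big(t(x_i)-\frac1n\sum_j t(x_j)\Big)^2 , \]
so $\sigma^2=v_+(x)$. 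This gives exactly the claimed bound.

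The one point needing care is the uniform upper bound on $f_t$. Bousquet's inequality in the stated normalisation requires $f_t\le 1$. If the functions in $\cT$ take values in $[-1,1]$, then $f_t$ can reach values up to $2$, which would force a rescaling by $b=2$ and change the constants to $v_+ + 2b\,\overline g$ with $\phi(b\lambda)/b^2$. This is the main obstacle.

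To obtain the constants exactly as stated, we would rely on Bousquet's inequality under the condition $\sup_t \|f_t\|_\infty\le 1$ after centering. Note that the statement takes $x\in(\mathbb{R}^{\cT})^n$, so only the values $t(x_i)$ matter. Alternatively, we would use the version of Bousquet's bound where the upper bound of $1$ is imposed only on $f_t-\EE f_t$. Accordingly, we first check the precise hypothesis of the version of \cite{Bous02} being invoked, namely that $f_t\le 1$ with $f_t$ centered. Under the standing assumption that the centered functions $f_t$ are bounded above by $1$, the displayed inequality follows immediately.
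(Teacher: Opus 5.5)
Your proof is correct and is the argument the paper intends: the paper omits the proof, calling the result a direct application of Bousquet's inequality, and your representation $g(x,\xi)=\sup_{t}\sum_{k=1}^n f_t(I_k)$, with $\EE f_t(I_1)=0$, $\EE Z=\overline{g}(x)$ and wimpy variance $v_+(x)$, is exactly that application. Your worry about normalization is not a point left to check but a genuine defect of the statement: it needs the extra hypothesis you end up assuming, namely $\sup_{t,i}\bigl(t(x_i)-\frac1n\sum_j t(x_j)\bigr)\le 1$ (e.g.\ $\cT$ valued in an interval of length $1$), because for $[-1,1]$-valued classes (the paper's standing convention) the bound fails — take the single function with $t(x_1)=1$ and $t(x_i)=-1$ for $i\ge 2$, so that $\overline{g}(x)=0$, $v_+(x)=4(n-1)/n$ and $\log\EE e^{\lambda g(x,\xi)}=-2\lambda+n\log\bigl(1+(e^{2\lambda}-1)/n\bigr)\to\phi(2\lambda)>4\phi(\lambda)$ as $n\to\infty$, and in that setting your rescaled bound $(\overline{g}(x)+v_+(x)/4)\,\phi(2\lambda)$ is what Bousquet's inequality actually delivers.
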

Theorem \ref{thm_bootstrap_efron} is a direct application of Bousquet's inequality for the concentration of the empirical process, and we omit its proof.

In the general case, $g(x,\xi)$ is of the form
\[ \sup_{a \in A} \left\{ \sum_{i = 1}^n a_i \xi_i \right\} \]
for some bounded set $A \subset \mathbb{R}^n$. The random vector $\xi$ is exchangeable and sums to $0$, which means that it can be written in the form $\xi = Z_\sigma$, where $\sigma$ is a uniform random permutation and $Z$ is an arbitrary random vector valued in the set 
\[S = \left\{ z \in \mathbb{R}^n : z_1 \leq z_2 \leq ... \leq z_n , \sum_{i = 1}^n z_i = 0 \right\}. \]
Since $Z$ may be arbitrary, concentration of $g(x,\xi)$ around $\overline{g}(x)$ cannot hold under only the assumptions of definition \ref{def_g_bootstrap}. Indeed, if $g(x,\xi)$ concentrates around $\overline{g}(x)$ for some $\xi$, one can replace $\xi$ by $\Lambda \xi$ for some non-negative random variable $\Lambda$. This yields $g(x,\Lambda \xi) = \Lambda g(x,\xi) \approx \Lambda \overline{g}(x)$,
which fails to concentrate around its expectation.

Rather than concentration around $\overline{g}(x)$, we study in Section \ref{sec_cond_X} the concentration of $g(x,Z_\sigma)$ around $\overline{g}_Z(x) = \mathbb{E}[g(x,Z_\sigma) | Z]$, which amounts to setting $\xi = w_\sigma$ for some fixed vector $w\in S$. This is indeed the setting of Theorem \ref{thm_intro_concen_exchange}. 
As $w_\sigma$ is exchangeable and sums to $0$, the same concentration result (Theorem \ref{thm_intro_cond_exp}) applies to $\overline{g}(X)$ and to $\overline{g}_Z(X)$ conditionally on $Z$, yielding concentration of $\overline{g}_Z(X)$ around $\mathbb{E}\left[ g(X,Z_\sigma) | Z \right]$. Moreover, by the same argument, bounds for $\mathbb{E}\left[ g(X, \xi) \right]$, such as Propositions \ref{prop_lower_mean_boot} and \ref{prop_upper_mean_boot}, also apply conditionally on $Z$. Thus, we can to a large extent treat $g(X,\xi)$ as if $\xi$ were of the form $w_\sigma$ for some fixed vector $w$ with sum $0$. This approach is worked out in Section \ref{sec_gen_bound} and yields general deviation upper bounds for $g(X,\xi)$.     

\section{Self-bounding property of the expectation conditioned on data}\label{sec_exch_self_bounding}

Let $g,X,\xi$ and $\overline{g}$ be given by Definitions \ref{def_g_bootstrap} and \ref{def_gbar_bootstrap}. In order to establish the concentration properties of $\overline{g}(X)$ for a vector of independent variables $X = (X_1,\ldots,X_n)$, we will rely on the notion of a \emph{self-bounding function} (see \cite{BoucheronLugosiMassart:2013}).
For completeness, we state the following definition (corresponding more precisely to the notion of a ``strongly $(a,b)-$self-bounding function'' in \cite{BoucheronLugosiMassart:2013}).

\begin{definition} \label{def_self_bounding}
    Let $E$ be a set and $n \geq 1$ be an integer. A function $f: E^n \to \mathbb{R}$ is said to be \emph{$(a,b)$-self-bounding} for $a,b \geq 0$ if for all $i \in \{1,\ldots,n\}$ there exists a function $f_i: E^{n-1} \to \mathbb{R}$ such that for all $x \in E^n,$
    \[ 0 \leq f(x) - f_i(x_{(i)}) \leq 1 \text{ and } \sum_{i = 1}^n f(x) - f_i(x_{(i)}) \leq a f(x) + b \]
    where for all $i$, $x_{(i)} = (x_1,\ldots,x_{i-1},x_{i+1},\ldots,x_n)$.
\end{definition}

Note that this definition makes sense for any function $f$, not necessarily measurable. However, to derive concentration inequalities, measurability of $f$ and $f_i$ is required. 
For this, the following lemma is useful.

\begin{lemma} \label{lem_meas}
    If $(X_i)_{1 \leq i \leq n}$ are independent random variables valued in $(E, \mathcal{B})$ for some sigma-algebra $\mathcal{B}$ and if $f$ is an $(a,b)-$self-bounding function such that $f(X)$ is integrable, where $X = (X_1,\ldots,X_n)$, then there exists measurable functions $\tilde{f}: E^n \to \mathbb{R}$ and $\tilde{f}_i: E^{n-1} \to \mathbb{R}$ such that $f(X) = \tilde{f}(X)$ almost surely,
    \[ 0 \leq \tilde{f}(X) - \tilde{f}_i(X_{(i)}) \leq 1 \text{ and } \sum_{i = 1}^n \tilde{f}(X) - \tilde{f}_i(X_{(i)}) \leq a \tilde{f}(X) + b \;,\]
    where $X_{(i)} = (X_1,\ldots,X_{i-1},X_{i+1},\ldots,X_n)$.
\end{lemma}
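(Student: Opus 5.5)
The plan is to replace $f$ and the $f_i$ by measurable versions obtained from conditional expectations, and then recover the pointwise self-bounding inequalities by composing with monotone measurable maps. The difficulty is that $f$ and $f_i$ need not be measurable, so we cannot simply evaluate them at $X$. We therefore use the self-bounding structure to control $f$ from above and below by measurable quantities.

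\textbf{Step 1: measurable versions.} Since $f(X)$ is integrable, $f$ is measurable with respect to the completion of the law of $X$. Choose a Borel function $\tilde f$ on $E^n$ with $\tilde f(X)=f(X)$ almost surely; this is the standard fact that a function measurable for the completion agrees almost surely with a measurable one. The inequality $0\le f-f_i\le 1$ implies $f_i(X_{(i)})\in[f(X)-1,f(X)]$, so $f_i(X_{(i)})$ is squeezed between integrable quantities. The catch is that $f_i(X_{(i)})$ is not yet known to be measurable.

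\textbf{Step 2: define $\tilde f_i$.} We will set
\[
\tilde f_i(x_{(i)})=\operatorname*{ess\,inf}_{x_i}\,\tilde f(x),
\]
where the essential infimum is taken with respect to the law of $X_i$. For measurability, a cleaner route is
\[
\tilde f_i(x_{(i)})=\lim_{p\to\infty}-\frac1p\log \EE\bigl[e^{-p\tilde f(x_{(i)},X_i)}\bigr],
\]
which is measurable by Fubini, using independence of the coordinates. We then check the two required inequalities.

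The lower bound $\tilde f(X)\ge\tilde f_i(X_{(i)})$ holds almost surely, because an essential infimum lies below almost every value.

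For the upper bound, note that $f_i(X_{(i)})$ does not depend on $X_i$. Hence for almost every $x_{(i)}$, we have $f_i\le f(\cdot,x_i)$ for almost every $x_i$, and therefore $f_i(x_{(i)})\le\tilde f_i(x_{(i)})$. This statement requires Fubini applied to the completed product measure, which is the delicate point. Granting it,
\[
\tilde f(X)-\tilde f_i(X_{(i)})\le f(X)-f_i(X_{(i)}),
\]
which gives both the bound $\le 1$ and, after summing over $i$, the bound $\le a\tilde f(X)+b$.

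\textbf{Main obstacle.} The hard step is justifying the comparison $f_i\le\tilde f_i$ almost everywhere when $f_i$ is non-measurable. I would first try to replace $f_i$ by its lower measurable envelope: take a measurable $h_i\le f_i$ with $h_i(X_{(i)})=f_i(X_{(i)})$ almost surely, which is possible when $f_i$ is completion-measurable. Alternatively, I would bypass $f_i$ altogether by working with the inequality $f_i(X_{(i)})\ge f(X)-1$ directly.

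If completion-measurability of $f_i(X_{(i)})$ fails, the fallback is to define $\tilde f_i$ as a measurable version of the upper envelope $\operatorname*{ess\,sup}$ of the functions $\tilde f-1$ and to use outer integrals instead.
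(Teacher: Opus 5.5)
Your construction works, and it is genuinely different from the paper's. The step you leave as the ``main obstacle'' is routine, so none of your fallbacks is needed. (The lower-envelope fallback would presuppose completion-measurability of $f_i$, which is not given. Using only $f_i\ge f-1$ cannot produce the summed inequality.)

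Here is how to close the step. Take $N\in\mathcal{B}^{\otimes n}$ with $P_X(N)=0$ and $\{f\neq\tilde f\}\subseteq N$. Apply Tonelli to $N$ itself, not to $\{f\neq\tilde f\}$, under $P_X=P_{X_{(i)}}\otimes P_{X_i}$; this is where independence enters. Outside a $P_{X_{(i)}}$-null set $N'$, the section $N_{x_{(i)}}$ is $P_{X_i}$-null. For $x_i\notin N_{x_{(i)}}$ one then has $\tilde f(x)=f(x)\ge f_i(x_{(i)})$, the last inequality holding for every $x$ by the definition of self-bounding. Since $f_i(x_{(i)})$ is just a number once $x_{(i)}$ is fixed, this gives $\operatorname{ess\,inf}_{x_i}\tilde f(x_{(i)},x_i)\ge f_i(x_{(i)})$ for all $x_{(i)}\notin N'$. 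No measurability of $f_i$ is involved.

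In particular $\tilde f_i>-\infty$ off $N'$, so you can reset $\tilde f_i$ to $0$ on the null set $\{\tilde f_i=-\infty\}$ to make it real-valued. Then $\tilde f(x)-\tilde f_i(x_{(i)})\le f(x)-f_i(x_{(i)})$ off a $P_X$-null set, which yields both upper bounds. The lower bound is Tonelli applied to the measurable set $\{\tilde f(x)<\tilde f_i(x_{(i)})\}$.

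A side remark on Step 1: integrability of $f(X)$ does not by itself make $f$ measurable for the $P_X$-completion when the underlying $\sigma$-field is larger than the completion of $\sigma(X)$. Read it as the intended hypothesis; the paper's first step relies on it as well.

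Compared with the paper: there $\tilde f$ is the minimal measurable majorant of $f$, and $\tilde f_i$ comes from the minimal measurable majorant $\overline f_i$ of $x\mapsto f_i(x_{(i)})$ on $E^n$. Minimality gives $\overline f_i\le\tilde f$ almost surely. An exchange argument then shows that $\overline f_i$ is almost surely free of $x_i$: replace $x_i$ by an independent copy $z$ and use minimality together with the symmetry between $z$ and $x_i$. After that, a good $z$ is frozen.

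So the paper picks the smallest measurable function above $f_i$, while you pick the largest measurable function of $x_{(i)}$ below $\tilde f$ almost surely. Both give the sandwich $f_i\le\tilde f_i\le\tilde f$ almost surely, which is all that is used.

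Your route is more elementary and explicit. Measurability in $x_{(i)}$ comes for free from Tonelli, no measurable-majorant theory or choice of $z$ is needed, and $f_i$ enters only through a pointwise comparison on sections. It also shows that any given $f_i$ may be replaced by the canonical $\operatorname{ess\,inf}_{x_i}\tilde f$, the almost-sure analogue of the usual $\inf_{x_i'}f$, which gives the smallest possible increments $\tilde f-\tilde f_i$. The paper's choice instead stays as close as possible to the given $f_i$.
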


We now come to the main theorem of this section.

\begin{theorem} \label{thm_self_bounding}
    Let $\kappa = \mathbb{E}[|\xi_1|]$ and recall that the functions in $\cT$ are valued in $[-1,1]$. The function 
    \[ \frac{\overline{g}}{\kappa \left(3 - \frac{2}{n} \right)}: E^n \to \mathbb{R} \]
    is $(2,0)$-self-bounding. As a consequence, if $X = (X_{i})_{1 \leq i \leq n}$ is a collection of $n$ independent $E$-valued random variables such that $\overline{g}(X)$ is measurable, then for any $x > 0$,
    \[ \overline{g}(X) \leq \mathbb{E}\left[ \overline{g}(X) \right] + \sqrt{12\kappa x \mathbb{E}[\overline{g}(X)]} + 5\kappa x \]
    with probability at least $1 - e^{-x}$. Moreover,
    \[ \overline{g}(X) \geq \mathbb{E}\left[ \overline{g}(X) \right] - \sqrt{12\kappa x \mathbb{E}\left[ \overline{g}(X) \right]} \]
    with probability at least $1 - e^{-x}$.
\end{theorem}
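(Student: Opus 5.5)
The plan is to prove the $(2,0)$-self-bounding property directly, with auxiliary functions built by removing one observation and redistributing its weight, and then to obtain the two tail bounds from the standard concentration inequality for strongly $(a,b)$-self-bounding functions in \cite{BoucheronLugosiMassart:2013}. The self-bounding step uses $\mathbb{E}[\overline{g}(X)]$ only through that inequality, and Lemma \ref{lem_meas} takes care of the measurability of the auxiliary functions.

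\emph{Construction of $f_i$.} For fixed $i$, I will set
\[ \overline{g}_i(x_{(i)}) = \mathbb{E}\Big[\sup_{t\in\cT} \sum_{j\neq i}\Big(\xi_j+\tfrac{\xi_i}{n-1}\Big) t(x_j)\Big]. \]
The redistributed weights $(\xi_j+\xi_i/(n-1))_{j\neq i}$ are exchangeable and sum to $0$, so $\overline{g}_i$ is the same kind of object as $\overline{g}$ on $n-1$ points.

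The basic identity is
\[ \sum_{j}\xi_j t(x_j)=\sum_{j\neq i}\Big(\xi_j+\tfrac{\xi_i}{n-1}\Big)t(x_j)+\xi_i\Big(t(x_i)-\tfrac{1}{n-1}\sum_{j\neq i}t(x_j)\Big). \]
The last term is at most $2|\xi_i|$ in absolute value, which gives $|\overline{g}(x)-\overline{g}_i(x_{(i)})|\le 2\kappa$. To reach the exact constant $\kappa(3-2/n)$, I expect to average over the choice of the dropped coordinate and to use $\sum_j \xi_j=0$; this is a routine variant of the same calculation.

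\emph{Nonnegativity.} The main obstacle is $\overline{g}(x)\ge \overline{g}_i(x_{(i)})$. My first attempt is a Jensen argument. Replace $x_i$ by $x_J$ with $J$ uniform on $\{j\neq i\}$ and independent of everything. Conditionally on $\xi$, averaging over $J$ turns $\sum_j \xi_j t(x_j)$ into exactly the redistributed sum, so $\sup_t$ of the average is at most the average of $\sup_t$. This bounds $\overline{g}_i$ by $\mathbb{E}_J\,\overline{g}(x^{(i\leftarrow x_J)})$, a duplicated configuration, and not yet by $\overline{g}(x)$.

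To close the gap, I would use exchangeability. Conditionally on the multiset of weights, $\xi$ is a uniform permutation of that multiset. The task is then to compare the duplicated configuration with the original one via a coupling of permutations that swaps $\xi_i$ with $\xi_J$. If that fails, I would fall back on the defensive choice $f_i(x_{(i)})=\inf_y \overline{g}(x_1,\dots,y,\dots,x_n)$. With this choice nonnegativity is automatic and $f-f_i\le 2\kappa$ still holds. The price is that the sum condition must then be derived from the redistribution identity.

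\emph{Sum condition.} Let $t^*$ be a (near-)maximiser for $\xi$, chosen measurably; separability of $\cT$ makes this possible. Then
\[ \overline{g}(x)-\overline{g}_i(x_{(i)})\le \mathbb{E}\Big[\xi_i\Big(t^*(x_i)-\tfrac{1}{n-1}\sum_{j\neq i}t^*(x_j)\Big)\Big]. \]
Summing over $i$ and using $\sum_i\xi_i=0$ gives
\[ \sum_i\big(\overline{g}(x)-\overline{g}_i(x_{(i)})\big)\le \Big(1+\tfrac{1}{n-1}\Big)\mathbb{E}\Big[\sum_i \xi_i t^*(x_i)\Big]=\tfrac{n}{n-1}\,\overline{g}(x)\le 2\,\overline{g}(x). \]
After dividing by $\kappa(3-2/n)$ this is the $(2,0)$ condition, since $\kappa(3-2/n)\ge\kappa$ for $n\ge 1$.

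\emph{Concentration.} Let $Z=\overline{g}(X)/c$ with $c=\kappa(3-2/n)\le 3\kappa$. The self-bounding inequality with $a=2$, $b=0$ gives
\[ \PP\big(Z\ge \EE Z+t\big)\le \exp\Big(-\frac{t^2}{2(2\EE Z+\tfrac56 t)}\Big),\qquad \PP\big(Z\le \EE Z-t\big)\le \exp\Big(-\frac{t^2}{4\EE Z}\Big). \]
Inverting each bound at level $e^{-x}$ and multiplying back by $c\le 3\kappa$ yields, for the upper tail, a deviation of $\sqrt{4c\,x\,\EE\overline{g}}+\tfrac53 c\,x\le\sqrt{12\kappa x\,\EE\overline{g}}+5\kappa x$. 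For the lower tail it yields $\sqrt{4c\,x\,\EE\overline{g}}\le\sqrt{12\kappa x\,\EE\overline{g}}$. These are the two stated bounds.
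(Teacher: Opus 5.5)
The concentration step is fine: you invert Theorem~6.21 of \cite{BoucheronLugosiMassart:2013} with $a=2$, $b=0$ exactly as the paper does. The self-bounding step, however, has a genuine gap, and it is the one you flagged. Your proxy $\overline{g}_i(x_{(i)})=\mathbb{E}\big[\sup_{t}\sum_{j\neq i}(\xi_j+\xi_i/(n-1))\,t(x_j)\big]$ does \emph{not} satisfy $\overline{g}(x)\ge \overline{g}_i(x_{(i)})$ in general, so no coupling can close it.

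Here is a counterexample. Take $n=4$ and let $\xi$ be a uniform random permutation of $(1,-1,0,0)$. Let $E=\{p,q\}$ and $\cT=\{t,-t\}$ with $t(p)=1$, $t(q)=0$. Take $x=(p,q,q,q)$ and $i=4$. Then
\[ \overline{g}(x)=\mathbb{E}|\xi_1|=\tfrac12, \qquad \overline{g}_4(x_{(4)})=\mathbb{E}\big|\xi_1+\tfrac{\xi_4}{3}\big|=\tfrac{5}{9}, \]
where the second value comes from enumerating the $12$ equally likely values of $(\xi_1,\xi_4)$. Your proxy is too large by a factor $n/(n-1)$: the weights $\xi_j+\xi_i/(n-1)$ push all of $\xi_i$ onto the remaining points.

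Your fallback $f_i=\inf_y \overline{g}(x_1,\dots,y,\dots,x_n)$ makes nonnegativity trivial, but it does not rescue the sum condition. Deriving $\sum_i(\overline{g}(x)-f_i)\le 2\overline{g}(x)$ from your redistribution identity requires $\inf_y\overline{g}(x_1,\dots,y,\dots,x_n)\ge \overline{g}_i(x_{(i)})$. That is the same false inequality.

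The missing idea is to permute the \emph{weights}, not the data, and to let $J$ be uniform on all of $\{1,\dots,n\}$, including $i$. By exchangeability,
\[ \overline{g}(x)=\mathbb{E}\Big[\sup_{t\in\cT}\Big\{\sum_{k\neq i}\xi_k t(x_k)+(\xi_i-\xi_J)t(x_J)+\xi_J t(x_i)\Big\}\Big]. \]
Jensen's inequality conditionally on $\xi$ then removes $x_i$ entirely, because $\mathbb{E}[\xi_J\mid\xi]=\frac1n\sum_j\xi_j=0$. This yields the proxy with weights $\frac{n-1}{n}\xi_j+\frac1n\xi_i$ on $x_j$, $j\neq i$, which is exactly $\frac{n-1}{n}$ times yours. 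Nonnegativity now holds by construction.

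With a measurable near-maximiser $\hat a(\xi)$, the difference $\overline{g}(x)-\overline{g}_i(x_{(i)})$ is bounded by $\mathbb{E}\big[\xi_i\hat a_i+\frac1n\sum_j(\xi_j-\xi_i)\hat a_j\big]$. This gives the per-coordinate bound $\kappa+\frac{n-1}{n}\mathbb{E}|\xi_1-\xi_2|\le(3-\frac2n)\kappa$. Summing over $i$ and using $\sum_i\xi_i=0$ gives exactly $2\overline{g}(x)$.
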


Our principal contribution in Theorem \ref{thm_self_bounding} is the self-bounding property of $\overline{g}$; the rest follows from known properties of self-bounding functions (\cite[Theorems 6.20,6.21]{BoucheronLugosiMassart:2013}). 
Beyond exchangeability, it worth noting that the only constraint on the weights is integrability, which seems minimal and holds for all resampling weights used in practice, as far as we know. 

The proof of Theorem \ref{thm_self_bounding} is deferred to Section \ref{ssec_self_bouning_SM} of the Supplementary Material. The main idea is to build lower proxy functions $\bar{g}_i$ of $\bar{g}$, as in Definition \ref{def_self_bounding}, by acting on the indices of the vector $\xi$ through random permutations $\tau_{i,J}$ and arguing that, by exchangeability, it does not change its distribution.


\section{Concentration conditional on data \label{sec_cond_X}}

In practice, the quantity $\overline{g}(x)$ generally cannot be calculated exactly, but is usually approximated by the Monte Carlo estimate
\[ \hat{g}_B(x) = \frac{1}{B} \sum_{b =1}^B \sup_{t \in \cT} \sum_{i = 1}^n \xi_i^{(b)} t(x_{i})\;, \]
where $(\xi^{(b)})_{b = 1,\ldots,B}$ are i.i.d. draws from the weight vector $\xi$.
In other applications, such as the non-parametric two sample tests discussed in Section \ref{sec_two_sample_test}, the quantity of interest is not the conditional mean $\overline{g}(X)$ but rather a quantile of the conditional distribution of 
\[ g(X,\xi) = \sup_{t \in \cT} \sum_{i = 1}^n \xi_i t(X_{i}) \]
knowing $X$, known as a ``bootstrap quantile''. One way to handle both quantities is to derive concentration inequalities for $g(x,\xi)$ around its mean $\overline{g}(x)$. 

In this section, we focus on weights of the form $\xi = w_\sigma$ where $w \in \mathbb{R}^n$ is a deterministic vector such that $\sum_{i = 1}^n w_i = 0$ and $\sigma \in \mathfrak{S}_n$ is a uniform random permutation. Consequences for general exchangeable weights are derived in section \ref{sec_gen_bound}, following the argument sketched at the end of Section \ref{sec_bootep}.

\subsection{Proof steps for Theorem \ref{thm_intro_concen_exchange}}\label{sec_general_concen}
For the sake of clarity, we briefly describe in this section some techniques that will be instrumental in our proofs. We also explain relations with some classical lemmas pertaining to the literature of concentration inequalities.
\subsubsection{Exponential moments and a decoupling inequality}
\sloppypar{}
The concentration of a random variable is related to the behavior of its moments 
(\cite{BoucheronLugosiMassart:2013,vershynin2018high}).
In the following, we will be interested in random variables with some finite exponential moments. Through the use of the classical Cramér-Chernoff method, it will thus be sufficient for us to control their moment-generating function.

Let us consider a random variable $Z$, defined on the measurable space $(\cZ,\cT)$, and a measurable function $g:(\cZ,\cT) \rightarrow \R$, such that the moment-generating function of $g(Z)$, denoted by $\varphi_{g(Z)}$, is well-defined at the right-neighborhood of the origin: there exists $\theta_0>0$ such that $\varphi_{g(Z)}(\theta_0)=\EE[e^{\theta_0 g(Z)}]<+\infty$. Denote also $g_0(Z)=g(Z)-\EE[g(Z)]$, the expectation of $g(Z)$ being indeed well-defined in this case. Then, for any $\theta \in (0,\theta_0)$, the quantity $\EE[g_0(Z) e^{\theta g_0(Z)}]$ is also well-defined, and it corresponds to the derivative of $\varphi_{g_0(Z)}$ at the point $\theta$. 

In this setting, a quite standard approach to derive concentration inequalities for $g_0(Z)$ (\cite{BoucheronLugosiMassart:2013}) consists in establishing and solving a differential inequality for its moment-generating function.
In this article, we will rather use the following result.
\begin{lemma}
\label{theorem duality entropy}
Let $X$ and $Y$ be two
real random variables such that 
\begin{equation}
\label{hyp_decoupling}
\mathbb{E}\left[Xe^{X}\right]\leq\mathbb{E}\left[Ye^{X}\right]<+\infty,
\end{equation}
then 
\begin{equation*}
\mathbb{E}\left[e^{X}\right]\leq\mathbb{E}\left[e^{Y}\right]\;.\label{bound_laplace}
\end{equation*}
In addition, if Inequality \eqref{hyp_decoupling} is strict, then the conclusion also holds with strict inequality.
\end{lemma}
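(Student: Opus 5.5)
This is a duality (Gibbs variational) argument. Introduce the tilted probability measure $Q$ with density $dQ/d\mathbb{P} = e^{X}/\mathbb{E}[e^{X}]$. It is well defined because the hypothesis \eqref{hyp_decoupling} implicitly requires $\mathbb{E}[e^X]$ to be finite and positive; if $\mathbb{E}[e^X]=+\infty$ we would have to treat it separately, but finiteness of $\mathbb{E}[Ye^X]$ together with the hypothesis is meant to exclude this.

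The key tool is the entropy inequality (Donsker--Varadhan). For any random variable $Y$ with $\mathbb{E}_Q[Y]$ well defined,
\[ \mathbb{E}_Q[Y] - \mathrm{KL}(Q\,\|\,\mathbb{P}) \leq \log \mathbb{E}[e^{Y}]. \]
This follows from Jensen's inequality applied to $\log$ under $Q$:
\[ \log \mathbb{E}[e^Y] \geq \log \mathbb{E}_Q\!\left[e^{Y}\frac{d\mathbb{P}}{dQ}\right] \geq \mathbb{E}_Q\!\left[Y - \log\frac{dQ}{d\mathbb{P}}\right]. \]

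Next compute $\mathrm{KL}(Q\|\mathbb{P})$ explicitly:
\[ \mathrm{KL}(Q\|\mathbb{P}) = \mathbb{E}_Q[X] - \log \mathbb{E}[e^X] = \frac{\mathbb{E}[Xe^X]}{\mathbb{E}[e^X]} - \log\mathbb{E}[e^X]. \]
Substituting into the entropy inequality gives
\[ \frac{\mathbb{E}[Ye^X]}{\mathbb{E}[e^X]} - \frac{\mathbb{E}[Xe^X]}{\mathbb{E}[e^X]} + \log\mathbb{E}[e^X] \leq \log \mathbb{E}[e^Y]. \]
By hypothesis the first difference is $\geq 0$, so $\log\mathbb{E}[e^X]\leq \log\mathbb{E}[e^Y]$, which is the claim. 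When the hypothesis is strict, the difference is $>0$ and the conclusion is strict. If $\mathbb{E}[e^Y]=+\infty$ there is nothing to prove.

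The main obstacle is integrability bookkeeping. We must make sure that $\mathbb{E}_Q[Y]$ and $\mathbb{E}_Q[X]$ are well defined and that $\mathbb{E}[Xe^X]$ is not $-\infty$ in a way that breaks the subtraction. Note that $xe^x \geq -1/e$, so $\mathbb{E}[Xe^X]$ is always well defined in $(-1/e,+\infty]$, and the hypothesis then forces it to be finite. Finiteness of $\mathbb{E}[Xe^X]$ in turn forces $\mathbb{E}[e^X]<\infty$, since $e^x \leq xe^x + e$.

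For $Y$, if $\mathbb{E}_Q[Y^-]$ could be infinite the Jensen step needs care. We handle this by first applying the argument to $Y\vee(-M)$ or by a truncation $Y_M=\min(Y,M)$. Alternatively, we can prove the elementary inequality $e^{y} \geq e^{x}(1+y-x)$ (convexity) pointwise and take expectations directly:
\[ \mathbb{E}[e^Y]\geq \mathbb{E}[e^X] + \mathbb{E}[(Y-X)e^X] \geq \mathbb{E}[e^X]. \]
This pointwise tangent-line argument actually sidesteps all the tilting machinery. It is the cleanest route, and it is the one I would write up. It also gives strictness immediately when the hypothesis is strict.
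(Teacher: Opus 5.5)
Your final argument, the pointwise tangent-line inequality, is correct. It is a genuinely different and more elementary route than the paper's.

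The paper proves the lemma through the variational formula for entropy, $\mathrm{Ent}(Z)=\sup_{\EE[e^U]=1}\EE[ZU]$, applied to $Z=e^X$ and $U=Y-\log\EE[e^Y]$. That is exactly your first, tilting/Donsker--Varadhan sketch in normalized form, since $\mathrm{KL}(Q\,\|\,\mathbb{P})=\mathrm{Ent}(e^X)/\EE[e^X]$; you then set it aside.

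Your inequality $e^{y}\geq e^{x}(1+y-x)$, written with $z=e^x$ as $zu\leq z\log z-z+e^{u}$, is the Young (Fenchel) inequality from which the entropy duality formula is usually derived. So you integrate the core convexity inequality directly, instead of first packaging it into a statement about $\log\EE[e^Y]$.

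What your route buys:
\begin{itemize}
\item No change of measure is needed.
\item Integrability reduces to your observations $xe^x\geq -1/e$ and $e^x\leq xe^x+e$. Together with $\EE[Ye^X]\in[-1/e,+\infty)$ from the hypothesis, these make $e^X$, $Xe^X$ and $Ye^X$ integrable.
\item The strict case follows at once from the explicit gap $\EE[e^Y]-\EE[e^X]\geq\EE[(Y-X)e^X]$, with no separate argument.
\end{itemize}

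What the paper's route buys:
\begin{itemize}
\item The slightly sharper log-scale bound $\log\EE[e^Y]-\log\EE[e^X]\geq\EE[(Y-X)e^X]/\EE[e^X]$, which implies yours since $\log(1+s)\leq s$. Your inequality recovers this bound too if you apply it to $X-\log\EE[e^X]$ and $Y-\log\EE[e^Y]$, once $\EE[e^Y]<\infty$.
\item The conceptual identification of the lemma with entropy duality, which the paper makes explicit by also proving the converse implication.
\end{itemize}

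Where the lemma is used in the paper, only the sign of $\EE[(Y-X)e^X]$ matters, so nothing is lost with your version.

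One minor correction: $\EE[Xe^X]$ lies in $[-1/e,+\infty]$, not $(-1/e,+\infty]$. The endpoint is attained when $X\equiv -1$.
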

The above theorem readily implies that if there exists a random variable $Y_\theta$ such that $\EE[g_0(Z) e^{\theta g_0(Z)}]\leq \EE[Y_\theta e^{\theta g_0(Z)}]$, then it holds: $\varphi_{g_0(Z)}(\theta)\leq \EE[e^{\theta Y_\theta}]$. Consequently, one may consider Lemma \ref{theorem duality entropy} a \textit{``decoupling'' inequality}. 


The first part of Lemma \ref{theorem duality entropy} above was proved in \cite{saumard2019weighted}, as a consequence of a duality formula for the entropy. Considering in addition the version with strict inequalities, we further prove in Section \ref{ssec_sm_decoupling} of the Supplementary Material that Lemma \ref{theorem duality entropy} is actually equivalent to that duality formula, that we recall now: for a non-negative random variable $X$ such that $\EE[X\log(X)]<+\infty$, it holds
$${\rm Ent}(X):=\EE[X\log(X)]-\EE[X]\log(\EE[X])=\sup_{U\, \text{s.t.}\, \EE[e^{U}]=1}\EE[UX]\;.$$
In other words, Lemma \ref{theorem duality entropy} is \textit{nothing but another formulation of the duality formula for the entropy}. This remark seems to be new, up to our best knowledge. 

\subsubsection{A covariance inequality through exchangeable pairs}\label{ssec_exch_pair_cov_ineq}
In order to use Lemma \ref{theorem duality entropy} above for a centered function $g_0=g-\EE[g(Z)]$, notice first that
$$\EE[g_0(Z) e^{tg_0(Z)}]=\cov\left(g(Z),e^{tg_0(Z)}\right)\; ,$$
thus casting the problem of bounding the left-hand side quantity into the quite developed framework of \textit{covariance inequalities}.
Covariance inequalities indeed appear to be useful in a variety of domains. Let us simply provide here some pointers to the literature for applications in particle systems (\cite{FKG71,BakryMichel92,Menz-Otto:2013}), convex geometry and log-concavity (\cite{CarlenCordero-ErausquinLieb,Royen:14,SauWel2014}), functional inequalities (\cite{Houdre95,HoudrePerezAbreu95,HoudrePerezAbreuSurgailis98,ArnaudonBonnefontJoulin18,ArrasHoudre23}) and also, more importantly for us, concentration inequalities (\cite{Houdre98,BobGotzeHoudre01,Led01,HoudrePrivault02,HoudreMarchalRB08,saumard2019weighted}). 

Our strategy in Section \ref{ssec_implement_exch_pair} below will be to follow the path initiated by Sourav Chatterjee in his PhD thesis \cite{Chatterjee2005thesis} (see also \cite{Chatterjee07,ChatDey:10}), by taking advantage of an exchangeable pair argument inspired by the pioneering work of Charles Stein related to rates of convergence in the central limit theorem.

The exchangeable pair approach is indeed very well suited for functionals of sums of random variables, especially when there are dependencies among these random variables, which is the case in general in the present article, where we consider exchangeable bootstrap statistics.

\begin{lemma}[\cite{Chatterjee2005thesis,Paulin14}]\label{lem_cov_ineq}
 Consider $t>0$, $g$ a measurable function from $(\cZ,\cT)$ to $\R$, $\left(Z, Z^{\prime}\right)$ an exchangeable pair on $\cZ^2$, and let $F\left(Z, Z^{\prime}\right)$ be a measurable, antisymmetric function such that
$$
\mathbb{E}\left[F\left(Z, Z^{\prime}\right) \mid Z\right]=g(Z)-\mathbb{E}[g(Z)]
$$
and 
$$
\EE\left[\vert F(Z,Z^\prime)\vert e^{tg(Z)}\right]<+\infty\;.
$$
Then, it holds
\begin{equation}\label{cov_ineq_psi}
    \cov\left(g(Z), e^{t g(Z)}\right)
\leq  \mathbb{E}\left[V_+(Z)e^{t g(Z)}\right],
\end{equation}
where 
\begin{equation}\label{def_V_+}
    V_+(Z)=\EE[\left(F\left(Z, Z^{\prime}\right)\right)_{+}(g(Z)-g(Z^\prime))_{+}\mid Z]\;. 
\end{equation}
\end{lemma}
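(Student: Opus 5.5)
The plan is the standard Stein/Chatterjee symmetrization of the covariance, followed by a one-dimensional convexity bound on the exponential. As explained below, this yields the bound with a factor $t$ in front of $\mathbb{E}[V_+(Z)e^{tg(Z)}]$, and hence the statement exactly as displayed only for $t\le 1$.

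\textbf{Step 1 (rewrite the covariance).} Set $m=\mathbb{E}[g(Z)]$. Since $\mathbb{E}[g(Z)-m]=0$, we have $\mathrm{cov}(g(Z),e^{tg(Z)})=\mathbb{E}[(g(Z)-m)e^{tg(Z)}]$. By the hypothesis $\mathbb{E}[F(Z,Z')\mid Z]=g(Z)-m$ and the tower property, this equals $\mathbb{E}[F(Z,Z')e^{tg(Z)}]$. The integrability assumption $\mathbb{E}[|F(Z,Z')|e^{tg(Z)}]<+\infty$ justifies pulling $e^{tg(Z)}$ inside the conditional expectation.

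\textbf{Step 2 (symmetrize).} The pair $(Z,Z')$ is exchangeable and $F$ is antisymmetric. Swapping $Z$ and $Z'$ therefore gives $\mathbb{E}[F(Z,Z')e^{tg(Z)}]=-\mathbb{E}[F(Z,Z')e^{tg(Z')}]$. Averaging the two expressions yields
\[
\mathrm{cov}\left(g(Z),e^{tg(Z)}\right)=\tfrac12\,\mathbb{E}\left[F(Z,Z')\left(e^{tg(Z)}-e^{tg(Z')}\right)\right].
\]
The integrability of the $Z'$-term follows from that of the $Z$-term by exchangeability.

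\textbf{Step 3 (keep only positive parts).} Write $F=F_+-F_-$. Then
\[
F\,(e^{tg}-e^{tg'})\le F_+(e^{tg}-e^{tg'})_+ + F_-(e^{tg'}-e^{tg})_+ .
\]
Since $F_-(Z,Z')=F_+(Z',Z)$, exchangeability shows that the expectations of the two terms on the right are equal. This cancels the factor $\tfrac12$, and we get
\[
\mathrm{cov}\left(g(Z),e^{tg(Z)}\right)\le \mathbb{E}\left[F_+(Z,Z')\left(e^{tg(Z)}-e^{tg(Z')}\right)_+\right].
\]

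\textbf{Step 4 (convexity bound: the main obstacle).} Next we bound $(e^{tg(Z)}-e^{tg(Z')})_+$ by a multiple of $(g(Z)-g(Z'))_+e^{tg(Z)}$. For $a\ge b$, convexity of the exponential gives $e^{ta}-e^{tb}\le t(a-b)e^{ta}$. When $a<b$ the positive part vanishes. Hence
\[
\left(e^{tg(Z)}-e^{tg(Z')}\right)_+\le t\,(g(Z)-g(Z'))_+\,e^{tg(Z)}.
\]
Conditioning on $Z$ then produces $\mathbb{E}[V_+(Z)e^{tg(Z)}]$ multiplied by $t$. Because $V_+\ge 0$, this gives the inequality as stated whenever $t\le 1$.

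For general $t>0$, this step cannot be improved without the factor $t$. As $t\to 0$, the left side is of order $t\,\mathrm{var}(g)$, while $\mathbb{E}[V_+(Z)e^{tg(Z)}]$ stays of order one, so the linear-in-$t$ form is the natural one. In applications through Lemma \ref{theorem duality entropy}, I would therefore either apply the lemma to $tg$ with antisymmetric function $tF$, or carry the factor $t$ explicitly. Both routes lead to the exponent $t^2V_+$ that is typical of Chatterjee-type bounds.
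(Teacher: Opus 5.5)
Your proof is correct and follows the paper's argument step for step: rewrite the covariance as $\mathbb{E}[F(Z,Z')e^{tg(Z)}]$, symmetrize via exchangeability and antisymmetry, keep only positive parts, and use $(e^{tg(Z)}-e^{tg(Z')})_+\le t\,(g(Z)-g(Z'))_+e^{tg(Z)}$.

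You are also right that this gives $t\,\mathbb{E}[V_+(Z)e^{tg(Z)}]$. The paper's own proof ends with exactly this expression, and it is the form used afterwards ($Y=t^2V_+(Z)$ and $\psi'(\theta)\le\theta\,\mathbb{E}[\cdots]$), so the displayed statement has simply dropped the factor $t$. Without the $t$ the inequality is in fact false for any $t>1$. The reason is scaling, not the small-$t$ behaviour you cite: under $g\mapsto cg$, $F\mapsto cF$, the left side scales like $c$ and the right side like $c^2$. For example, take $Z,Z'$ i.i.d.\ uniform on $\{0,1\}$, $g(z)=z/2$, $F(z,z')=g(z)-g(z')$ and $t=2$. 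Then $\mathrm{cov}(g(Z),e^{tg(Z)})=(e-1)/8$, which exceeds $e/16=\mathbb{E}[V_+(Z)e^{tg(Z)}]$.
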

For completeness, the proof of Lemma \ref{lem_cov_ineq} can be found in Section \ref{ssec_sm_cov_ineq} of the Supplementary Material. 
It appears that positive parts in the term $V_+(Z)$ of the covariance inequality, compared to absolute values, can bring substantial improvements when used to address concentration, due for instance to potential asymmetry between right and left tails. They will play an essential role when considering the supremum of the (bootstrap) empirical process. 
 
In light of Lemma \ref{lem_cov_ineq} applied with $\psi=e^{t\cdot}$, Inequality \eqref{hyp_decoupling} of Lemma \ref{theorem duality entropy} is satisfied with $X=tg(Z)$ and $Y=t^2V_+(Z)$. Lemma \ref{theorem duality entropy} then gives
\begin{equation}\label{eq:bound_Laplace_exch}
  \varphi_{g_0(Z)}(t)\leq \EE[\exp(t^2V_+(Z))]\;.
\end{equation}

Hence, the control of the moment generating function of $g_0(Z)$ reduces to the one of $V_+(Z)$, with the parameter taken to the square. In general, there is a major gain in switching to $V_+(Z)$. In particular, whatever $Z$, $V_+(Z)$ is a non-negative random variable. 




\subsubsection{Implementing the exchangeable pair argument}\label{ssec_implement_exch_pair}

Let $w \in \mathbb{R}^n$ be a centered vector (\textit{i.e.} $\sum_{i = 1}^n w_i = 0$) and let $\sigma \in \mathfrak{S}_n$ be a uniform random permutation. The vector $\xi = w_\sigma$ is thus exchangeable and centered.  Let
\[ g_x(\sigma) = g(x,w_\sigma) = \sup_{t \in \cT} \left\{ \sum_{i = 1}^n w_{\sigma(i)} t(x_{i}) \right\}. \]
The only source of randomness here is the random permutation $\sigma$. Hence, we are interested in the concentration properties of certain functions of a random permutation. Using the method of exchangeable pairs, we shall establish general concentration results for such functions which may be of intrinsic interest.

Let $g: \mathfrak{S}_n \to \mathbb{R}$ be a function and let $\sigma \in \mathfrak{S}_n$ be a uniform random permutation. In order to apply Lemma \ref{lem_cov_ineq} to $g(\sigma)$, we need to construct an exchangeable pair $(\sigma, \sigma')$ and an antisymmetric function $F$. 

Consider by analogy the case of an i.i.d. vector $X = (X_1,\ldots,X_n)$, instead of a random permutation $\sigma$. In that case, as first discovered in \cite{Chatterjee2005thesis}, some classical concentration results - according to the assumptions on $g$ -
can be recovered by taking the exchangeable pair $(X,X')$ where $X_i' = X_i$ except for some random index $I$ where $X_I' = X_I^*$ is drawn from an i.i.d. copy of $X$ (denoted $X^*$). 
Indeed, simplifications occur in the computations since in the exchangeable pair $(X,X')$, $X'$ represents a ``small modification'' of $X$, which acts only on a small number of coordinates of $X$ ($1$ coordinate in this case). 

For permutations, the smallest number of coordinates which can be modified from $\sigma$ to $\sigma'$ is $2$, which leads to the pairs $(\sigma, \sigma \tau_{I,J})$ or $(\sigma, \tau_{I,J} \sigma)$ for some random transposition $\tau_{I,J}$ independent from $\sigma$. 
We choose the distribution of $\tau_{I,J}$ to be invariant under permutations of the indices in $\left\{ 1,\ldots, n\right\}$, that is of the form 
\begin{equation} \label{eq_loi_transpo_al}
   \alpha_0 \delta_{\mathrm{Id}} + \frac{2(1-\alpha_0)}{n(n-1)} \sum_{i = 1}^{n-1} \sum_{j = i+1}^n \delta_{\tau_{i,j}}, 
\end{equation}
\textit{i.e.} equal to the identity with probability $\alpha_0$ and uniformly distributed over the non-trivial transpositions.
The pairs $(\sigma, \sigma \tau_{I,J})$ and $(\sigma, \tau_{I,J} \sigma)$ are then exchangeable and equal in distribution since
\[ (\sigma, \sigma \tau_{I,J}) = (\sigma \tau_{I,J} \tau_{I,J}, \sigma \tau_{I,J}) \text{ and } (\sigma, \tau_{I,J} \sigma) = (\sigma, \sigma \tau_{\sigma^{-1}(I), \sigma^{-1}(J)}). \]
By convention, we write $(\sigma, \sigma') = (\sigma, \sigma \tau_{I,J})$. 

Now that the pair is defined, Chatterjee's work \cite{Chatterjee2005thesis} yields a standard method of constructing $F$ and of bounding $(F(\sigma, \sigma'))_+$, which is the great strength of his approach. Let us briefly describe it here in a somewhat informal manner (see Proposition \ref{prop_exist_F} in the Supplementary Material for a rigorous statement). 

For any permutation $\pi$, let $P_\pi$ be the conditional distribution of $\sigma'$ knowing that $\sigma = \pi$, \textit{i.e.} the Markov kernel associated with the pair $(\sigma, \sigma')$. For any integer $k$, let $P_\pi^k$ be the $k$-th iterate of the Markov kernel, \textit{i.e.} the distribution of the $k$-th element of a Markov chain started at $\pi$ and with transition kernel $\pi \mapsto P_\pi(\cdot)$ on $\mathfrak{S}_n$. Here, this Markov chain is the ``transposition random walk'' on the permutation group. Given any two permutations $\pi,\pi'$, the function defined by
\begin{equation} \label{eq_def_F_gen}
   F(\sigma,\sigma') = \sum_{k = 0}^{+ \infty} P_\pi^k g - P_{\pi'}^k g 
\end{equation}
is well defined due to the mixing properties of the transposition random walk (see Proposition \ref{prop_exist_F}). Moreover, it is obviously antisymmetric and satisfies
\[ \mathbb{E} \left[ F(\pi,\pi') | \sigma \right] = \sum_{k = 0}^{+ \infty} P_\sigma^k g - \mathbb{E}\left[P_{\sigma'}^k f | \sigma \right] = \sum_{k = 0}^{+ \infty} P_\sigma^k g - P_\sigma^{k+1} g = g(\sigma) - Pg \;, \]
where $P$ is the uniform distribution on $\mathfrak{S}_n$. This proves that $F$ satisfies the hypotheses of lemma \ref{lem_cov_ineq}. In order to derive useful results from Lemma \ref{lem_cov_ineq}, it remains to bound $(F(\sigma, \sigma'))_+$. A possibility is to use a coupling argument (Chatterjee \cite{Chatterjee2005thesis}), but no Markov coupling can merge faster than $\mathcal{O}(n^2)$ \cite{Bormashenko2011} here. In a work on concentration of Haar measures (\cite{Chatterjee2007Haar}), Chatterjee rather used directly the mixing property of the chain in total variation distance. In this article, we employ a hybrid approach in order to deliver an optimized bound.


Formally, let $(I_j,J_j)_{j \geq 1}$ be an i.i.d. sequence independent from $(I,J)$ and with the same distribution. Let $\pi_0 = \mathrm{Id}$ and for any $k \geq 1$,
\[ \pi_k =  \pi_{k-1} \circ \tau_{I_k,J_k}  = \tau_{I_1,J_1} \tau_{I_{2},J_{2}} ... \tau_{I_{k-1},J_{k-1}} \tau_{I_k,J_k}. \]
This defines a random walk on the symmetric group, called the (right) transposition random walk, started at the identity. We then define
\[ \sigma_k = \sigma \pi_k \text{ and } \sigma_k' = \sigma' \pi_k \]
for all $k \in \mathbb{N}$. Clearly, the two chains never meet (unless $\sigma = \sigma'$), however they both converge in total variation to the uniform distribution on $\mathfrak{S}_n$. In particular, the distributions of the two chains become closer and closer over time. To exploit this property, we introduce the concept of ``strong convergence time'' defined below.

\begin{definition} \label{def_conv_time}
    Let $i,j$ be two distinct indices in $\{1,\ldots,n\}$.
     A non-negative random variable $T_{i,j}$, measurable with respect to $(\pi_k)_{k \geq 1}$ is called a \emph{strong convergence time} for the random walk $(\pi_k)_{k \geq 1}$ and the pair $(i,j)$ if the conditional distributions of $\pi_k$ and $\tau_{i,j} \pi_k$ knowing that $k \geq T_{i,j}$ are equal for any integer $k$. By symmetry, the distribution of $T_{i,j}$ can and will be assumed to not depend on $i,j$. 
\end{definition}

A specific class of ``strong convergence times'' are the \emph{strong stationary times} (also known as \emph{strong uniform times}) which are such that the distribution of $\pi_k$ (and hence of $\tau_{i,j} \pi_k$) is uniform on $\mathfrak{S}_n$ for all $k \geq T$ (knowing that this event has occurred). As is clear from the definition, these ``strong stationary times'' are closely related to the mixing properties of the Markov chain $(\pi_k)_{1 \leq k \leq n}$. Several authors have worked to construct strong stationary times for the transposition random walks \cite{Matthews1988, White2019}; these times have expectation of order $\mathcal{O}(n \log n)$, which is known to be optimal \cite{Diaconis1981, Matthews1988}. In contrast, the property required by definition \ref{def_conv_time} is weaker than full mixing and so it stands to reason that the strong convergence times should be smaller on average than the strong uniform times. In fact, we shall see that strong convergence times can be constructed with expectation of order $\mathcal{O}(n)$.     
First, let us show how strong convergence times can be used to bound $F$. 
\begin{lemma} \label{prop_F_sigma}
    For any collection of strong convergence times $(T_{i,j})_{1 \leq i < j \leq n}$,
    \[ F(\sigma,\sigma \tau_{I,J}) = \mathbb{E} \left[ \sum_{k = 0}^{+\infty} \left( g(\sigma \circ \pi_k) - g(\sigma' \circ \pi_k) \right) \mathbb{I}\{ k < T_{I,J} \} \bigl| \sigma,I,J \right],  \]
    defining $T_{I,J} = 0$ if $I = J$.
\end{lemma}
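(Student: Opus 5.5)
Starting from the definition \eqref{eq_def_F_gen}, I will write each term of the series as an expectation along the right transposition walk. With $(\pi_k)$ independent of $(\sigma,I,J)$, the $k$-th iterate of the kernel started at a permutation $\rho$ is the law of $\rho\pi_k$, since $\sigma_k=\sigma\pi_k$ is the Markov chain with kernel $P$ started at $\sigma$. Hence, conditionally on $(\sigma,I,J)$, and with $\sigma'=\sigma\tau_{I,J}$,
\[ P^k_{\sigma}g - P^k_{\sigma'}g = \mathbb{E}\left[ g(\sigma\pi_k) - g(\sigma'\pi_k) \bigl| \sigma,I,J\right]. \]
If $I=J$, then $\sigma'=\sigma$, every term vanishes, and the convention $T_{I,J}=0$ makes the claimed formula trivially true. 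So I assume $I\neq J$ from now on.

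\textbf{Splitting each term on $\{k<T_{I,J}\}$.} I will write $\mathbb{I}=\mathbb{I}\{k<T_{I,J}\}+\mathbb{I}\{k\geq T_{I,J}\}$ and show that the second part contributes zero. The key observation is that
\[ \sigma'\pi_k=\sigma\tau_{I,J}\pi_k . \]
Fix $\sigma=s$ and $(I,J)=(i,j)$, which is legitimate by independence. Definition \ref{def_conv_time} says that, conditionally on $\{k\geq T_{i,j}\}$, the variables $\pi_k$ and $\tau_{i,j}\pi_k$ have the same law. Applying the bounded function $h(\rho)=g(s\rho)$ then gives
\[ \mathbb{E}\left[(g(s\pi_k)-g(s\tau_{i,j}\pi_k))\mathbb{I}\{k\geq T_{i,j}\}\right]=0 .\]
If $\mathbb{P}(k\geq T_{i,j})=0$, the identity holds trivially. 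So each summand equals $\mathbb{E}[(g(\sigma\pi_k)-g(\sigma'\pi_k))\mathbb{I}\{k<T_{I,J}\}\mid\sigma,I,J]$.

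\textbf{Exchanging sum and conditional expectation.} This is the main obstacle: it requires domination. Since $g$ is bounded on the finite set $\mathfrak{S}_n$, we have
\[ \sum_k \left|g(\sigma\pi_k)-g(\sigma'\pi_k)\right|\mathbb{I}\{k<T\}\leq 2\|g\|_\infty T_{I,J}. \]
Fubini therefore applies as soon as $\mathbb{E}[T_{i,j}]<\infty$. Two routes are possible here:
\begin{itemize}
\item[(a)] Add finiteness of $\mathbb{E}[T]$ to the hypotheses. This is harmless, because the constructed times have expectation $\mathcal{O}(n)$.
\item[(b)] Argue via partial sums. For each $N$, the partial sum of the series \eqref{eq_def_F_gen} up to $N$ equals the expectation of the truncated sum $\sum_{k\leq N}(\cdots)\mathbb{I}\{k<T\}$. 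The partial sums converge by Proposition \ref{prop_exist_F}. Passing to the limit then needs either $T<\infty$ a.s. together with dominated convergence, or the result is read with the right-hand side understood as this limit.
\end{itemize}
I would state the integrability assumption explicitly and conclude by linearity, which gives the formula of Lemma \ref{prop_F_sigma}.
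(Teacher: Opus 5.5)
Your proposal is correct and follows the paper's proof. In both, the $\{k\geq T_{I,J}\}$ part of each term is killed via Definition \ref{def_conv_time} and the independence of $(\pi_k)$ from $(\sigma,I,J)$, and the result is then summed. The only delicate point is interchanging the sum and the (conditional) expectation, which the paper does without comment. Your assumption $\mathbb{E}[T_{1,2}]<\infty$ handles it, and it is also exactly what route (b) would need for dominated convergence. It costs nothing, since $\mathbb{E}[T_{1,2}]=\sum_{k\geq 0}\mathbb{P}(k<T_{1,2})\leq r(T_{1,2})$, which is finite wherever the lemma is applied in Theorem \ref{thm_conc_fun_permut}.
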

The proof of Lemma \ref{prop_F_sigma} can be found in Section \ref{ssec_proof_lemma_F_sigma} of the Supplementary Material. Using the covariance inequality (Lemma \ref{lem_cov_ineq}), the decoupling inequality (Lemma \ref{theorem duality entropy}) and Lemma \ref{prop_F_sigma} above yields the following theorem, the proof of which can be found in Section \ref{subsubsection_proof_theorem_concen} of the Supplementary Material.
\begin{theorem} \label{thm_conc_fun_permut}
    Let $g: \mathfrak{S}_n \to \mathbb{R}$ be a function and let $\sigma$ be a uniform random permutation. Let $f = g - \mathbb{E}[g(\sigma)]$. Let $(T_{i,j})_{1\leq i<j\leq n}$ be strong convergence times with common distribution $Q$ and define
    \[ r = r(Q) = r(T_{1,2}) = \sum_{k = 0}^{+\infty} \sqrt{\mathbb{P}(k < T_{1,2})}\;. \]
    Define also the random variable
    \[ V_+(g,\sigma) = \frac{1}{n} \sum_{i = 1}^n \sum_{j = 1}^n (g(\sigma) - g(\sigma \tau_{i,j}))_+^2\;. \]
    For any $\theta \geq 0$,
    \[ \mathbb{E}\left[ \exp \left( \theta f(\sigma) \right) \right] \leq \mathbb{E}\left[ \exp \left( \theta^2 (1-\alpha_0) \frac{r(T_{1,2})}{n-1} V_+(g,\sigma) \right) \right]\;. \]
    Moreover, for $\alpha_0 = \frac{1}{2}$ and any $n \geq 34,$ $T_{1,2}$ can be chosen such that
    \begin{equation} \label{eq_thm_conc_fun_explicit}
       \mathbb{E}\left[ \exp \left( \theta f(\sigma) \right) \right] \leq \mathbb{E}\left[ \exp \left( 9.5 \theta^2 V_+(g,\sigma) \right) \right]\;. 
    \end{equation}
\end{theorem}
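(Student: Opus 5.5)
The plan is to apply Lemma \ref{lem_cov_ineq} to the exchangeable pair $(\sigma,\sigma')=(\sigma,\sigma\tau_{I,J})$ with $F$ as in \eqref{eq_def_F_gen}. Then Lemma \ref{theorem duality entropy} gives, as in \eqref{eq:bound_Laplace_exch}, $\mathbb{E}[e^{\theta f(\sigma)}]\le \mathbb{E}[e^{\theta^2 V_+(\sigma)}]$ with $V_+(\sigma)=\mathbb{E}[(F(\sigma,\sigma'))_+(g(\sigma)-g(\sigma'))_+\mid\sigma]$. The integrability condition is trivial here, since $\mathfrak{S}_n$ is finite. The task therefore reduces to bounding $V_+$ pointwise by $(1-\alpha_0)\frac{r}{n-1}V_+(g,\sigma)$.

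The first step is to bound $(F)_+$ using Lemma \ref{prop_F_sigma}. Fix $\sigma$ and $I=i\neq J=j$. Since $(x)_+$ is convex and subadditive,
\[ (F(\sigma,\sigma\tau_{i,j}))_+\le \sum_{k\ge0}\mathbb{E}\bigl[(g(\sigma\pi_k)-g(\sigma\tau_{i,j}\pi_k))_+\mathbb{I}\{k<T_{i,j}\}\bigr]. \]
I then apply Cauchy--Schwarz to each term, which gives $\sqrt{\mathbb{P}(k<T_{i,j})}\,\bigl(\mathbb{E}[(g(\sigma\pi_k)-g(\sigma\tau_{i,j}\pi_k))_+^2]\bigr)^{1/2}$. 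The key observation is the rewriting $\sigma\tau_{i,j}\pi_k=\sigma\pi_k\tau_{\pi_k^{-1}(i),\pi_k^{-1}(j)}$, so that each summand is a difference of $g$ across a transposition applied at the permuted point $\sigma\pi_k$.

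Next I multiply by $(g(\sigma)-g(\sigma\tau_{i,j}))_+$ and average over $(i,j)$. The pair $(i,j)$ is a uniform non-trivial transposition with probability $1-\alpha_0$; when $I=J$ the contribution is zero. Using $ab\le \frac{1}{2}(\lambda a^2+b^2/\lambda)$ termwise, or a global Cauchy--Schwarz over $k$ and $(i,j)$, I then take expectations. Because $\sigma$ is uniform, $\sigma\pi_k$ is uniform and independent of $k$, and $(\pi_k^{-1}(i),\pi_k^{-1}(j))$ is uniform over pairs if $(i,j)$ is. Hence $\mathbb{E}[\sum_{i\ne j}(g(\sigma\pi_k)-g(\sigma\pi_k\tau_{\cdot,\cdot}))_+^2]=\mathbb{E}[nV_+(g,\sigma)]$.

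Here lies the main obstacle: the bound must be pointwise in $\sigma$, yet the shifted terms only match $V_+(g,\cdot)$ in distribution. The remedy I would try is to avoid a pointwise bound on $V_+$ altogether. I would go back into the proof of Lemma \ref{lem_cov_ineq}, where the quantity actually bounded is $\frac12\mathbb{E}[F(e^{\theta g(\sigma)}-e^{\theta g(\sigma')})]\le \theta\mathbb{E}[F_+(g(\sigma)-g(\sigma'))_+e^{\theta g(\sigma)}]$. I would then use the Markov/stationarity structure so that the weight $e^{\theta g}$ can be transported along the walk. This is a symmetrization under the stationary chain: $\mathbb{E}[h(\sigma\pi_k)e^{\theta g(\sigma)}]$ is handled by Young's inequality. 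Combined with Lemma \ref{theorem duality entropy}, it yields $\mathbb{E}[e^{\theta f}]\le\mathbb{E}[\exp(\theta^2(1-\alpha_0)\frac{r}{n-1}V_+(g,\sigma))]$, where the factors $\sqrt{\mathbb{P}(k<T)}$ sum to $r$ and the normalization $\frac{2}{n(n-1)}\cdot\frac{n}{2}$ produces $\frac{1}{n-1}$.

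For the explicit bound \eqref{eq_thm_conc_fun_explicit}, I would construct $T_{1,2}$ with $\alpha_0=\frac12$ via a coupling in the spirit of White \cite{White2019}. I would track the positions of $\pi_k^{-1}(1)$ and $\pi_k^{-1}(2)$, and declare $T$ to be the first time a step moves one of these two marked cards in a way that makes swapping their labels symmetric, for instance the first lazy step touching either card. This time has geometric-type tails with rate of order $1/n$. A geometric variable with parameter $p\approx c/n$ gives $r\approx 2/p$, so $\frac{r}{2(n-1)}$ is bounded by a constant. Computing this constant for $n\ge34$ should give $9.5$.
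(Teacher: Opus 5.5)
Your argument for the first inequality is essentially the paper's. The one difference is that the ``main obstacle'' you identify does not arise. Lemma~\ref{theorem duality entropy} only requires some random variable $Y$ with $\mathbb{E}[Xe^X]\le\mathbb{E}[Ye^X]$, so nothing needs to be bounded pointwise. The paper takes $X=\theta f(\sigma)$ and $Y=\theta^2W$, where
\[ W=\sum_{k\ge 0}\frac{\sqrt{p_k}}{2}\,\mathbb{E}\left[(g(\sigma)-g(\sigma'))_+^2+(g(\sigma\pi_k)-g(\sigma'\pi_k))_+^2\mid\sigma\right] \]
is exactly the output of your Cauchy--Schwarz/AM--GM step. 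This gives $\mathbb{E}[e^{\theta f(\sigma)}]\le\mathbb{E}[e^{\theta^2W}]$. Stationarity is used only afterwards, in three steps:
\begin{itemize}
\item convexity of $\exp$ with weights $\sqrt{p_k}/(2r)$;
\item conditional Jensen, to bring $\mathbb{E}[\cdot\mid\sigma,\pi_k]$ inside the exponential;
\item the fact that $(\sigma\pi_k,\sigma\pi_k\tau_{I,J})$ has the same law as $(\sigma,\sigma')$.
\end{itemize}
Your ``Young's inequality'' remedy can be made rigorous, but only in its entropic form $\mathbb{E}[UY]\le\mathrm{Ent}(Y)+\mathbb{E}[Y]\log\mathbb{E}[e^U]$. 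Apply it with $Y=e^{\theta f(\sigma)}$ and $U=r\theta^2\,\mathbb{E}[(g(\sigma\pi_k)-g(\sigma'\pi_k))_+^2\mid\sigma,\pi_k]$, and close with $\mathrm{Ent}(e^{\theta f})=\theta\psi'(\theta)-\psi(\theta)\log\psi(\theta)$. This is the paper's argument in another form, since Lemma~\ref{theorem duality entropy} is the entropy duality formula. Classical Young would not do it, and your sketch does not specify which version you mean.

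The genuine gap is the explicit bound \eqref{eq_thm_conc_fun_explicit}. Its whole difficulty is constructing a strong convergence time with $r(T_{1,2})=O(n)$, and your candidate is not a strong convergence time at all. A lazy step changes nothing, and a transposition moving only one of the two cards leaves the other in place. Already at $k=1$, given $T=1$, there are two cases:
\begin{itemize}
\item $\pi_1=\mathrm{Id}$, while $\tau_{1,2}\pi_1=\tau_{1,2}$;
\item $\pi_1=\tau_{1,q}$ or $\tau_{2,q}$ with $q\notin\{1,2\}$, while $\tau_{1,2}\pi_1$ is a $3$-cycle.
\end{itemize}
Either way the two conditional laws differ.

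More generally, write $p_\ell=\pi_{k-1}^{-1}(\ell)$, so that $\tau_{1,2}\pi_{k-1}\tau_{I_k,J_k}=\pi_{k-1}\tau_{p_1,p_2}\tau_{I_k,J_k}$. Conditionally on the whole past, a single step can equalize the laws of $\pi_k$ and $\tau_{1,2}\pi_k$ only on an event carrying equal masses of $\{\tau_{I_k,J_k}=\mathrm{Id}\}$ and $\{\tau_{I_k,J_k}=\tau_{p_1,p_2}\}$. That event has probability $O(1/n^2)$ per step, which gives $r\asymp n^2$ and a variance factor of order $n$ rather than $9.5$.

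A time of order $n$ must exploit invariance that holds only conditionally on a coarser $\sigma$-field built up over many steps. White's marking scheme D provides exactly this:
\begin{itemize}
\item the law of $\pi_t$ given the partition $\mathcal{P}(t)$ is invariant under transpositions inside a block;
\item a block of size $>n/2$ appears after $O(n)$ steps, with exponential tails;
\item cards $1$ and $2$ then join that block at rate of order $1/n$.
\end{itemize}
This is Definition~\ref{def_strong_cv_time}. The quantitative work is Proposition~\ref{prop_strong_stat_time}: it decomposes $T=T_1+T_2+T_3+T_4$ and shows $\mathbb{E}[s^T]\le 60.5$ for $s=\min(1.01,1+\frac{1}{3n})$, whence $r\le\max\{615,18.42n\}$. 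For $n\ge 34$ this gives $r\le 18.42n$, and then $\frac12\cdot18.42\cdot\frac{n}{n-1}\le 9.5$. Your claim that the computation ``should give $9.5$'' has no valid construction behind it.
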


This concentration result appears to be new. Works by Bobkov \cite{Bobkov2004} and Tolstikhin \cite{Tolstikhin2017} (see also \cite{tolstikhin2014localized}) established a sub-Gaussian bound with variance parameter proportional to $\norm{V_+(g,\sigma)}_\infty,$ but only for $(m,n)-$symmetric functions, \textit{i.e.} functions $g$ which are invariant under permutation of the first $m$ and last $n-m$ coordinates, for some $m \in \{1,\ldots,n\}$. In the case of the function $g_x,$ this amounts to requiring the vector $w$ to only take two values $a,b$ on the first $m$ and last $n-m$ coordinates, respectively. Another difference is that our bound involves exponential moments of $V_+(g_x,\sigma)$ instead of an almost sure upper bound. The bound of Theorem \ref{thm_conc_fun_permut} depends on the choice of a strong convergence time $T_{i,j}$. If one of the known strong stationary times is used, this yields $r(T)$ of order $n \log n$ and there is an extra logarithmic term compared to the results of Bobkov and Tolstikhin. A bound of this kind was obtained by Chatterjee \cite{Chatterjee2007Haar},  but with $V_+(g,\sigma)$ replaced by
\[ V(g,\sigma) = \frac{1}{n} \sum_{i = 1}^n \sum_{j = 1}^n (g(\sigma) - g(\sigma \tau_{i,j}))^2. \]
Aside from removing the extra $\log n$ term, Theorem \ref{thm_conc_fun_permut} introduces a positive part in the variance proxy $V_+(g,\sigma)$, which makes suprema such as $g_x$ much easier to handle.

To remove the extra logarithm and prove Equation \eqref{eq_thm_conc_fun_explicit}, we now show that the construction of a certain strong stationary time, due to White \cite{White2019}, can be modified to yield a strong convergence time $T_{1,2}$ with $\mathbb{E}[T_{1,2}]$ and $r(T_{1,2})$ of order $n$.

\begin{definition} \label{def_strong_cv_time}
    For any $t \geq 0$, let $\mathcal{P}(t)$ be the partition of $\{1,\ldots,n\}$ defined in \cite{White2019} and let $P_m(t)$ be a block of $\mathcal{P}(t)$ with maximal cardinality (unique in the following equation). For any pair of distinct indices $i,j,$ let
    \[ T_{i,j} = \min \left\{ t \geq 0 : |P_m(t)| > \frac{n}{2} \text{ and } \{i,j\} \subset P_m(t) \right\}. \]
\end{definition}

The construction of $\mathcal{P}(t)$ (``marking scheme D'' in \cite{White2019}) is such that the largest block does not shrink, \textit{i.e.} there always exists $B \in \mathcal{P}(t+1)$ such that $P_m(t) \subset B$. It follows by induction that the maximal blocks $P_m(t)$ form a nested sequence for all $t$ larger than the hitting time of $\left[ \left\lceil\tfrac{n}{2} \right\rceil,+\infty \right)$ by $(|P_m(t)|)_{t  \in \mathbb{N}}$. This guarantees that $\{i,j\} \subset P_m(t)$ remains true for all $t \geq T_{i,j}$: thus,
\[ \{t \geq T_{i,j} \} = \left\{ |P_m(t)| > \frac{n}{2} \text{ and } \{i,j\} \subset P_m(t) \right\}. \]
Moreover, White proved (\cite[Proposition 11]{White2019}) that the law of $\pi_t$ knowing $\mathcal{P}(t)$ is invariant under transpositions $\tau_{k,l}$ whenever $k,l$ belong to the same block of $\mathcal{P}(t)$.
It follows that $T_{i,j}$ is a strong convergence time in the sense of Definition \ref{def_conv_time}.

Based on the work of White \cite{White2019} and our own calculations (see Section \ref{proof_strong_stat_time} of the Supplementary Material), we can prove the following bound for $r(T_{1,2})$.

\begin{proposition} \label{prop_strong_stat_time}
    For the exchangeable pair given by equation \eqref{eq_loi_transpo_al} with $\alpha_0 = \frac{1}{2}$ and the strong convergence time $T_{1,2}$ given by Definition \ref{def_conv_time}, we have that
    \[ r(T_{1,2}) \leq \max\left\{615,18.42 n\right\}. \]
\end{proposition}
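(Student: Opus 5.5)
We need $r(T_{1,2})=\sum_{k\ge0}\sqrt{\mathbb{P}(T_{1,2}>k)}$, so the whole task is a tail bound on $T_{1,2}$ strong enough that the sum of square roots is $O(n)$. The plan is to split $T_{1,2}$ into two phases. The first phase is the hitting time $H$ of $\{|P_m(t)|>n/2\}$ by the size of the largest block in White's marking scheme D. The second phase is the additional time, after $H$, needed for both indices $1$ and $2$ to be absorbed into the (nested) maximal block. Since $\sqrt{a+b}\le\sqrt a+\sqrt b$ and $\{T_{1,2}>k\}\subset\{H>k/2\}\cup\{T_{1,2}-H>k/2\}$, we get
\[ r(T_{1,2})\le \sum_k \sqrt{\mathbb{P}(H>k/2)}+\sum_k\sqrt{\mathbb{P}(T_{1,2}-H>k/2)}, \]
and each sum can be bounded separately. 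A weighted split $\{H>\beta k\}\cup\{T_{1,2}-H>(1-\beta)k\}$ with an optimized $\beta$ should give better constants.

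For the first phase, I would use White's analysis of scheme D (with laziness $\alpha_0=1/2$). Before the giant block appears, blocks merge when the walk picks a transposition joining two distinct blocks, and the block structure is dominated by (or compared with) a coalescence/random-graph process in which a giant component of size $>n/2$ emerges after order $n$ steps. I would extract from \cite{White2019}, or reprove, an exponential tail $\mathbb{P}(H>c_1n+s)\le C e^{-s/(c_2 n)}$. Summing square roots then gives a contribution of order $c_1n+2c_2n\cdot O(1)$. If White only controls expectations, I would instead bound exponential moments of the successive waiting times between merges: each is geometric with success probability depending on the current partition, and a Chernoff bound on the sum of independent geometrics gives the tail.

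For the second phase, I condition on the configuration at time $H$. From then on, the largest block has size $m>n/2$ and is nested-increasing. A fixed index outside it joins the block whenever a transposition pairs it with an element of the block. Under the law \eqref{eq_loi_transpo_al} with $\alpha_0=1/2$, this has probability at least $\tfrac12\cdot\tfrac{2m}{n(n-1)}\ge \tfrac{1}{2(n-1)}$ per step, provided scheme D marks such a merge; I need to check White's rule to confirm it. Hence each of indices $1,2$ is absorbed after a time stochastically dominated by a geometric variable with parameter about $1/(2n)$, and a union bound gives $\mathbb{P}(T_{1,2}-H>s)\le 2(1-\tfrac1{2n})^s$. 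Summing square roots yields roughly $2\sqrt2\cdot 4n$ plus lower-order terms, which is of order $n$.

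The main obstacle is the first phase. I need an explicit, non-asymptotic tail bound for the emergence time of the majority block in scheme D with constants good enough to reach $18.42n$, and to get the additive $615$ for small $n$, where lower-order terms dominate. I would first try to track $|P_m(t)|$ through a supermartingale of the form $e^{\lambda(\text{deficit})}$ and only fall back on the random-graph comparison if that fails. The remaining numerical optimization of the split parameter and the constants is routine.
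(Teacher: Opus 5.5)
Your second phase is sound and matches the paper's $T_3+T_4$: once the largest block exceeds $n/2$, one of the two indices is absorbed at rate at least $1/(n-1)$ and the other at rate at least $1/(2(n-1))$.

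The first genuine gap is the first phase, which carries all the difficulty. Neither route you sketch gets it to $O(n)$.

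A supermartingale in the deficit $n/2-|P_m(t)|$ can only use a drift bound that depends on $k=|P_m(t)|$. Take a partition with one block of size $k$ and singletons elsewhere: the largest block then grows with probability at most $k(n-k)/(n(n-1))$, which is of order $k/n$. The waiting times therefore add up to $\sum_{k<n/3} n/k\asymp n\log n$. That is exactly the logarithm the strong convergence time is designed to remove.

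The random-graph comparison runs the wrong way. Blocks of $\mathcal{P}(t)$ only merge along sampled transpositions, so they refine the components of the transposition graph. The emergence of a giant component therefore bounds $H$ from below, not from above.

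What you need is a potential whose drift is bounded away from zero uniformly in the partition. The paper takes this from the proof of Proposition 13 in \cite{White2019}: $T_1=\min\{t:|P_m(t)|\ge n/3\}$ is stochastically dominated by the hitting time of $n-1$ by a lazy walk with steps $+1,-1,0$ of probabilities $\tfrac13,\tfrac16,\tfrac12$. This walk has the explicit generating function $G_1(s)=\frac3s\bigl(1-\frac s2-\sqrt{1-s+s^2/36}\bigr)$, so $\mathbb{E}[s^{T_1}]\le G_1(s)^{n-1}$. Only on $[n/3,n/2]$ does the crude rate $k(n-k)/(n(n-1))\ge 2/9$ suffice (White's Proposition 14), giving a sum of $n/6+1$ geometric variables.

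The second gap is that, even with these inputs, your way of combining the phases cannot reach $18.42n$, so the final optimization is not routine. The split $\{T_{1,2}>k\}\subset\{H>\beta k\}\cup\{T_{1,2}-H>(1-\beta)k\}$ gives at best about $r(H)/\beta+r(T_{1,2}-H)/(1-\beta)$. Its minimum over $\beta$ is $\bigl(\sqrt{r(H)}+\sqrt{r(T_{1,2}-H)}\bigr)^2$.

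The available domination only yields $r(H)\approx 6(n-1)+\tfrac92(\tfrac n6+1)\approx 6.75n$, and the second phase costs about $5n$. You therefore end near $23n$; the cross term $2\sqrt{r(H)\,r(T_{1,2}-H)}$ alone exceeds $11n$.

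The paper avoids this cross term by working with a single exponential moment. With $s=\min(1.01,1+\frac1{3n})$, it multiplies the conditional generating functions along the stopping times $T_1$, $T_1+T_2$, $T_1+T_2+T_3$, obtaining $\mathbb{E}[s^{T_{1,2}}]\le 9.68\cdot1.36\cdot1.515\cdot3.03\le 60.5$. It then applies Markov's inequality $\mathbb{P}(T_{1,2}\ge t)\le 60.5\,s^{-t}$ once and sums: $r(T_{1,2})\le \log(60.5)/\log s+(1-s^{-1/2})^{-1}\le \max(413,12.37n)+\max(202,6.05n)$. This way the phase costs add rather than interact.
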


We now return to the original problem of finding concentration bounds for the resampled supremum $g(x,\sigma)$ around its expectation $\overline{g}(x)$. By Theorem \ref{thm_conc_fun_permut}, this amounts to finding a majorant for $V_+(g_x,\sigma)$. This is the point of the following Proposition, which proof is detailed in Section \ref{ssec_proof_prop_bound_Vplus_perm} of the Supplementary Material.

\begin{proposition} \label{prop_bound_Vplus_perm}
    For any $x \in E^n$ and $t \in \cT$, let
    \[ \overline{t}_x = \frac{1}{n} \sum_{i = 1}^n t(x_{i}) \]
    and define the weak variance
    \[ v_+(x) = \sup_{t \in \cT} \left\{ \sum_{i = 1}^n (t(x_{i}) - \overline{t}_x)^2 \right\}\;. \]
    Let also $a = \min_{1 \leq i \leq n} w_i$ and $b = \max_{1 \leq i \leq n} w_i$. Then, it holds
    \[ V_+(g_x,\sigma) \leq 2 (b-a)^2 v_+(x)\;. \]
    Moreover, if the functions in $\cT$ are valued in $[-1,1]$, then
    \[ V_+(g_x,\sigma) \leq 8 \norm{w}^2\;, \]
    where $\norm{\cdot}$ is the euclidean norm on $\mathbb{R}^n$
\end{proposition}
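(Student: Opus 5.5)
We bound each term $(g_x(\sigma)-g_x(\sigma\tau_{i,j}))_+$ using the standard supremum trick. Fix $\sigma$ and $\varepsilon>0$, and choose a near-maximizer $\hat t\in\cT$ with $\sum_k w_{\sigma(k)}\hat t(x_k)\geq g_x(\sigma)-\varepsilon$. (When the supremum is attained one may take $\varepsilon=0$; otherwise we let $\varepsilon\to0$ at the end.) Since $g_x(\sigma\tau_{i,j})\geq \sum_k w_{\sigma\tau_{i,j}(k)}\hat t(x_k)$, and $\sigma\tau_{i,j}$ only swaps the weights at positions $i$ and $j$, we get
\[
(g_x(\sigma)-g_x(\sigma\tau_{i,j}))_+ \leq \bigl|(w_{\sigma(i)}-w_{\sigma(j)})(\hat t(x_i)-\hat t(x_j))\bigr| + \varepsilon .
\]
Squaring and summing over $i,j$ then gives, up to $\varepsilon$-terms,
\[
V_+(g_x,\sigma)\leq \frac1n\sum_{i,j}(w_{\sigma(i)}-w_{\sigma(j)})^2(\hat t(x_i)-\hat t(x_j))^2 .
\]
The key point is that $\hat t$ depends only on $\sigma$, not on the pair $(i,j)$, so a single function controls the whole double sum. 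This is why positive parts make suprema tractable.

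For the first bound, we use $(w_{\sigma(i)}-w_{\sigma(j)})^2\leq (b-a)^2$ together with the identity
\[
\sum_{i,j}(t(x_i)-t(x_j))^2 = 2n\sum_i (t(x_i)-\overline t_x)^2 .
\]
This yields $V_+\leq \frac1n (b-a)^2\, 2n\, v_+(x)=2(b-a)^2v_+(x)$, as claimed.

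For the second bound, we instead use $|\hat t|\leq 1$, so $(\hat t(x_i)-\hat t(x_j))^2\leq 4$. Combined with
\[
\sum_{i,j}(w_{\sigma(i)}-w_{\sigma(j)})^2 = 2n\norm{w}^2 - 2\Bigl(\sum_i w_i\Bigr)^2 = 2n\norm{w}^2,
\]
which uses $\sum_i w_i=0$, this gives $V_+\leq \frac1n\cdot 4\cdot 2n\norm{w}^2 = 8\norm{w}^2$.

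The only delicate step is the $\varepsilon$-approximation. The error contributes at most $\frac1n\sum_{i,j}\bigl(2\varepsilon A_{ij}+\varepsilon^2\bigr)$, where the $A_{ij}$ are bounded uniformly (for instance by $2(b-a)$, since $\cT$ consists of functions valued in $[-1,1]$). This vanishes as $\varepsilon\to0$. In the first bound one should also note that $(\hat t(x_i)-\hat t(x_j))^2$ summed over $i,j$ is at most $2n\,v_+(x)$ because $v_+$ is a supremum over $\cT\ni\hat t$. No real obstacle is expected beyond this bookkeeping.
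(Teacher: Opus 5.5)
Your proof is correct and follows essentially the same route as the paper: you bound each increment $g_x(\sigma)-g_x(\sigma\tau_{i,j})$ by $(w_{\sigma(i)}-w_{\sigma(j)})(\hat t(x_i)-\hat t(x_j))$ using a single maximizer that depends on $\sigma$ alone, then use either $(w_{\sigma(i)}-w_{\sigma(j)})^2\leq (b-a)^2$ with $\sum_{i,j}(t(x_i)-t(x_j))^2=2n\sum_i(t(x_i)-\overline{t}_x)^2$, or $(\hat t(x_i)-\hat t(x_j))^2\leq 4$ with $\sum_i w_i=0$. The only difference is technical: the paper takes an exact maximizer $\hat z$ in the compact closure $\mathcal{K}(x)$ of $\{(t(x_i))_{1\leq i\leq n} : t\in\cT\}$, whereas you take an $\varepsilon$-maximizer in $\cT$ and let $\varepsilon\to 0$, which is equally valid given that $\cT$ is bounded.
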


 Theorem \ref{thm_conc_fun_permut} and Propositions \ref{prop_strong_stat_time} and \ref{prop_bound_Vplus_perm} directly yield the following bound on the moment generating function of $g_x(\sigma)$.

\begin{theorem}\label{Th_concen_exchange}
    Let $a < 0 < b$ be two real numbers and let $w \in [a;b]^n$ be such that $\sum_{i = 1}^n w_i = 0$. For any  $x \in E^n$, let
    \[ g_x(\sigma) = g(x,\sigma) = \sup_{t \in \cT} \left\{ \sum_{i = 1}^n w_{\sigma(i)} t(x_{i}) \right\}\;. \]
    Let $\sigma \in \mathfrak{S}_n$ be a uniform random permutation.
    Let also $f_x = g_x - \mathbb{E}[g_x(\sigma)]$. Assume that the functions in $\cT$ are valued in $[-1,1]$. Then, for any $\theta > 0$ and any $n \geq 34$,
    \begin{equation} \label{exp_moment_cond_X_v_+}
    \mathbb{E}[\exp(\theta f_x(\sigma))] \leq \exp \left(\theta^2 \min \left\{19 (b-a)^2 v_+(x), 4.2 \norm{w}^2\right\}  \right)\;, 
    \end{equation}
    where $\norm{w}$ is the Euclidean norm of $w$ and  $v_+(x)$ is the weak empirical variance,
    \[ v_+(x) = \sup_{t \in \cT} \left\{ \sum_{i = 1}^n \left(t(x_{i}) - \frac{1}{n} \sum_{j = 1}^n t(x_{j}) \right)^2 \right\}\;. \]
\end{theorem}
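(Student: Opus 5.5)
The plan is to chain Theorem \ref{thm_conc_fun_permut} (in the explicit form \eqref{eq_thm_conc_fun_explicit}) with the deterministic bounds of Proposition \ref{prop_bound_Vplus_perm}. We apply Theorem \ref{thm_conc_fun_permut} to the function $g = g_x : \mathfrak{S}_n \to \mathbb{R}$. This function is bounded because $\cT$ takes values in $[-1,1]$ and $w$ is fixed, and $\sigma$ ranges over a finite set, so no integrability or measurability issue arises. We choose $\alpha_0 = \frac12$ and the strong convergence time of Definition \ref{def_strong_cv_time}. Since $n \geq 34$, Proposition \ref{prop_strong_stat_time} gives $r(T_{1,2}) \leq 18.42\, n$, because $18.42 \cdot 34 > 615$. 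This yields
\[ \mathbb{E}[\exp(\theta f_x(\sigma))] \leq \mathbb{E}\left[\exp\left(\theta^2 \tfrac{1}{2}\cdot\tfrac{18.42\, n}{n-1} V_+(g_x,\sigma)\right)\right], \]
and for $n\geq 34$ the constant is at most $9.21\cdot 34/33 < 9.5$. This is exactly \eqref{eq_thm_conc_fun_explicit}.

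Next we replace the random variance proxy $V_+(g_x,\sigma)$ by a deterministic majorant. Proposition \ref{prop_bound_Vplus_perm} gives, for every $\sigma$, the almost sure bound
\[ V_+(g_x,\sigma) \leq \min\{2(b-a)^2 v_+(x),\, 8\norm{w}^2\}, \]
where the first term uses $\min_i w_i \geq a$ and $\max_i w_i \leq b$, which makes $(\max w - \min w)^2 \leq (b-a)^2$. Since $u \mapsto e^{\theta^2 c u}$ is increasing, the expectation on the right of \eqref{eq_thm_conc_fun_explicit} is bounded by $\exp(9.5\theta^2 \min\{2(b-a)^2 v_+(x), 8\norm{w}^2\})$. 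The first branch gives the constant $19$.

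The main obstacle is the constant for the second branch. Naively, $9.5\cdot 8 = 76$, which is far from the claimed $4.2$. I therefore expect this branch to need a sharper, separate argument that bypasses Proposition \ref{prop_bound_Vplus_perm}'s constant $8$. My first attempt would be to redo the proof of the proposition keeping the $1/n$ normalization. For a maximizer $t^*$ at $\sigma$, we have
\[ g_x(\sigma) - g_x(\sigma\tau_{i,j}) \leq (w_{\sigma(i)}-w_{\sigma(j)})(t^*(x_i)-t^*(x_j)), \]
and this quantity is at most $2|w_{\sigma(i)}-w_{\sigma(j)}|$. Hence
\[ V_+ \leq \frac{1}{n}\sum_{i,j} 4 (w_{\sigma(i)}-w_{\sigma(j)})^2 = 8\norm{w}^2, \]
using $\sum_i w_i = 0$. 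To gain a factor of roughly $18$, one would have to exploit, through the factor $(1-\alpha_0) r/(n-1)$, that only a $1/n$ fraction of pairs matter, or else use a different Hoeffding-type argument, for instance a bounded-differences inequality for permutations directly. I would first check whether Theorem \ref{thm_conc_fun_permut}'s general form, combined with the sharper bound $r \lesssim n$ and a refined bound $\sum_{i,j}(w_{\sigma(i)}-w_{\sigma(j)})_+^2\cdot$(product of positive parts), can halve the constant twice. Failing that, I would accept a constant of order $76$ for this branch and revisit the constant $4.2$.
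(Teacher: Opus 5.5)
Your chain is exactly the paper's argument: the paper only says that Theorem \ref{thm_conc_fun_permut}, Proposition \ref{prop_strong_stat_time} and Proposition \ref{prop_bound_Vplus_perm} ``directly yield'' the statement. Your checks are correct: the factor is $9.5$ for $n\ge 34$, and the first branch is $9.5\cdot 2=19$. Your arithmetic for the second branch is also correct. These ingredients give only $9.5\cdot 8\,\norm{w}^2=76\,\norm{w}^2$, and the paper supplies nothing further, so the constant $4.2$ is not justified by its own proof.

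Do not try to rescue it by sharpening the bound on $V_+$. As long as you pass through \eqref{eq_thm_conc_fun_explicit}, Jensen's inequality makes the right-hand side at least $\exp(9.5\theta^2\,\mathbb{E}[V_+(g_x,\sigma)])$. Take $n$ even, $w$ with $n/2$ entries equal to $1$ and $n/2$ equal to $-1$, and $\cT=\{t\}$ with $t(x_i)=\pm 1$ balanced. A direct count gives $\mathbb{E}[V_+(g_x,\sigma)]=\frac{2n^2}{n-1}>2\norm{w}^2$. So this route can never certify anything better than $\exp(19\theta^2\norm{w}^2)$ in general, and the factor of about $18$ you were hoping to recover is not there.

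The missing idea for the $\norm{w}^2$ branch is a separate, elementary martingale argument that does not use exchangeable pairs.

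1. Write $g_x(\sigma)=\sup_{t\in\cT}\sum_{k=1}^n w_k\,t(x_{\rho(k)})$ with $\rho=\sigma^{-1}$ uniform. Relabel so that $|w_1|\ge|w_2|\ge\cdots\ge|w_n|$.
2. Form the Doob martingale $M_k=\mathbb{E}[g_x(\sigma)\mid\rho(1),\ldots,\rho(k)]$, which reveals $\rho(1),\rho(2),\ldots$ in turn.
3. Given $\rho(1),\ldots,\rho(k-1)$, the map $\rho\mapsto\tau_{p,p'}\circ\rho$ couples the conditional laws on $\{\rho(k)=p\}$ and $\{\rho(k)=p'\}$. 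It only swaps the positions of $w_k$ and of one $w_l$ with $l>k$. Hence $g_x$ changes by at most $\sup_{t\in\cT}|(w_k-w_l)(t(x_p)-t(x_{p'}))|\le 2(|w_k|+|w_l|)\le 4|w_k|$.
4. So the increment $M_k-M_{k-1}$ ranges in an interval of length $4|w_k|$. Hoeffding's lemma bounds its conditional Laplace transform by $e^{2\theta^2 w_k^2}$.
5. Iterating gives $\mathbb{E}[e^{\theta f_x(\sigma)}]\le e^{2\theta^2\norm{w}^2}$. This is stronger than the claimed $4.2$ branch and needs neither $n\ge 34$ nor $\sum_i w_i=0$.

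Each of the two bounds holds separately, so taking the minimum of this bound and your $19(b-a)^2v_+(x)$ bound proves the statement as written.
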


Theorem \ref{Th_concen_exchange} shows that $g_x(\sigma)$ is sub-Gaussian with variance factor $\mathcal{O}\left(v_+(x) \right)$. Compared to the bootstrap version of Bousquet's inequality, which applies to the Efron weights (Theorem \ref{thm_bootstrap_efron}), the variance proxy is smaller ($v_+(x)$ instead of $2\overline{g}(x) + v_+(x)$) and the tail behaviour is sub-Gaussian instead of sub-Poisson. This corresponds to the tail behaviour of the weights, which are bounded in Theorem \ref{Th_concen_exchange} and essentially Poisson distributed in Theorem \ref{thm_bootstrap_efron}.  

In the special case where $w \in \left\{- \frac{1}{l}, \frac{1}{k} \right\}^n$ with $k + l = n,$ the conclusion of Theorem \ref{Th_concen_exchange} follows from Tolstikhin's version of Bobkov's inequality \cite{tolstikhin2014localized,Tolstikhin2017} 
Theorem \ref{Th_concen_exchange} is more general since it allows for arbitrary centered $w$. This greater generality does come at the price of a worse constant ($19$ in Inequality \eqref{exp_moment_cond_X_v_+} instead of $2$ in Tolstokhin's inequality), though one should note that the constant $19$ in Inequality \eqref{exp_moment_cond_X_v_+} can likely be improved, either through a more refined analysis of the construction in \cite{White2019} or using a different strong convergence time. In section \ref{proof_Tolstikhin_exch_pair} of the Supplementary Material, we also give a probabilistic proof of Tolstikhin's Theorem that recovers the right constant up to terms of order $1/n$, using the method of exchangeable pairs. This essentially shows that the difference in constants is not due to some inherent defect in our proof strategy.

\section{A general deviation bound}\label{sec_gen_bound}
Let us now combine Theorems \ref{thm_intro_cond_exp} and \ref{thm_intro_concen_exchange} in order to derive a general high probability upper bound comparing the quantity $g(X,\xi)$ -- see Definition \ref{def_g_bootstrap} above -- with the expected supremum of the associated empirical process. Let $\sigma \in \mathfrak{S}_n$ be a random permutation, independent from $\xi$ and define
\[ \overline{g}_\xi(x) = \mathbb{E} \left[ \sup_{t \in \cT} \sum_{i = 1}^n \xi_{\sigma(i)} t(x_i) | \xi \right]\;. \]
First notice that Theorem \ref{thm_intro_cond_exp} applies to $\overline{g}_\xi$ conditionally on $\xi$, yielding concentration of $\overline{g}_\xi(X)$ around
\[ \mathbb{E}[g(X,\xi_\sigma) | \xi] = \mathbb{E} \left[ \sup_{t \in \cT} \sum_{i = 1}^n \xi_{\sigma(i)} t(X_{i}) | \xi  \right]. \]
The latter quantity can be bounded from above and from below by the results recalled in Section \ref{ssec_mean} of the Supplementary Material. For example, according to Proposition \ref{prop_upper_mean_boot},
\[ \mathbb{E} \left[ \sup_{t \in \cT} \sum_{i = 1}^n \xi_{\sigma(i)} t(X_{i}) | \xi  \right] \leq \norm{\xi}_\infty \mathbb{E} \left[ \sup_{t \in \cT} \sum_{i = 1}^n \bigl(t(X_{i}) - \mathbb{E}[t(X_i)] \bigr) \right]. \]
Now, it remains to bound the quantity
\[ g(X,\xi) - \overline{g}_\xi(X) = \sup_{t \in \cT} \sum_{i = 1}^n \xi_{\sigma(i)} t(x_i) - \mathbb{E} \left[ \sup_{t \in \cT} \sum_{i = 1}^n \xi_{\sigma(i)} t(x_i) | \xi \right]. \]
Conditioning on $\xi = w$, this is equivalent to the concentration of
\[ \sup_{t \in \cT} \left\{ \sum_{i = 1}^n w_{\sigma(i)} t(x_{i}) \right\}\;, \]
which results from Theorem \ref{thm_intro_concen_exchange}. This yields the following general probability upper bound on $g(X,\xi)$.


\begin{theorem} \label{thm_hp_ubd_boot}
Let $\cT$ be a set of measurable functions, from $E$ to $\left[-\frac{1}{2}, \frac{1}{2} \right]$, which is separable in the product topology. Let $X=(X_{i})_{1 \leq i \leq n}$ be a collection of $n$ independent random variables valued in $E$. Let 
\[\mu_t = \frac{1}{n}\sum_{i = 1}^n \mathbb{E}[t(X_i)], \ \sigma^2 = \sup_{t \in \cT} \left\{ \sum_{i = 1}^n \mathbb{E}\left[(t(X_{i})-\mu_t)^2 \right] \right\}, \ M_n = \mathbb{E} \left[ \sup_{t \in \cT} \left| \sum_{i = 1}^n t(X_i) - \mathbb{E}[t(X_i)] \right| \right].\]
Define the random variables
\[ \norm{\xi}_\infty = \max_{1 \leq i \leq n} \{ |\xi_i| \}\;, \quad \kappa_\xi = \frac{1}{n} \sum_{i = 1}^n |\xi_i|\;, \quad M_n(\xi) = \mathbb{E} \left[ g(X,\xi_\sigma) | \xi \right]\;, \]
where $\sigma \in \mathfrak{S}_n$ is a uniform random permutation independent from $X,\xi$. 
For any $x > 0$, with probability at least $1 - 2e^{-x}$,
\begin{equation}\label{general_dev_bound}
    g(X,\xi) \leq M_n(\xi) + \sqrt{6\kappa_\xi x M_n(\xi)} + \norm{\xi}_\infty \max \left( 19 \sqrt{x(4M_n +\sigma^2)}, 41 x \right) + \frac{5}{2} \kappa_\xi x\;. 
\end{equation}
\end{theorem}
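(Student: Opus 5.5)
Throughout, I condition on $\xi = w$ and use that $g(X,\xi)$ has the same law as $g(X,\xi_\sigma)$ with $\sigma$ uniform and independent of $(X,\xi)$, by exchangeability. The deviation $g(X,w_\sigma) - M_n(w)$ splits into two terms, each to be bounded with conditional probability at least $1-e^{-x}$:
\[ \bigl[g(X,w_\sigma) - \overline{g}_w(X)\bigr] + \bigl[\overline{g}_w(X) - M_n(w)\bigr]. \]
A union bound then gives the conditional statement, and integrating over $\xi$ gives probability $1-2e^{-x}$.

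\textbf{Second term.} Since $w_\sigma$ is exchangeable and sums to zero, Theorem~\ref{thm_self_bounding} applies to $\overline{g}_w$. The functions are valued in $[-\tfrac12,\tfrac12]$, so I apply it to the class $2\cT$ (valued in $[-1,1]$) and rescale by $1/2$. The relevant constant is $\kappa = \EE|w_{\sigma(1)}| = \kappa_w$. With $G = 2\overline{g}_w$ and $\EE G = 2M_n(w)$, the upper bound $G \le \EE G + \sqrt{12\kappa_w x\,\EE G} + 5\kappa_w x$ becomes, after dividing by $2$,
\[ \overline{g}_w(X) \le M_n(w) + \sqrt{6\kappa_w x M_n(w)} + \tfrac52 \kappa_w x. \]
This produces exactly the first and last terms of \eqref{general_dev_bound}.

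\textbf{First term, conditional on $X$.} Theorem~\ref{Th_concen_exchange} (again applied to $2\cT$ and rescaled) gives sub-Gaussian conditional concentration with variance proxy $c(b-a)^2 v_+(X)$, where $b-a \le 2\norm{w}_\infty$. Chernoff then yields
\[ g - \overline{g}_w \le C\norm{w}_\infty \sqrt{x\, v_+(X)}, \]
with $v_+$ computed for the class $\cT$ itself. It remains to control $v_+(X)$ with high probability by $O(4M_n + \sigma^2 + x)$. I would write
\[ v_+(X) \le \sup_t \sum_i (t(X_i)-\mu_t)^2, \]
since centering at the empirical mean minimizes the sum of squares. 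The right side is the supremum of an empirical process over the class $\{(t-\mu_t)^2\}$, whose members are valued in $[0,1]$. Its expectation is at most $\sigma^2 + \EE\sup_t \sum_i \bigl[(t(X_i)-\mu_t)^2 - \EE(t(X_i)-\mu_t)^2\bigr]$. By symmetrization and the contraction principle (the map $u\mapsto u^2$ is $2$-Lipschitz on $[-1,1]$), the last term is bounded by a constant multiple of $M_n$. Here I would be careful: the Rademacher average is compared to $M_n$ through desymmetrization, and the $\mu_t$ depend on $i$ only through their average, which should still be manageable. Next, $\sup_t \sum_i (t(X_i)-\mu_t)^2$ is a $(1,0)$-self-bounding function (each summand lies in $[0,1]$), so its upper deviation is at most $\sqrt{2x\,\EE} + x$ times a constant, with probability $1-e^{-x}$.

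\textbf{Combining.} With $v_+ \lesssim 4M_n + \sigma^2 + x$, I get $\sqrt{x v_+} \lesssim \sqrt{x(4M_n+\sigma^2)} + x$, which is at most twice $\max\bigl(\sqrt{x(4M_n+\sigma^2)},\, x\bigr)$. The constants $19$ and $41$ then come from tracking the constant $19$ of Theorem~\ref{Th_concen_exchange} and the self-bounding constants.

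The main obstacle is the probability budget. Concentration of $v_+$ is one event and concentration of $\overline{g}_w$ is another, and together with the conditional bound this risks $3e^{-x}$ rather than $2e^{-x}$. To avoid this, I would try to bound the first term directly in unconditional probability: integrate the conditional moment generating function bound from Theorem~\ref{Th_concen_exchange} over $X$, using
\[ \EE\bigl[\exp(\theta^2 c\norm{w}_\infty^2 v_+(X))\bigr] \le \exp\bigl(\theta^2 c' (4M_n+\sigma^2) \cdots\bigr), \]
where the inequality follows from the self-bounding moment generating function bound for $\sup_t\sum_i (t(X_i)-\mu_t)^2$. This requires restricting $\theta$ to a range where $\theta^2\norm{w}_\infty^2$ is small, and that restriction is what produces the linear $41x$ regime (a Bernstein-type tail). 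In this way the first term costs a single $e^{-x}$.
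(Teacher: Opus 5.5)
Your final argument is essentially the paper's proof: one event comes from Theorem~\ref{thm_self_bounding} applied to $2\overline{g}_\xi$ conditionally on $\xi$. The other comes from integrating the conditional bound $\exp(76\theta^2\norm{\xi}_\infty^2 v_+(X))$ of Theorem~\ref{Th_concen_exchange} over $X$, using the self-bounding moment generating function of $\sup_{t\in\cT}\sum_i (t(X_i)-\mu_t)^2 \geq v_+(X)$ (whose mean is at most $\sigma^2+4M_n$), followed by a two-regime Chernoff bound (restricting $76\norm{\xi}_\infty^2\theta^2\leq 1/4$ gives the $41x$ term), a union bound, and the equality in law of $\xi_\sigma$ and $\xi$; your remark that a separate high-probability bound on $v_+(X)$ would cost a third $e^{-x}$ is exactly why the paper goes through the moment generating function instead of your intermediate combining step.
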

\noindent The proof of Theorem \ref{thm_hp_ubd_boot} can be found in Section \ref{ssec_proofs_general_bound} of the Supplementary Material.

Inequality \eqref{general_dev_bound} is not a ``true'' concentration inequality since the upper bound depends on $\xi$ and hence is random. In particular, the leading term $M_n(\xi)$ is not the expectation $\mathbb{E}[g(X,\xi)] = \mathbb{E}[g(X,\xi_\sigma)]$ but rather the conditional expectation,
$\mathbb{E} \left[ g(X,\xi_\sigma) | \xi \right]$. It is easy to show that concentration of $g(X,\xi)$ around its expectation cannot hold at this level of generality: for example, replacing $\xi$ by $\Lambda \xi$ for some independent nonnegative random variable $\Lambda$ yields $g(X,\Lambda \xi) = \Lambda g(X,\xi)$, which fails to concentrate.

In the exchangeable bootstrap perspective, the point of using $g(X,\xi)$ is to approximate the distribution of the associated empirical process supremum. In applications, one is typically interested in comparing $g(X,\xi)$ to $M_n$ rather than to $\mathbb{E}[g(X,\xi)]$. If $g(X,\xi)$ is known to concentrate (as is the case for Efron's bootstrap), this still leaves the final step of relating $\mathbb{E}[g(X,\xi)]$ to $M_n,$ using for example Proposition \ref{prop_upper_mean_boot}. Any such bound that is valid for all exchangeable bootstrap weights will apply also to $M_n(\xi)$ since conditionally on $\xi,$ $\xi_\sigma$ is exchangeable with null sum and independent from $X$. For example, using Proposition \ref{prop_upper_mean_boot} immediately yields $M_n(\xi) \leq 2\norm{\xi}_\infty M_n$. Thus, Theorem \ref{thm_hp_ubd_boot} should be as useful as a ``real'' concentration inequality for the purpose of bounding $g(X,\xi)$ by a function of $M_n$ with high probability. 

In fact, if Proposition \ref{prop_upper_mean_boot} is used to compare expectations then the dependence on $\xi$ significantly strengthens the result compared to a deterministic concentration bound, yielding a non-trivial inequality even for unbounded weights. For example, if $M_n \to + \infty$, then Theorem \ref{thm_hp_ubd_boot} and Proposition \ref{prop_upper_mean_boot} together imply that
\[ \mathbb{E}[g(X,\xi)] \leq 2(1+o(1)) \mathbb{E} [\norm{\xi}_\infty] M_n  \]
whereas Proposition \ref{prop_upper_mean_boot} by itself only yields $\mathbb{E}[g(X,\xi)] \leq 2 \norm{\norm{\xi}_\infty}_{L^\infty} M_n$.

The bound of Theorem \ref{thm_hp_ubd_boot} is \emph{not} conditional on $X$, unlike Theorem \ref{Th_concen_exchange}. However, we shall see that it still implies bounds on the upper tail of the bootstrap quantiles. In the rest of this section, let us denote by $q_\alpha(Y)$ the least $1-\alpha$ quantile of a random variable $Y$, \textit{i.e.} 
\[ q_\alpha(Y) = \inf \left\{q \in \mathbb{R}: 1-\alpha \leq \mathbb{P}\left( Y \leq q \right) \right\} = \inf \left\{q \in \mathbb{R}: \mathbb{P}\left( Y > q \right) \leq \alpha \right\}.  \]
Similarly, let $q_\alpha(Y | Z)$ denote the conditional quantile of order $1-\alpha$. In particular, $q_\alpha(g(X,\xi) | X)$ denotes the bootstrap quantile of order $1-\alpha$ (defined up to a negligible event). The key tool is the following simple lemma.

\begin{lemma} \label{lem_quantile_condi}
    Let $Y,X$ be random variables, where $Y$ is real valued. Let $\alpha,\gamma \in (0;1]$. Then
    \[ q_\gamma \left( q_\alpha(Y | X) \right) \leq q_{\gamma \alpha}(Y)\;. \]
\end{lemma}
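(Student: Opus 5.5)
Set $q = q_{\gamma\alpha}(Y)$; the claim is equivalent to $\mathbb{P}\left( q_\alpha(Y|X) > q \right) \leq \gamma$. The idea is a Markov-type argument applied to the conditional tail probability $p(X) = \mathbb{P}(Y > q \mid X)$.

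The first step is to use the definition of $q_\alpha(\cdot \mid X)$ as an infimum. On the event $\{q_\alpha(Y|X) > q\}$, the number $q$ is not admissible in the infimum defining the conditional quantile. Hence, almost surely on that event, $p(X) > \alpha$. This uses a regular version of the conditional distribution of $Y$ given $X$, so that $q_\alpha(Y|X)$ is the quantile of that conditional law and is measurable. For the same reason, right-continuity of $q \mapsto \mathbb{P}(Y \le q)$ gives $\mathbb{P}(Y > q) \leq \gamma\alpha$ at $q = q_{\gamma\alpha}(Y)$ itself. If $q_{\gamma\alpha}(Y) = +\infty$ there is nothing to prove, and since $\gamma\alpha > 0$ it is in fact finite.

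The second step is Markov's inequality applied to $p(X)$:
\[ \mathbb{P}\left( q_\alpha(Y|X) > q \right) \leq \mathbb{P}\left( p(X) > \alpha \right) \leq \frac{\mathbb{E}[p(X)]}{\alpha} = \frac{\mathbb{P}(Y > q)}{\alpha} \leq \gamma . \]

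The last step concludes from the definition of $q_\gamma$ as the least $q'$ with $\mathbb{P}(q_\alpha(Y|X) > q') \leq \gamma$. Since $q$ satisfies this, $q_\gamma(q_\alpha(Y|X)) \leq q = q_{\gamma\alpha}(Y)$.

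The only delicate points are the measure-theoretic ones: the existence of a regular conditional distribution, which holds since $Y$ is real-valued, and the careful handling of strict versus non-strict inequalities through right-continuity of distribution functions.
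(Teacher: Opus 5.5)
Your proof is correct and is essentially the paper's argument. The paper establishes the pointwise bound $\mathbb{P}(Y>t\mid X)\ge \alpha\,\mathbb{I}\{q_\alpha(Y|X)>t\}$ via $\mathbb{P}(Y\ge q_\alpha(Y|X)\mid X)\ge\alpha$ and then takes expectations, which is exactly your Markov step; your route, which uses the non-admissibility of $q$ in the infimum directly, is if anything slightly cleaner. One small correction: your remark that $q_{\gamma\alpha}(Y)$ is always finite fails when $\gamma=\alpha=1$, where it equals $-\infty$, though in that case the inequality holds trivially.
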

\noindent The proof of Lemma \ref{lem_quantile_condi} can be found in Section \ref{ssec_proofs_general_bound} of the Supplementary Material.
As a result of this lemma, we can state a universal bound on the bootstrap quantiles. As it turns out, this bound also applies, with no modification, to the empirical bootstrap quantiles based on a random sample $(\xi_{(b)})_{b = 1,\ldots,B}$. For simplicity, we assume that the weights $\xi$ are of the form $1 - W$ or $W-1$, where $\frac{1}{n}W$ is an exchangeable probability vector; that is, the vector $\xi$ is obtained from the weights of the exchangeably weighted bootstrap in the manner described in Section \ref{sec_bootep}.

\begin{corollary} \label{cor_ubd_quant_boot}
    Let $B$ be an integer, $\alpha, \gamma \in (0;1)$ be probabilities and $\xi,(\xi^{(b)})_{b = 1,\ldots,B}$ be i.i.d. vectors, each of which is of the form $\xi = 1 - W$ or $\xi = W-1$, where $W$ is a non-negative, exchangeable vector which sums to $n$. Let $\hat{q}_\alpha^B$ be the empirical bootstrap $1-\alpha$ quantile, \textit{i.e.} the empirical $1-\alpha$ quantile of the sample 
    \[ ( g(\xi^{(b)},X))_{b = 1,\ldots,B}. \]
    For any non-negative $\alpha_1,\alpha_2,\alpha_3$ such that $\sum_{i = 1}^3 \alpha_i = \alpha$, 
    \begin{align*}
        q_\gamma \left( \hat{q}_\alpha^B \right) &\leq q_{\gamma \alpha_2}(M_n(\xi)) + \sqrt{6 q_{\gamma \alpha_2}(M_n(\xi)) (\log(2)-\log(\gamma \alpha_1))} + 5 (\log(2)-\log(\gamma \alpha_1)) \\ 
        &\quad + q_{\gamma \alpha_3}(\norm{\xi}_\infty) \max \left( 19 \sqrt{x(4M_n +\sigma^2)}, 41 x \right).
    \end{align*}
\end{corollary}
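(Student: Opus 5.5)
Write $U = g(X,\xi)$ and $\alpha = \alpha_1+\alpha_2+\alpha_3$. The plan is to bound the quantiles of $\hat q_\alpha^B$ by quantiles of the unconditional variable $U$, and then bound those by a union bound over the three random terms in Theorem \ref{thm_hp_ubd_boot}.

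\textbf{Step 1 (empirical quantile).} Conditionally on $X$, the values $g(X,\xi^{(b)})$ are i.i.d. copies of $U$ given $X$. I would like to treat $\hat q_\alpha^B$ as $q_\alpha(\cdot \mid X, \xi^{(1)},\dots,\xi^{(B)})$ of a variable $Y = g(X,\xi^{(K)})$, with $K$ uniform on $\{1,\ldots,B\}$ and independent of everything else. This works because the empirical $1-\alpha$ quantile of the sample is exactly the conditional quantile of $Y$ given $(X,(\xi^{(b)})_b)$. Unconditionally, $Y$ has the same law as $U$. Lemma \ref{lem_quantile_condi} applied to $Y$ and the conditioning variable $(X,(\xi^{(b)})_b)$ then gives
\[ q_\gamma(\hat q_\alpha^B) \leq q_{\gamma\alpha}(U). \]
This explains why no dependence on $B$ appears in the statement.

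\textbf{Step 2 (rescaling to $[-1/2,1/2]$).} Theorem \ref{thm_hp_ubd_boot} is stated for functions valued in $[-1/2,1/2]$, while the corollary uses constants $6$ and $5$ with no $\kappa_\xi$. For $\xi = \pm(W-1)$ with $W\ge 0$ and $\sum_i W_i = n$, we have $\kappa_\xi \le \frac1n\sum_i (W_i+1) = 2$. With $\kappa_\xi\le 2$, the terms $\sqrt{6\kappa_\xi x M_n(\xi)}$ and $\tfrac52\kappa_\xi x$ are at most $\sqrt{12 x M_n(\xi)}$ and $5x$. This does not match the displayed constant $6$ exactly, so I would reconcile it by applying the theorem at level $x$ with $2e^{-x}=\gamma\alpha_1$, i.e. $x=\log 2-\log(\gamma\alpha_1)$. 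I would then use the sharper fact $\kappa_\xi\le 1$ if that is available. Otherwise I would accept the constant as stated, taking the theorem's form for the $\tfrac12$-bounded class. This bookkeeping of constants is the only delicate point.

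\textbf{Step 3 (union bound).} Let $Q_2 = q_{\gamma\alpha_2}(M_n(\xi))$ and $Q_3 = q_{\gamma\alpha_3}(\|\xi\|_\infty)$. By the definition of these quantiles, $\PP(M_n(\xi) > Q_2)\le\gamma\alpha_2$ and $\PP(\|\xi\|_\infty > Q_3) \le \gamma\alpha_3$. Theorem \ref{thm_hp_ubd_boot} fails with probability at most $\gamma\alpha_1$. On the intersection of the three complementary events, the right-hand side of \eqref{general_dev_bound} is monotone nondecreasing in $M_n(\xi)$ and in $\|\xi\|_\infty$. Hence $U$ is bounded by the displayed expression $R$, with $x$ as chosen in Step 2. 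This gives $\PP(U > R) \le \gamma\alpha$, so $q_{\gamma\alpha}(U)\le R$, and combining with Step 1 concludes the proof.
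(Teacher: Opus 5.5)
Your plan is the paper's proof. A uniform random index (your $K$, the paper's $J$) makes $\hat q_\alpha^B$ a conditional quantile, so Lemma~\ref{lem_quantile_condi} gives $q_\gamma(\hat q_\alpha^B)\le q_{\gamma\alpha}(g(X,\xi))$. Theorem~\ref{thm_hp_ubd_boot} is then applied with $x=\log 2-\log(\gamma\alpha_1)$ and $\kappa_\xi\le 2$, and a union bound with monotonicity in $(M_n(\xi),\norm{\xi}_\infty)$ concludes.

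The one thing to correct is Step~2: the ``sharper fact'' $\kappa_\xi\le 1$ is false. Take $W=n\,e_{\sigma(1)}$ with $\sigma$ uniform; it is non-negative, exchangeable and sums to $n$, yet $\kappa_\xi=\frac1n\bigl((n-1)+(n-1)\bigr)=2-\frac2n$. In general $\kappa_\xi=\frac2n\sum_i(1-W_i)_+$, and at most $n-1$ of the $W_i$ lie below $1$, so $2-\frac2n$ is the sharp bound.

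With the valid bound $\kappa_\xi\le 2$, which is exactly what the paper's proof substitutes, the argument yields $\sqrt{12\,q_{\gamma\alpha_2}(M_n(\xi))\,(\log 2-\log(\gamma\alpha_1))}$ together with $5(\log 2-\log(\gamma\alpha_1))$. The $5$ matches the statement, so the $6$ under the square root is a misprint for $12$. You should prove that corrected bound rather than ``accept the constant as stated'', since neither your route nor the paper's gives $6$.

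Also state explicitly that the $x$ in the last term of the corollary is this same $x=\log 2-\log(\gamma\alpha_1)$.
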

\noindent The proof of Corollary \ref{cor_ubd_quant_boot} can be found in Section \ref{ssec_proofs_general_bound} of the Supplementary Material.

According to the bootstrap heuristic, the bootstrap quantile $\hat{q}_\alpha^B$ should approximate the quantile 
\[q_\alpha \left( \sup_{t \in \cT} \sum_{i = 1}^n t(X_{i}) - \mathbb{E}[t(X_{i})] \right),\] 
up to a constant depending on the weights, as $n,B \to + \infty$. By Bousquet's inequality for the empirical process, we therefore expect a bound of the form 
\begin{equation} \label{eq_conc_quant_opt}
   \hat{q}_\alpha^B \leq M_n + R \sqrt{|\log \alpha| (2M_n + \sigma^2)} - R \log \alpha\;, 
\end{equation}
 with $R$ bounded in probability independently of $\alpha$ as $B$ tends to $+\infty$. Corollary \ref{cor_ubd_quant_boot} yields a similar bound, but with $M_n$ replaced by $M_n(\xi)$ and $R$ depending on $\norm{\xi}_\infty$. If the processes $(t(X_{i}))_{t \in \cT}$ are symmetric and \[ \norm{\norm{\xi_i}_\infty}_{L^\infty} \approx \mathbb{E}[|\xi|_1] \approx 1, \]
 then the above optimal bound follows by Proposition \ref{prop_upper_mean_boot}. In general, proving a bound of the form \eqref{eq_conc_quant_opt} with leading constant $1$ is a difficult question, which goes beyond concentration of measure, since $\mathbb{E}[g(X,\xi)]$ is not equal to $M_n$. Significant work has been done to prove the validity of the bootstrap heuristic for suprema on finite index sets $\cT$ \cite{CCK23}, but the bounds depend on the cardinality of $\cT$. Thus, for infinite $\cT$, the convergence rate established by such methods will vary depending on the entropy of $\cT$, as for Donsker and Glivenko-Cantelli theorems. It may well be that the convergence rate of $\mathbb{E}[g(X,\xi)]/M_n$ to $1$ likewise depends on the entropy of $\cT$.  In contrast, the results of this article require no assumptions on $\cT$ other than those which guarantee the measurability of suprema.

\section{Application I: Confidence regions for the mean in high-dimension}\label{sec_conf_reg}

\subsection{Presentation}
In this section, we assume that the space $E$ is a Banach space with norm $\norm{\cdot}$. The sample $(X_i)_{1 \leq i \leq n}$ is made of i.i.d. random vectors valued in $E$, with common expectation $\mu$. The expectation $\mu$ can be estimated by the empirical mean
\[ \overline{X}_n = \frac{1}{n} \sum_{i = 1}^n X_i\;. \]
A norm ball of the form $B(\overline{X}_n,\hat{r}_\alpha)=\left\{x\in E\; /\; \Vert x-\overline{X}_n\Vert \leq \hat{r}_\alpha \right\}$, for some radius $\hat{r}_\alpha>0$, provides a natural confidence region adapted to the geometry of $E$. In order for  $B(\overline{X}_n,\hat{r}_\alpha)$ to have coverage $1-\alpha,$ the data-dependent radius $\hat{r}_\alpha$ must be such that
\begin{equation} \label{eq_def_conf_reg}
    \mathbb{P} \left( \norm{\overline{X}_n - \mu} > \hat{r}_\alpha \right) \leq \alpha.
\end{equation}
For finite-dimensional $E$, an asymptotically valid $\hat{r}_\alpha$ can be constructed using covariance estimation and the central limit theorem.  However, the quality of this approximation is heavily dependent on the space $(E,\norm{\cdot})$, primarily through its dimension, and the distribution $P$. Applications in biology also produce high dimensional datasets where the dimension $d$ of $E$ is much larger than the sample size $n$ \cite{arlot2010resampling}. In such cases, classical asymptotic arguments fail and the empirical covariance does not provide a reliable estimate of the true covariance matrix anymore. 

The bootstrap provides an alternative method of calibrating $\hat{r}_\alpha$, that does not suffer from a curse of dimensionality (as we shall see later). For a general norm $\norm{\cdot}$ and given resampling weights $(\xi_i)_{1 \leq i \leq n}$ satisfying Definition \ref{def_g_bootstrap}, the bootstrap estimate of $\mathbb{E}\left[ \norm{\overline{X}_n - \mu} \right]$ is 
\[\hat{R}_n =   \mathbb{E} \left[\frac{1}{n} \norm{ \sum_{i = 1}^n \xi_i X_i} \Bigr| X_1,\ldots,X_n \right]. \]
The concentration inequalities of this article apply to $\hat{R}_n$ and yield sharper bounds than the existing literature, to the best of our knowledge.

\subsection{New confidence bounds}
First, let us see how this problem relates to our setting. Let $\cB^*$ be the unit ball of the dual of $E$, that is the set of linear functionals $l: E \to \mathbb{R}$ such that $|l(x)| \leq \norm{x}$ for all $x \in E$. By the Hahn-Banach theorem, for all $x \in E$,
\[ \norm{x} = \sup_{l \in \cB^*} l(x). \]
It follows that
\[ \norm{\overline{X}_n - \mu} = \sup_{l \in \cB^*} l\left(\overline{X}_n - \mu \right) = \frac{1}{n} \sup_{l \in \cB^*}   \left\{\sum_{i = 1}^n l(X_i) - l(\mu)  \right\} \;, \]
where by linearity and continuity of $l,$ $\mathbb{E}[l(X_i)] = l(\mu)$. It follows that $\norm{\overline{X}_n - \mu}$ is the supremum of the empirical process associated with the function class $\cB^*$. Likewise, the statistic 
\[n \hat{R}_n=\EE\left[ \sup_{l\in \cB^*} \left\{ \sum_{i=1}^n \xi_il(X_i)\right\}\bigl\vert X\right] \] is of the form of the quantity $\overline{g}$ in Definition \ref{def_gbar_bootstrap}. 
Applying Theorem \ref{thm_intro_cond_exp} yields the following result. 

\begin{theorem} \label{thm_conf_reg_mean}
    Let $\kappa = \mathbb{E}[|\xi_1|]$ and $b = \norm{\xi_1}_\infty$. Assume that $\norm{X_1 - \mu} \leq M$ almost surely for some constant $M$.
    With probability at least $1 - 2e^{-x},$ for all $\theta > 0,$
    \[ \norm{\overline{X}_n - \mu} \leq (1+\theta)^2 \frac{2}{\kappa} \hat{R}_n + \sigma_\cB \sqrt{\frac{2x}{n}} + \left(\frac{7}{\theta} + 19 + 18 \theta + 6 \theta^2 \right) \frac{xM}{n}. \]
    where 
    \[ \sigma_\cB = \sup_{l \in \cB^*} \sqrt{\mathrm{Var}(l(X_1))}.  \]
    Moreover, with probability at least $1 - 2e^{-x},$ for all $\theta \in [0,1],$ 
    \[ \norm{\overline{X}_n - \mu} \geq (1-\theta)^2 \frac{\hat{R}_n}{2b} - \sigma_\cB \sqrt{\frac{2x}{n}} - \left(\frac{3\kappa}{2b} + 1 \right) \frac{xM}{\theta n} - \left(2 - 2 \frac{\kappa}{b} + \theta^2 \frac{\kappa}{2b} \right) \frac{xM}{n} \]
    If $X_1$ is symmetric around the mean, the leading constants $2/\kappa$ and $1/2b$ can be replaced by $1/\kappa$ and $1/b$.
\end{theorem}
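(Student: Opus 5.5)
The plan is to combine three ingredients. First, the concentration of $n\norm{\overline{X}_n-\mu}$ around its mean. Second, the concentration of $n\hat{R}_n=\overline{g}(X)$ around its mean (Theorem \ref{thm_self_bounding}). Third, the comparison between $\mathbb{E}[\overline{g}(X)]$ and $M_n=\mathbb{E}[n\norm{\overline{X}_n-\mu}]$ from the bounds in Section \ref{ssec_mean} of the Supplementary Material. These are the Fromont/Arlot lower bound $\mathbb{E}[\overline{g}(X)]\geq \mathbb{E}[(\xi_1)_+]M_n\geq \frac{\kappa}{2}M_n$, since $\mathbb{E}[(\xi_1)_+]=\kappa/2$ because $\sum\xi_i=0$, and the upper bound $\mathbb{E}[\overline{g}(X)]\leq 2bM_n$ from Proposition \ref{prop_upper_mean_boot}. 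In the symmetric case they improve to $\kappa M_n$ and $bM_n$, which gives the stated improvement of the constants.

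To apply Theorem \ref{thm_self_bounding}, the class $\cB^*$ (restricted to a separable part, e.g.\ the closed span of the support) is rescaled so that its functions are valued in $[-1,1]$. Centering at $\mu$ is harmless because $\sum\xi_i=0$. We thus use $t_l=l(\cdot-\mu)/M$, and $\overline{g}$ simply scales by $M$. This yields, with probability $\geq 1-e^{-x}$,
\[ n\hat{R}_n\geq \mathbb{E}[n\hat R_n]-\sqrt{12\kappa xM\,\mathbb{E}[n\hat R_n]}, \]
and a matching upper deviation with the extra $5\kappa xM$ term.

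For the empirical process itself, we use Bousquet's inequality for the upper tail and Klein--Rio for the lower tail. Writing $Z=n\norm{\overline{X}_n-\mu}$, the upper tail reads, with probability $\geq1-e^{-x}$,
\[ Z\leq M_n+\sqrt{2x(n\sigma_\cB^2+2MM_n)}+\frac{xM}{3}, \]
and a similar lower bound holds.

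For the upper confidence bound, we chain the two inequalities. From the lower deviation of $\hat R_n$, writing $m=\mathbb{E}[n\hat R_n]$, we get $\sqrt{m}\leq \sqrt{n\hat R_n}+\sqrt{3\kappa xM}$ after completing the square. Squaring with the weighted inequality $(u+v)^2\leq(1+\theta)u^2+(1+1/\theta)v^2$ gives $m\leq(1+\theta)n\hat R_n+O((1+1/\theta)\kappa xM)$. Then $M_n\leq \frac{2}{\kappa}m$. In Bousquet's bound, $\sqrt{2x\cdot 2MM_n}\leq\theta M_n+xM/\theta$, which splits off the $\sqrt{2xn}\sigma_\cB$ term and produces another factor $(1+\theta)$. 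Collecting everything and dividing by $n$ gives the form $(1+\theta)^2\frac{2}{\kappa}\hat R_n+\sigma_\cB\sqrt{2x/n}+(\cdots)\frac{xM}{n}$. A union bound over the two events gives $1-2e^{-x}$. The statement holds for all $\theta$ simultaneously because the events do not depend on $\theta$; only the algebra uses $\theta$.

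For the lower confidence bound, we use instead the upper deviation of $\hat R_n$ together with $m\leq 2bM_n$. This gives $n\hat R_n\leq m+\sqrt{12\kappa xMm}+5\kappa xM$. We then invert using $\sqrt{12\kappa xMm}\leq \theta m+3\kappa xM/\theta$, obtaining $M_n\geq \frac{m}{2b}\geq\frac{(1-\theta)}{2b}(n\hat R_n-\ldots)$. Combined with the Klein--Rio lower tail, where once more $\sqrt{4xMM_n}\leq\theta M_n+xM/\theta$, this yields the second factor $(1-\theta)$.

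The main obstacle is purely the bookkeeping of constants. The aim is to land exactly on the coefficients $\frac{7}{\theta}+19+18\theta+6\theta^2$ and $\frac{3\kappa}{2b}+1$, and this depends on which version of Bousquet/Klein--Rio is used and on the order in which the $\theta$-splittings are applied. I would first fix the deviation inequalities explicitly in the form above and then optimize the ordering until the constants match, accepting slightly different numerical constants if necessary.
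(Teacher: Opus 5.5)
Your plan is the paper's proof. The paper likewise rescales to $\{l(\cdot-\mu)/M : l\in\cB^*\}$ so that $n\hat R_n/M=\overline g(X)$, applies Theorem \ref{thm_self_bounding}, and uses Bousquet (upper tail) and Klein--Rio (lower tail) for $Z=n\norm{\overline X_n-\mu}$. It compares expectations via Propositions \ref{prop_lower_mean_boot} and \ref{prop_upper_mean_boot} with $\EE[(\xi_1)_+]=\kappa/2$, and takes a union bound over two $\theta$-free events. What needs repair is the inversion algebra: one step is false and another is lossy. Both matter because the statement has explicit constants.

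\textbf{Upper bound.} Put $c^2=3\kappa xM$ and $m=\EE[n\hat R_n]$. The lower deviation reads $n\hat R_n\ge m-2c\sqrt m$, i.e.\ $(\sqrt m-c)^2\le n\hat R_n+c^2$. This gives $\sqrt m\le c+\sqrt{n\hat R_n+c^2}$, not $\sqrt m\le\sqrt{n\hat R_n}+c$; the latter fails for $n\hat R_n=0$, $m=4c^2$. With the correct inequality, your weighted square gives
\[ m\le(1+\theta)(n\hat R_n+c^2)+(1+1/\theta)c^2=(1+\theta)n\hat R_n+3(\theta+1/\theta+2)\kappa xM . \]
This is exactly the paper's intermediate bound, which it obtains via Lemma \ref{lem_ineq_quad}. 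Chaining with $M_n\le\frac{2}{\kappa}m$ and $Z\le(1+\theta)M_n+\sqrt{2xn}\,\sigma_\cB+(\frac1\theta+\frac13)xM$ produces the remainder $6(\frac1\theta+3+3\theta+\theta^2)+\frac1\theta+\frac13\le\frac7\theta+19+18\theta+6\theta^2$, as stated.

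\textbf{Lower bound.} You use AM--GM with the same $\theta$ and then $\frac{1}{1+\theta}\ge1-\theta$. This costs an extra $3(1-\theta)\kappa xM$ in the bound on $m$. After the second factor $(1-\theta)/(2b)$, your remainder exceeds the stated one by $\frac{\kappa}{2b}(1-\theta)(3-4\theta)\frac{xM}{n}$, which is positive for $\theta<3/4$. The fix is to use the AM--GM parameter $\lambda=\theta/(1-\theta)$, i.e.\ $\sqrt{12\kappa xMm}\le\lambda m+3\kappa xM/\lambda$, so that $\frac{1}{1+\lambda}=1-\theta$ exactly. This yields $m\ge(1-\theta)n\hat R_n-(\frac3\theta-1-2\theta)\kappa xM$, which is the paper's bound from the second half of Lemma \ref{lem_ineq_quad}. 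Then:
\begin{itemize}
\item multiply by $(1-\theta)/(2b)$;
\item add the Klein--Rio terms $\frac{xM}{\theta}+2xM$ (you need the linear term in this $2xM$ form to get the stated ``$2$'');
\item use $2\theta^2-\theta\le\theta^2$ on $[0,1]$.
\end{itemize}
This recovers exactly the stated $\left(\frac{3\kappa}{2b}+1\right)\frac{xM}{\theta n}+\left(2-\frac{2\kappa}{b}+\theta^2\frac{\kappa}{2b}\right)\frac{xM}{n}$.
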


Theorem \ref{thm_conf_reg_mean} yields confidence upper and lower bounds on $\norm{\overline{X}_n - \mu}$ based on the bootstrap statistic $\hat{R}_n$, given the constant $M$ and an upper bound on $\sigma_\cB$. A version of Theorem \ref{thm_conf_reg_mean}, that gives slightly more precise bounds for symmetric random variables, can be found in Section \ref{ssec_sm_conf_reg} of the the Supplementary Material (Theorem \ref{thm_conf_reg_mean_gen}), with a detailed proof. 

To the best of our knowledge, the state of the art theory on this bootstrap approach to mean estimation was established in an article by Arlot et al. \cite{arlot2010resampling}.
Their setting is slightly different from the present one since instead of general norms, they consider sub-additive functions bounded by one of the $p$-norms. 
Common to both settings is the case of the $p$-norms which can be used as a point of comparison.  

They consider two assumptions: that $X$ is a Gaussian vector or that $X$ is symmetric about $\mu$ with $\norm{X - \mu}$ almost surely bounded by a constant $M$. In the bounded symmetric case, their confidence bound is based on the bounded difference inequality and controls the deviations through a term of the form $M/\sqrt{n}$. We are able to improve their result in this case,
controlling the deviations through the smaller term $\sqrt{M \hat{R}_n}$, yielding a sharper confidence bound. 

Furthermore, the leading constants $\kappa,b$ in the upper and lower bounds of Theorem $\ref{thm_conf_reg_mean}$ can be made very close to $1$ by an adequate choice of the resampling weights: for example, if $n$ is even, taking $\xi$ to be a random permutation of a fixed vector $w \in \{-1,1\}^n$ having equal numbers of $1$s and $-1$s yields matching constants $\kappa = b = 1$ for symmetric $X_1$ (by Theorem \ref{thm_conf_reg_mean_gen}). Optimizing over $\theta > 0$ yields a remainder term of order
\[ \max \left\{ \sigma_\cB \sqrt{\frac{x}{n}}, \sqrt{\frac{xM}{n} \hat{R}_n}, \frac{xM}{n} \right\}\;. \]
Since $M \geq \max \left\{ \sigma_\cB, \kappa^{-1}\hat{R}_n \right\}$, for fixed $x > 0,$ this bound is indeed sharper than the $M \sqrt{\frac{x}{n}}$ given by \cite[Theorem 2.1]{arlot2010resampling}. The improvement can be quite significant: for example, if $\norm{\cdot}$ is the Euclidean norm and $X_1$ is a random vector of dimension $d_n$ with i.i.d. coordinates $X_{1,j}$ such that $\norm{X_{1,j} - \mu_j}_{L^\infty} = c$ and $\mathrm{Var}(X_{1,j}) = \sigma^2,$ then $M = c d_n,$ $\sigma_\cB = \sigma$ and $\mathbb{E}\left[ \norm{\overline{X}_n - \mu}  \right] \leq \sqrt{\EE[\Vert \overline{X}_n-\mu\Vert^2]}= \sigma \sqrt{d_n/n}$. It follows that the bound of \cite{arlot2010resampling} is never of the correct order when $d_n \to +\infty,$ whereas some standard calculations show that ours remains sharp as long as $d_n = o(n)$.  

More generally, the remainder terms are negligible if $M \ll n \hat{R}_n$ and $\frac{\sigma_\cB}{\sqrt{n}} \ll \hat{R}_n$. Since $\EE[\hat{R}_n] \leq 2b \sigma_\cB/\sqrt{n}$ if $E$ is of dimension $1$ (by using Proposition \ref{prop_upper_mean_boot}) and since, by assuming without loss of generality that $l^*\in \cB^*$ is such that $\var(l^*(X_1))=\sigma^2_\cB$, it holds
\[ \mathbb{E}\left[ \norm{\overline{X}_n - \mu} \right] \geq \mathbb{E} \left[ \left| \frac{1}{n} \sum_{i = 1}^n l^*(X_i-\mu) \right| \right] \underset{n \to +\infty}{\sim} \sqrt{\frac{2}{\pi}} \frac{\sigma_\cB}{\sqrt{n}}\;, \]
the assumption that $\hat{R}_n \gg \sigma_\cB/\sqrt{n}$ can be interpreted as the claim that we are in a high-dimensional setting.
Provided then that $M \ll \sigma_\cB \sqrt{n}$, the bounds of Theorem \ref{thm_conf_reg_mean} are first-order correct.

Note also that Theorem \ref{thm_conf_reg_mean} does not quite yield a practical confidence upper bound, since the quantity $\sigma_\cB$ depends on the unknown distribution of the data. In general, $\sigma_\cB$ can be estimated with
\[ \hat{\sigma}_\cB^2 = \sup_{l \in \cB^*} \hat{\sigma}_l^2 \text{ where } \hat{\sigma}_l^2 = \frac{1}{n} \sum_{i = 1}^n \left(l(X_i) - l(\overline{X}_n) \right)^2\; , \]
which is biased upward by Jensen's inequality and is asymptotically consistent if $\cB^*$ is not too large (\textit{i.e.} if it forms a Donsker class). If $l^* \in \cB^*$ attains the supremum in the definition of $\sigma_\cB$, then $\hat{\sigma}_\cB^2 \geq \hat{\sigma}_{l^*}^2$ where $\hat{\sigma}_{l^*}^2$ is the empirical estimator of $\mathrm{Var}(l^*(X_i)) = \sigma_\cB^2$ based on the sample $\left(l^*(X_i)\right)_{1 \leq i \leq n}$. Thus, $\hat{\sigma}_\cB$ can be substituted for $\hat{\sigma}_{l^*}$ in confidence upper bounds for the (one-dimensional) standard deviation, such as those of \cite{MaurerPontil2009}, yielding a valid non-asymptotic confidence upper bound for $\sigma_\cB$ and thus a valid confidence region for $\mu$ using Theorem \ref{thm_conf_reg_mean}. Alternatively, if $\norm{\cdot}$ is an $\ell^p$-norm on $\mathbb{R}^d$, $\sigma_\cB$ and $\hat{\sigma}_\cB$ can be replaced by $\norm{\sigma}_p, \norm{\hat{\sigma}}_p$, where
\[ \sigma = \left(\sqrt{\mathrm{Var}(\langle e_j, X_{1} \rangle)} \right)_{1 \leq j \leq d},  \hat{\sigma} = \left( \sqrt{\frac{1}{n} \sum_{i = 1}^n \langle e_j, X_i - \overline{X}_n \rangle^2 }  \right)_{1 \leq j \leq d} \]
and $(e_j)_{1 \leq j \leq d}$ is the canonical basis,
as proposed by \cite{arlot2010resampling} for their bound in the Gaussian case. The following lemma shows that $\norm{\sigma}_p, \norm{\hat{\sigma}}_p$ are never smaller than $\sigma_\cB,\hat{\sigma}_\cB$, respectively. Thus, in theory, $\hat{\sigma}_\cB$ provides a better confidence bound than $\norm{\hat{\sigma}}_p$ when $(E,\norm{\cdot}) = (\mathbb{R}^d, \norm{\cdot}_p)$. However, $\norm{\hat{\sigma}}_p$ may be preferable in practice since it is much easier to compute. 

\begin{lemma}\label{lemma_Lp_var}
    Assume that $(E,\norm{}) = (\mathbb{R}^d, \norm{\cdot}_p)$ for some $p \geq 1$. Let $X = (X_{j})_{1 \leq j \leq d}$ be a square integrable random vector. Let $\cB^*$ be the unit ball of the dual $E^*$. For any $j \in \{1,\ldots,d\},$ let $\sigma_j$ be the standard deviation of $X_j$. Define
    \[ \sigma_\cB^2 = \sup_{l \in \cB^*} \mathrm{Var}(l(X)). \]
   Let $\sigma = (\sigma_j)_{1 \leq j \leq d}$. Then
    \[ \sigma_\cB \leq \norm{\sigma}_p = \left( \sum_{j = 1}^d \sigma_j^p \right)^{1/p}. \]
\end{lemma}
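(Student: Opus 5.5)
The plan is to identify the dual unit ball concretely and then bound the variance of each linear functional by the triangle inequality in $L^2$, followed by Hölder's inequality. For $E = \mathbb{R}^d$ with $\norm{\cdot}_p$, the dual norm is $\norm{\cdot}_q$ with $1/p + 1/q = 1$, where $q = \infty$ when $p = 1$. So every $l \in \cB^*$ has the form $l(x) = \sum_{j=1}^d a_j x_j$ with $\norm{a}_q \leq 1$. This is the only structural input needed, and it follows from the equality case in Hölder's inequality. It suffices to show that $\sqrt{\mathrm{Var}(l(X))} \leq \norm{\sigma}_p$ for each such $a$, and then take the supremum.

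Fix such an $a$ and write $\sqrt{\mathrm{Var}(l(X))} = \norm{\sum_j a_j (X_j - \mathbb{E}X_j)}_{L^2}$. By Minkowski's inequality in $L^2$,
\[ \sqrt{\mathrm{Var}(l(X))} \leq \sum_{j=1}^d |a_j| \, \norm{X_j - \mathbb{E}X_j}_{L^2} = \sum_{j=1}^d |a_j| \sigma_j . \]
Hölder's inequality in $\mathbb{R}^d$ then bounds the right-hand side by $\norm{a}_q \norm{\sigma}_p \leq \norm{\sigma}_p$. Taking the supremum over $l \in \cB^*$ gives $\sigma_\cB \leq \norm{\sigma}_p$. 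Square integrability of $X$ ensures that all the quantities involved are finite.

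There is no real obstacle. The only point needing care is the endpoint $p = 1$ (so $q = \infty$), together with the fact that the identification of $\cB^*$ through $\norm{a}_q \leq 1$ only requires the inclusion direction: any $l \in \cB^*$ satisfies $|a_j| = |l(e_j)| \leq 1$ when $p = 1$, and in general $\norm{a}_q \leq 1$ by testing $l$ on the Hölder extremizer. The same argument applied to the empirical distribution gives $\hat{\sigma}_\cB \leq \norm{\hat{\sigma}}_p$, which is the companion remark made before the statement.
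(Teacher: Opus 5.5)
Your proof is correct and is essentially the paper's argument. The paper writes $\mathrm{Var}(l(X)) = \norm{A\theta}^2$, with $A$ a (symmetric) square root of the covariance matrix whose columns $c_k$ have Euclidean norm $\sigma_k$, and then bounds $\norm{\sum_k \theta_k c_k}$ using H\"older and Cauchy--Schwarz. You run the same triangle-inequality-plus-H\"older estimate directly on the centered coordinates $X_j - \mathbb{E}X_j$ in $L^2$ via Minkowski, which avoids the matrix square root and is slightly cleaner.
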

\noindent The proof of Lemma \ref{lemma_Lp_var} can be found in Section \ref{ssec_sm_conf_reg} of the Supplementary Material.



\section{Application II: Permutation two-sample testing}\label{sec_two_sample_test}
Let us introduce the non-parametric permutation two-sample test, that will be the statistical problem of interest in this section. 

Assume that we have access to two independent samples $X = (X_1,\ldots,X_n)$ and $Y = (Y_1,\ldots,Y_m)$, of distributions $P^{\otimes n}$ and $Q^{\otimes m}$ respectively. Let also $Z = (X,Y)$ be the concatenation of the two samples. The classical two-sample testing problem is to test the null hypthesis $H_0:P=Q$ against the alternative $H_1:P\neq Q$. 

A generic approach to two-sample testing consists in choosing a class of functions $\cF$ and considering the following test statistic:
\begin{equation} \label{def_stat_test}
    T_{n,m}(\cF) = \sup_{f \in \cF} \left\{ \frac{1}{n} \sum_{i = 1}^n f(X_i) - \frac{1}{m} \sum_{j = 1}^m f(Y_j) \right\}\;.
\end{equation}  
Classical examples, such as Kolmogorov-Smirnov, Kernel Mean Discrepancy tests or testing with respect to the Wasserstein distance, fall into this setting and will be further discussed in Section \ref{sec_examples} below. 

Heuristically, the statistic $T_{n,m}(\cF)$ is expected to be larger under the alternative than under the null hypothesis. The test will thus depend on a threshold, above which the null hypothesis is rejected.

In general, the distribution of the statistic \eqref{def_stat_test} under the null hypothesis is unknown. Hence, a bootstrap method can be used to select the significance threshold for the test. More precisely, let
\[ w = \left(\underbrace{\frac{1}{n},\ldots,\frac{1}{n}}_{n \text{ times}}, \underbrace{\frac{- 1}{m},\ldots,\frac{-1}{m}}_{m \text{ times}} \right) \]
and let $\xi = (w_{\sigma(i)})_{1 \leq i \leq n+m}$ , where $\sigma$ is a uniform random permutation. Remark that $\xi$ is exchangeable and that $\sum_{i = 1}^{n+m} \xi_i = 0$.
Hence, the results of Sections \ref{sec_exch_self_bounding} and \ref{sec_cond_X} above apply for $\xi$.

The so-called permutation test (\cite{Pitman37,Hoeff52}) rejects at level $\alpha$ when
\[ \{T_{n,m}(\cF) \geq \hat{q}_B(\alpha) \}\;,\]
where $\hat{q}_B(\alpha) = \hat{q}_B(Z,\alpha)$ is the empirical $1-\alpha$ quantile based on the sample
\[  \left( \sup_{f \in \cF} \left\{ \sum_{i = 1}^{n+m} \xi_i^{(b)} f(Z_i) \right\} \right)_{b = 0,\ldots,B}, \]
with $\xi^{(0)}=w$ and $(\xi^{(b)})_{1 \leq b \leq B}$ an i.i.d. sample, independent from $(Z,\xi)$, with the same distribution as $\xi$. More precisely, by setting $T_{\xi}(Z)=\sup_{f \in \cF} \left\{ \sum_{i = 1}^{n+m} \xi_i f(Z_i) \right\}$, we have
\begin{equation*}
     \hat{q}_B(\alpha)= \inf\{ q\,/\,\hat{F}^{(\ell)}_B(q)\geq 1-\alpha\}\;,
\end{equation*}
where $\hat{F}^{(\ell)}_B$ is the left-continuous version of the empirical cumulative distribution function of $(T_{\xi^{(0)}}(Z),\ldots,T_{\xi^{(B)}}(Z))$,
\[
\hat{F}^{(\ell)}_B (x) := \frac{1}{B+1} \sum_{b=0}^{B} \mathbb{I}_{\left\{T_{\xi^{(i)}}< x\right\}}\;.
\]
Equivalently, if $T^{[0]}\leq\ldots\leq T^{[B]}$ stands for the order statistics of $(T_{\xi^{(0)}}(Z),\ldots,T_{\xi^{(B)}}(Z))$, then $\hat{q}_B(\alpha)=T^{[m]}$, with $m=\lceil (B+1)(1-\alpha)\rceil$, where $\lceil x\rceil$ stands for the smallest integer larger than or equal to the real number $x$. Using exchangeability, it can be shown (see for instance \cite[Lemma 1]{RomWolf:05}) that this test controls the type-I error at level $\alpha$, for any value of $B$, that is: $P_{H_0}(T_{n,m}(\cF) \geq \hat{q}_B(\alpha))\leq \alpha$, where the notation ``$P_{H_0}$'' indicates that the probability of the event is computed under the assumption that $P=Q$.

The concentration inequalities of Theorems \ref{thm_intro_cond_exp} and \ref{thm_intro_concen_exchange} will allow us to give a non-asymptotic analysis of the power of the permutation two-sample test in terms of the integral probability metric
\[ d_{\cF}(P,Q) := \sup_{f \in \cF} \left\{ \int f dP - \int f d Q \right\}\;. \]

Note that if we assume that $d_{\cF}$ is a distance - as it is the case in the examples of Section \ref{sec_examples} -, then it is symmetric in its arguments. Hence, the supremum over $\cF$ in its definition is equal to the supremum over $\cF \cup -\cF$. We will thus assume that $\cF$ is a symmetric class, in the sense that if the function $f$ belongs to $\cF$ then the function $-f$ also belongs to $\cF$.

Our approach in non-asymptotic, based on Theorems \ref{thm_intro_cond_exp} and \ref{thm_intro_concen_exchange}. We refer to \cite[Section 3.8.1]{vandervaartWellner:23}
for an account on the asymptotic theory of permutation two-sample testing.


\subsection{A general result}\label{ssec_gen_res}
Let us denote, for a distribution $R\in\left\{ P,Q\right\}$, $\sigma^2_R(\cF)=\sup_{f\in\cF}\left\{\var_{R}(f(Z)) \right\}$, $V=\sup_{f\in \cF} \left\{ n \var(f(X_1)) + m\var(f(Y_1)) \right\}$ and, for any positive integer $k$,
$$M_{k}(R)=\EE_{R^{\otimes k}}\left[\sup_{f\in \cF}\left\{\sum_{i=1}^{k} f(Z_i)-\EE[f(Z_i)]\right\}\right].$$
\begin{theorem}\label{th_test_separation_rate}
Grant the notations above, take $\alpha, \delta \in (0,1)$ and define 
\[ \alpha_B = \left(1 + \frac{1}{B} \right) \left( \alpha - \sqrt{\frac{3\alpha}{B} \log \left( \frac{1}{\delta} \right)} - \frac{1}{B+1} \right). \]
Consider the test rejecting the null hypothesis if $T_{n,m}(\cF)>\hat{q}_B(\alpha)$ and assume that $\alpha_B\in (0,1)$. Assume also that all the functions in $\cF$ are valued in $[-1,1]$. Then the test power is at least equal to $3\delta$ if either of the following inequalities holds:
\begin{align*}
  &\left(1 - \frac{2}{\sqrt{n+m-1}} \right)  d_{\cF}(P,Q) \\
  &\quad \geq  \frac{2}{n}(M_n(P)+M_n(Q))+\frac{2}{m}(M_m(P)+M_m(Q))+ \frac{12}{n+m}\log\left(\frac{1}{\delta}\right) \\
  &\quad + \sqrt{\frac{1}{n} + \frac{1}{m}} \left(2\sqrt{2\log \left( \frac{1}{\alpha_B} \right)} + \sqrt{2\log \left( \frac{1}{\delta} \right)} \right) \numberthis \label{eq_thm_sep_rates_hoeff}
\end{align*}
or
\begin{align*}
    &\left(1 - \frac{2}{\sqrt{n+m-1}}-4\sqrt{3\left(\frac{1}{n}+\frac{1}{m}\right)\log\left(\frac{1}{\alpha_B}\right)} \right) d_{\cF}(P,Q)\\
    &\quad \geq \left(\frac{1}{n}\vee \frac{1}{m}\right)\log\left( \frac{1}{\delta}\right)+  \frac{2}{n}(M_n(P)+M_n(Q))+\frac{2}{m}(M_m(P)+M_m(Q))+ \frac{12}{n+m}\log\left(\frac{1}{\delta}\right) \\
    &+ \sqrt{2\left( \frac{\sigma^2_P(\cF)}{n} + \frac{\sigma^2_Q(\cF)}{m} \right)\log\left(\frac{1}{\delta}\right)} +2\left(\frac{1}{n}+\frac{1}{m}\right)\sqrt{\log\left(\frac{1}{\alpha_B}\right)} \times  \\ 
    &\times\sqrt{\left(34 (M_n(P)+M_m(Q))+\frac{2m}{n(n+m)} M^2_n(P)+ \frac{2n}{m(n+m)} M^2_m(Q)+V+4\log\left(\frac{1}{\delta}\right)\right)}\;. \numberthis \label{eq_thm_sep_rates}
\end{align*}
\end{theorem}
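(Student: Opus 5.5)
The plan is to show that, under either separation condition, with probability at least $1-3\delta$ the test statistic $T_{n,m}(\cF)$ is larger than an upper bound on the empirical quantile $\hat{q}_B(\alpha)$. (Here "power at least $3\delta$" is read as "type II error at most $3\delta$".) The proof splits into three pieces, each of which fails with probability at most $\delta$:
\begin{itemize}
\item a lower bound on $T_{n,m}(\cF)$;
\item a reduction from the empirical quantile $\hat{q}_B(\alpha)$ to the true conditional permutation quantile $q_{\alpha_B}(T_\xi(Z)\mid Z)$;
\item an upper bound on that conditional quantile via Theorem \ref{thm_intro_concen_exchange}, combined with concentration of the conditional mean $\overline{g}(Z)$ via Theorem \ref{thm_intro_cond_exp}.
\end{itemize}

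\emph{Lower bound on the statistic.} Since $\cF$ is symmetric, fix $f$ nearly attaining $d_\cF(P,Q)$. Then $T_{n,m}(\cF)\geq \frac1n\sum f(X_i)-\frac1m\sum f(Y_j)$, and Hoeffding's inequality (for \eqref{eq_thm_sep_rates_hoeff}) or Bernstein's inequality (for \eqref{eq_thm_sep_rates}) gives
\[
T_{n,m}(\cF)\geq d_\cF(P,Q)-\sqrt{2(1/n+1/m)\log(1/\delta)},
\]
resp. the same bound with the variance term $\sqrt{2(\sigma_P^2/n+\sigma_Q^2/m)\log(1/\delta)}$ plus a $(1/n\vee1/m)\log(1/\delta)$ term.

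\emph{Empirical quantile reduction.} Conditionally on $Z$, the values $T_{\xi^{(b)}}(Z)$ for $b\geq1$ are i.i.d. The count of those exceeding $q_{\alpha_B}(\cdot\mid Z)$ is binomial with parameter at most $\alpha_B$. A multiplicative Chernoff bound, together with the extra term $b=0$, shows that with probability at least $1-\delta$ this count is below $(B+1)\alpha-1$. This is exactly where the definition of $\alpha_B$ comes from, and on this event $\hat{q}_B(\alpha)\leq q_{\alpha_B}(T_\xi(Z)\mid Z)$.

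\emph{Bounding the conditional quantile.}
\begin{enumerate}
\item Apply Theorem \ref{thm_intro_concen_exchange}, or more precisely the sub-Gaussian MGF of Theorem \ref{Th_concen_exchange}, with weights $w$. This gives $q_{\alpha_B}(\cdot\mid Z)\leq \overline{g}(Z)+c\sqrt{\log(1/\alpha_B)}\min\{(b-a)\sqrt{v_+(Z)},\|w\|_2\}$, where $\|w\|_2^2=1/n+1/m$ and $b-a=1/n+1/m$.
\begin{itemize}
\item The $\|w\|_2$ branch gives the term $2\sqrt{2(1/n+1/m)\log(1/\alpha_B)}$ in \eqref{eq_thm_sep_rates_hoeff}; the constant $2\sqrt2$ should follow from Hoeffding-type control for two-valued weights (Tolstikhin) rather than the generic constant $9$.
\item The $v_+$ branch is used for \eqref{eq_thm_sep_rates}. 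There, $v_+(Z)$ must itself be bounded with high probability. Centering at the pooled mean, $v_+(Z)\leq \sup_f\sum_i (f(Z_i)-\mathbb{E}f(Z_i))^2+n(\text{mean shift})^2+m(\cdot)^2$. The mean-shift part is controlled by $d_\cF(P,Q)^2$, which produces the $4\sqrt{3(1/n+1/m)\log(1/\alpha_B)}\,d_\cF$ on the left-hand side. The sum of squares is bounded, via the contraction principle and Bousquet/Talagrand, by $V+34(M_n(P)+M_m(Q))+\dots$ plus $4\log(1/\delta)$; the $M_n^2$ terms come from the empirical-mean deviation.
\end{itemize}
\item Apply Theorem \ref{thm_intro_cond_exp} with $\kappa=\mathbb{E}|\xi_1|=2/(n+m)$. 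With probability at least $1-\delta$, $\overline{g}(Z)\leq \mathbb{E}\overline{g}(Z)+\sqrt{12\kappa\log(1/\delta)\,\mathbb{E}\overline{g}}+5\kappa\log(1/\delta)$. By AM–GM this is at most $(1+\eta)\mathbb{E}\overline{g}+C\kappa\log(1/\delta)$, which yields the $\frac{12}{n+m}\log(1/\delta)$ term.
\item Bound $\mathbb{E}\overline{g}(Z)$ by Proposition \ref{prop_upper_mean_boot} or a direct symmetrization. Under the alternative, $Z$ is not i.i.d.: write $f(Z_i)=(f(Z_i)-\mathbb{E}f(Z_i))+\mathbb{E}f(Z_i)$. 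The random part gives $\frac2n(M_n(P)+M_n(Q))+\frac2m(M_m(P)+M_m(Q))$, by splitting according to which original sample the permuted positions fall in. The deterministic part is $\mathbb{E}\sup_f\sum_i w_{\sigma(i)}\mathbb{E}f(Z_i)$, which by a variance computation of the hypergeometric-type sum is at most $d_\cF(P,Q)\cdot 2/\sqrt{n+m-1}$. This explains the factor $1-2/\sqrt{n+m-1}$.
\end{enumerate}

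Finally, collect the terms and apply a union bound over the three events; if the stated inequality holds, $T_{n,m}(\cF)>\hat{q}_B(\alpha)$ on their intersection. I expect the main obstacle to be this last bookkeeping on $\mathbb{E}\overline{g}(Z)$ under non-identically distributed data, namely isolating the mean-shift contribution with the precise constant $2/\sqrt{n+m-1}$, together with the high-probability control of $v_+(Z)$ needed for \eqref{eq_thm_sep_rates}.
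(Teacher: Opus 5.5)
Your overall architecture is the paper's:
\begin{itemize}
\item the multiplicative Chernoff reduction of $\hat q_B(\alpha)$ to a conditional quantile (Lemma \ref{lem_F_hat_q}, which is where $\alpha_B$ comes from);
\item Tolstikhin's inequality conditionally on $Z$, with the two branches $\norm{w}_2$ and $v_+(Z)$;
\item Theorem \ref{thm_intro_cond_exp} with $\kappa=2/(n+m)$, followed by AM--GM;
\item the mean-shift term $d_{\cF}(P,Q)\,\mathbb{E}\bigl|\sum_{i\le n}\xi_i\bigr|\le d_{\cF}(P,Q)/\sqrt{n+m-1}$;
\item Hoeffding/Bernstein for $T_{n,m}(\cF)$.
\end{itemize}
Centering $\mathbb{E}[\overline{g}(Z)]$ at the true means instead of using Lemma \ref{lemma_expect_delta_alt} is a harmless variant. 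The number of $X$-positions receiving weight $1/n$ is random, so you need $k\mapsto M_k(R)$ nondecreasing, which holds by Jensen. Note that the raw bounds are $\frac1n(M_n(P)+M_n(Q))+\frac1m(M_m(P)+M_m(Q))$ and $d_{\cF}(P,Q)/\sqrt{n+m-1}$. The factors $2$ in the statement come only from taking $\eta=1$, so do not double them a second time.

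\textbf{The gap: high-probability control of $v_+(Z)$ in \eqref{eq_thm_sep_rates}.} Your pathwise bound drops a cross term. With $N=n+m$, $\bar f_i=f(Z_i)-\mathbb{E}f(Z_i)$, $S_X=\sum_{i\le n}\bar f_i$ and $S_Y=\sum_{i>n}\bar f_i$, one has exactly
\[ \sum_{i=1}^{N}\Bigl(f(Z_i)-\frac1N\sum_{j=1}^N f(Z_j)\Bigr)^2=\sum_{i=1}^N\bar f_i^2+\frac{nm}{N}(Pf-Qf)^2+\frac{2(Pf-Qf)}{N}\bigl(mS_X-nS_Y\bigr)-\frac{(S_X+S_Y)^2}{N}. \]
The third term is not sign-definite: take $S_Y=-S_X$ with $S_X$ of the same sign as $Pf-Qf$. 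The pooled empirical-mean deviation enters with a minus sign, so the $M_n^2(P)$, $M_m^2(Q)$ terms do not come from there; they come from the cross term, after AM--GM. Controlling that term pathwise would need a separate concentration bound for $\sup_f|mS_X-nS_Y|$, on top of the one for $\sup_f\sum_i\bar f_i^2$, hence more failure events.

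The paper sidesteps this. Lemma \ref{lem_v_+} shows that $v_+/4$ is $(2,0)$-self-bounding: write $v_+=\frac{1}{2N}\sup_f\sum_{i,j}(f(Z_i)-f(Z_j))^2$ and delete index $i$. A single application of \cite[Theorem 6.21]{BoucheronLugosiMassart:2013} then gives $v_+(Z)\le 2\mathbb{E}[v_+(Z)]+C\log(1/\delta)$, and the cross term enters only through its expectation. That expectation is at most $2d_{\cF}(P,Q)\bigl(\frac mN M_n(P)+\frac nN M_m(Q)\bigr)$, and it is split by $2ab\le\eta a^2+b^2/\eta$ with $\eta=nm/N$. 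This produces both the $M^2$ terms and the multiple of $\frac{nm}{N}d_{\cF}^2(P,Q)$ whose square root becomes $4\sqrt{3(\frac1n+\frac1m)\log(1/\alpha_B)}\,d_{\cF}(P,Q)$ on the left-hand side.

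\textbf{Two bookkeeping points.} First, the variance branch uses four events: the quantile, $\mathbb{E}[T_\xi(Z)\mid Z]$, $v_+(Z)$, and the statistic. So ``three pieces, each failing with probability $\delta$'' does not close. The paper's own Step 5 is stated with probability $1-2\beta$, so strictly that branch gives $1-4\delta$ unless $\log(1/\delta)$ is replaced by $\log(2/\delta)$.

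Second, some constants you expect to recover do not follow from the cited tools for $[-1,1]$-valued $\cF$. Each of the $nm$ increments in $|\nabla g|_+^2$ can be as large as $4(\frac1n+\frac1m)^2$. Tolstikhin therefore gives $4\sqrt{2(\frac1n+\frac1m)\log(1/\alpha_B)}$, not $2\sqrt2$. Likewise, Theorem \ref{thm_intro_cond_exp} with $\eta=1$ gives $\frac{16}{n+m}\log(1/\delta)$, not $\frac{12}{n+m}$. The paper's Steps 2--3 reach the stated values only by implicitly using unit increments and a linear term $\frac52\kappa$ instead of $5\kappa$.
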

The proof of Theorem \ref{th_test_separation_rate} makes use of Theorems \ref{thm_intro_cond_exp} and  \ref{Th_Tolsti} and
is deferred to Section \ref{ssec_two_sample_proof} of the Supplementary Material. Indeed, for the specific weights of the permutation test, Tolstikhin's inequality (Theorem \ref{Th_Tolsti}) achieves better constants than Theorem \ref{thm_intro_concen_exchange}. See also the related discussion in Section \ref{proof_Tolstikhin_exch_pair}.

Let us comment on the separation rates obtained in Theorem \ref{th_test_separation_rate}. Note first that, as expected due to the symmetry between the pairs $(P,n)$ and $(Q,m)$ in the testing problem, the bound are symmetric in these variables.

Another preliminary remark consists in noting that the quantity $M_k(R)/k$ is non-increasing in $k$ for any probability measure $R$ (see Lemma  in Section \ref{sssec_two_sample_tech_lem} of the Supplementary Material). Hence, we have $m/(n(n+m))M^2_n(P)=\mathcal{O}(M_n(P))$ and the same holds true with $P$ replaced by $Q$ and the pair $(n,m)$ switched. 

Assume that the length of the two samples are similar, \textit{i.e.} $n \asymp m$. 
If the class $\cF$ is such that $\sqrt{n} = \mathcal{O}(M_n(R))$ for $R \in \{P,Q\}$ -- which is the case as soon as $\cF$ contains two functions that are not $R$-\textit{a.e.} equal --, then
the separation rate in both \eqref{eq_thm_sep_rates_hoeff} and \eqref{eq_thm_sep_rates} is
\[ \max \left\{ \frac{M_{n}(P)}{n}, \frac{M_{m}(Q)}{m} \right\} \;, \]
\textit{i.e.} the rate of convergence in the uniform law of large numbers for the class $\cF$.  


It is worth noting that the main difference between Inequalities \eqref{eq_thm_sep_rates_hoeff} and \eqref{eq_thm_sep_rates} is that Inequality \eqref{eq_thm_sep_rates} has factors that depend on the variances $\sigma_R^2(\cF)$, $R\in \left\{P,Q\right\}$, and $V$, whereas in comparison, the controls are uniform (\textit{i.e.} numerical constants replace the variance terms) in Inequality \eqref{eq_thm_sep_rates_hoeff}. From a technical viewpoint, this is due to the two possible choices provided by Proposition \ref{prop_bound_Vplus_perm} for bounding the quantity of the form $V_+(g,\sigma)$ appearing in our proof of Theorem \ref{th_test_separation_rate}. If the variance terms are of the order of absolute constants, then Inequalities \eqref{eq_thm_sep_rates_hoeff} and \eqref{eq_thm_sep_rates} are of the same order. 

Finally, note that 
in the case of the Kolmogorov-Smirnoff test and the Wasserstein test in dimension 1, it holds $M_n(R) = \mathcal{O}(\sqrt{n})$ for $R \in \{P,Q\}$, which gives a parametric separation rate, of the order $ \mathcal{O}(1/\sqrt{n})$. See Section \ref{sec_examples} below for more details about these specific tests.

\subsection{Some examples}\label{sec_examples}

 Fix some confidence level $\alpha \in \left(0; \frac{1}{3} \right]$, some desired power level $1 - 3\delta$ and define $\alpha_B$ as in Theorem \ref{th_test_separation_rate}, i.e
\[ \alpha_B = \left(1 + \frac{1}{B} \right) \left( \alpha - \sqrt{\frac{3\alpha}{B} \log \left( \frac{1}{\delta} \right)} - \frac{1}{B+1} \right).  \]

\subsubsection{Kolmogorov-Smirnov two-samples test}

The test statistics $T^{(KS)}_{n,m}$ used in the Kolmogorov-Smirnov two-sample test \cite{smirnov1939estimation} is
\[
T^{(KS)}_{n,m} = \sup_{x \in \mathbb{R}} \left| \frac{1}{n} \sum_{i = 1}^n \mathbb{I}\{X_i \leq x \} - \frac{1}{m} \sum_{j = 1}^m \mathbb{I}\{Y_j \leq x\} \right|\;.
\]
Using the notations of Section \ref{ssec_gen_res} above, it holds $T^{(KS)}_{n,m}=T_{n,m}(\cF)$ for 
\[
    \cF:=\left\{ c\mathbb{I}_{(-\infty,x]}\, / \, c\in\left\{ -1, 1\right\}, \; x\in \R \right\} \;,
\]
the symmetrized class of indicators of right-closed half-lines in $\R$. The corresponding distance $d_{\cF}$ between probability measures is the so-called Kolmogorov distance, given by 
\[
  d_{\cF}(P,Q)= d_{KS}(P,Q) =  \sup_{x\in \R} \vert F(x)-G(x)\vert\;,
\]
where $F$ and $G$ are the cumulative distribution functions of $P$ and $Q$ respectively. The class $\cF$ is bounded, moreover for any distribution $P$,
\[ \sigma^2_P(\cF) = \sup_{f \in \cF} \var(f(X)) = \sup_{x \in \mathbb{R}} \{ F(x) (1-F(x)) \} \leq \frac{1}{4}\;, \]
with equality when the measure $P$ is atomless. This means that there is little to gain by taking the variance into account: thus, we apply Equation \eqref{eq_thm_sep_rates_hoeff} of Theorem \ref{th_test_separation_rate}. By the DKW inequality \cite{DKW1956} with optimal constant \cite{DKW1990}, for any $x > 0$, any $k \in \mathbb{N}$ and any i.i.d. sample $(Z_i)_{1 \leq i \leq k}$ with common distribution function $H$,
\[ \mathbb{P} \left( \sup_{t \in \mathbb{R}} \left| \frac{1}{k} \sum_{i = 1}^k \mathbb{I}\{Z_i \leq t \} - H(t) \right| \geq x \right) \leq 2e^{-2kx^2}\;, \]
which implies that, for all $k \in \mathbb{N}$ and any distribution $R$,
\[ M_k(R) = \mathbb{E} \left[ k \sup_{t \in \mathbb{R}} \left| \frac{1}{k} \sum_{i = 1}^k \mathbb{I}\{Z_i \leq t \} - H(t) \right| \right] \leq 2k \int_0^{+\infty} e^{-2k x^2} dx = \sqrt{\frac{k \pi}{2}}\;. \]
The following result is a corollary of Theorem \ref{th_test_separation_rate} and the above bounds. 

\begin{corollary} \label{cor_KS}
    The permutation test based on the Kolmogorov-Smirnoff test statistic has power at least $1 - 3\delta$ whenever
    \begin{align*}
        \left(1 - \frac{2}{\sqrt{n+m-1}} \right) d_{KS}(P,Q) &\geq \sqrt{\frac{1}{n} + \frac{1}{m}} \left(2\sqrt{2\log \left( \frac{1}{\alpha_B} \right)} + \sqrt{2\log \left( \frac{1}{\delta} \right)} \right) \\
        &\quad + 2\sqrt{2\pi} \left( \frac{1}{\sqrt{n}} + \frac{1}{\sqrt{m}} \right) + \frac{12}{n+m} \log \left( \frac{1}{\delta} \right) \;.
    \end{align*}
\end{corollary}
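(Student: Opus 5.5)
The plan is to derive Corollary \ref{cor_KS} directly from Inequality \eqref{eq_thm_sep_rates_hoeff} of Theorem \ref{th_test_separation_rate}, applied to the symmetrized class $\cF$ of signed indicators of half-lines. First we check the hypotheses of that theorem. The functions in $\cF$ take values in $\{-1,0,1\}\subset[-1,1]$. The class is symmetric by construction, and $T^{(KS)}_{n,m}=T_{n,m}(\cF)$, $d_{KS}=d_\cF$. For measurability, we note that the supremum over $x\in\R$ can be restricted to a countable set: by right-continuity of $x\mapsto\mathbb{I}\{z\leq x\}$, the rationals suffice. Hence $\cF$ is separable in the product topology, and Lemma \ref{lemma_meas} applies. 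The condition $\alpha_B\in(0,1)$ is implicitly assumed, exactly as in the theorem.

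Next we bound the terms $M_k(R)$ for $R\in\{P,Q\}$ and $k\in\{n,m\}$. Since $\cF$ contains both $\mathbb{I}_{(-\infty,x]}$ and $-\mathbb{I}_{(-\infty,x]}$, the supremum defining $M_k(R)$ equals $k\sup_x|H_k(x)-H(x)|$. The DKW bound quoted above then gives $M_k(R)\leq\sqrt{k\pi/2}$. Therefore
\[ \frac{2}{n}(M_n(P)+M_n(Q))\leq \frac{4}{n}\sqrt{\frac{n\pi}{2}}=\frac{2\sqrt{2\pi}}{\sqrt{n}}, \]
and similarly the $m$-terms are at most $2\sqrt{2\pi}/\sqrt{m}$. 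Substituting these into the right-hand side of \eqref{eq_thm_sep_rates_hoeff} gives exactly the right-hand side of the corollary. The remaining terms, namely $\frac{12}{n+m}\log(1/\delta)$ and the $\sqrt{1/n+1/m}$ term, are unchanged.

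Finally, suppose the displayed condition of the corollary holds. Then the left-hand side $(1-2/\sqrt{n+m-1})\,d_{KS}(P,Q)$ dominates our upper bound, and hence the original right-hand side of \eqref{eq_thm_sep_rates_hoeff}. So Theorem \ref{th_test_separation_rate} applies and yields power at least $1-3\delta$. I read the theorem's phrase ``power at least equal to $3\delta$'' as a typo for $1-3\delta$, consistent with the setup of Section \ref{sec_examples}.

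There is no real obstacle here. The only points needing care are the identification of the signed supremum with the absolute deviation, so that DKW applies, and the countability argument for measurability. Both are routine.
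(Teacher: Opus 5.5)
Your proposal is correct and is the paper's argument: the corollary follows by plugging the DKW-based bound $M_k(R)\le\sqrt{k\pi/2}$ into Inequality \eqref{eq_thm_sep_rates_hoeff} of Theorem \ref{th_test_separation_rate}, which gives $\frac{2}{n}(M_n(P)+M_n(Q))\le 2\sqrt{2\pi}/\sqrt{n}$ and likewise for $m$. Your extra checks are sound and supply details the paper leaves unstated: separability of the class via rational thresholds, the identification of the signed supremum with the absolute Kolmogorov--Smirnov deviation, and reading ``power at least $3\delta$'' as $1-3\delta$.
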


Usually, the Kolmogorov-Smirnov statistic $T_{n,m}(\cF)$ is used together with a threshold of the form    
\[ c_\alpha \sqrt{\frac{1}{n} + \frac{1}{m}} \approx \sqrt{2 \log \left( \frac{2}{\alpha} \right)} \sqrt{\frac{1}{n} + \frac{1}{m}} \]
based on the limiting distribution of $T_{n,m}(\cF)$ worked out by Smirnov \cite{smirnov1939estimation}. 
This approximation can be poor for finite samples, while the exact critical value is hard to compute \cite{Hodges1958}. Using the DKW inequality \cite{DKW1956} with optimal constant \cite{DKW1990}, the power of the above test can be shown to be at least $1 - \delta$ whenever
\[ d_{KS}(P,Q) \geq c_\alpha \sqrt{\frac{1}{n} + \frac{1}{m}}  + \sqrt{\frac{1}{2} \log \left( \frac{2}{\delta} \right)} \left( \frac{1}{\sqrt{n}} + \frac{1}{\sqrt{m}} \right). \]
Corollary \ref{cor_KS} shows that a similar guarantee holds for the permutation test,
which is exact and does not rely on the knowledge of the asymptotic distribution of $T_{n,m}(\cF)$.

\subsubsection{Testing with respect to the Wasserstein distance}
In this section, we make use of the so-called Wasserstein distance $d_W$ -- or Wasserstein-$1$ distance -- defined as follows: for any two distributions $P$ and $Q$ on a metric space $(\cX, d)$, the Wasserstein distance between $P$ and $Q$ is
\begin{equation*}
    d_W(P,Q)=\sup_{\cF}\left\{Pf-Qf \right\}\;,
\end{equation*}
where $\cF={\rm Lip}_1(\cX)$ is the set of $1-$Lipschitz functions on $(\cX,d)$. Given two samples with empirical distributions $P_n$ and $Q_m,$ the Wasserstein distance $d_W(P_n,Q_m)=T_{n,m}(\cF)$ can be used as a test statistic for the two-sample problem. We refer to Ramdas \cite{RamdasTrillosCuturi:17} for a recent survey on Wasserstein two-sample testing, with a focus on relations to other classical testing problems. Unlike the case of the Kolmogorov-Smirnov statistic, the distribution of $d_W(P_n,Q_m)$ under the null depends on $P$. Ramdas et al. \cite{RamdasTrillosCuturi:17} resolve this problem in dimension $1$ by modifying the test statistic, but their approach is specific to the one-dimensional case. Instead, we consider here the use of the bootstrap to calibrate the threshold. As we discussed previously, this yields an exact test no matter what the distribution of $T_{n,m}(\cF)$ actually is. 


When $\cX$ is compact, the functions belonging to $\cF$ can be centered by their respective mean value, with respect to any probability measure on $\cX$, in order to ensure that they all take values in $[-{\rm diam}(\cX), {\rm diam}(\cX)]$, where ${\rm diam}(\cX)=\sup\left\{ d(x,y):x,y\in \cX\right\}$ is the diameter of the set $\cX$. This comes from the following identity: for any probability measure $P$ on $\cX$, any $f \in \cF$ and any $x\in \                  
                  \cX$, we have
\begin{equation}
    f(x)-Pf=\int (f(x)-f(y))P(dy) \leq \int_{\cX} d(x,y) dP(y) \leq \mathrm{diam}(\cX)\;.
\end{equation}
Moreover, for any $f \in \cF,$
\[ \mathrm{Var}(f(X)) = \frac{1}{2} \mathbb{E} \left[ (f(X) - f(X'))^2 \right] \leq \frac{1}{2} \mathbb{E} \left[ d(X,X')^2 \right]\;,  \]
so we have that
\[ \sigma_P^2(\cF) = \sup_{f \in \cF} \left\{ \mathrm{Var}(f(X)) \right\} \leq \frac{1}{2} \mathbb{E} \left[ d(X,X')^2 \right]. \]
The separation rate of the test based on $ d_W(P_n,Q_m)$ is determined by the rate of growth of $M_{l}(R) = l \mathbb{E}[d_W(R_{l},R)]$ for $R \in \{P,Q\}$ and $l \in \{n,m\}$,  \textit{i.e.} by the rate of convergence of $\mathbb{E}[d_W(R_{l},R)]$ to $0$,  where $R_{l}$ denotes the empirical distribution of an i.i.d. sample of size $l$ from $R$. 

Let us consider the case where $\cX = \mathbb{R}^k$ and $d(x,y) = \norm{x-y}$ for some norm $\norm{\cdot}$.
If the dimension $k = 1, \norm{\cdot} = |\cdot|$, and we have that
\[ d_W(R_{l},R) = \int_{\mathbb{R}} \left| F_{l}(t) - F_R(t) \right| dt\;, \]
where $F_{l}, F_R$ are the cumulative distribution functions of $R_{l}$ and $R$, respectively. It follows by Jensen's inequality that
\[ M_{l}(R) \leq \sqrt{l} \int_{\mathbb{R}} \sqrt{F_R(t)(1-F_R(t))} = \mathcal{O} \left( \sqrt{l} \right) \;, \]
provided that $J_1(R)=\int_{\mathbb{R}} \sqrt{F_R(t)(1-F_R(t))} < +\infty$. As proved in \cite[Section 3]{BobkovLedoux:14}, the finiteness of $J_1(R)$ is in fact necessary and sufficient for a convergence of $\EE[d_W(R_l,R)]$ at the rate $n^{-1/2}$. 

In dimension $d \geq 2$, it follows from the work of Fournier and Guillin \cite{Fournier2015} that for any $q > 2$ and some constant $C$ depending only on $q,d$ and the norm $\Vert \cdot \Vert$,
\begin{equation}
\mathbb{E} \left[ d_W(R_{l},R) \right] \leq C \mathbb{E} \left[ d(X,\mathbb{E} X)^q \right]^{1/q} \frac{\log(1+l)}{\sqrt{l}} \label{ineq_W_dim2}
\end{equation}
when $d = 2$ and
\begin{equation}\label{ineq_W_dimsup3}
\mathbb{E} \left[ d_W(R_{l},R) \right] \leq C \mathbb{E} \left[ d(X,\mathbb{E} X)^q \right]^{1/q} l^{- 1/d} 
\end{equation}
when $d \geq 3$. Fournier and Guillin \cite{Fournier2015} give examples that show that this rate is attained for some distribution $R$ on $\mathbb{R}^d$, up to a $\sqrt{\log l}$ factor when $d = 2$. Unlike the one-dimensional case, this is only a worst-case bound: for example, if $R$ is concentrated on an affine subspace of dimension $k < d$, $d_W(R_{l},R)$ behaves as in dimension $k$. Thus, under the assumption that the distributions $P,Q$ admit a moment of order $q > 2,$ we are able in Corollary \ref{cor_wass_test} below to derive the separation rate for the Wasserstein two-sample test.

To simplify the bounds, we assume that the two samples are of equivalent size, that is $n \leq m \leq \rho n$ for some $\rho > 1$.
Theorem \ref{th_test_separation_rate} then yields the following Corollary.

\begin{corollary}\label{cor_wass_test}
    Let $\cX \subset \mathbb{R}^k$ be a Borel set of diameter $\mathrm{diam}(\cX) \leq 1$.
    Let $d$ be the distance on $\cX$ associated with some norm $\norm{\cdot}$ on $\mathbb{R}^k$. 
    For every distribution $P$ on $\cX$ and every $r \geq 2$, let
    \[ V_r(P) = \mathbb{E} \left[ d(X,\mathbb{E}[X])^r \right]^{1/r}. \]
    Assume that the sample sizes $n$ and $m$ are such that there exists $\rho > 1$ satisfying $n \leq m \leq \rho n$ and moreover, assume that 
    \[  n \geq n_0 = \left\lceil 16 \left(1 + 2\sqrt{6 \log \left( \frac{1}{\alpha_B} \right)} \right)\right\rceil \;. \]
    For any $r > 2$, the permutation test based on the Wasserstein-$1$ distance $d_W$ has power at least $1-3\delta$ whenever 
    \begin{align*}
        d_W(P,Q) &\geq C \left( \frac{L_n}{n^\gamma} + \frac{L_m}{m^\gamma} \right) (V_r(P) + V_r(Q))
        + \sqrt{\frac{1}{n} \log \left( \frac{1}{\alpha_B} \right)} \left(4\sqrt{2(V_2(P) + \rho V_2(Q))} + c_1 \frac{L_n}{n^{\gamma/2}} \right) \\
        &\quad + 2\sqrt{\left( \frac{V_2(P)}{n} + \frac{V_2(Q)}{m} \right) \log \left( \frac{1}{\delta} \right)} + c_2 \frac{L_n}{n} \left( \log \left(\frac{1}{\alpha_B} \right) + \log \left(\frac{1}{\delta} \right) \right),
    \end{align*}
    where $\gamma = \min \left\{1/2, 1/k\right\}$, 
    \[ L_j = \begin{cases}
        & \log (1+j) \text{ if } k = 2 \\
        &1 \text{ else}\;,
    \end{cases} \]
    $C$ is a constant depending only on $r,k,\norm{\cdot}$, while $c_1,c_2$ depend also on $\rho,V_r(P),V_r(Q),V_2(P),V_2(Q)$.
\end{corollary}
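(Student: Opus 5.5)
We derive Corollary \ref{cor_wass_test} from inequality \eqref{eq_thm_sep_rates} of Theorem \ref{th_test_separation_rate}, the variance-sensitive version. That theorem requires functions valued in $[-1,1]$. Since $\mathrm{diam}(\cX)\le 1$, every $f\in\mathrm{Lip}_1(\cX)$ may be recentred, e.g. $f\mapsto f-f(x_0)$ for a fixed $x_0\in\cX$, so that it takes values in $[-1,1]$. This recentring changes neither $T_{n,m}(\cF)$, nor the permuted statistics (because $\sum_i\xi_i=0$), nor $d_W(P,Q)$, nor the $M_k(R)$. The class is also symmetric. So the theorem applies, and it remains to bound each term on its right-hand side and to absorb the factor in front of $d_W(P,Q)$.

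\textbf{Mean terms.} By \eqref{ineq_W_dim2} and \eqref{ineq_W_dimsup3}, and for $k=1$ by the Jensen bound $M_l(R)\le\sqrt l\,J_1(R)$, we have $M_l(R)\le C\,V_r(R)\,L_l\,l^{1-\gamma}$. For $k=1$ we need $J_1(R)\lesssim V_r(R)$ when $r>2$; this follows from $F(1-F)(t)\le \PP(|X-\EE X|\ge |t-\EE X|)\le V_r^r/|t-\EE X|^r$ by splitting the integral at $|t-\EE X|=V_r$. Hence $\frac{2}{n}(M_n(P)+M_n(Q))+\frac2m(M_m(P)+M_m(Q))\le C(L_n n^{-\gamma}+L_m m^{-\gamma})(V_r(P)+V_r(Q))$.

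\textbf{Variance terms.} From the display preceding the corollary, $\sigma^2_R(\cF)\le\frac12\EE[d(X,X')^2]\le 2V_2(R)^2$. Since $\mathrm{diam}\le1$, $V_2\le 1$, so this is at most $2V_2(R)$, matching the form of the statement with linear $V_2$. Similarly $V\le 2nV_2(P)+2mV_2(Q)$. Then $(\frac1n+\frac1m)V\lesssim 2(V_2(P)+\rho V_2(Q))\cdot\frac{2}{n}$ using $n\le m\le\rho n$. This yields the term $4\sqrt{2(V_2(P)+\rho V_2(Q))}\sqrt{\log(1/\alpha_B)/n}$. The remaining parts under that square root are handled as follows:
\begin{itemize}
\item $34(M_n(P)+M_m(Q))\lesssim L_n n^{1-\gamma}$, which after the prefactor $(\frac1n+\frac1m)$ and the square root gives $c_1L_nn^{-\gamma/2}\sqrt{\log(1/\alpha_B)/n}$, possibly up to an $L$ power absorbed into constants;
\item $\frac{2m}{n(n+m)}M_n(P)^2\le \frac{2}{n}M_n(P)^2\lesssim L_n^2 n^{1-2\gamma}\lesssim L_n n^{1-\gamma}$, which is of the same order;
\item $4\log(1/\delta)$, which after $\sqrt{a+b}\le\sqrt a+\sqrt b$ and $2\sqrt{xy}\le x+y$ becomes part of the $c_2\frac{L_n}{n}(\log\frac1{\alpha_B}+\log\frac1\delta)$ term.
\end{itemize}
The terms $(\frac1n\vee\frac1m)\log\frac1\delta$ and $\frac{12}{n+m}\log\frac1\delta$ also go into $c_2$. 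Finally, the term $\sqrt{2(\sigma^2_P/n+\sigma_Q^2/m)\log\frac1\delta}$ gives the $2\sqrt{(V_2(P)/n+V_2(Q)/m)\log\frac1\delta}$ term.

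\textbf{Left-hand factor.} The step needing most care is the factor $1-\frac{2}{\sqrt{n+m-1}}-4\sqrt{3(\frac1n+\frac1m)\log\frac1{\alpha_B}}$. We want it to be at least $1/2$, and we then divide through by it, which doubles the constants (all absorbed in $C,c_1,c_2$). Using $\frac1n+\frac1m\le\frac2n$ and $n+m-1\ge 2n-1$, a short computation should show that $n\ge n_0=\lceil16(1+2\sqrt{6\log(1/\alpha_B)})\rceil$ suffices. For instance, $4\sqrt{6\log(1/\alpha_B)/n}\le\frac14$ as soon as $n\ge 16\cdot 16\cdot 6\log(1/\alpha_B)$. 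This exact match may fail with the stated $n_0$, in which case one would target a weaker lower bound such as $1/4$ on the factor. I expect reconciling this factor with the precise $n_0$, together with the bookkeeping of the logarithmic factors $L_n$ versus $L_m$ when $k=2$, to be the main obstacle. The rest is routine term-by-term bounding.
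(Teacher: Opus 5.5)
Your route is the paper's: apply \eqref{eq_thm_sep_rates} and bound its right-hand side term by term. Those bounds are fine, and in places you are more careful than the paper. The paper writes $\sigma_R^2(\cF)\le 2V_2(R)$, whereas the computation actually gives $2V_2(R)^2$; your remark $V_2\le 1$ is what justifies the linear form. The $L_m$ versus $L_n$ bookkeeping is harmless, since $m\le\rho n$ gives $L_m\le(1+\log_2\rho)L_n$.

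The paper's proof says nothing about the left-hand factor. Your treatment of that factor is where the proposal fails, for two reasons.

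\textbf{First reason: the printed $n_0$.} With $n_0$ as printed, no positive lower bound on the factor holds, so falling back to $1/4$ does not help. Under the standing assumption $\alpha\le 1/3$ of Section~\ref{sec_examples}, one has $\alpha_B<\alpha$, hence $\log(1/\alpha_B)>\log 3$. At $n=m=n_0$, the term $4\sqrt{3(\frac{1}{n}+\frac{1}{m})\log(1/\alpha_B)}$ alone then exceeds $1$. For instance, for $\log(1/\alpha_B)$ just above $\log 3$, one gets $n_0=99$ and this term is about $4\sqrt{6\log 3/99}\approx 1.03$. Your own estimate
\[ 1-\frac{2}{\sqrt{n+m-1}}-4\sqrt{3\left(\frac{1}{n}+\frac{1}{m}\right)\log\frac{1}{\alpha_B}}\;\ge\; 1-\frac{2(1+2\sqrt{6\log(1/\alpha_B)})}{\sqrt n} \]
shows that the threshold that works is $\lceil 16(1+2\sqrt{6\log(1/\alpha_B)})^2\rceil$, which gives a factor at least $1/2$. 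The square has evidently been dropped. The literal statement can still be rescued: below that threshold, $\log(1/\alpha_B)/n$ is bounded below by an absolute constant. Taking $c_2$ large enough then makes the right-hand side exceed $1\ge d_W(P,Q)$, so the claim is vacuous there.

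\textbf{Second reason: the fixed constants.} Dividing by a factor $\ge 1/2$ and absorbing the doubled constants into $C,c_1,c_2$ is not allowed for two of the terms. The coefficient $4$ in front of $\sqrt{2(V_2(P)+\rho V_2(Q))}$ and the coefficient $2$ in front of $\sqrt{(V_2(P)/n+V_2(Q)/m)\log(1/\delta)}$ are fixed numbers in the statement. Halving the factor would turn them into $8$ and $4$.

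To keep them, proceed as follows.
\begin{itemize}
\item Write the factor as $1-\epsilon_n$, with $\epsilon_n\le \frac{2}{\sqrt n}+4\sqrt{6\log(1/\alpha_B)/n}\le\frac{1}{2}$.
\item Use $R/(1-\epsilon_n)\le R+2\epsilon_n R$, where $R$ is your bound on the right-hand side of \eqref{eq_thm_sep_rates}.
\item Since $V_2\le 1$, the products of $\epsilon_n$ with the two $V_2$-terms are of order $\bigl(\sqrt{\log(1/\alpha_B)}+\log(1/\alpha_B)+\sqrt{\log(1/\delta)}+\sqrt{\log(1/\alpha_B)\log(1/\delta)}\bigr)/n$.
\item These products fit into the $c_1$-term, because $1/n\le n^{-(1+\gamma)/2}$.
\item They also fit into the $c_2$-term, using $\log(1/\alpha_B)>1$ and $2\sqrt{ab}\le a+b$.
\item Since $2\epsilon_n\le 1$, the contribution of the remaining terms of $R$ merely doubles $C$, $c_1$ and $c_2$.
\end{itemize}
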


Thus, the separation rate of the bootstrap Wasserstein test is of order $\frac{1}{\sqrt{n}}$ in dimension $1$ and almost of order $\frac{1}{\sqrt{n}}$ in dimension $2$ (up to a $\log n$ factor). For dimensions greater than $2$, the worst-case rate worsens: there is a curse of dimensionality. Note however that this applies only to continuous distributions: for singular distributions concentrated on a subspace, the separation rate of the Wasserstein test adapts to the lower dimensionality. For continuous distributions, a possible solution considered in the literature is to replace $T_{n,m}(\cF)$ with a dimensionally reduced version. If this dimensionally reduced Wasserstein distance is an integral probability metric, then it can likewise be analysed using Theorem \ref{th_test_separation_rate}. For example, this is the case of the max-sliced Wasserstein distance \cite{Deshpande2019MaxSlicedWD}.



\subsubsection{Kernel Maximum Mean Discrepancy Tests}\label{sec_MMD_tests}

The Maximum Mean Discrepancy between $P$ and $Q$ is an integral probability metric defined as
\begin{equation}
    {\rm MMD}(\cH,P,Q)=\max_{f\in B^1_\cH}\left\{ \EE_P[f(X)] - \EE_Q[f(Y)]\right\},
\end{equation}
where $\cH$ is a Reproducing Kernel Hilbert Space associated to a (Mercer) kernel $k(\cdot, \cdot)$ on a measure space $\cX$ and $B_\cH=\left\{f\in \cH\, / \,\Vert f \Vert_\cH \leq 1\right\}$ is the unit ball of $\cH$ endowed with its natural norm.
Assuming that the kernel is bounded, that is,
\[ \kappa = \sup_{x \in \cX} \sqrt{k(x,x)} < +\infty, \]
then (by a standard argument) the functions of $\cF = B^1_\cH$ are uniformly bounded by $\kappa$, so the results of this article apply.

Let us however mention that in the context of the MMD test, the use of empirical process techniques can be bypassed through the use of test statistics that corresponds to an empirical version of the following formula,
\begin{equation}
    {\rm MMD}^2(\cH,P,Q)=\EE [k(X,X^\prime)]+\EE [k(Y,Y^\prime)]-2\EE [k(X,Y)],
\end{equation}
where $X^\prime$ is a copy of $X$, similarly for $Y^\prime$ and $Y$, and all the random variables are independent. Nevertheless, it is instructive to compare the results derived from theorem \ref{th_test_separation_rate} with what can be achieved using these alternative techniques.

In the RKHS setting, the general method considered in this article yields the test statistic
\begin{align*}
   T_{n,m}^{\cH} &= {\rm MMD}(\cH,P_n,Q_m) \\
   &= \norm{\frac{1}{n} \sum_{i = 1}^n k(X_i,\cdot) - \frac{1}{m} \sum_{j = 1}^m k(Y_j,\cdot)}_{\cH} \\
   &= \left( \frac{1}{n^2} \sum_{i = 1}^n \sum_{i' = 1}^n k(X_i,X_{i'}) + \frac{1}{m^2} \sum_{j = 1}^m \sum_{j' = 1}^m k(X_j,X_{j'}) - \frac{2}{nm} \sum_{i = 1}^n \sum_{j = 1}^n k(X_i,Y_j) \right)^{1/2} \numberthis \label{eq_kern_MMD}
\end{align*}
where $P_n,Q_m$ denote the empirical distributions associated with the two samples $X = X_1,\ldots,X_n$ and $Y = Y_1,\ldots,Y_m$. This test statistic was one of several considered by Gretton \cite{MMD-Gretton2012}, under the notation ${\rm MMD}_b(\cH,X,Y)$. Alternative, unbiased test statistics can be obtained by eliminating some of the terms in equation \ref{eq_kern_MMD} . Concerning $T_{n,m}^{\cH}$, Theorem 7 in \cite{MMD-Gretton2012} shows that the test which rejects when
\[ T_{n,m}^{\cH} \geq \kappa \left(\frac{2}{\sqrt{n}} + \frac{2}{\sqrt{m}} + \sqrt{2\left( \frac{1}{n} + \frac{1}{m} \right) \log \left( \frac{2}{\alpha} \right)} \right) \]
has level $\alpha$ and separation rate $\frac{\kappa}{\sqrt{n}}$ in ${\rm MMD}$ distance. In comparison, the permutation test estimates the optimal threshold instead of using a worst-case bound and thus should adapt better to favourable properties of the distributions $P,Q$. Using Theorem \ref{th_test_separation_rate}, we can show that its separation rate is still bounded by $\frac{\kappa}{\sqrt{n}}$ in the worst case.

\begin{corollary} \label{cor_MMD}
    When
    \begin{align*}
       &\left(1 - \frac{2}{\sqrt{n+m-1}} \right) {\rm MMD}(\cH,P,Q) \\ 
       &\quad \geq \kappa \sqrt{\frac{1}{n} + \frac{1}{m}} \left(2\sqrt{2 \log \left( \frac{1}{\alpha_B} \right)} + \sqrt{2 \log \left( \frac{1}{\delta} \right)} \right) + \frac{4\kappa}{\sqrt{n}} + \frac{4\kappa}{\sqrt{m}} + \frac{12\kappa}{n+m} \log \left( \frac{1}{\delta} \right),
    \end{align*}
    the MMD permutation test rejects the null hypothesis with probability at least $1-3\delta$.
\end{corollary}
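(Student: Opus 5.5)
The plan is to apply the first inequality \eqref{eq_thm_sep_rates_hoeff} of Theorem \ref{th_test_separation_rate} to the class $\cF$ rescaled by $\kappa$, and then bound every expected supremum $M_k(R)$ by a Hilbert-space second-moment argument. Theorem \ref{th_test_separation_rate} requires functions valued in $[-1,1]$. Since every $f\in B^1_\cH$ satisfies $|f(x)| = |\langle f, k(x,\cdot)\rangle_\cH| \leq \norm{f}_\cH \sqrt{k(x,x)} \leq \kappa$, the class $\cF/\kappa = \{f/\kappa : f \in B^1_\cH\}$ takes values in $[-1,1]$. It is symmetric, because $B^1_\cH$ is, and separability for measurability is inherited from the separability of $\cH$, which holds for a Mercer kernel. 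The test statistic, the permuted statistics, and hence the permutation test are all homogeneous of degree one under this rescaling. So the test based on $\cF/\kappa$ coincides with the MMD permutation test, and $d_{\cF/\kappa}(P,Q) = {\rm MMD}(\cH,P,Q)/\kappa$.

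Next, we bound $M_k(R)$ for $\cF/\kappa$ and any distribution $R$. By the reproducing property,
\[ \sup_{f\in B^1_\cH}\sum_{i=1}^k \bigl(f(Z_i)-\EE f(Z_i)\bigr) = \norm{\sum_{i=1}^k \bigl(k(Z_i,\cdot)-\mu_R\bigr)}_\cH, \]
where $\mu_R$ is the kernel mean embedding; the Bochner integral is well defined since the kernel is bounded. By Jensen's inequality and the orthogonality of independent centered Hilbert-valued summands, the expectation of this norm is at most
\[ \Bigl(\sum_{i=1}^k \EE\norm{k(Z_i,\cdot)-\mu_R}_\cH^2\Bigr)^{1/2} \leq \sqrt{k\,\EE[k(Z,Z)]} \leq \kappa\sqrt{k}. \]
Hence $M_k(R)\leq \sqrt{k}$ for the rescaled class. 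This gives $\frac{2}{n}(M_n(P)+M_n(Q)) \leq 4/\sqrt{n}$, and similarly $4/\sqrt{m}$ for the $m$ terms.

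Substituting these bounds into \eqref{eq_thm_sep_rates_hoeff} and multiplying both sides by $\kappa$ gives exactly the displayed condition of Corollary \ref{cor_MMD}. Under that condition, Theorem \ref{th_test_separation_rate} yields rejection with probability at least $1-3\delta$. (The theorem's wording "power at least $3\delta$" should be read as $1-3\delta$, consistent with Corollary \ref{cor_KS}.)

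The only mildly delicate points are the measurability and separability requirement of Lemma \ref{lemma_meas} and the Bochner-integral justification of the mean embedding. Both are standard for bounded measurable kernels on a separable RKHS, so I do not expect a real obstacle here.
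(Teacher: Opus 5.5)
Your proposal is correct and follows the paper's own proof. Both rescale to $\cF/\kappa$, bound $M_k(R)\leq\sqrt{k}$ via Jensen's inequality and the second-moment computation for the centered kernel embeddings, substitute into the Hoeffding-type condition \eqref{eq_thm_sep_rates_hoeff}, and multiply back by $\kappa$. (The paper's write-up cites \eqref{eq_thm_sep_rates}, but \eqref{eq_thm_sep_rates_hoeff}, which you used, is the inequality that actually produces the stated condition.)
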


When $\delta = \alpha$ and $B$ is large enough, the lower bound of Corollary \ref{cor_MMD} is the same as Gretton's proposed threshold, up to a numerical factor of $3$ and the negligible remainder term
\[ \frac{12\kappa}{n+m} \log \left( \frac{1}{\delta} \right). \]

\section{Supplementary material}

\subsection{Around measurability}\label{sec_proof_meas}

\begin{proof}[Proof of Lemma \ref{lemma_meas}]
Let $\cT_0$ be a countable dense subspace of $\cT$ in the product topology. This means that for any integer $n \geq 1$, any $x \in E^n, t \in \cT$ and $\varepsilon > 0$, there exists $t_0 \in \cT_0$ such that
\[ \max_{1 \leq i \leq n} |t(x_i) - t_0(x_i)| < \varepsilon. \]
In particular, for any $x \in E^n,$ the set $\cT_0(x) = \{ (t(x_1),\ldots,t(x_n)) : t \in \cT_0 \}$ is dense in the set $\cT(x) = \{ (t(x_1),\ldots,t(x_n)) : t \in \cT \}$ (as subsets of $[-1,1]^n$). Thus, by continuity of the scalar product, for any $w \in \mathbb{R}^n$,
\[ \sup_{t \in \cT} \left\{ \sum_{i = 1}^n w_i t(x_i) \right\} = \sup_{a \in \cT(x)} \{\langle a,w \rangle \} = \sup_{a \in \cT_0(x)} \{\langle a,w \rangle \} = \sup_{t \in \cT_0} \left\{ \sum_{i = 1}^n w_i t(x_i) \right\}. \]
    It follows that for all $x \in E^n$ and $w \in \mathbb{R}^n$, $g(x,w) = g_0(x,w)$ where
    \[ g_0(x,w) = \sup_{t \in \cT_0} \left\{ \sum_{i = 1}^n w_i t(x_{i}) \right\}. \]
    Let $(t_k)_{k \in \mathbb{N}}$ be an enumeration of $\cT_0$, then $g_0$ is the pointwise limit of the functions 
    \[ h_N : x,w \mapsto \max_{1 \leq k \leq N} \left\{ \sum_{i = 1}^n w_i t_k(x_i)  \right\} \]
    which are measurable on $E^n \times \mathbb{R}^n$.
    This proves that $g_0 = g$ is measurable. 
\end{proof}

\begin{proof}[Proof of Lemma \ref{lem_meas}]
    Let $\tilde{f}$ be the minimal measurable majorant of $f$ with respect to the law of $X$.
    By definition, $\tilde{f}(X) \geq f(X)$ almost surely. Moreover, since $f(X)$ is integrable
    \[ \mathbb{E}[\tilde{f}(X) - f(X) | X] = \tilde{f}(X) - \mathbb{E}[f(X) | X] \geq 0  \]
    almost surely. Since $\mathbb{E}[f(X) | X] $ is of the form $\overline{f}(X)$ for a measurable function $\overline{f}$ we have that $\tilde{f} \geq \overline{f}$ $P_X-$almost surely which implies by definition that $\tilde{f} = \overline{f}$ $P_X-$almost surely. Thus, $\tilde{f}(X) = \mathbb{E}[f(X) | X]$ a.s. which yields
    \[ \mathbb{E}[\tilde{f}(X) - f(X)] = 0 \]
    and hence (since $\tilde{f}(X) \geq f(X)$), $\tilde{f}(X) = f(X)$ a.s.
    Fix some $i \in \{1,\ldots,n\}$.
    Let $\overline{f}_i$ be a version of the minimal measurable majorant of $x \mapsto f_i(x_{(i)})$ on $E^{n}$ with respect to the law of $X$. Since $\tilde{f}$ is measurable, $\overline{f}_i \leq \tilde{f}$ almost surely. Moreover, by Fubini's theorem, for almost all $z \in E$,
    \[ \overline{f}_i(x_1,\ldots,x_{i-1},z,x_{i+1},\ldots,x_n) \geq f_i(x_{(i)}) \text{ a.s.} \]
    By definition of the minimal measurable majorant, for almost all $z \in E$ and $x \in E^n$,
    \[ \overline{f}_i(x_1,\ldots,x_{i-1},z,x_{i+1},\ldots,x_n) \geq \overline{f}_i(x) = \overline{f}_i(x_1,\ldots,x_{i-1},x_i,x_{i+1},\ldots,x_n). \]
    Now, $z$ and $x_i$ play symmetrical roles so almost surely wrt $z,x$,
    \[ \overline{f}_i(x_1,\ldots,x_{i-1},z,x_{i+1},\ldots,x_n) = \overline{f}_i(x). \]
    We may pick any such $z$ and set
    \[ \tilde{f}_i : y \mapsto \overline{f}_i(y_1,\ldots,y_{i-1},z,y_{i+1},\ldots,y_n) \]
    on $E^{n-1}$.  $\tilde{f}_i$ is measurable and such that $f_i(X_{(i)}) \leq \tilde{f}_i(X_{(i)}) \leq \tilde{f}(X) = f(X)$ almost surely, which implies the result.
\end{proof}

\subsection{Proof of the self-bounding property}\label{ssec_self_bouning_SM}

\begin{proof}[Proof of Theorem \ref{thm_self_bounding}]
Let $x = (x_{i})_{1 \leq i \leq n}$.
Fix some $i \in \{1,\ldots,n\},$ let $J$ be uniformly distributed on $\{1,\ldots,n\}$ and let $\tau_{i,J}$ be the transposition of $i$ and $J$. By exchangeability of $\xi, \xi \circ \tau_{i,J} \sim \xi$ and hence
\begin{align*}
   \overline{g}(x) &= \mathbb{E} \left[ \sup_{t \in \cT} \left\{ \sum_{k = 1}^n \xi_{\tau_{i,J}(k)} t(x_{k}) \right\} \right] \\
   &= \mathbb{E} \left[ \sup_{t \in \cT} \left\{ \sum_{k \neq i}^n \xi_{k} t(x_{k}) + (\xi_i - \xi_J) t(x_{J}) + \xi_J t(x_{i}) \right\} \right].
\end{align*}
Let then
\[ \overline{g}_i(x_{(i)}) = \mathbb{E} \left[ \sup_{t \in \cT} \left\{\sum_{k \neq i} \xi_k t(x_{k}) + \frac{1}{n} \sum_{j \neq i}^n (\xi_i - \xi_j) t(x_{j}) \right\} \right]. \]
By Jensen's inequality and since $\mathbb{E}[\xi_J | \xi] = 0,$
\begin{align*}
   \overline{g}(x) &= \mathbb{E} \left[ \sup_{t \in \cT} \left\{ \sum_{k \neq i}^n \xi_{k} t(x_{k}) + (\xi_i - \xi_J) t(x_{J}) + \xi_J t(x_{i}) \right\} \right] \\
   &\geq \mathbb{E} \left[ \sup_{t \in \cT} \mathbb{E} \left[ \sum_{k \neq i}^n \xi_{k} t(x_{k}) + (\xi_i - \xi_J) t(x_{J}) + \xi_J t(x_{i}) \Bigl| \xi \right]  \right] \\
   &= \overline{g}_i(x_{(i)}).
\end{align*}
Define the set
\[ \mathcal{K}(x) = \overline{\bigl\{ (t(x_{i}))_{1 \leq i \leq n} : t \in \cT \bigr\} } \subset [-1,1]^n. \]
$\mathcal{K}(x)$ is a compact set, moreover
\begin{align}
    \overline{g}(x) &= \mathbb{E} \left[ \sup_{a \in \mathcal{K}(x)} \sum_{i = 1}^n \xi_i a_i \right] \\
    \overline{g}_i(x_{(i)}) &= \mathbb{E} \left[ \sup_{a \in \mathcal{K}(x)} \sum_{k \neq i} \xi_k a_k + \frac{1}{n} \sum_{j \neq i}^n (\xi_i - \xi_j) a_j \right] \label{eq_def_gbar-i}
\end{align}
by continuity of the functions within the expectation. Since $\mathcal{K}(x)$ is compact, we can find a measurable $\hat{a}: \mathbb{R}^n \to \mathcal{K}(x)$ so that
\[ \sum_{i = 1}^n y_i \hat{a}_i(y) \geq \sup_{a \in \mathcal{K}(x)} \sum_{i = 1}^n y_i a_i - \varepsilon \]
for any fixed $\varepsilon > 0$. To see this, consider an $\varepsilon$-net $(a_k)_{1 \leq k \leq N}$ in $\mathcal{K}(x)$ and let 
\[ \hat{k}(y) = \min \underset{k \in \{1,\ldots,N\}}{\mathrm{argmax}} \langle a_k,y \rangle, \quad \hat{a}(y) = a_{\hat{k}(y)} .\] 
It follow that
\[ \overline{g}(x) \leq \varepsilon + \mathbb{E}\left[ \sum_{i = 1}^n \xi_i \hat{a}_i(\xi) \right]. \]
By equation \eqref{eq_def_gbar-i} defining $\overline{g}_i$,
\begin{align*}
   \overline{g}(x) - \overline{g}_i(x_{(i)}) &\leq  \varepsilon + \mathbb{E} \left[ \sum_{k = 1}^n \xi_k \hat{a}_k(\xi) - \sum_{k \neq i} \xi_k \hat{a}_k(\xi) - \frac{1}{n} \sum_{j \neq i}^n (\xi_i - \xi_j) \hat{a}_j(\xi) \right] \\
   &\leq \varepsilon + \mathbb{E} \left[ \xi_i \hat{a}_i(\xi) + \frac{1}{n} \sum_{j = 1}^n (\xi_j - \xi_i) \hat{a}_j(\xi) \right]. \numberthis \label{eq_ubd_diff_gbar_gbar-i}
\end{align*}
Since $\hat{a}(\xi) \in [-1,1]^n,$ it follows that
\begin{align*}
   \overline{g}(x) - \overline{g}_i(x_{(i)}) &\leq \varepsilon + \mathbb{E} \left[ |\xi_i| +  \frac{1}{n} \sum_{j \neq i} |\xi_j - \xi_i| \right] \\
   &= \varepsilon + \mathbb{E} \left[ |\xi_1| \right] + \frac{n-1}{n} \mathbb{E}[|\xi_2 - \xi_1|] \text{ by exchangeability } \\
   &\leq \varepsilon + \left(3 - \frac{2}{n} \right) \mathbb{E} \left[ |\xi_1| \right] \text{ by exchangeability and triangle inequality}.
\end{align*}
Moreover, summing over $i$ in equation \eqref{eq_ubd_diff_gbar_gbar-i} yields
\begin{align*}
 \sum_{i=1}^n \left[\overline{g}(x) - \overline{g}_i(x_{(i)}) \right] &\leq n\varepsilon +  \mathbb{E} \left[ \sum_{i = 1}^n \xi_i \hat{a}_i(\xi) + \frac{1}{n} \sum_{j = 1}^n \left( \sum_{i = 1}^n (\xi_j - \xi_i) \right) \hat{a}_j(\xi)  \right] \\
 &\leq n\varepsilon +  \mathbb{E} \left[ \sum_{i = 1}^n \xi_i \hat{a}_i(\xi) + \frac{1}{n} \sum_{j = 1}^n n\xi_j \hat{a}_j(\xi)  \right] \\
 &\leq n\varepsilon +  2 \mathbb{E} \left[ \sum_{i = 1}^n \xi_i \hat{a}_i(\xi)  \right] \\
 &\leq n\varepsilon + 2\overline{g}(x).
\end{align*}
Since $\varepsilon > 0$ was arbitrary, it follows that
\[0 \leq \overline{g}(x) - \overline{g}_i(x_{(i)}) \leq \left(3 - \frac{2}{n} \right) \mathbb{E}[|\xi_1|] \]
and that
\[ \sum_{i = 1}^n \left[ \overline{g}(x) - \overline{g}_i(x_{(i)}) \right] \leq 2 \overline{g}(x). \]
This proves that the function $\frac{\overline{g}}{\left(3 - \frac{2}{n} \right)\mathbb{E}[|\xi_1|]}$ is $(2,0)$-self-bounding.
Now, if $X = (X_{i})_{1 \leq i \leq n}$ are independent random variables such that $\overline{g}(X)$ is measurable (hence integrable), then by lemma \ref{lem_meas} and \cite[Theorem 6.21]{BoucheronLugosiMassart:2013} with $b = 0$ and $a = 2$, with probability at least $1 - e^{-x}$,
 \[ \frac{\overline{g}(X)}{\kappa \left(3 - \frac{2}{n} \right)} \leq \frac{\mathbb{E}\left[ \overline{g}(X) \right]}{\kappa \left(3 - \frac{2}{n} \right)} + \frac{\sqrt{2a x \mathbb{E}[\overline{g}(X)]}}{\sqrt{\kappa \left(3 - \frac{2}{n} \right)}} + 2 \left( \frac{3a-1}{6} \right)_+ x \;,\]
which yields
\[ \overline{g}(X) \leq \mathbb{E}\left[ \overline{g}(X) \right] + \sqrt{2a \kappa \left(3 - \frac{2}{n} \right) x \mathbb{E}[\overline{g}(X)]} + \left(a - \frac{1}{3} \right) \kappa \left(3 - \frac{2}{n} \right) x \;.\]
Ignoring the $\frac{1}{n}$ terms for simplicity yields
\[ \overline{g}(X) \leq \mathbb{E}\left[ \overline{g}(X) \right] + \sqrt{12 \kappa x \mathbb{E}[\overline{g}(X)]} + 5 \kappa x. \]
For the lower tail, \cite[Theorem 6.21]{BoucheronLugosiMassart:2013} yields
\begin{align*}
   \frac{\overline{g}(X)}{\kappa \left(3 - \frac{2}{n} \right)} &\geq \frac{\mathbb{E}\left[ \overline{g}(X) \right]}{\kappa \left(3 - \frac{2}{n} \right)} - \frac{\sqrt{2 a x \mathbb{E}\left[ \overline{g}(X) \right]}}{\sqrt{\kappa \left(3 - \frac{2}{n} \right)}} \;,
\end{align*}
which gives
\begin{equation}
   \overline{g}(X) \geq \mathbb{E}\left[ \overline{g}(X) \right] - \sqrt{12\kappa x \mathbb{E}\left[ \overline{g}(X) \right]}
\end{equation}
with probability at least $1 - e^{-x}$.
\end{proof}

\subsection{Exchangeable pair covariance inequality}\label{ssec_sm_cov_ineq}
\begin{proof}[Proof Lemma \ref{lem_cov_ineq}] We first make appear the function $F$ in the covariance:
\begin{align*}
& \operatorname{cov}\left(g(Z), e^{tg(Z)}\right) \\
= & \mathbb{E}[(g(Z)-\mathbb{E}[g(Z)]) e^{tg(Z)}] \\
= & \mathbb{E}\left[\mathbb{E}\left[F\left(Z, Z^{\prime}\right) \vert Z\right] e^{tg(Z)}\right] \\
= & \mathbb{E}\left[F\left(Z, Z^{\prime}\right) e^{tg(Z)}\right]\;,
\end{align*}
where the last equality follows by using Fubini's theorem. Then, by antisymmetry of $F$ and exchangeability of $\left(Z, Z^{\prime}\right)$, it holds
$$
\operatorname{cov}\left(g(Z), e^{tg(Z)}\right)=\frac{1}{2} \mathbb{E}\left[F\left(Z, Z^{\prime}\right)(e^{tg(Z)} - e^{tg(Z^\prime)})\right]  .
$$
This implies that
$$
\begin{aligned}
& \operatorname{cov}\left(g(Z), e^{tg(Z)}\right) \\
 \leq & \left(\operatorname{cov}\left(g(Z), e^{tg(Z)}\right)\right)_{+} \\
\leq & \frac{1}{2} \mathbb{E}\left[\left(F\left(Z, Z^{\prime}\right)\left(e^{tg(Z)}-e^{tg(Z^\prime)}\right)\right)_{+}\right] \\
\leq & \frac{1}{2}\left(\mathbb{E}\left[\left(F\left(Z, Z^{\prime}\right)\right)_{+}\left(e^{tg(Z)}-e^{tg(Z^\prime)}\right)_{+}\right]\right. \\
& \left.+\mathbb{E}\left[\left(F\left(Z, Z^{\prime}\right)\right)_{-}\left(e^{tg(Z)}-e^{tg(Z^\prime)}\right)_{-}\right]\right) \\
= & \mathbb{E}\left[\left(F\left(Z, Z^{\prime}\right)\right)_{+}\left(e^{tg(Z)}-e^{tg(Z^\prime)}\right)_{+}\right],
\end{aligned}
$$
where the last equality comes again from the exchangeability of $\left(Z, Z^{\prime}\right)$ and antisymmetry of $F$. Now, notice that 
\begin{equation*}
    \left(e^{tg(Z)}-e^{tg(Z^\prime)}\right)_{+}=e^{tg(Z)}\left(1-e^{-t(g(Z)-g(Z^\prime))}\right)_{+}\leq t(g(Z)-g(Z^\prime)_+e^{tg(Z)}\;,
\end{equation*}
which gives 
\begin{align*}
    \operatorname{cov}\left(g(Z), e^{tg(Z)}\right) & \leq t\mathbb{E}\left[\left(F\left(Z, Z^{\prime}\right)\right)_{+}(g(Z)-g(Z^{\prime}))_+ e^{t g(Z)}\right]\\
   & = t\mathbb{E}\left[\EE [\left(F\left(Z, Z^{\prime}\right)\right)_{+}(g(Z)-g(Z^{\prime}))_+ \vert Z] e^{t g(Z)}\right] \;. 
\end{align*}
\end{proof}
\subsection{Decoupling lemma and entropy duality formula}\label{ssec_sm_decoupling}

We provide here a detailed proof of Lemma \ref{theorem duality entropy}, which essentially follows the lines of the proof given in \cite{saumard2019weighted}. We also prove that Lemma \ref{theorem duality entropy} is equivalent to the following duality formula for the entropy : for a non-negative random variable $Z$ such that $\EE[Z\log(Z)]<+\infty$, it holds
$${\rm Ent}(Z)=\EE\left[Z\log\left(Z\right)\right]-\EE[Z]\log(\EE[Z])=\sup_{U\, \text{s.t.}\, \EE[e^{U}]=1}\EE[ZU]\;.$$

\begin{proof}[Proof of Lemma \ref{theorem duality entropy}]
Note that the condition $\EE[Xe^X]<+\infty$ implies that $\EE[e^X]<+\infty$. Hence, if $\EE[e^Y]=+\infty$, then Lemma \ref{theorem duality entropy} holds since in this case $\EE[e^X]<\EE[e^Y]$.

Assume now that $\EE[e^Y]<+\infty$ and set $\beta :=\log(\EE[e^Y])$. As $0\leq \EE[Xe^X] \leq \EE[Ye^X]$, $Y$ is not almost surely equal to $-\infty$ and it holds $\EE[e^Y]>0$, that is $\beta \in \R$. Let us set $U=Y-\beta$. On the one hand, we have $\EE[e^U]=1$, so by the duality formula for the entropy, 
\[ \EE[e^XU]\leq \EE[Xe^X]-\EE[e^X]\log(\EE[e^X])\;.\]
On the other hand, as $\EE[Xe^X]\leq \EE[Ye^X]$, we also have
\[
\EE[e^XU]\geq \EE[Xe^X]-\beta\EE[e^X]\;.
\]
Combining the two inequalities for $\EE[e^XU]$, we get $\beta\leq -\log(\EE[e^X])$, which gives $\EE[e^X]\leq \EE[e^Y]$.

It remains to consider the case where we assume that $\EE[Xe^X]<\EE[Ye^X]$, together with $\EE[e^Y]<+\infty$. In that case, we have $\EE[e^XU]>\EE[Xe^X]-\beta\EE[e^X]\;,$
which implies $\beta < -\log(\EE[e^Y])$ and finally, $\EE[e^X]<\EE[e^Y]$.
\end{proof}

The proof of Lemma \ref{theorem duality entropy} is based on the duality formula for the entropy. Conversely, let us now prove that the duality formula for the entropy is a consequence of Lemma \ref{theorem duality entropy}. 

Consider a non-negative random variable $Z$ such that $\EE[Z\log(Z)]<+\infty$. If $\EE[Z]=0$, then $Z=0$ \textit{a.s.} and the duality formula holds. Now, assume that $\EE[Z]>0$ and set $V=\log(Z/\EE[Z])$. It holds $\EE[e^V]=1$, hence ${\rm Ent}(Z)=\EE[ZV]\leq \sup_{U\, \text{s.t.}\, \EE[e^{U}]=1}\EE[ZU]\;.$ Furthermore, we have the identity ${\rm Ent}(Z)/\EE[Z]=\EE[Ve^V]$, so if we assume that ${\rm Ent}(Z)<\EE[UZ]$ for some random variable $U$ such that $\EE[e^U]=1$, we also have $\EE[Ve^V]={\rm Ent}(Z)/\EE[Z]<\EE[UZ/\EE[Z]]=\EE[Ue^V]$, which gives by Lemma \ref{theorem duality entropy}, $1=\EE[e^V]<\EE[e^U]$. This means that $\sup_{U\, \text{s.t.}\, \EE[e^{U}]=1}\EE[ZU]\leq {\rm Ent}(Z)$. This concludes the proof of the duality formula for the entropy, that consisted in establishing both inequalities.

\subsection{Proofs of the result on concentration of the resampled empirical process conditioned on data}
\subsubsection{Existence of the antisymmetric function}
\begin{proposition} \label{prop_exist_F}
    For any strong stationary time $T$ 
    and any two permutations $\pi,\pi'$, we have that
    \[ \sum_{k = 0}^{+\infty} \left| \mathbb{E} \left[ g(\pi \circ \pi_k) - g(\pi' \circ \pi_k) \right] \right| \leq 2\| g \|_\infty \mathbb{E}[T]. \]
    In particular, the function
    \[ F(\pi,\pi') = \sum_{k = 0}^{+\infty} \mathbb{E} \left[ g(\pi \circ \pi_k) - g(\pi' \circ \pi_k) \right]  \]
    is well-defined. It is antisymmetric and such that
    \[ \mathbb{E}[F(\sigma,\sigma') | \sigma] = g(\sigma) - \mathbb{E}[g(\sigma)] = f(\sigma). \]
\end{proposition}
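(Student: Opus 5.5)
The plan is to combine the defining property of a strong stationary time with a coupling-type total variation bound; after that, antisymmetry is immediate and the conditional-expectation identity comes from a telescoping sum.

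\emph{Step 1: total variation bound.} Fix $\pi,\pi'$ and let $U$ denote the uniform distribution on $\mathfrak{S}_n$. Because $T$ is a strong stationary time, conditionally on $\{k \geq T\}$ the law of $\pi_k$ is uniform. Left multiplication by a fixed permutation preserves the uniform law, so on this event both $\pi\circ\pi_k$ and $\pi'\circ\pi_k$ are uniform. Hence
\[ \mathbb{E}\left[g(\pi\circ\pi_k)\mathbb{I}\{k\geq T\}\right] = \mathbb{P}(k \geq T)\, Ug = \mathbb{E}\left[g(\pi'\circ\pi_k)\mathbb{I}\{k\geq T\}\right]. \]
The contributions of $\{k\geq T\}$ therefore cancel in the difference, and what remains is
\[ \left|\mathbb{E}\left[\left(g(\pi\circ\pi_k)-g(\pi'\circ\pi_k)\right)\mathbb{I}\{k<T\}\right]\right| \leq 2\|g\|_\infty \mathbb{P}(k<T). \]
Summing over $k\geq 0$ and using $\sum_k \mathbb{P}(k<T)=\mathbb{E}[T]$ gives the stated bound. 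Here $\|g\|_\infty<\infty$ because $\mathfrak{S}_n$ is finite. We also need $\mathbb{E}[T]<\infty$, which holds for the known constructions \cite{Matthews1988, White2019}; if it fails, the inequality is trivial.

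\emph{Step 2: properties of $F$.} Absolute convergence from Step 1 makes $F$ well defined. Antisymmetry is clear, since swapping $\pi$ and $\pi'$ changes the sign of every term.

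For the identity, write $P_\pi^k g = \mathbb{E}[g(\pi\circ\pi_k)]$; this is exactly the $k$-step kernel of the chain $\sigma\mapsto\sigma\tau_{I,J}$ started at $\pi$. With $\sigma'=\sigma\tau_{I,J}$ and $(I,J)$ independent of $(\pi_k)$, the Markov property gives
\[ \mathbb{E}\left[g(\sigma'\circ\pi_k)\mid\sigma\right] = P_\sigma^{k+1}g. \]
The bound of Step 1 is uniform in $(\pi,\pi')$, so dominated convergence lets us exchange the sum with the conditional expectation:
\[ \mathbb{E}[F(\sigma,\sigma')\mid\sigma] = \lim_{N\to\infty}\sum_{k=0}^{N}\left(P^k_\sigma g - P^{k+1}_\sigma g\right) = g(\sigma) - \lim_{N\to\infty} P^{N+1}_\sigma g. \]

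\emph{Step 3: the limit.} It remains to show $P^{N}_\sigma g \to \mathbb{E}[g(\sigma)] = Ug$. By the argument of Step 1,
\[ \left|P^{N}_\sigma g - Ug\right| \leq 2\|g\|_\infty\,\mathbb{P}(T>N), \]
and the right-hand side tends to $0$ because $T$ is integrable, hence almost surely finite.

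The only delicate point is justifying the interchange of the sum and the conditional expectation, and this is handled by the uniform $\ell^1$ bound from Step 1. Everything else is routine.
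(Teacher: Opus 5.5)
Your proof is correct and follows the same route as the paper: you cancel the contribution of $\{k\ge T\}$ using uniformity of $\pi_k$ on that event, sum the bounds $2\|g\|_\infty\mathbb{P}(k<T)$, and then telescope $P^k_\sigma g - P^{k+1}_\sigma g$ using convergence of $P^N_\sigma g$ to the uniform average. Your explicit estimate $|P^N_\sigma g - \mathbb{E}[g(\sigma)]|\le 2\|g\|_\infty\mathbb{P}(T>N)$, and your remark that $\mathbb{E}[T]<\infty$ is implicitly required, are slightly more careful than the paper's bare appeal to total variation convergence, but the argument is the same.
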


\begin{proof}
     Let $T$ be a strong stationary time. For any $k \geq 0$,
    \[ g(\pi \circ \pi_k) - g(\pi' \circ \pi_k) = \left( g(\pi \circ \pi_k) - g(\pi' \circ \pi_k) \right) \mathbb{I}\{ k < T\} + \left( g(\pi \circ \pi_k) - g(\pi' \circ \pi_k) \right) \mathbb{I}\{ k \geq T\}. \]
    By definition of a strong stationary time,
    \begin{align*}
       \mathbb{E}\left[ \left( g(\pi \circ \pi_k) - g(\pi' \circ \pi_k) \right) \mathbb{I}\{ k \geq T\} \right] &=  \mathbb{P}(k \geq T) \mathbb{E}\left[g(\pi \circ \pi_k) - g(\pi' \circ \pi_k) | k \geq T  \right] \\
       &= 0
    \end{align*}
    since both $\pi \circ \pi_k$ and $\pi' \circ \pi_k$ are uniformly distributed on $\mathfrak{S}_n$, knowing that $k \geq T$. It follows that
    \begin{align*}
        \left| \mathbb{E} \left[ g(\pi \circ \pi_k) - g(\pi' \circ \pi_k) \right] \right|
        &= \left| \mathbb{E} \left[ \left( g(\pi \circ \pi_k) - g(\pi' \circ \pi_k) \right) \mathbb{I}\{ k < T\} \right] \right| \\
        &\leq \mathbb{E}\left[ 2\| g \|_\infty \mathbb{I}\{ k < T\} \right] 
    \end{align*}
    This yields
    \[ \sum_{k = 0}^{+\infty} \left| \mathbb{E} \left[ g(\pi \circ \pi_k) - g(\pi' \circ \pi_k) \right] \right| \leq \mathbb{E}\left[ 2\| g \|_\infty T \right]. \]
    Thus, the series defining $F(\pi,\pi')$ converges, and is equal to
    \[ \sum_{k = 0}^{+\infty} \mathbb{E} \left[ \left( g(\pi \circ \pi_k) - g(\pi' \circ \pi_k) \right) \mathbb{I}\{ k < T\} \right] = \mathbb{E} \left[ \sum_{k = 0}^{+\infty} \left( g(\pi \circ \pi_k) - g(\pi' \circ \pi_k) \right) \mathbb{I}\{ k < T\} \right]. \]
    Antisymmetry of $F$ is obvious. To conclude, note that by Fubini's theorem,
    \begin{align*}
        \mathbb{E}[F(\sigma,\sigma') | \sigma] &= \sum_{k = 0}^{+\infty} \mathbb{E}[g(\sigma \circ \pi_k) - g(\sigma' \circ \pi_k) | \sigma] \\
        &= \sum_{k = 0}^{+\infty} \mathbb{E}[g(\sigma \circ \pi_k) - g(\sigma \circ \tau_{I,J} \circ \pi_k) | \sigma] \\
        &= \sum_{k = 0}^{+\infty} \mathbb{E}[g(\sigma \circ \pi_k) - g(\sigma \circ \pi_{k+1}) | \sigma].
    \end{align*}
    The existence of a strong stationary time implies in particular that the distribution of $\pi_k$ converges in total variation to the uniform distribution. Hence,
    \[ \lim_{k \to +\infty} \mathbb{E}[g(\sigma \circ \pi_k) | \sigma] = \mathbb{E}[g(\sigma)]. \]
    This finally yields
    \[ \mathbb{E}[F(\sigma,\sigma') | \sigma] = g(\sigma) - \mathbb{E}[g(\sigma)].  \]
\end{proof}

\subsubsection{Proof of Lemma \ref{prop_F_sigma}}\label{ssec_proof_lemma_F_sigma}
\begin{proof}
    Let $\pi$ be a permutation and $\tau_{i,j}$ be a non-trivial transposition.
    For any $k \geq 0$,
    \[ g(\pi \circ \pi_k) - g(\pi \tau_{i,j} \circ \pi_k) = \left( g(\pi \circ \pi_k) - g(\pi \circ \tau_{i,j} \pi_k) \right) \mathbb{I}\{ k < T_{i,j} \} + \left( g(\pi \circ \pi_k) - g(\pi \tau_{i,j} \circ \pi_k) \right) \mathbb{I}\{ k \geq T_{i,j} \}. \]
    By definition of a strong convergence time,
    \begin{align*}
       \mathbb{E}\left[ \left( g(\pi \circ \pi_k) - g(\pi \circ \tau_{i,j} \pi_k) \right) \mathbb{I}\{ k \geq T_{i,j} \} \right] &=  \mathbb{P}(k \geq T_{i,j}) \mathbb{E}\left[g(\pi \circ \pi_k) - g(\pi \circ \tau_{i,j} \pi_k) | k \geq T_{i,j}  \right] \\
       &= 0
    \end{align*}
    since both $\pi \circ \pi_k$ and $\pi' \circ \tau_{i,j} \pi_k$ are identically distributed on $\mathfrak{S}_n$, knowing that $k \geq T_{i,j}$. It follows that
    \[ \mathbb{E} \left[ \sum_{k = 0}^{+\infty} \left( g(\pi \circ \pi_k) - g(\pi \tau_{i,j} \circ \pi_k) \right) \right] = \mathbb{E} \left[ \sum_{k = 0}^{+\infty} \left( g(\pi \circ \pi_k) - g(\pi \tau_{i,j} \circ \pi_k) \right) \mathbb{I}\{ k < T_{i,j} \} \right]. \]
    Let $\sigma' = \sigma \tau_{I,J},$ then knowing $\sigma, \sigma' $ is equivalent to knowing $\sigma, I,J$. These variables are independent from $(\pi_k)_{k \geq 1}$ so that
    \begin{align*}
      F(\sigma, \sigma') &= \mathbb{E} \left[ \sum_{k = 0}^{+\infty} \left( g(\sigma \circ \pi_k) - g(\sigma \tau_{I,J} \circ \pi_k) \right) | \sigma, I, J \right]  \\
      &= \mathbb{E} \left[ \sum_{k = 0}^{+\infty} \left( g(\sigma \circ \pi_k) - g(\sigma \tau_{I,J} \circ \pi_k) \right) \mathbb{I}\{ k < T_{I,J} \} | \sigma, I, J \right]
    \end{align*}
    which yields the lemma.
\end{proof}

\subsubsection{Proof of Theorem \ref{thm_conc_fun_permut}}\label{subsubsection_proof_theorem_concen}
    Let us now analyze the concentration of $f(\sigma)$. Fix some strong stationary time $T$. Let $\psi$ be the moment generating function of $f(\sigma) = g(\sigma) - \mathbb{E}[g(\sigma)]$. By Lemma \ref{lem_cov_ineq},
\begin{align}
   \psi'(\theta)=\EE[f(\sigma)e^{\theta f(\sigma)}] &\leq \theta \mathbb{E}\left[ (f(\sigma) - f(\sigma'))_+ (F(\sigma,\sigma'))_+ e^{\theta f(\sigma)} \right] \nonumber\\
   &= \theta \mathbb{E}\left[\EE[(g(\sigma) - g(\sigma'))_+ (F(\sigma,\sigma'))_+\vert \sigma]  e^{\theta f(\sigma)} \right]\;. \label{eq_Vplus_sigma}
\end{align}
By Lemma \ref{prop_F_sigma} and since $\sigma' = \sigma$ if $I = J$, 
\begin{align*}
    &(g(\sigma) - g(\sigma'))_+ (F(\sigma,\sigma'))_+ \\ \leq & \mathbb{I}\{ I \neq J \} \sum_{k = 0}^{+ \infty} \mathbb{E} \left[ (g(\sigma) - g(\sigma'))_+ (g(\sigma \circ \pi_k) - g(\sigma' \circ \pi_k))_+ \mathbb{I}\{ k < T_{I,J} \} \bigl| \sigma,I,J \right]\;. 
\end{align*}
Let $p_k = \mathbb{P}(k < T_{1,2})$. By the Cauchy-Schwarz inequality,
\begin{align*}
    &(g(\sigma) - g(\sigma'))_+ (F(\sigma,\sigma'))_+  \\ 
    &\quad \leq \sum_{k = 0}^{+ \infty} \sqrt{p_k} (g(\sigma) - g(\sigma'))_+ \mathbb{E} \left[(g(\sigma \circ \pi_k) - g(\sigma' \circ \pi_k))_+^2 \bigl| \sigma,I,J \right]^{1/2} \\
    &\quad \leq \sum_{k = 0}^{+\infty} \frac{\sqrt{p_k}}{2} \left( (g(\sigma) - g(\sigma'))_+^2 + \mathbb{E} \left[(g(\sigma \circ \pi_k) - g(\sigma' \circ \pi_k))_+^2 \bigl| \sigma,I,J \right] \right)\;.
\end{align*}
Define the random variable
\[ W = \sum_{k = 0}^{+\infty} \frac{\sqrt{p_k}}{2} \mathbb{E} \left[ (g(\sigma) - g(\sigma \tau_{I,J}))_+^2 + (g(\sigma \circ \pi_k) - g(\sigma \tau_{I,J} \circ \pi_k))_+^2 \bigl| \sigma \right]\;. \]
It follows from equation \eqref{eq_Vplus_sigma} that
\[ \psi'(\theta) \leq \theta \mathbb{E}[ W e^{\theta f(\sigma)} ] \]
for any $\theta \geq 0$. Lemma \ref{theorem duality entropy} implies therefore that
\[ \psi(\theta) \leq \mathbb{E}[\exp(\theta^2 W)]\;. \]
Let $r = \sum_{k = 0}^{+\infty} \sqrt{p_k}$ (assuming it is finite). Then
\[ W = \sum_{k = 0}^{+\infty} \frac{\sqrt{p_k}}{2r} \left( r\mathbb{E}[(g(\sigma) - g(\sigma'))_+^2|\sigma] + r\mathbb{E} \left[(g(\sigma \circ \pi_k) - g(\sigma' \circ \pi_k))_+^2 \bigl| \sigma \right] \right)\;. \]
By convexity of the exponential function,
\begin{align*}
   &\exp(\theta^2 W) \leq  \frac{1}{2} \exp \left(r\theta^2 \mathbb{E}[(g(\sigma) - g(\sigma'))_+^2|\sigma]  \right)+\ldots\\
    \ldots + & \frac{1}{2} \sum_{k = 0}^{+\infty} \frac{\sqrt{p_k}}{r} \exp \left(r \theta^2 \mathbb{E} \left[(g(\sigma \circ \pi_k) - g(\sigma' \circ \pi_k))_+^2 \bigl| \sigma \right] \right)\;.
\end{align*}
Moreover, by Jensen's inequality,
\begin{align*}
    & \exp \left(r \theta^2 \mathbb{E} \left[(g(\sigma \circ \pi_k) - g(\sigma' \circ \pi_k))_+^2 \bigl| \sigma \right] \right) \\ \leq & \mathbb{E}\left[ \exp \left(r \theta^2 \mathbb{E} \left[(g(\sigma \circ \pi_k) - g(\sigma' \circ \pi_k))_+^2 \bigl| \sigma, \pi_k \right] \right)  | \sigma \right]\;,
\end{align*}
which implies that
\begin{align}
    &\mathbb{E} \left[ \exp(\theta^2 W)  \right] \leq \frac{1}{2} \mathbb{E} \left[ \exp \left(r\theta^2 \mathbb{E}[(g(\sigma) - g(\sigma'))_+^2|\sigma]  \right) \right] \nonumber \\ 
&\quad + \frac{1}{2} \sum_{k = 0}^{+\infty} \frac{\sqrt{p_k}}{r} \mathbb{E}\left[ \exp \left(r \theta^2 \mathbb{E} \left[(g(\sigma \circ \pi_k) - g(\sigma' \circ \pi_k))_+^2 \bigl| \sigma, \pi_k \right] \right) \right]\;. \label{eq_Jensen_permut}
\end{align}
Now, remark that for any permutation $\pi$ and any transposition $\tau = \tau_{i,j}$,
\[ \tau_{i,j} \circ \pi = \pi \circ \tau_{\pi^{-1}(i),\pi^{-1}(j)}\;.  \]
In particular,
\[ \sigma' \circ \pi_k = \sigma \circ \tau_{I,J} \circ \pi_k = (\sigma \circ \pi_k) \circ \tau_{\pi_k^{-1}(I),\pi_k^{-1}(J)}\;. \]
Conditional on $\sigma,\pi_k$, the pair $\pi_k^{-1}(I),\pi_k^{-1}(J)$ follows the same distribution as $I,J$. Hence
\[ \mathbb{E} \left[(g(\sigma \circ \pi_k) - g(\sigma' \circ \pi_k))_+^2 \bigl| \sigma, \pi_k \right] = \mathbb{E} \left[(g(\sigma \circ \pi_k) - g(\sigma \circ \pi_k \circ \tau_{I,J}))_+^2 \bigl| \sigma, \pi_k \right]\;. \]
By independence of $\pi_k$ from $\sigma$, $\sigma \circ \pi_k$ is also a uniform random permutation and is independent from $\tau_{I,J},$ which means that the pair $(\sigma \circ \pi_k, \sigma \circ \pi_k \circ \tau_{I,J})$ has the same distribution as the pair $(\sigma, \sigma')$. In particular,
\[ \mathbb{E}\left[ \exp \left(r \theta^2 \mathbb{E} \left[(g(\sigma \circ \pi_k) - g(\sigma' \circ \pi_k))_+^2 \bigl| \sigma, \pi_k \right] \right) \right] = \mathbb{E} \left[ \exp \left(r\theta^2 \mathbb{E}[(g(\sigma) - g(\sigma'))_+^2|\sigma]  \right) \right]. \]
Injecting this upper bound into equation \eqref{eq_Jensen_permut} yields
\[ \mathbb{E} \left[ \exp(\theta^2 W)  \right] \leq \mathbb{E} \left[ \exp \left(r\theta^2 \mathbb{E}[(g(\sigma) - g(\sigma'))_+^2|\sigma]  \right) \right] \]
and it follows that
\[ \psi(\theta) \leq \mathbb{E} \left[ \exp \left(r\theta^2 \mathbb{E}[(g(\sigma) - g(\sigma'))_+^2|\sigma]  \right) \right]  \]
for any $\theta \geq 0$. Now, conclude by observing that
\begin{align*}
    \mathbb{E}[(g(\sigma) - g(\sigma'))_+^2|\sigma] &=  \mathbb{E}[(g(\sigma) - g(\sigma \tau_{I,J}))_+^2| \sigma ] \\
    &= \frac{2(1-\alpha_0)}{n(n-1)} \sum_{i = 1}^{n-1} \sum_{j = i+1}^n (g(\sigma) - g(\sigma \tau_{i,j}))_+^2 \\
    &= \frac{1-\alpha_0}{n-1} V_+(g,\sigma)\;.
\end{align*}

\subsubsection{Proof of Proposition \ref{prop_strong_stat_time}} \label{proof_strong_stat_time} 
 Let $T = T_{1,2}$ for short. 
    Define the following times
    \begin{align*}
        T_1 &= \min \left\{t \geq 0 : |P_m(t)| \geq \frac{n}{3}  \right\} \\
        T_2 &= \min \left\{t \geq 0 : |P_m(T_1 + t)| > \frac{n}{2}  \right\} \\
        T_3 &= \min \left\{t \geq 0 : \{i,j\} \cap P_m(T_1 + T_2 + t) \neq \emptyset   \right\} \\
        T_4 &= \min \left\{t \geq 0 : \{i,j\} \subset P_m(T_1 + T_2 + T_3 + t) \neq \emptyset   \right\} \\
        &= T - T_1 - T_2 - T_3.
    \end{align*}
    By construction, $T = T_1 + T_2 + T_3 + T_4$. $(T_i)_{1 \leq i \leq 4}$ are stopping times; let $(\cF_i)_{1 \leq i \leq 4}$ be the corresponding stopped $\sigma$-algebras. Let
    \[ s = \min \left(1.01, 1 + \frac{1}{3n} \right). \]
    The proof of \cite[Proposition 13]{White2019} shows that $T_1$ is stochastically dominated by $S_{n-1}$, the hitting time of $n-1$ by a biased random walk $(X_t)_{t \geq 0}$ started at $X_0 = 0$ and with increment distribution
    \[ \mathbb{P}(X_1 = 1) = \frac{1}{3}, \mathbb{P}(X_1 = - 1) = \frac{1}{6}, \mathbb{P}(X_1 = 0) = \frac{1}{2}. \]
    By the strong Markov property and translation equivariance, the distribution of $S_{n-1}$ is that of an i.i.d. sum of $n-1$ variables distributed as $S_1$, the hitting time of $1$. Thus,
    \[ \mathbb{E}[s^{T_1}] \leq \mathbb{E}[s^{S_1}]^{n-1} = G_1(s)^{n-1} \]
    where
    \[ G_1(s) = \mathbb{E}[s^{S_1}]. \]
    Now, conditioning on the first step of the walk, we see that
    \begin{itemize}
        \item If $X_1 = 1$ then $S_1 = 1$ by definition
        \item If $X_1 = 0$, we have wasted one step, the conditional distribution of $S_1$ is the (unconditional) distribution of $S_1 + 1$.
        \item If $X_1 = -1$, two upward steps need to be taken, so the conditional distribution of $S_1$ is that of $1 + S_1 + S_1^*$, where $S_1^*$ is an i.i.d. copy of $S_1$.
    \end{itemize}
    This leads to the equation
    \[ G_1(s) = \frac{s}{3} + \frac{s}{2} G_1(s) + \frac{s}{6} G_1(s)^2 \]
    \textit{i.e.}
    \[  \frac{s}{6} G_1(s)^2 - \left(1 - \frac{s}{2} \right) G_1(s) + \frac{s}{3} = 0.  \]
    This quadratic equation has two solutions, only one of which belongs to $[0,1]$ when $s \in [0,1]$. This yields
    \begin{equation} \label{eq_calc_G1}
       G_1(s) = \frac{3}{s} \left(1 - \frac{s}{2} - \sqrt{1 - s + \frac{s^2}{36}} \right) 
    \end{equation}
    at least for $s\in [0,1]$.
    
    The right-hand side can be analytically continued to the open disk $B(0,s_*)$, where \[ s_* = \frac{1}{\frac{1}{2} + \frac{\sqrt{2}}{3}} > 1.02. \] 
    To see this, remark that for any complex $z$ such that $|z| < s_*$, 
    \begin{align*}
        \mathrm{Re}\left(1 - z + \frac{z^2}{36} \right) &= 1 - \mathrm{Re}(z) + \frac{\mathrm{Re}(z)^2 - \mathrm{Im}(z)^2}{36} \\
        &= 1 - \mathrm{Re}(z) + \frac{2\mathrm{Re}(z)^2 - |z|^2}{36} \\
        &\geq 1 - \mathrm{Re}(z) + \frac{\mathrm{Re}(z)^2}{18} - \frac{(s_*)^2}{36} \\
        &> 1 - s_* + \frac{(s_*)^2}{36} \text{ since } |\mathrm{Re}(z)| \leq |z| < s_* \\
        &> 0,
    \end{align*}
   so that the function
   \[ \psi: z \mapsto \frac{3}{z} \left(1 - \frac{z}{2} - \sqrt{1 - z + \frac{z^2}{36}} \right) \]
   is well defined and analytic on $B(0,s_*) \backslash \{0\}$, where $\sqrt{\cdot}$ denotes the analytic continuation of the ordinary square root function to the open half-plane $\{z : \mathrm{Re}(z) > 0\}$. $0$ is a removable singularity as can easily be checked using Taylor expansion. Since $\psi$ is analytic on $B(0,s_*)$, the radius of convergence of its Taylor series at $0$ is at least $s_*$. By equation \eqref{eq_calc_G1}, the Taylor series of $\psi$ at $0$ is 
   \[ \sum_{k = 0}^{+\infty} z^k \mathbb{P}(S_1 = k) = G_1(z), \]
   which proves that $G_1$ is well-defined and equal to $\psi$ on $B(0,s_*)$ and in particular on $(0,s_*)$.

    Since $S_1$ is integer-valued, $G_1$ is convex on $[0;s_*)$, in particular
    \[ G_1(s) \leq 1 + \frac{G_1(1.01)-1}{0.01}(s-1) \leq 1 + 6.81(s-1) \leq 1 + \frac{6.81}{3n} \]
    and hence
    \[ \mathbb{E}[s^{T_1}] \leq \left(1 + \frac{6.81}{3n} \right)^{n-1} \leq \exp \left( \frac{6.81}{3} \right) \leq 9.68. \]
    Consider now $T_2$. As explained in the proof of \cite[Proposition 14]{White2019}, at each time step $t$ such that $\frac{n}{3} \leq k = |P_m(t)| \leq \frac{n}{2}$, there is a probability at least
    \[ \frac{k(n-k)}{n(n-1)} \geq \frac{1}{3} \frac{2}{3} = \frac{2}{9} \]
    of increasing the size of $P_m(t)$ by (at least) one, conditional on the past. Thus, conditionally on the past up to time $T_1$, the distribution of $T_2$ is dominated by the sum of $\frac{n}{2} - \frac{n}{3} + 1 = \frac{n}{6}+1$ independent geometric variables with parameter $\frac{2}{9}$, so that
    \[ \mathbb{E}\left[s^{T_2} | \cF_1 \right] \leq \left(\frac{\frac{2}{9}s}{1 - \left(1 - \frac{2}{9} \right)s} \right)^{\frac{n}{6}+1} = \left( \frac{2s}{9 - 7s} \right)^{\frac{n}{6}+1} \leq \frac{2\times 1.01}{9 - 7\times1.01} \left( \frac{2s}{9 - 7s} \right)^{\frac{n}{6}} \]
    which is well-defined and finite for all $s < \frac{9}{7}$. By convexity of the function $u \mapsto \frac{2u}{9-7u}$, we have that
    \[ \frac{2s}{9 - 7s} \leq 1 + \left(\frac{2\times 1.01}{9 - 7\times 1.01} - 1 \right) \frac{s-1}{0.01} \leq 1 + \frac{4.67}{3n} \]
    which yields
    \[ \mathbb{E}\left[s^{T_2} | \cF_1 \right] \leq 1.047 \left( 1 + \frac{4.67}{3n} \right)^{\frac{n}{6}} \leq 1.047 \exp \left( \frac{4.67}{18} \right) \leq 1.36. \]
    Consider now $T_3$ and $T_4$. Once $t \geq T_1 + T_2$, that is, once $|P_m(t)|$ becomes strictly larger than $\frac{n}{2}$, \cite[Marking scheme D]{White2019} is such that the block that was maximal at time $T_1 + T_2$ continues to grow and hence remains maximal at all later times. 
    
    At each time step $t \geq T_1 + T_2$, assuming $i,j \notin P_m(t)$, \cite[Marking scheme D]{White2019} yields $i \in P_m(t+1)$ or $j \in P_m(t+1)$ as soon as $(I_{t+1},J_{t+1}) \in \{i,j\} \times P_m(t) \cup P_m(t) \times \{i,j\}$, which happens with probability
    \[ \frac{2|P_m(t)|}{n(n-1)} \geq \frac{1}{n-1}. \]
    Thus, conditionally on the past up to time $T_1 + T_2,$ $T_3$ is stochastically dominated by a geometric distribution with parameter $\frac{1}{n-1}$, which yields
    \[ \mathbb{E}\left[ s^{T_3} | \cF_2 \right] \leq \frac{\frac{s}{n-1}}{1 - \frac{n-2}{n-1}s}. \]
    Now, $s \leq \frac{3n+1}{3n}$ hence 
    \begin{align*}
        \frac{n-2}{n-1} s &\leq \frac{3n^2 - 5n - 2}{3n^2 - 3n} \\ 
        &\leq 1 - \frac{2}{3(n-1)} \\
        1 - \frac{n-2}{n-1} \frac{3n+1}{3n} &\geq \frac{2}{3(n-1)} \text{hence} \\
        \frac{\frac{s}{n-1}}{1 - \frac{n-2}{n-1}s} &\leq \frac{3}{2}s \\
       &\leq 1.5 \times 1.01 \\
       &= 1.515.
    \end{align*}
    Similarly, conditionally on the past up to time $T_1 + T_2 + T_3,$ $T_4$ is stochastically dominated by a geometric distribution with parameter $\frac{1}{2(n-1)}$, so that
    \[ \mathbb{E}\left[ s^{T_4} | \cF_3 \right] \leq \frac{\frac{s}{2(n-1)}}{1 - \frac{2n-3}{2n-2}s}. \]
   Since $s \leq \frac{3n+1}{3n}$,
   \begin{align*}
        \frac{2n-3}{2n-2} s &\leq \frac{6n^2 - 7n - 3}{6n^2 - 6n} \\ 
        &\leq 1 - \frac{1}{6(n-1)} \\
        \frac{\frac{s}{2(n-1)}}{1 - \frac{2n-3}{2n-2}s} &\leq 3s \\
        &\leq 3\times 1.01
   \end{align*}
   It follows that 
   \[ \mathbb{E}\left[ s^{T_3} | \cF_2 \right] \leq 1.515, \quad \mathbb{E}\left[ s^{T_4} | \cF_3 \right] \leq 3.03. \]
   This finally yields
   \[ \mathbb{E}[s^T] = \mathbb{E} \left[ s^{T_1} \mathbb{E}\left[ s^{T_2} | \cF_1 \right] \mathbb{E}\left[ s^{T_3} | \cF_2 \right] \mathbb{E}\left[ s^{T_4} | \cF_3 \right]  \right] \leq 9.68\times 1.36 \times 1.515 \times 3.03 
   \leq 60.5 
   \]
   Now, for any $t \geq 1$, by Markov's inequality,
   \[ \mathbb{P} \left( T \geq t \right) = \mathbb{P} \left( s^T \geq s^t \right) \leq \frac{\mathbb{E}\left[ s^T \right]}{s^t} \leq \frac{60.5}{s^t}. \]
   Let
   \[ t_* = \inf \left\{t > 0 : s^t \geq 60.5 \right\} = \frac{\log(60.5)}{\log(s)}, \]
   it follows that
   \begin{align*}
       r(T) &= \sum_{t = 0}^{+\infty} \sqrt{\mathbb{P}(T > t)} \\
       &= \sum_{t = 1}^{+\infty} \sqrt{\mathbb{P}(T \geq t)} \\
       &\leq \lfloor t_* \rfloor + \sum_{t = \lfloor t_* \rfloor + 1}^{+\infty} \sqrt{\frac{60.5}{s^t}} \\
       &\leq \lfloor t_* \rfloor + \sum_{t = 0}^{+\infty} s^{- \frac{t}{2}} \\
       &\leq t_* + \left(1 - \frac{1}{\sqrt{s}} \right)^{-1}.
   \end{align*}
   Since $s = \min \left(1.01, 1 + \frac{1}{3n} \right)$, by convexity of $u \mapsto \frac{1}{\sqrt{u}}$,
   \[ \frac{1}{\sqrt{s}} \leq 1 - 100*\left(1 - \frac{1}{\sqrt{1.01}} \right) (s-1) \leq 1 - 0.496 \min \left(0.01, \frac{1}{3n} \right) \]
   and thus
   \[ \left(1 - \frac{1}{\sqrt{s}} \right)^{-1} \leq \frac{\max(100,3n)}{0.496} \leq \max(202, 6.05 n). \]
   Moreover, by concavity of the logarithm,
   \[ \log(s) \geq 100\times \log(1.01) \min \left(0.01, \frac{1}{3n} \right) \geq 0.995 \min \left(0.01, \frac{1}{3n} \right) \]
   and so
   \[ t_* \leq \frac{\log(60.5)}{0.995} \max(100,3n) \leq \max\left(413, 12.37 n \right). \]
   This finally yields
   \[ r(T) \leq \max \left(615, 18.42 n \right) \]
   as claimed.

\subsubsection{Proof of Proposition \ref{prop_bound_Vplus_perm}}\label{ssec_proof_prop_bound_Vplus_perm}
Consider now the specific case of the function 
\[ g_x: \sigma \mapsto \sup_{t \in \cT} \left\{ \sum_{i = 1}^n w_{\sigma(i)} t(x_i) \right\}. \]
Define the set 
\[ \mathcal{K}(x) = \overline{\left\{ (t(x_i))_{1 \leq i \leq n} : t \in \cT \right\}} \subset \mathbb{R}^n \]
which is closed and bounded (by assumption) and such that
\[ g_x(\sigma) = \sup_{z \in \mathcal{K}(x)} \sum_{i = 1}^n w_{\sigma(i)} z_i\;. \]
As $\mathcal{K}(x)$ is compact, we can choose for each $\pi \in \mathfrak{S}_n$, 
\[ \overline{z}(\pi) \in \underset{z \in \mathcal{K}(x)}{\mathrm{argmax}} \sum_{i = 1}^n w_{\pi(i)} z_i \;. \]
Since $\mathfrak{S}_n$ is finite, this does not raise measurability issues. Let $\hat{z}$ denote the random variable $\overline{z}(\sigma)$. Let $I,J$ be independent and uniformly distributed on $\{1,\ldots,n\}$, then
\begin{align*}
    g_x \left( \sigma \right) - g_x(\sigma \circ \tau_{I,J}) &\leq \sum_{i = 1}^n w_{\sigma(i)} \hat{z}_i - \sum_{i = 1}^n w_{(\sigma \circ \tau_{I,J})(i)} \hat{z}_i  \\
    &= w_{\sigma(I)} \hat{z}_{I} + w_{\sigma(J)} \hat{z}_J - w_{\sigma(J)} \hat{z}_I - w_{\sigma(I)} \hat{z}_J \\
    &= \left(w_{\sigma(I)} - w_{\sigma(J)} \right) \left(\hat{z}_I - \hat{z}_J \right).
\end{align*}
It follows that
\[ \left( g_x \left( \sigma \right) - g_x(\sigma \circ \tau_{I,J}) \right)_+^2 \leq \left[ \left(w_{\sigma(I)} - w_{\sigma(J)} \right) \left(\hat{z}_I - \hat{z}_J \right) \right]_+^2 \leq \left(w_{\sigma(I)} - w_{\sigma(J)} \right)^2 \left(\hat{z}_I - \hat{z}_J \right)^2. \]
As a consequence, since $\hat{z} =\overline{z}(\sigma)$ depends on $\sigma$ but not on $I,J$ and since  $V_+(g_x,\sigma)=n\EE[ \left( g_x \left( \sigma \right) - g_x(\sigma \circ \tau_{I,J}) \right)_+^2\vert \sigma]$, we have
\[ V_+(g_x,\sigma) \leq n\sup_{t \in \cT} \left\{ \mathbb{E} \left[ \left(w_{\sigma(I)} - w_{\sigma(J)} \right)^2 \left(t(x_I) - t(x_J) \right)^2 \bigl| \sigma \right] \right\} := W\;. \]
If the quantities $t(x_i)$ all belong to $[-1,1]$, then
\[ V_+(g_x,\sigma) \leq W \leq 4n \mathbb{E} \left[ \left(w_{\sigma(I)} - w_{\sigma(J)} \right)^2 | \sigma \right] = 8 \sum_{i = 1}^n w_i^2\;. \]
In general, since $w_i \in [a;b]$ for all $i\in \left\{ 1,\ldots, n\right\}$ by definition, 
\begin{align*}
    V_+(g_x,\sigma) &\leq n(b-a)^2 \sup_{t \in \cT} \left\{ \mathbb{E} \left[ \left(t(x_I) - t(x_J) \right)^2 \bigl| \sigma \right] \right\} \\
    &= n(b-a)^2 \sup_{t \in \cT} \left\{ \frac{1}{n^2} \sum_{i = 1}^n \sum_{j = 1}^n (t(x_i) - t(x_j))^2 \right\} \\
    &=  2(b-a)^2 \sup_{t \in \cT} \left\{ \sum_{i = 1}^n (t(x_i) - \overline{t}_{x})^2 \right\}\;,
\end{align*}
where $\overline{t}_{x}$ denotes the empirical mean,
\[ \overline{t}_{x} = \frac{1}{n} \sum_{i = 1}^n t(x_i)\;. \]
This concludes the proof of Proposition \ref{prop_bound_Vplus_perm}.

\subsection{A version of Tolstikhin's theorem using the method of exchangeable pairs}
\label{proof_Tolstikhin_exch_pair}
Let us recall first Tolstikhin's concentration inequality (\cite{Tolstikhin2017}, see also \cite{tolstikhin2014localized}), which is a clever modification of a previous result by Bobkov (\cite{Bobkov2004}).

The result describes the concentration of some functions defined on the symmetric group $\mathfrak{S}_n$, $n\geq 3$, and satisfying some invariance conditions. More precisely, let us state the definition of $(k,n)$-symmetric functions.

\begin{definition} Let $n\geq 3$ be a positive integer and let $g$ be a function defined on $\mathfrak{S}_n$. Take a positive integer $k$ such that $k< n$ and let $\mathfrak{S}_{k,n-k}$ be the subgroup of permutations that leave the set $\{1,\ldots,k\}$ invariant. The function $g$ is said to be $(k,n)$-symmetric if and only if $g(\sigma \circ \pi) = g(\sigma)$ for all $\sigma \in \mathfrak{S}_n$ and $\pi \in \mathfrak{S}_{k,n-k}$.
\end{definition}

Tostikhin's result can expressed as follows.

\begin{theorem}[Tolstikhin \cite{Tolstikhin2017}, Bobkov \cite{Bobkov2004}]\label{Th_Tolsti}
    Let $n\geq 3$, $\sigma\in \mathfrak{S}_n$ a uniform ramdom permutation and $g:\mathfrak{S}_n \rightarrow \R$, a $(k,n)$-symmetric function. Denote 
    $$\vert \nabla g(\sigma) \vert^2_+ = \sum_{i=1}^k\sum_{j=k+1}^n(g(\sigma)-g(\sigma \circ \tau_{i,j}))_+^2$$ and assume that there exists a positive constant $\Sigma^2$ such that, almost surely, $\vert \nabla g(\sigma) \vert^2_+ \leq \Sigma^2$. Then, for any $t>0$ it holds
    \begin{equation}\label{ineq_Tolsti}
        \PP\left(g(\sigma)-\EE[g(\sigma)]\geq t\right) \leq \exp\left(-\frac{(n+2)t^2}{8\Sigma^2}\right)\;.
    \end{equation}
\end{theorem}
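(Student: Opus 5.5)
The plan is to reprove Theorem \ref{Th_Tolsti} with the same route as Theorem \ref{thm_conc_fun_permut}: Lemma \ref{lem_cov_ineq} combined with Lemma \ref{theorem duality entropy}. The change is to use an exchangeable pair adapted to the $(k,n)$-symmetry. Let $I$ be uniform on $\{1,\ldots,k\}$ and $J$ uniform on $\{k+1,\ldots,n\}$, independent of each other and of $\sigma$. Set $\sigma'=\sigma\circ\tau_{I,J}$; the identity $(\sigma,\sigma\tau)=(\sigma\tau\tau,\sigma\tau)$ shows that $(\sigma,\sigma')$ is exchangeable. Since $g$ is $(k,n)$-symmetric, $g(\sigma)$ depends only on the $k$-subset $A=\sigma(\{1,\ldots,k\})$. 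The chain $\sigma\mapsto\sigma\tau_{I,J}$ therefore projects to the Bernoulli--Laplace chain, which swaps a uniform element of $A$ with a uniform element of $A^c$. This reduction is what removes the $\log n$ loss and the generic White-type strong convergence times. With this pair,
\[ \mathbb{E}\left[(g(\sigma)-g(\sigma'))_+^2\mid\sigma\right]=\frac{|\nabla g(\sigma)|_+^2}{k(n-k)}\leq\frac{\Sigma^2}{k(n-k)}. \]

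Next I construct $F$ as in \eqref{eq_def_F_gen}, $F(A,A')=\sum_{t\geq0}(P^t_A g-P^t_{A'}g)$. To control it, I use a Markovian coupling of two Bernoulli--Laplace chains started from sets $A,A'$ that differ by one swap, say $a\in A\setminus A'$ and $a'\in A'\setminus A$. The coupling makes the same move in both chains, with $a$ identified with $a'$. It is arranged so that at each step the chains either stay at ``distance one'' (still differing by a single swap) or merge. Merging occurs when the removed element is $a$ (respectively $a'$) or the inserted element is $a'$ (respectively $a$). This happens with probability at least $p=\frac1k+\frac1{n-k}-\frac{1}{k(n-k)}\approx\frac{n}{k(n-k)}$. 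The meeting time $T$ is then dominated by a geometric variable of parameter $p$, and
\[ F(\sigma,\sigma')=\mathbb{E}\Bigl[\sum_{t<T}\bigl(g(A_t)-g(A'_t)\bigr)\Bigm|\sigma,I,J\Bigr]. \]
Each pair $(A_t,A'_t)$ with $t<T$ is again of the form $(\sigma_t,\sigma_t\tau)$ for a stationary $\sigma_t$ and a transposition $\tau$ that is uniform across the two blocks given $\sigma_t$. This is the analogue of the relabeling step $\tau_{i,j}\pi=\pi\tau_{\pi^{-1}(i),\pi^{-1}(j)}$ in the proof of Theorem \ref{thm_conc_fun_permut}.

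The remaining steps copy the proof of Theorem \ref{thm_conc_fun_permut}. Bound $(g(\sigma)-g(\sigma'))_+F_+$ using the elementary inequality $xy\leq\frac{1}{2}(cx^2+y^2/c)$, with weights adapted to $\mathbb{P}(T>t)$, and apply Jensen's inequality together with the stationarity of $(\sigma_t,\sigma_t\tau)$. The almost-sure hypothesis $|\nabla g|_+^2\leq\Sigma^2$ lets the exponential moment step be trivial. The result should be $\mathbb{E}[e^{\theta f(\sigma)}]\leq\exp\bigl(\theta^2 r\,\Sigma^2/(k(n-k))\bigr)$ with $r\approx\sum_t\mathbb{P}(T>t)$-type quantities of order $1/p\approx k(n-k)/n$. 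This gives a variance factor of order $\Sigma^2/n$, and Chernoff's bound yields a tail $\exp(-c\,n t^2/\Sigma^2)$.

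The main obstacle is the constant. Lemma \ref{lem_cov_ineq} followed by Lemma \ref{theorem duality entropy} turns $\mathbb{E}[fe^{\theta f}]\leq\theta\,\mathbb{E}[We^{\theta f}]$ into $\mathbb{E}[e^{\theta f}]\leq e^{\theta^2 W}$. Chernoff then gives $\exp(-t^2/(4W))$, so matching $(n+2)/8$ requires $W\approx2\Sigma^2/n$, that is $r\approx 2k(n-k)/n$. The Cauchy--Schwarz step with $\sqrt{p_t}$ would cost an extra factor of $2$, since $\sum_t(1-p)^{t/2}\approx 2/p$. I would therefore try to avoid it. Because the chains remain at distance exactly one before merging, I can bound $(g(A_t)-g(A'_t))_+$ directly. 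I would also choose the AM--GM weights as $\mathbb{P}(T>t)$ itself rather than its square root, which gives $r=\mathbb{E}[T]\approx 1/p$ and hence a constant $2\Sigma^2/n$ up to $O(1/n)$ corrections. If this fails, the fallback is an exact eigenfunction analysis of the Bernoulli--Laplace kernel: the first nontrivial eigenvalue is $1-n/(k(n-k))$, which suggests the same $1/p$ scaling.
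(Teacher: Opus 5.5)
There is a genuine gap. The route you set up cannot reach the exact constant $\frac{n+2}{8}$, and the step you propose to close the difference does not hold. For context, the paper does not prove this theorem: it is quoted from Tolstikhin and Bobkov. The paper's own exchangeable-pair version (Theorem \ref{th_tol_exch}) only recovers the constant up to $O(1/n)$, namely $\frac{2n-5}{2n-2}\frac{n}{8}$. Your argument with the Cauchy--Schwarz step kept is essentially that proof transported to the Bernoulli--Laplace chain, and it yields the same kind of constant. Two corrections to your computation follow.

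First, when the step removes $a$ and inserts $a'$, your identification coupling makes the chains exchange ($A\to A'$, $A'\to A$), not merge. So the merging probability is $p=\frac{n-2}{k(n-k)}$. No coupling does better, since $A$ and $A'$ have exactly $n-2$ common neighbours.

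Second, with weights $\sqrt{\mathbb{P}(T>t)}=(1-p)^{t/2}$ you get $r=\frac{1+\sqrt{1-p}}{p}$. Hence $W\le\frac{(1+\sqrt{1-p})\Sigma^2}{n-2}$, and Lemma \ref{theorem duality entropy} gives the tail $\exp\bigl(-\frac{(n-2)t^2}{4(1+\sqrt{1-p})\Sigma^2}\bigr)$. For $k=n/2$ one has $\sqrt{1-p}>\frac{n-2}{n}$, so this exponent is strictly below $\frac{(n+2)t^2}{8\Sigma^2}$ for every even $n$.

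Your diagnosis of the obstacle is also off. By your own numbers, Cauchy--Schwarz gives $r\approx 2/p\approx 2k(n-k)/n$, which is exactly the value you said was needed. So that step does not cost a factor $2$. The deficit is a factor $1+O(1/n)$, and an argument ``up to $O(1/n)$ corrections'' cannot supply that for an exact inequality. Conversely, $r=\mathbb{E}[T]\approx 1/p$ would give $W\approx\Sigma^2/n$, which is twice Tolstikhin's exponent, not $2\Sigma^2/n$.

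The fix itself, AM--GM with weights $\mathbb{P}(T>t)$, needs
$\mathbb{E}\bigl[(g(A_t)-g(A'_t))_+^2\,\mathbb{I}\{t<T\}\bigm|\sigma\bigr]\le\mathbb{P}(T>t)\,\Sigma^2/(k(n-k))$.
That is, given the starting point and non-coalescence, the $t$-th pair would have to be a uniformly random cross-block swap of $A_t$. This is false. In your coupling the discrepancy $\{a,a'\}$ is frozen until coalescence, so given $A_0,A_t$ it stays in $(A_0\cap A_t)\times(A_0^c\cap A_t^c)$, or its reverse after an exchange step.

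For $k=1$, write $g(\sigma)=h(\sigma(1))$. The chains simply alternate between $(\{a\},\{a'\})$ and $(\{a'\},\{a\})$. At odd times you therefore average $(h(a')-h(a))_+^2$ over $a'$, which is a negative-part gradient at $A_0$. For $g(\sigma)=-\mathbb{I}\{\sigma(1)=1\}$ one has $\Sigma^2=1$, yet this average equals $1$ at $\sigma(1)=1$, against a required bound of $1/(n-1)$.

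Your claimed analogue of the relabeling step fails even without the indicator. Given $\sigma$, the swapped elements are $\sigma(I),\sigma(J)$, uniform on $A_0\times A_0^c$, not across the blocks of $A_t$. What is true is only that this pair injects into the cross pairs of $A_t$. That gives $\mathbb{E}[Y_t^2\mid\sigma]\le\Sigma^2/(k(n-k))$ for the indicator-free ghost increment $Y_t=(g(\sigma\pi_t)-g(\sigma\tau_{I,J}\pi_t))_+$. This is exactly why the paper's Cauchy--Schwarz step puts the whole indicator on the $t=0$ factor, and it is what forces the $\sqrt{p_t}$ weights.

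The eigenfunction fallback only covers linear statistics, where $F=(g-g')/\lambda$ exactly.

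If you want to gain a factor, the real loss is in Lemma \ref{theorem duality entropy}. Under the almost-sure bound $W\le v$, integrate $\psi'(\theta)\le\theta v\psi(\theta)$ directly. This gives $\log\psi(\theta)\le v\theta^2/2$, hence the tail $\exp(-(n-2)t^2/(4\Sigma^2))$, which implies the statement for $n\ge 6$. Small $n$ would still need a separate argument.
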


The point of this section is to prove a version of Tolstikhin's result using the method of exchangeable pairs. More precisely, we will prove the following theorem.

\begin{theorem}\label{th_tol_exch}
   Grant the assumptions and notations of Theorem \ref{Th_Tolsti}. Then, for all $\theta \geq 0$,
 \[ \mathbb{E}\left[ \exp(\theta (g(\sigma)-\EE[g(\sigma)])) \right] \leq \mathbb{E} \left[ \exp \left( \frac{2}{n} \frac{2n-2}{2n-5}  \theta^2 \left| \nabla g(\sigma) \right|_+^2 \right) \right] \;.\]
 If we have $\vert \nabla g(\sigma) \vert^2_+ \leq \Sigma^2$ $a.s.$, then, for any $t>0$,
    \begin{equation}\label{ineq_Tolsti_mod}
        \PP\left(g(\sigma)-\EE[g(\sigma)]\geq t\right) \leq \exp\left(-\frac{2n-5}{2n-2}\frac{nt^2}{8\Sigma^2}\right)\;.
    \end{equation}
\end{theorem}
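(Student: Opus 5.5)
The plan is to rerun the exchangeable-pair argument of Theorem \ref{thm_conc_fun_permut}, but with a pair adapted to the $(k,n)$-symmetry, and with a genuine coupling in place of the strong convergence time. Since $g(\sigma\circ\pi)=g(\sigma)$ for $\pi\in\mathfrak{S}_{k,n-k}$, $g(\sigma)$ depends only on the uniformly distributed $k$-subset $A=\sigma(\{1,\ldots,k\})$.

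\textbf{The pair.} Take $\sigma'=\sigma\circ\tau_{I,J}$ with $I$ uniform on $\{1,\ldots,k\}$ and $J$ uniform on $\{k+1,\ldots,n\}$, independent of $\sigma$. This pair is exchangeable, and on subsets it is one step of the Bernoulli--Laplace chain: one element leaves $A$ and one enters. We get
\[ \mathbb{E}[(g(\sigma)-g(\sigma'))_+^2\mid\sigma] = \frac{|\nabla g(\sigma)|_+^2}{k(n-k)} . \]
As in Proposition \ref{prop_exist_F}, set $F(\sigma,\sigma')=\sum_{t\ge0}\big(P^tg(\sigma)-P^tg(\sigma')\big)$, where $P$ is this chain. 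It is antisymmetric and satisfies $\mathbb{E}[F\mid\sigma]=g(\sigma)-\mathbb{E}g$.

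\textbf{Bounding $F$ by a coupling.} Write $A'=A\setminus\{a\}\cup\{b\}$. Run the chains from $A$ and $A'$ with the mirror coupling: they make the same move, except that $a$ in the first chain is identified with $b$ in the second. The two chains merge as soon as the move removes $a$ (from the first chain) or inserts $b$. Before merging, each $(A_t,A'_t)$ again differs by one swap and, by symmetry, is distributed as the original pair. The per-step merging probability is exactly
\[ p=\frac1k+\frac1{n-k}-\frac1{k(n-k)}=\frac{n-1}{k(n-k)} . \]
So the merging time $T$ is geometric, independent of the current states, and
\[ F(\sigma,\sigma')=\mathbb{E}\Big[\sum_{t<T}\big(g(A_t)-g(A'_t)\big)\,\Big|\,\sigma,\sigma'\Big]. \]

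\textbf{From $F$ to the moment bound.} Apply Lemma \ref{lem_cov_ineq} and then the decoupling Lemma \ref{theorem duality entropy}. Then repeat verbatim the Cauchy--Schwarz, AM--GM and convexity/Jensen steps of the proof of Theorem \ref{thm_conc_fun_permut}, using $(A_t,A'_t)\overset{d}{=}(\sigma,\sigma')$. This yields
\[ \mathbb{E}\big[e^{\theta(g-\mathbb{E}g)}\big]\le \mathbb{E}\Big[\exp\Big(\theta^2\, r\,\frac{|\nabla g(\sigma)|_+^2}{k(n-k)}\Big)\Big], \qquad r=\sum_{t\ge0}(1-p)^{t/2}=\frac{1}{1-\sqrt{1-p}} . \]

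\textbf{Main obstacle: the constant.} The delicate point is checking that $r/(k(n-k))\le \frac{2}{n}\frac{2n-2}{2n-5}$ uniformly in $1\le k<n$. Since $1/(1-\sqrt{1-p})=(1+\sqrt{1-p})/p\le 2/p - $ (a correction), we get $r/(k(n-k))\lesssim 2/(n-1)$. An elementary bound on $\sqrt{1-p}$, using $p\ge 4(n-1)/n^2$ at $k=n/2$ as the worst case, should give the stated factor $\frac{2}{n}\frac{2n-2}{2n-5}$. It may be necessary to add a lazy step, or to replace $\sqrt{p_t}$ by a slightly different weighting in Cauchy--Schwarz, to land exactly on this constant; I would try the direct estimate first.

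\textbf{Tail bound.} If $|\nabla g|_+^2\le\Sigma^2$ a.s., the moment bound becomes $\exp(c\theta^2\Sigma^2)$ with $c=\frac{2(2n-2)}{n(2n-5)}$. Chernoff's method then gives $\exp(-t^2/(4c\Sigma^2))$, which is exactly \eqref{ineq_Tolsti_mod}.
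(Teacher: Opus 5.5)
\textbf{Verdict.} Your route differs from the paper's and can be completed, but its key numerical claim is false, and the step you flag as the main obstacle is left open.

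\textbf{The merging probability.} Under the mirror coupling the chains do \emph{not} merge when the move removes $a$ and inserts $b$ at the same time, which happens with probability $\frac{1}{k(n-k)}$. In that case the second chain removes $b$ and inserts $a$, so the pair becomes $(A',A)$: it has swapped, not merged. Hence
\[ p=\frac1k+\frac1{n-k}-\frac{2}{k(n-k)}=\frac{n-2}{k(n-k)}, \]
not $\frac{n-1}{k(n-k)}$. No other coupling does better, because the one-step laws from $A$ and from $A'$ overlap only on the $n-2$ common neighbours of $A$ and $A'$.

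\textbf{Consequence for the constant.} With the correct $p$, the crude estimate $r\le 2/p$ gives the factor $\frac{2}{n-2}$. This exceeds $\frac{2}{n}\frac{2n-2}{2n-5}$ for every $n\ge 5$, so the second-order term is genuinely needed. It is enough:
\[ \frac{r}{k(n-k)}=\frac{1+\sqrt{1-p}}{n-2}\le\frac{2-p/2}{n-2}\le\frac{2}{n-2}-\frac{2}{n^2}\le\frac{2}{n}\,\frac{2n-2}{2n-5}. \]
Here the middle step uses $p\ge 4(n-2)/n^2$, and the last inequality is equivalent, for $n\ge 3$, to $n^2-5n+10>0$.

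\textbf{Comparison with the paper.} Making the chain lazy, as you suggest, would go the wrong way, and that is in effect what the paper does. It keeps $(I,J)$ uniform on $\{1,\ldots,n\}^2$, so within-block draws are wasted moves. It follows $(\pi_t^{-1}(I),\pi_t^{-1}(J))$ through the three block configurations and obtains a geometric merging time with $q=2(n-2)/n^2$. This gives the factor $\frac{2}{n^2}\cdot\frac{1}{1-\sqrt{1-q}}=\frac{1+\sqrt{1-q}}{n-2}$. Since $p\ge 2q$, your corrected non-lazy version yields a slightly smaller constant than the paper's.

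\textbf{The Jensen step.} This is not a verbatim copy of the proof of Theorem \ref{thm_conc_fun_permut}. The identity $(A_t,A_t')\overset{d}{=}(\sigma,\sigma')$ is false once merging has occurred. Even on $\{t<T\}$ it is not what the step uses: conditional Jensen needs a conditioning that includes $\sigma$, and given $(A_0,A_t)$ and $\{t<T\}$ the law of $A_t'$ is not uniform over the neighbours of $A_t$. The paper's device also does not transfer. That device drops the indicator and uses $(\sigma\pi_t,\sigma'\pi_t)\overset{d}{=}(\sigma,\sigma')$, but your cross-block walk $\pi_t$ does not preserve the blocks.

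What works is an inequality. Before merging, $A_t'=\tau_{a,b}(A_t)$ with $a=\sigma(I)$ and $b=\sigma(J)$, and distinct initial pairs $(a,b)$ give distinct neighbours of $A_t$. Therefore
\[ \EE\bigl[\mathbb{I}\{t<T\}\,(g(A_t)-g(A_t'))_+^2\,\big|\,\sigma,\pi_1,\ldots,\pi_t\bigr]\le\frac{|\nabla g(\sigma\pi_t)|_+^2}{k(n-k)}, \]
with $\sigma\pi_t$ uniform, and this is all the convexity argument needs.

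\textbf{Where $F$ lives.} $F$ should be built on $k$-subsets, not on $\mathfrak{S}_n$. On $\mathfrak{S}_n$ your walk is periodic, since every step is an odd permutation. The coupling itself gives $P^tg\to\EE[g(\sigma)]$ on subsets.

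With these corrections your argument proves the theorem.
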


\begin{proof}
Let $(I,J),(I_j,J_j)_{j \in \mathbb{N}}$ be i.i.d. pairs uniformly distributed on $\{1,\ldots,n\}^2$ and define a sequence $(\pi_t)_{t\in \NN}$ of random permutations such that: $\pi_0=Id$ and, for any $t\geq 1$,
\[ \pi_t = \tau_{I_1,J_1} \circ \tau_{I_2,J_2} \circ ... \circ \tau_{I_t,J_t}\;. \]
For $\sigma \in \mathfrak{S}_n$, we set $\sigma_t = \sigma \circ \pi_t$. For $\sigma,\sigma'\in \mathfrak{S}_n$, define the random time
\[ T_{\sigma,\sigma'} = \min \left\{ t \in \mathbb{N} : \pi_t^{-1} \sigma^{-1} \sigma' \pi_t \in \mathfrak{S}_{k,n-k} \right\}. \]
Remark that $T$ is a symmetric function of $(\sigma,\sigma')$.
Since $\pi_t$ converges exponentially fast to the uniform distribution on $\mathfrak{S}_n$ in total variation distance, $T$ is almost surely finite and of finite expectation, provided that $\sigma^{-1} \sigma'$ is of the form $\pi \tau \pi^{-1}$ for some $\tau \in \mathfrak{S}_{k,n-k}$, \textit{i.e.} $(\sigma,\sigma') \in \Delta$ where
\[ \Delta = \left\{ (\sigma_1,\sigma_2) : \sigma_1^{-1} \sigma_2 \in H_n \right\} \text{ with } H_n = \left\{ \pi \tau \pi^{-1} : \pi \in \mathfrak{S}_n, \tau \in \mathfrak{S}_{k,n-k} \right\}.  \]
Note that $\Delta$ is a symmetric set, \textit{i.e.} $(\sigma,\sigma') \in \Delta \iff (\sigma',\sigma) \in \Delta$, and that it contains the pairs of the form $(\sigma, \sigma \circ \tau_{i,j})$ for any $\sigma \in \mathfrak{S}_n$ and $(i,j) \in \{1,\ldots,n\}^2$. To see this, remark that
\[ \pi^{-1} (\sigma \tau_{i,j})^{-1} \sigma \pi = \pi^{-1} \tau_{i,j} \pi = \tau_{\pi^{-1}(i), \pi^{-1}(j)}\;, \]
which belongs to $\mathfrak{S}_{k,n-k}$ as long as $(\pi^{-1}(i), \pi^{-1}(j)) \in \{1,\ldots,k\}^2 \cup \{k+1,\ldots,n\}^2$.
For $n > 2$, such a permutation $\pi$ exists for any $i\neq j$.
For $(\sigma,\sigma')\in \mathfrak{S}^2_n$, let us now define $\sigma_t'$ as follows:
\[ \sigma_t' = \begin{cases}
                &\sigma' \circ \pi_t \text{ if } t \leq T\;, \\
                &\sigma_t \pi_T^{-1} \sigma^{-1} \sigma' \pi_T \text{ if } t > T\;.
               \end{cases} 
\]
Note that the sequence $(\sigma'_t)_{t\geq 0}$ depends on $\sigma$ and $T$, but we do not explicitely mention these quantities, that are clear from context, in the aim to lighten the notation. 

\noindent Define a function $F: \Delta \to \mathbb{R}$ by
\[ F(\sigma,\sigma') = \mathbb{E} \left[ \sum_{t = 0}^{+\infty} g(\sigma_t) - g(\sigma_t') | \sigma,\sigma' \right]. \]
We need to prove that
\begin{itemize}
    \item $F$ is antisymmetric on $\Delta$,
    \item $\mathbb{E}[F(\sigma,\sigma') | \sigma] = g(\sigma) - \mathbb{E}[g(\sigma)] = f(\sigma)$ where $\sigma$ is uniformly distributed on $\mathfrak{S}_n$ and $\sigma' = \sigma \circ \tau_{I,J}$.
\end{itemize}
The function $F$ is antisymmetric since $T_{\sigma',\sigma} = T_{\sigma,\sigma'} = T$ and by symmetry of $g$,
\begin{align*}
    \sum_{t = 0}^{+\infty} g(\sigma_t) - g(\sigma'_t) &= \sum_{t = 0}^{T-1} g(\sigma \pi_t) - g(\sigma' \pi_t) \\
    &= - \sum_{t = 0}^{T-1} g(\sigma' \pi_t) - g(\sigma \pi_t)\;,
\end{align*}
which yields $F(\sigma,\sigma') = - F(\sigma',\sigma)$ by taking expectations conditional on $(\sigma,\sigma')$. To prove the second property, we will show that for any $(\sigma,\sigma')\in \mathfrak{S}_n^2$, $(\sigma_t)_{t \geq 0}$ and $(\sigma_t')_{t \geq 0}$ are equal in distribution to a right transposition walk started respectively at $\sigma$ and $\sigma'$. For the sequence $(\sigma_t)_{t \geq 0}$, this is clearly true by definition. Let us show that $(\sigma_t')_{t \geq 0}$ is also a right transposition walk. If $0 < t \leq T$, then
\[ \sigma_t' = \sigma' \pi_t = \sigma'  \pi_{t-1} \tau_{I_t,J_t} = \sigma_{t-1}' \tau_{I_t,J_t} \]
and if $t \geq T+1$, then
\[ \sigma_t' = \sigma_t \eta = \sigma_{t-1} \tau_{I_t,J_t} \eta = \sigma_{t-1} \eta \tau_{\eta^{-1}(I_t), \eta^{-1}(J_t)} = \sigma_{t-1}' \tau_{\eta^{-1}(I_t), \eta^{-1}(J_t)} \;, \]
where $\eta = \pi_T^{-1} \sigma^{-1} \sigma' \pi_T$. Thus, the sequence $(\sigma_t')_{t \in \mathbb{N}}$ satisfies the recursive formula
\[ \sigma_t' = \sigma_{t-1}' \tau_{I_t',J_t'} \]
for all $t \geq 1$, where
\[ (I_t',J_t') =  (I_t,J_t) \mathbb{I}\{t-1 < T\} + (\eta^{-1}(I_t), \eta^{-1}(J_t)) \mathbb{I}\{T \leq t-1 \}. \]
To complete the argument, it thus suffices to show that $(I_t',J_t')_{t \geq 1}$ is an i.i.d. sequence uniformly distributed on $\{1,\ldots,n\}^2$. To see this, introduce the filtration
\[ \cF_t = \sigma ((I,J),(I_s,J_s)_{s \leq t}) \]
and remark that $T$ is a stopping time with respect to this filtration, which implies that $\{ T \leq t-1 \} \in \cF_{t-1}$ and that $\eta \mathbb{I}\{ T \leq t-1 \}$ is $\cF_{t-1}-$measurable. Since $(I_t,J_t)$ is independent from $\cF_{t-1}$ and uniform, this implies that $(I_t',J_t')$ is uniformly distributed conditionally on $\cF_{t-1}$. It follows that $(I_t',J_t')$ is independent from $\cF_{t-1}$ and in particular from $(I_s',J_s')_{1 \leq s \leq t-1}$. This proves that $(I_t',J_t')_{t \geq 1}$ is an i.i.d. uniform sequence. 

Let $\psi=\EE[e^{\cdot f(\sigma)}]$ be the moment generating function of $f(\sigma) = g(\sigma) - \mathbb{E}[g(\sigma)]$. It follows from Lemma \ref{lem_cov_ineq} that
\[ \psi'(\theta) \leq \theta \mathbb{E}\left[ (g(\sigma) - g(\sigma'))_+ (F(\sigma,\sigma'))_+ e^{\theta f(\sigma)} \right]. \]
Moreover, by $(k,n)$-symmetry of $g$,
\begin{align*}
   (F(\sigma,\sigma'))_+ &\leq \sum_{k = 0}^{+\infty} \mathbb{I}\{k < T\} (g(\sigma_k) - g(\sigma'_k))_+ \\
   &= \sum_{k = 0}^{+\infty} \mathbb{I}\{k < T\} (g(\sigma \pi_k) - g(\sigma' \pi_k))_+\;.
\end{align*}
It follows that
\[ (g(\sigma) - g(\sigma'))_+ (F(\sigma,\sigma'))_+ \leq \sum_{k = 0}^{+ \infty} \mathbb{E} \left[ (g(\sigma) - g(\sigma'))_+ (g(\sigma \circ \pi_k) - g(\sigma' \circ \pi_k))_+ \mathbb{I}\{ k < T \} \bigl| \sigma,\sigma' \right]\;. \]
Let $p_k = \mathbb{P}(k < T)$. By the Cauchy-Schwarz inequality,
\begin{align*}
    &(g(\sigma) - g(\sigma'))_+ (F(\sigma,\sigma'))_+  \\ 
    &\quad \leq \sum_{k = 0}^{+ \infty} \mathbb{E} \left[ (g(\sigma) - g(\sigma'))_+^2 \mathbb{I}\{ k < T\} | \sigma,\sigma' \right]^{1/2} \mathbb{E} \left[(g(\sigma \circ \pi_k) - g(\sigma' \circ \pi_k))_+^2 \bigl| \sigma,\sigma' \right]^{1/2}. \numberthis \label{eq_decoup_Tolstikhin}
\end{align*}
Let us now analyze the random variable $T$.  First, remark that
\begin{align*}
   T &= \min \{ t \in \mathbb{N} : \pi_t^{-1} \tau_{I,J} \pi_t \in \mathfrak{S}_{k,n-k} \} \\ 
   &= \min \{ t \in \mathbb{N} : \tau_{\pi_t^{-1}(I),\pi_t^{-1}(J)} \in \mathfrak{S}_{k,n-k} \} \\
   &= \min \left\{ t \in \mathbb{N} : (\pi_t^{-1}(I),\pi_t^{-1}(J)) \in \{1,\ldots,k\}^2 \cup \{k+1,\ldots,n\}^2 \right\}\;.
\end{align*}
In particular, $T = 0$ if and only if $(I,J) \in \{1,\ldots,k\}^2 \cup \{k+1,\ldots,n\}^2$. We now assume that this is not the case: $T>0$. Let $(X_t,Y_t)=(\pi^{-1}_t(I),\pi^{-1}_t(J))$ for any $t\geq 0$. For all $t \geq 1$, $\pi_t^{-1} = (\pi_{t-1} \tau_{I_t,J_t})^{-1} = \tau_{I_t,J_t} \pi_{t-1}^{-1}$, hence
\[ (X_t,Y_t) = \left(\tau_{I_t,J_t}(X_{t-1}), \tau_{I_t,J_t}(Y_{t-1}) \right). \]
Since $(I_t,J_t)$ is independent from $(X_s,Y_s)_{1 \leq s \leq t-1}$, this implies that $(X_t,Y_t)$ is a Markov chain started at $(I,J)$ (which we condition on). Define the sets
\[ E = \{1,\ldots,k\}, F = \{k+1,\ldots,n\}, A = E^2, B = E\times F \cup F \times E, C = F^2. \]
For any $t \in \mathbb{N}$, let
\[ Z_t = \begin{cases}
          &A \text{ if } (X_t,Y_t) \in A\;, \\
          &B \text{ if } (X_t,Y_t) \in B\;, \\
          &C \text{ if } (X_t,Y_t) \in C\;.
 \end{cases} 
 \]
 We show that $(Z_t)_{t \in \mathbb{N}}$ is a Markov chain and compute its transition matrix. Define the filtration
 \[ \cF_t = \sigma((X_s,Y_s)_{1 \leq s \leq t})\;. \]
 Since $(X_t,Y_t)_{t\geq 0}$ is a Markov chain, for any $R \in \{A,B,C\}$,
 \[  \mathbb{P}(Z_t = R | \cF_{t-1}) = \mathbb{P} ((X_t,Y_t) \in R | (X_{t-1},Y_{t-1})) = \mathbb{P} \left((\tau_{I_t,J_t}(X_{t-1}),\tau_{I_t,J_t}(Y_{t-1})) \in R | (X_{t-1},Y_{t-1}) \right). \]
Depending on $Z_{t-1}$, the following cases are possible:
 \begin{itemize}
     \item If $Z_{t-1} = A$, that is $(X_{t-1},Y_{t-1}) \in E^2$, then $(X_t,Y_t) \in E \times F$ if and only if 
     \[ (I_t,J_t) \in \{X_{t-1}\} \times F \cup F \times \{ X_{t-1} \}  \]
     and $(X_t,Y_t) \in F \times E$ if and only if
     \[ (I_t,J_t) \in \{Y_{t-1}\} \times F \cup F \times \{ Y_{t-1} \}\;. \]
     Thus, $(X_t,Y_t) \in B$ if and only if
     \[ (I_t,J_t) \in \{X_{t-1}\} \times F \cup F \times \{ X_{t-1} \} \cup \{Y_{t-1}\} \times F \cup F \times \{ Y_{t-1} \}\;. \]
     This yields
     \[ \mathbb{P}(Z_t = B | X_{t-1},Y_{t-1}) = \mathbb{P}(Z_t = B | Z_{t-1} = A) = 4 \frac{n-k}{n^2}\;. \]
     Moreover, $\mathbb{P}(Z_t = C | X_{t-1},Y_{t-1}) = 0$ since a single transposition can only ``move'' one of $X$ and $Y$ at a time. It follows that
     \[ \mathbb{P}(Z_t = C | X_{t-1},Y_{t-1}) = 0 \text{ and } \mathbb{P}(Z_t = A | X_{t-1},Y_{t-1}) = 1 - 4\frac{n-k}{n^2}\;. \]
     \item Symetrically, by exchanging $k$ and $n-k$, if $Z_{t-1} = C$, then
     \[ \mathbb{P}(Z_t = B | X_{t-1},Y_{t-1}) = \frac{4k}{n^2}, \mathbb{P}(Z_t = A | X_{t-1},Y_{t-1}) = 0 \text{ and } \mathbb{P}(Z_t = C | X_{t-1},Y_{t-1}) = 1 - \frac{4k}{n^2}\;. \]
     \item Finally, consider the case $Z_{t-1} = B$ \textit{i.e.} $(X_{t-1},Y_{t-1}) \in B = E \times F \cup F \times E$. If $X_{t-1} \in E$ and $Y_{t-1} \in F$, then $(X_t,Y_t) \in A = E^2$ if and only if
     \[ (I_t,J_t) \in \{ Y_{t-1} \} \times  \left(E \backslash \{ X_{t-1} \} \right) \cup \left(E \backslash \{ X_{t-1} \} \right) \times \{ Y_{t-1} \} \;, \]
     which occurs with probability 
     \[ \mathbb{P} \left( Z_t = A | (X_{t-1},Y_{t-1}) \right) =  2 \frac{k-1}{n^2}\;.\] 
     The same holds if $X_{t-1} \in F$ and $Y_{t-1} \in E$. By symmetry, we also have that
     \[ \mathbb{P} \left( Z_t = C | (X_{t-1},Y_{t-1}) \right) =  2 \frac{n-k-1}{n^2} \]
     on the event $(X_{t-1},Y_{t-1}) \in B$. Finally,
     \[ \mathbb{P} \left( Z_t = B | (X_{t-1},Y_{t-1}) \right) = 1 - 2 \frac{k-1}{n^2} - 2\frac{n-k-1}{n^2} = 1 - 2 \frac{n-2}{n^2} \]
     on the event $(X_{t-1},Y_{t-1}) \in B$.
 \end{itemize}
 Thus, $(Z_t)_{t\geq 0}$ is a Markov chain with transition matrix: \\
 \begin{tabular}{c|c|c|c}
      & A & B & C  \\
      \hline 
    A & $1 - 4 \frac{n-k}{n^2}$ & $4 \frac{n-k}{n^2}$ & 0 \\
    B & $2 \frac{k-1}{n^2}$ & $1 - 2 \frac{n-2}{n^2}$ & $2 \frac{n-k-1}{n^2}$ \\
    C & $0$ & $4 \frac{k}{n^2}$ & $1 - 4 \frac{k}{n^2}$ 
    \vspace{0.2cm}
 \end{tabular}
 
 \noindent The random variable $T$ is non-zero if and only if $(I,J) = (X_0,Y_0) \in B$ that is, if the chain $Z_t$ starts in state $B$. $T$ is then equal to the exit time from state $B$. Thus, $T$ follows a geometric distribution with parameter $q = 2\frac{n-2}{n^2}$ conditionally on $(I,J) \in B$. Moreover, $T$ is independent of $\sigma$. Thus, the distribution of $T$ conditional on $(\sigma,\sigma')$, that is, conditional on $\sigma,I,J$, is geometric with parameter $q$ if $(I,J) \in B$, \textit{i.e.} if $T > 0$. This yields
 \[ \mathbb{E} \left[ (g(\sigma) - g(\sigma'))_+^2 \mathbb{I}\{ k < T\} | \sigma,\sigma' \right] = \mathbb{I}\{T > 0\} (1-q)^k (g(\sigma) - g(\sigma'))_+^2 = (1-q)^k (g(\sigma) - g(\sigma'))_+^2 \]
 since $g(\sigma) = g(\sigma')$ when $(I,J) \notin B$. It follows from equation \eqref{eq_decoup_Tolstikhin} that
 \begin{align*}
  &(g(\sigma) - g(\sigma'))_+ (F(\sigma,\sigma'))_+ \\ 
  &\quad \leq   \sum_{k = 0}^{+ \infty} (1-q)^{k/2} (g(\sigma) - g(\sigma'))_+ \mathbb{E} \left[(g(\sigma \pi_k) - g(\sigma' \pi_k))_+^2 \bigl| \sigma,\sigma' \right]^{1/2} \\
  &\quad \leq \frac{1}{2} (g(\sigma) - g(\sigma'))_+^2 \sum_{k = 0}^{+ \infty} (1-q)^{k/2} + \frac{1}{2} \sum_{k = 0}^{+ \infty} (1-q)^{k/2}  \mathbb{E} \left[(g(\sigma \pi_k) - g(\sigma' \pi_k))_+^2 \bigl| \sigma,\sigma' \right].
 \end{align*}
 Define the random variable
\[ W = \sum_{k = 0}^{+\infty} \frac{(1-q)^{k/2}}{2} \mathbb{E} \left[ (g(\sigma) - g(\sigma'))_+^2 + (g(\sigma \circ \pi_k) - g(\sigma' \circ \pi_k))_+^2 \bigl| \sigma \right]. \]
By reasoning exactly as in the proof of Theorem \ref{thm_conc_fun_permut}, we find that
\[ \psi(\theta) \leq \mathbb{E} \left[ \exp \left( r \theta^2 \mathbb{E} \left[ (g(\sigma) - g(\sigma'))_+^2 | \sigma  \right] \right) \right], \]
where
\[ r = \sum_{k = 0}^{+ \infty} (1-q)^{k/2} = \frac{1}{1 - \sqrt{1-q}}\;. \]
The quantity $r$ can further be bounded as follows (recall that $q = 2(n-2)/n^2$):
 \begin{align*}
     1-q &= 1 - 2\frac{n-2}{n^2} = \frac{n^2 - 2n + 4}{n^2} = \frac{(n-1)^2 + 3}{n^2} \\
     \sqrt{1-q} &= \frac{n-1}{n} \sqrt{1 + \frac{3}{(n-1)^2}} \\
     &\leq \frac{n-1}{n} \left(1 + \frac{3}{2(n-1)^2} \right) \\
     &\leq \frac{n-1}{n} + \frac{3}{2n(n-1)} \\
    1 - \sqrt{1-q} &\geq \frac{1}{n} - \frac{3}{2n(n-1)} = \frac{1}{n} \left(1 - \frac{3}{2(n-1)} \right) \\
                &= \frac{1}{n} \frac{2n-5}{2n-2} \\
      \frac{1}{1-\sqrt{1-q}} & \leq n \frac{2n-2}{2n-5} \;,        
 \end{align*}
 yielding $r \leq n \frac{2n-2}{2n-5}$. Moreover, since $g$ is $(k,n)-$invariant,
 \begin{align*}
     \mathbb{E} \left[ (g(\sigma) - g(\sigma'))_+^2 | \sigma  \right] &= \frac{1}{n^2} \sum_{i = 1}^n \sum_{j = 1}^n (g(\sigma) - g(\sigma \circ \tau_{i,j}))_+^2 \\
     &= \frac{2}{n^2} \sum_{i = 1}^k \sum_{j = k+1}^n (g(\sigma) - g(\sigma \circ \tau_{i,j}))_+^2 \\
     &= \frac{2}{n^2} \left| \nabla g(\sigma) \right|_+^2\;,
 \end{align*}
 which finally yields, for all $\theta \geq 0$,
 \[ \mathbb{E}\left[ \exp(\theta f(\sigma)) \right] \leq \mathbb{E} \left[ \exp \left( \frac{2}{n} \frac{2n-2}{2n-5}  \theta^2 \left| \nabla g(\sigma) \right|_+^2 \right) \right] \;,\]
which is the first part of Theorem \ref{th_tol_exch}. Futhermore, if $\left| \nabla g(\sigma) \right|_+^2 \leq \Sigma^2$ for some constant $\Sigma^2$ then by the Chernoff bound, for all $t \geq 0$,
 \[ \mathbb{P} \left( f(\sigma) \geq t \right) \leq  \exp\left( \frac{2}{n} \frac{2n-2}{2n-5} \theta^2 \Sigma^2 - \theta t \right) \]
 which yields
\begin{equation*} \label{eq_th_Tolsti_paires_echs}
    \mathbb{P} \left( f(\sigma) \geq t \right) \leq  \exp\left( - \frac{2n-5}{2n-2} \frac{n t^2}{8 \Sigma^2} \right)
\end{equation*}
 by optimizing over $\theta > 0$. 
\end{proof}

\subsection{Proofs related to Section \ref{sec_gen_bound}}\label{ssec_proofs_general_bound}

\begin{proof}[Proof of Theorem \ref{thm_hp_ubd_boot}]
Introduce a uniform random permutation $\sigma$ and let
\begin{equation}
    \overline{g}_\xi(x) = \mathbb{E} \left[ \sup_{t \in \cT} \xi_{\sigma(i)} t(x_{i}) | \xi \right]\;.
\end{equation}
Now, by Theorem \ref{thm_intro_cond_exp} applied to $2\overline{g}_\xi$ -- since for $t$ a function in $\cT$, 2$t$ takes values in $[-1,1]$ -- conditionnally on $\xi,$ with probability at least $1 - e^{-x}$,
\begin{align*}
   2\overline{g}_\xi(X) &\leq 2\mathbb{E}\left[ \overline{g}_\xi(X) | \xi \right] + \sqrt{12 \kappa_\xi 2\mathbb{E}\left[ \overline{g}_\xi(X) | \xi \right]} + 5 \kappa_\xi x
\end{align*}
where
\[ \kappa_\xi = \frac{1}{n} \sum_{i = 1}^n |\xi_i|\;. \]
Moreover, by definition
\[ \mathbb{E}\left[ \overline{g}_\xi(X) | \xi \right] = \mathbb{E}\left[\sup_{t \in \cT} \sum_{i=1}^{n}\xi_{\sigma(i)} t(X_{i})  | \xi \right] = M_n(\xi)  \]
which yields
\[ \overline{g}_\xi(X) \leq M_n(\xi) + \sqrt{6 \kappa_\xi x M_n(\xi)} + \frac{5}{2} \kappa_\xi x \]
with probability at least $1 -e^{-x}$, knowing $\xi$. 
Furthermore, by Theorem \ref{Th_concen_exchange}, for any $\theta \geq 0,$
\[ \mathbb{E} \left[\exp \left( \theta (g(X,\xi_\sigma) - \overline{g}_\xi(X)) \right) \bigr| \xi,X \right] \leq \exp \left(76 \theta^2 \norm{\xi}_\infty^2 v_+(X) \right). \]
Let
\[ \overline{v}_+(X) = \sup_{t \in \cT} \left\{ \sum_{i = 1}^n \left(t(X_i) - \mu_t \right)^2 \right\} \geq v_+(X). \]
As the supremum of sums of independent random variables valued in $[0,1],$ $\overline{v}_+$ is a self-bounding function of $X$. By \cite[Theorem 6.12]{BoucheronLugosiMassart:2013}, for any $\lambda \geq 0$,
\[ \mathbb{E} \left[ \exp(\lambda v_+(X)) \right] \leq \mathbb{E} \left[ \exp(\lambda \overline{v}_+(X)) \right] \leq \exp \left( (e^\lambda - 1) \mathbb{E}[\overline{v}_+(X)] \right). \]
It follows that
\[ \mathbb{E} \left[\exp \left( \theta (g(X,\xi_\sigma) - \overline{g}_\xi(X)) \right) \bigr| \xi \right] \leq \exp \left( \left(e^{76\norm{\xi}_\infty^2 \theta^2} - 1 \right) \mathbb{E}[\overline{v}_+(X)] \right). \]
Fix some $t > 0$. Let $v = \mathbb{E}[\overline{v}_+(X)]$. If $t \leq 2v \norm{\xi}_\infty \sqrt{19},$ let $\theta = \frac{t}{152 \norm{\xi}_\infty^2 v}$, then by the Chernoff bound,
\begin{align*}
    \log \mathbb{P} (g(X,\xi_\sigma) - \overline{g}_\xi(X)) \geq t | \xi) &\leq  v \left( \exp \left( \frac{t^2}{304 \norm{\xi}_\infty^2 v^2} \right) - 1 \right) - \frac{t^2}{152 \norm{\xi}_\infty^2 v} \\
    &\leq 1.14 \frac{t^2}{304 \norm{\xi}_\infty^2 v} - \frac{t^2}{152 \norm{\xi}_\infty^2 v} \text{ since } \frac{t^2}{304 \norm{\xi}_\infty^2 v^2} \leq \frac{1}{4} \\
    &\leq - \frac{0.43 t^2}{152 \norm{\xi}_\infty^2 v}.
\end{align*}
If $t > 2v \norm{\xi}_\infty \sqrt{19},$ then by the Chernoff bound with $\theta = \frac{1}{4 \sqrt{19} \norm{\xi}_\infty}$,
\begin{align*}
  \log \mathbb{P} (g(X,\xi_\sigma) - \overline{g}_\xi(X)) \geq t | \xi) &\leq v \left(e^{\frac{1}{4}} - 1 \right) - \frac{t}{4 \sqrt{19} \norm{\xi}_\infty} \\
  &\leq \frac{t}{2 \sqrt{19} \norm{\xi}_\infty} \left(e^{\frac{1}{4}} - 1 \right) - \frac{t}{4\sqrt{19} \norm{\xi}_\infty} \\
  &\leq - \frac{0.43 t}{4\sqrt{19} \norm{\xi}_\infty}.
\end{align*}
Hence,
\[ \log \mathbb{P} (g(X,\xi_\sigma) - \overline{g}_\xi(X)) \geq t | \xi) \leq -0.43 \min \left( \frac{t^2}{152 \norm{\xi}_\infty^2 v}, \frac{t}{4\sqrt{19} \norm{\xi}_\infty} \right). \]
As a consequence, 
\[ \mathbb{P} \left(g(X,\xi_\sigma) - \overline{g}_\xi(X) \geq \max \Bigl( \frac{4\norm{\xi}_\infty \sqrt{19 v x}}{\sqrt{0.86}}, \frac{8x\sqrt{19} \norm{\xi}_\infty}{0.86} \Bigr) \Bigr| \xi \right) \leq e^{-x}. \]
Now, let us bound $v$ using symmetrization. Let $X^*$ be an independent copy of $X$ and let $\varepsilon \in \{-1;1\}^n$ be an i.i.d. vector of Rademacher random variables independent from $X,X^*$. It holds
\begin{align*}
    v &= \mathbb{E} \left[ \sup_{t \in \cT} \left\{ \sum_{i = 1}^n (t(X_i)-\mu_t)^2 \right\} \right] \\
    &\leq \sigma^2 + \mathbb{E} \left[ \sup_{t \in \cT} \left\{ \sum_{i = 1}^n (t(X_i)-\mu_t)^2 - \mathbb{E} \bigl[ (t(X_i)-\mu_t)^2 \bigr] \right\} \right] \\
    &\leq \sigma^2 + \mathbb{E} \left[ \sup_{t \in \cT} \left\{ \sum_{i = 1}^n (t(X_i)-\mu_t)^2 - (t(X_i^*)-\mu_t)^2 \right\} \right] \text{ by Jensen's inequality} \\
    &= \sigma^2 + \mathbb{E} \left[ \sup_{t \in \cT} \left\{ \sum_{i = 1}^n \varepsilon_i \left((t(X_i)-\mu_t)^2 - (t(X_i^*)-\mu_t)^2 \right) \right\} \right] \\
    &= \sigma^2 + \mathbb{E} \left[ \sup_{t \in \cT} \left\{ \sum_{i = 1}^n \varepsilon_i (t(X_i) - t(X_i^*)) \left(t(X_i) + t(X_i^*) - 2\mu_t \right) \right\} \right] \\
    &\leq \sigma^2 + 2\mathbb{E} \left[ \sup_{t \in \cT} \left\{ \sum_{i = 1}^n \varepsilon_i (t(X_i) - t(X_i^*)) \right\} \right] \text{ by contractivity } \\
    &= \sigma^2 + 2\mathbb{E} \left[ \sup_{t \in \cT} \left\{ \sum_{i = 1}^n (t(X_i) - t(X_i^*)) \right\} \right] \\
    &\leq \sigma^2 + 4 M_n\;.
\end{align*}
It follows that
\[ g(X,\xi_\sigma) - \overline{g}_\xi(X) \leq \norm{\xi}_\infty \max \left( 19 \sqrt{x(4M_n +\sigma^2)}, 41 x \right) \]
with probability at least $1 - e^{-x}$ (knowing $\xi$).

It follows that conditionally on $\xi$, with probability at least $1 - 2e^{-x}$,
\[ g(X,\xi_\sigma) \leq M_n(\xi) + \sqrt{6 \kappa_\xi x M_n(\xi)} + \frac{5}{2} \kappa_\xi x + \norm{\xi}_\infty \max \left( 19 \sqrt{x(4M_n +\sigma^2)}, 41 x \right). \]
Since $\xi_\sigma$ is equal in distribution to $\xi$ and $\norm{\xi_\sigma}_\infty = \norm{\xi}_\infty,$ $\kappa_\xi = \kappa_{\xi_\sigma}$, $M_n(\xi_\sigma) = M_n(\xi),$ it follows that with probability at least $1 - 2e^{-x}$,
\[ g(X,\xi) \leq M_n(\xi) + \sqrt{6\kappa_\xi x M_n(\xi)} + \frac{5}{2} \kappa_\xi x + \norm{\xi}_\infty \max \left( 19 \sqrt{x(4M_n +\sigma^2)}, 41 x \right).  \]
(note that this probability is \emph{not} conditional on $\xi$).
\end{proof}

\begin{proof}[Proof of Lemma \ref{lem_quantile_condi}]
    Let $t\in \R$. Since
    \[ \mathbb{I}\{ Y > t\} \geq \mathbb{I}\{ Y \geq q_\alpha(Y | X) > t \} = \mathbb{I}\{ Y \geq q_\alpha(Y | X) \} \mathbb{I}\{ q_\alpha(Y | X) > t \}, \]
    it follows by taking conditional expectations that, almost surely,
    \[ \mathbb{P}(Y > t | X) \geq \mathbb{P}(Y \geq q_\alpha(Y | X) | X) \mathbb{I}\{ q_\alpha(Y | X) > t \} \geq \alpha \mathbb{I}\{ q_\alpha(Y | X) > t \} \;. \]
    Taking expectations yields
    \[ \mathbb{P}(Y > t) \geq \alpha \mathbb{P} \left( q_\alpha(Y | X) > t \right)\;. \]
    If $t = q_{\gamma \alpha}(Y)$ then by definition,
    \[ \alpha \gamma \geq \mathbb{P}(Y > t) \geq \alpha \mathbb{P} \left( q_\alpha(Y | X) > t \right)\;, \]
    which proves that
    \[ \mathbb{P} \left( q_\alpha(Y | X) > t \right) \leq \gamma \]
    and hence that
    \[ q_\gamma \left( q_\alpha(Y | X) \right) \leq t = q_{\gamma \alpha}(Y). \]
\end{proof}

\begin{proof}[Proof of Corollary \ref{cor_ubd_quant_boot}]
    Let $J$ be uniformly distributed on $\{1,\ldots,B\}$ and independent from $X$ and $(\xi^{b})_{b = 1,\ldots,B}$, then by definition
    \[ \hat{q}_\alpha^B = q_\alpha \left( g(X,\xi^{(J)}) | X,\xi \right). \]
    It follows from Lemma \ref{lem_quantile_condi} above that
    \[ q_\gamma \left( \hat{q}_\alpha^B \right) \leq q_{\gamma \alpha} \left( g(X,\xi^{(J)}) \right) = q_{\gamma \alpha}(g(X,\xi))\;. \]
    Let $x = \log 2 - \log (\gamma \alpha_1)$ and let $Y$ be the upper bound given by Theorem \ref{thm_hp_ubd_boot} (with this value of $x$) and with $\kappa_\xi$ replaced by its upper bound $2$. Theorem \ref{thm_hp_ubd_boot} states that with probability at least $1 - \gamma \alpha_1,$ $g(X,\xi) \leq Y$. By the union bound, this implies that with probability at least $1 - \gamma \alpha,$ $g(X,\xi) \leq q_{\gamma(\alpha_2 + \alpha_3)}(Y)$. Thus, 
    \[ q_\gamma \left( \hat{q}_\alpha^B \right) \leq q_{\gamma(\alpha_2 + \alpha_3)}(Y)\;. \]
    We can write $Y$ in the form $Y = \phi(M_n(\xi),\norm{\xi}_\infty)$ where the (deterministic) function $\phi$ is non-decreasing in both of its variables. By the union bound, with probability at least $\gamma(\alpha_2 + \alpha_3)$,
    \[ \norm{\xi}_\infty \leq q_{\gamma \alpha_3}(\norm{\xi}_\infty) \text{ and } M_n(\xi) \leq q_{\gamma \alpha_2}(M_n(\xi))\;, \]
    in which case
    \[ Y = \phi(M_n(\xi),\norm{\xi}_\infty) \leq \phi \left(q_{\gamma \alpha_2}(M_n(\xi)),  q_{\gamma \alpha_3}(\norm{\xi}_\infty) \right)\;. \]
    This proves that
    \[ q_\gamma \left( \hat{q}_\alpha^B \right) \leq q_{\gamma(\alpha_2 + \alpha_3)}(Y) \leq \phi \left(q_{\gamma \alpha_2}(M_n(\xi)),  q_{\gamma \alpha_3}(\norm{\xi}_\infty) \right)\;, \]
    which is the result.
\end{proof}

\subsection{Confidence regions for the mean: optimized version and proof}\label{ssec_sm_conf_reg}
The following version of Theorem \ref{thm_conf_reg_mean} handles the special case of symmetric random variables.

\begin{theorem} \label{thm_conf_reg_mean_gen}
    Let $\kappa = \mathbb{E}[|\xi_1|]$ and $b = \norm{\xi_1}_\infty$. Assume that $\norm{X_1 - \mu} \leq M$ almost surely for some constant $M$. Let $\eta = 1$ if $X_1$ is symmetrically distributed and $\eta = 2$ otherwise. 
    With probability at least $1 - 2e^{-x},$ for all $\theta > 0,$
    \[ \norm{\overline{X}_n - \mu} \leq (1+\theta)^2 \frac{\eta}{\kappa} \hat{R}_n + \sigma_\cB \sqrt{\frac{2x}{n}} + \left(\frac{3\eta + 1}{\theta} + 9\eta + 1 + 9 \eta \theta + 3 \eta \theta^2 \right) \frac{xM}{n}. \]
    where 
    \[ \sigma_\cB = \sup_{l \in \cB^*} \sqrt{\mathrm{Var}(l(X_1))}.  \]
    Moreover, with probability at least $1 - 2e^{-x},$ for all $\theta > 0,$ 
    \[ \norm{\overline{X}_n - \mu} \geq (1-\theta)^2 \frac{\hat{R}_n}{\eta b} - \sigma_\cB \sqrt{\frac{2x}{n}} - \left(\frac{3\kappa}{\eta b} + 1 \right) \frac{xM}{\theta n} - \left(2 - 4\frac{\kappa}{\eta b} + \theta^2 \frac{\kappa}{\eta b} \right) \frac{xM}{n}. \]
\end{theorem}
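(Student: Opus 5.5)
The plan is to combine three ingredients, each already available, and then do elementary bookkeeping with a free parameter $\theta$. Write $Z = n\norm{\overline{X}_n-\mu} = \sup_{l\in\cB^*}\sum_{i=1}^n l(X_i-\mu)$, so that $Z$ is the supremum of a centred empirical process indexed by $\cB^*$. Each summand satisfies $|l(X_i-\mu)|\leq M$ and has variance at most $\sigma_\cB^2$. Since $\sum_i\xi_i=0$, we have $n\hat R_n = \overline{g}(X) = \overline{g}(X-\mu)$, where $\overline{g}$ is built on the class $\{l(\cdot-\mu)\}$. The three ingredients are:
\begin{itemize}
\item \emph{Concentration of $Z$.} Bousquet's inequality \cite{Bous02} gives, with probability $1-e^{-x}$,
\[ Z\leq \mathbb{E}Z+\sqrt{2x(n\sigma_\cB^2+2M\mathbb{E}Z)}+\tfrac{xM}{3}. \]
Klein--Rio's inequality \cite{KleinRio2005} gives the symmetric lower-tail bound, with a slightly larger linear term.
\item \emph{Comparison of expectations.} From the lower bound of Fromont and Arlot (Proposition \ref{prop_lower_mean_boot}) I would use $\mathbb{E}\overline{g}(X)\geq (\kappa/\eta)\,\mathbb{E}Z$. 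This uses $\mathbb{E}[(\xi_1)_+]=\kappa/2$ in general, and the full $\kappa$ in the symmetric case. From Proposition \ref{prop_upper_mean_boot} I would use $\mathbb{E}\overline{g}(X)\leq \eta b\,\mathbb{E}Z$.
\item \emph{Self-bounding concentration of $\overline{g}$.} I would apply Theorem \ref{thm_self_bounding} to the rescaled class $\{l(\cdot-\mu)/M\}$, whose functions take values in $[-1,1]$. Multiplying back by $M$ gives, each with probability $1-e^{-x}$,
\[ \mathbb{E}\overline{g}\leq \overline{g}+\sqrt{12\kappa xM\,\mathbb{E}\overline{g}}, \qquad \overline{g}\leq \mathbb{E}\overline{g}+\sqrt{12\kappa xM\,\mathbb{E}\overline{g}}+5\kappa xM. \]
\end{itemize}

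For the upper confidence bound I intersect the Bousquet event with the lower-tail event for $\overline{g}$; a union bound gives probability $1-2e^{-x}$.
\begin{itemize}
\item Split the square root as $\sqrt{2x(n\sigma_\cB^2+2M\mathbb{E}Z)}\leq\sqrt{2xn}\,\sigma_\cB+\sqrt{4xM\mathbb{E}Z}$. Then use AM--GM, $\sqrt{4xM\mathbb{E}Z}\leq\theta\mathbb{E}Z+xM/\theta$, to get $Z\leq(1+\theta)\mathbb{E}Z+\sigma_\cB\sqrt{2xn}+(1/\theta+1/3)xM$.
\item Replace $\mathbb{E}Z$ by $(\eta/\kappa)\mathbb{E}\overline{g}$.
\item Solve the quadratic inequality for $\sqrt{\mathbb{E}\overline{g}}$: it yields $\sqrt{\mathbb{E}\overline{g}}\leq\sqrt{\overline{g}}+\sqrt{3\kappa xM}$. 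Hence $\mathbb{E}\overline{g}\leq(1+\theta)\overline{g}+3(1+1/\theta)\kappa xM$.
\end{itemize}
Multiplying out produces the factor $(1+\theta)^2\eta/\kappa$ in front of $n\hat R_n$. It also produces a remainder $xM$ times a polynomial in $\theta,1/\theta$ of the announced shape $\frac{3\eta+1}{\theta}+9\eta+\dots$. The $\kappa$'s cancel in these terms because $\mathbb{E}Z$ carries the factor $\eta/\kappa$. Dividing by $n$ then gives the statement, and the claim holds simultaneously for all $\theta$ because the underlying events do not depend on $\theta$.

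The lower bound follows the same pattern with the roles reversed. I use the Klein--Rio lower tail for $Z$, together with the upper tail for $\overline{g}$ and $\mathbb{E}\overline{g}\leq\eta b\,\mathbb{E}Z$. The steps are:
\begin{itemize}
\item Write $\mathbb{E}Z\geq\mathbb{E}\overline{g}/(\eta b)$.
\item Bound $\overline{g}\leq(1+\theta)\mathbb{E}\overline{g}+(3/\theta+5)\kappa xM$, again via AM--GM on the square root.
\item Absorb $\sqrt{4xM\mathbb{E}Z}\leq\theta\mathbb{E}Z+xM/\theta$. This gives $(1-\theta)\mathbb{E}Z$, and after combining, $(1-\theta)^2\overline{g}/(\eta b)$ up to an inessential choice $(1+\theta)^{-1}\geq 1-\theta$.
\end{itemize}

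The main obstacle is not conceptual but the constant tracking. First, I must make sure the $\kappa/(\eta b)$ factors land exactly as in the stated remainders. Second, the Klein--Rio lower tail has a less clean linear term than Bousquet's, and its precise form determines the constant $2-4\kappa/(\eta b)+\theta^2\kappa/(\eta b)$. If the constants do not match on a first pass, I would redistribute the AM--GM parameters, using separate $\theta$'s, before collapsing them into a single $\theta$.
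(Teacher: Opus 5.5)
Your architecture matches the paper's: the rescaled class $\{l((\cdot-\mu)/M)\}$, both tails of Theorem \ref{thm_self_bounding}, Bousquet and Klein--Rio for $Z$, Propositions \ref{prop_lower_mean_boot} and \ref{prop_upper_mean_boot}, and a union bound. The gap is in the step you treat as bookkeeping, namely inverting the self-bounding deviation bound, and that is exactly where the stated constants come from.

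\textbf{Upper bound.} As written, the inversion is false. From $\mathbb{E}\overline{g}\le\overline{g}+\sqrt{12\kappa xM\,\mathbb{E}\overline{g}}$ the quadratic only yields $\sqrt{\mathbb{E}\overline{g}}\le\sqrt{3\kappa xM}+\sqrt{\overline{g}+3\kappa xM}$. Your claim $\sqrt{\mathbb{E}\overline{g}}\le\sqrt{\overline{g}}+\sqrt{3\kappa xM}$ fails at $\overline{g}=0$, $\mathbb{E}\overline{g}=12\kappa xM$, where the hypothesis holds with equality. If you repair it via $\sqrt{a+b}\le\sqrt{a}+\sqrt{b}$, you get $\mathbb{E}\overline{g}\le(1+\theta)\overline{g}+12(1+1/\theta)\kappa xM$. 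The final remainder then has $1/\theta$-coefficient $12\eta+1$ instead of $3\eta+1$. Because this doubles the cross term, no re-tuning of the AM--GM parameters recovers the statement for small $\theta$: the best achievable coefficient is $6\eta+\sqrt{12\eta}+\tfrac12>3\eta+1$.

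The paper (Lemma \ref{lem_ineq_quad}) keeps the exact root, $\mathbb{E}\overline{g}\le\overline{g}+a\sqrt{\overline{g}+a^2/4}+a^2/2$ with $a^2=12\kappa xM$. It then applies AM--GM to $a\sqrt{\overline{g}+a^2/4}$ as a whole, giving $\mathbb{E}\overline{g}\le(1+\theta)\overline{g}+(6+3\theta+3/\theta)\kappa xM$. Multiplied by $(1+\theta)\eta/\kappa$, this is exactly $\eta(3/\theta+9+9\theta+3\theta^2)$. Adding the Bousquet term $1/\theta+1/3$ gives something bounded by the announced remainder.

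\textbf{Lower bound.} The same issue arises. AM--GM followed by division by $1+\theta$ gives $\mathbb{E}\overline{g}\ge(1-\theta)\bigl(\overline{g}-(3/\theta+5)\kappa xM\bigr)$. This leads to the coefficient $3/\theta-1-7\theta+5\theta^2$ in front of $\kappa xM/(\eta b)$, which is larger than the stated $3/\theta-4+\theta^2$ whenever $\theta<3/4$. The second part of Lemma \ref{lem_ineq_quad} gives instead $\mathbb{E}\overline{g}\ge(1-\theta)(\overline{g}-5\kappa xM)-3\kappa xM(1/\theta+\theta-2)$. After multiplication by $1-\theta$, this yields the coefficient $3/\theta-4-\theta+2\theta^2\le 3/\theta-4+\theta^2$.

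Both this last comparison and the multiplication of $\mathbb{E}Z\ge\mathbb{E}\overline{g}/(\eta b)$ by $1-\theta$ require $\theta\le 1$. So the lower bound holds simultaneously only over $\theta\in(0,1]$, as in Theorem \ref{thm_conf_reg_mean} and in the paper's own proof. Your remark that uniformity in $\theta$ comes for free (the events do not depend on $\theta$) is fine on that range. The ``for all $\theta>0$'' of the displayed lower bound cannot hold: for large $\theta$ its right-hand side grows like $\theta^2(\hat{R}_n-\kappa xM/n)/(\eta b)$, which is unbounded as soon as $\hat{R}_n>\kappa xM/n$.
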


Theorem \ref{thm_conf_reg_mean_gen} yields Theorem \ref{thm_conf_reg_mean} by setting $\eta = 2$.
Let us prove Theorem \ref{thm_conf_reg_mean_gen}.

\begin{lemma} \label{lem_ineq_quad}
    Let $y,t,a$ be non-negative reals. Then
    \[ y \geq t - a\sqrt{t} \implies t \leq y + a\sqrt{y + \frac{a^2}{4}} + \frac{a^2}{2}. \]
    Let also $b \leq y$ be non-negative, then for all $\theta \in [0,1]$,
    \[ y \leq t + a\sqrt{t} +b \implies t \geq (1-\theta)(y-b) - \frac{a^2}{4} \left(\frac{1}{\theta} + \theta - 2 \right). \]
\end{lemma}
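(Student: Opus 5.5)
The two implications are elementary and I would prove each directly, the first by solving a quadratic inequality in $\sqrt{t}$ and the second by a weighted AM--GM bound on the cross term $a\sqrt{t}$.

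\emph{First implication.} I set $s=\sqrt{t}\ge 0$. The hypothesis $y\ge t-a\sqrt t$ then reads $s^2-as-y\le 0$. Hence $s$ is at most the larger root:
\[
s\le \frac{a+\sqrt{a^2+4y}}{2}.
\]
Squaring gives
\[
t=s^2\le \frac{2a^2+4y+2a\sqrt{a^2+4y}}{4}=y+\frac{a^2}{2}+a\sqrt{y+\frac{a^2}{4}},
\]
which is the claim. No case distinction is needed, since both sides of the bound on $s$ are non-negative.

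\emph{Second implication.} First I rewrite the target. Note that $\frac1\theta+\theta-2=\frac{(1-\theta)^2}{\theta}$. So for $\theta\in(0,1)$ the desired conclusion is equivalent, after dividing by $1-\theta>0$, to
\[
y-b\le \frac{t}{1-\theta}+\frac{a^2(1-\theta)}{4\theta}.
\]
From the hypothesis we have $y-b\le t+a\sqrt t$. I then bound the cross term by AM--GM with weight $\lambda=\theta/(1-\theta)$:
\[
a\sqrt t\le \lambda t+\frac{a^2}{4\lambda}=\frac{\theta}{1-\theta}\,t+\frac{a^2(1-\theta)}{4\theta}.
\]
Adding $t$ gives $t\left(1+\frac{\theta}{1-\theta}\right)=\frac{t}{1-\theta}$, which is exactly the required inequality. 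Multiplying back by $1-\theta$ concludes.

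\emph{Endpoint cases.} The boundary values need a separate check. For $\theta=1$ the right-hand side of the conclusion equals $0$, and $t\ge 0$ holds trivially. For $\theta=0$ the term $\frac1\theta$ is interpreted as $+\infty$ when $a>0$, so the bound is vacuous. When $a=0$ at $\theta=0$, the conclusion $t\ge y-b$ follows directly from the hypothesis. The assumption $b\le y$ is not needed for the argument; it only ensures that the main term $(1-\theta)(y-b)$ is non-negative, which is how the bound is used later.

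There is no real obstacle here. The only point requiring care is choosing the AM--GM weight $\lambda=\theta/(1-\theta)$ so that the coefficient of $t$ becomes exactly $1/(1-\theta)$, and spotting the identity $\frac1\theta+\theta-2=(1-\theta)^2/\theta$ that turns the stated remainder into the matching AM--GM term.
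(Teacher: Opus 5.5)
Your proof is correct, and it handles the square root differently from the paper. The paper squares the hypotheses to get quadratic inequalities in $t$ itself. That forces a case split to justify the squaring: $t>y$ in the first part, $t<y-b$ in the second.

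In the second part, the paper first extracts the exact lower root $t\ge y-b+\frac{a^2}{2}-a\sqrt{y-b+a^2/4}$. Only then does it linearize the remaining square root via $a\sqrt{u}\le \theta u+\frac{a^2}{4\theta}$ with $u=y-b+a^2/4$.

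You instead treat the first part as a quadratic in $s=\sqrt t$, which needs no case split because the smaller root is non-positive. In the second part you apply the weighted AM--GM with $\lambda=\theta/(1-\theta)$ directly to $a\sqrt t$ in the hypothesis, which bypasses the quadratic entirely. The identity $\frac1\theta+\theta-2=\frac{(1-\theta)^2}{\theta}$ makes the remainder match exactly.

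Your route is shorter and shows that the assumption $b\le y$ is not needed. It also treats the endpoints $\theta\in\{0,1\}$ explicitly, which the paper leaves implicit even though its formula involves $1/\theta$.

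The paper's route exhibits the sharp bound (the exact root) as an intermediate step. Your one-parameter family recovers that bound only after optimizing over $\theta$, taking the value for which the AM--GM is tight. Since only the $\theta$-form of the second implication is used later, in the lower confidence bound of Theorem \ref{thm_conf_reg_mean_gen}, nothing is lost.
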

\begin{proof}
    The conclusion is obvious when $t \leq y$. Assume now that $t > y$, then
    \begin{align*}
        y \geq t - a \sqrt{t} &\iff a\sqrt{t} \geq t - y \\
        &\iff a^2 t \geq (t-y)^2 = t^2 -2y t + y^2 \\
        &\iff t^2 - (2y+a^2)t + y^2 \leq 0
    \end{align*}
    The discriminant of this quadratic inequality is $\Delta = (2y+a^2)^2 - 4y^2 = 4ya^2 + a^4$ and the leading coefficient is positive, so
    \[ t^2 - (2y+a^2)t + y^2 \leq 0 \implies t \leq \frac{2y+a^2 + \sqrt{4ya^2 + a^4}}{2} = y + a\sqrt{y + \frac{a^2}{4}} + \frac{a^2}{2} \]
    Consider now the second inequality. The conclusion is obvious when $t \geq y - b$. Assume now that $t < y - b$, then
    \begin{align*}
        y \leq t + a\sqrt{t} +b &\implies a \sqrt{t} \geq y - t -b \\
        &\implies a^2 t \geq (y-t-b)^2 \\
        &\implies a^2 t \geq (y-b)^2 - 2(y-b)t + t^2 \\
        &\implies (y-b)^2 - (2(y-b)+a^2)t + t^2 \leq 0.
    \end{align*}
    The discriminant is $\Delta = (2(y-b)+a^2)^2 - 4(y-b)^2 = 4(y-b) a^2 + a^4$ and the leading coefficient is positive, so
    \begin{align*}
        (y-b)^2 - (2(y-b)+a^2)t + t^2 \leq 0 \implies t &\geq \frac{2(y-b)+a^2 - \sqrt{4(y-b)a^2 + a^4}}{2} \\ 
        &= y-b + \frac{a^2}{2} - a \sqrt{y - b + \frac{a^2}{4}}. 
    \end{align*}
    If $y > b$, then the right hand side is at most $y-b$. Now, for any $\theta \in [0,1]$,
    \[ t \geq (1-\theta)(y - b) + \left(1 - \frac{\theta}{2} \right) \frac{a^2}{2} - \frac{a^2}{4\theta} = (1-\theta)(y-b) - \frac{a^2}{4} \left(\frac{1}{\theta} + \theta - 2 \right) \]
    which proves the result.
\end{proof}  

Assume that $\norm{X_1 - \mu} \leq M,$ then since $\sum_{i = 1}^n \xi_i = 0$,
\begin{align*}
    \frac{n \hat{R}_n}{M} &= \mathbb{E} \left[\norm{\frac{1}{M} \sum_{i = 1}^n \xi_i(X_i-\mu)} | X_1,\ldots,X_n \right] \\
    &= \mathbb{E} \left[ \sup_{l \in \cB^*} \left\{ \sum_{i = 1}^n \xi_i l\left( \frac{X_i-\mu}{M} \right)  \right\} \Bigr| X_1,\ldots,X_n \right] \\
    &= \overline{g}(X),
\end{align*}
setting $\cT = \{x \mapsto l \left( \frac{x-\mu}{M} \right) : l \in \cB^* \}$. Let $\kappa =\mathbb{E}[|\xi_1|]$. By Theorem \ref{thm_self_bounding}, with probability at least $1 - e^{-x}$,
\[ \frac{n\hat{R}_n}{M} \geq \frac{n\mathbb{E}[\hat{R}_n]}{M} - \sqrt{12\kappa x \mathbb{E}[\hat{R}_n]} \sqrt{\frac{n}{M}}  \]
and 
\[ \frac{n\hat{R}_n}{M} \leq \frac{n\mathbb{E}[\hat{R}_n]}{M} + \sqrt{12\kappa x \mathbb{E}[\hat{R}_n]} \sqrt{\frac{n}{M}} + 5 \kappa x  \]
which yields
\begin{align*}
    \hat{R}_n &\geq \mathbb{E}\left[\hat{R}_n \right] - \sqrt{12 \kappa M \mathbb{E}[\hat{R}_n]} \sqrt{\frac{x}{n}}  \\
   \hat{R}_n &\leq \mathbb{E}\left[\hat{R}_n \right] + \sqrt{12 \kappa M \mathbb{E}[\hat{R}_n]} \sqrt{\frac{x}{n}} + 5 \kappa M \frac{x}{n}
\end{align*}
on events $E_x^1, E_x^2$ each with probability $\geq 1 - e^{-x}$. 
By lemma \ref{lem_ineq_quad}, on $E_x^1,$
\begin{equation}
 \mathbb{E}\left[\hat{R}_n \right] \leq \hat{R}_n + \sqrt{12 \kappa M} \sqrt{\frac{x}{n}} \sqrt{\hat{R}_n + 3\kappa M \frac{x}{n}} + 6\kappa M \frac{x}{n}.    
\end{equation}
Hence, for any $\theta > 0,$
\begin{align*}
  \mathbb{E}\left[\hat{R}_n \right] &\leq (1+\theta) \hat{R}_n + \left(3\theta + \frac{3}{\theta} \right) \kappa M \frac{x}{n} + 6 \kappa M \frac{x}{n}  \\
  &\leq (1+\theta) \hat{R}_n + \left(\theta + \frac{1}{\theta} + 2 \right) 3\kappa M \frac{x}{n}. \numberthis \label{eq_conf_ubd_boot}
\end{align*}
Moreover, by lemma \ref{lem_ineq_quad}, for any $\theta \in [0,1],$ on $E_x^2$
\begin{align*}
  \mathbb{E}\left[\hat{R}_n \right] &\geq (1-\theta) \left( \hat{R}_n - 5 \kappa M \frac{x}{n} \right) - 3\kappa M \frac{x}{n} \left(\frac{1}{\theta} + \theta - 2 \right) \\
  &= (1-\theta) \hat{R}_n - \kappa M \frac{x}{n} \left( \frac{3}{\theta} - 1 - 2\theta \right) \numberthis \label{eq_conf_lbd_boot}
\end{align*}

On the other hand, by Bousquet's inequality \cite[Theorem 12.5]{BoucheronLugosiMassart:2013} for the centered empirical process
\begin{align*}
    \frac{n}{M} \norm{\overline{X}_n - \mu} &= \sup_{l \in \cB^*} \left\{ \sum_{i = 1}^n l \left( \frac{X_i - \mu}{M} \right)  \right\} 
\end{align*}
for any $x > 0$, with probability at least $1 - e^{-x},$
\[ \frac{n}{M} \norm{\overline{X}_n - \mu} \leq \frac{n}{M} \mathbb{E} \left[ \norm{\overline{X}_n - \mu} \right] + \sqrt{2x\left(2\frac{n}{M} \mathbb{E} \left[ \norm{\overline{X}_n - \mu} \right] + \frac{n \sigma_\cB^2}{M^2} \right)} + \frac{2x}{3} \]
which yields
\[ \norm{\overline{X}_n - \mu} \leq \mathbb{E} \left[ \norm{\overline{X}_n - \mu} \right] + \sqrt{\frac{2x}{n}} \sqrt{2M \mathbb{E} \left[ \norm{\overline{X}_n - \mu} \right] + \sigma_\cB^2} + \frac{2x M}{3 n}. \]
where $\sigma_\cB^2$ is the ``wimpy variance'',
\[ \sigma_\cB^2 = \sup_{l \in \cB^*} \left\{ \mathrm{Var}(l(X_1)) \right\}. \]
Thus, for any $\theta > 0$,
\begin{align*}
    \norm{\overline{X}_n - \mu} &\leq (1+\theta) \mathbb{E} \left[ \norm{\overline{X}_n - \mu} \right] + \frac{M x}{\theta n} + \sigma_\cB \sqrt{\frac{2x}{n}} + \frac{2x M}{3 n} \\
    &\leq (1+\theta) \mathbb{E} \left[ \norm{\overline{X}_n - \mu} \right] + \sigma_\cB \sqrt{\frac{2x}{n}} + \left(\frac{1}{\theta} + \frac{2}{3} \right) \frac{M x}{n}
\end{align*}

Let $\eta = 1$ if $X_1$ is symmetric about $\mu$ and $\eta = 2$ otherwise.
Now, by 
Proposition \ref{prop_lower_mean_boot} and since $\mathbb{E}[(\xi_1)_+] = \frac{1}{2} \mathbb{E}[|\xi_1|] = \frac{\kappa}{2},$  
\[\mathbb{E} \left[ \norm{\overline{X}_n - \mu} \right] \leq \frac{\eta}{\kappa} \mathbb{E}[\hat{R}_n],\] so combining the above bound with equation \ref{eq_conf_ubd_boot} yields
\[ \norm{\overline{X}_n - \mu} \leq (1+\theta)^2 \frac{\eta}{\kappa} \hat{R}_n + \sigma_\cB \sqrt{\frac{2x}{n}} + \left( 3\eta \left(\frac{1}{\theta} + 3 + 3\theta + \theta^2 \right) + \frac{1}{\theta} + \frac{2}{3} \right) \frac{xM}{n} \]
for any $\theta > 0$, with probability at least $1 - 2e^{-x}$. 
Finally,
\[ \norm{\overline{X}_n - \mu} \leq (1+\theta)^2 \frac{\eta}{\kappa} \hat{R}_n + \sigma_\cB \sqrt{\frac{2x}{n}} + \left(\frac{3\eta + 1}{\theta} + 9\eta + 1 + 9 \eta \theta + 3 \eta \theta^2 \right) \frac{xM}{n}. \]
Consider now the lower confidence bound. By the Klein-Rio lower bound \cite[Theorem 1.2]{KleinRio2005}, with probability at least $1 - e^{-x}$,
\[ \frac{n}{M} \norm{\overline{X}_n - \mu} \geq \frac{n}{M} \mathbb{E} \left[ \norm{\overline{X}_n - \mu} \right] - \sqrt{2x\left(2\frac{n}{M} \mathbb{E} \left[ \norm{\overline{X}_n - \mu} \right] + \frac{n \sigma_\cB^2}{M^2} \right)} - 2x \]
or equivalently,
\[ \norm{\overline{X}_n - \mu} \geq \mathbb{E} \left[ \norm{\overline{X}_n - \mu} \right] - \sqrt{\frac{2x}{n}} \sqrt{2M \mathbb{E} \left[ \norm{\overline{X}_n - \mu} \right] + \sigma_\cB^2} - 2M \frac{x}{n}. \]
Thus, for any $\theta \in [0,1],$
\begin{align*}
    \norm{\overline{X}_n - \mu} &\geq (1-\theta) \mathbb{E} \left[ \norm{\overline{X}_n - \mu} \right] - \frac{Mx}{\theta n} - \sigma_\cB \sqrt{\frac{2x}{n}} - 2 \frac{Mx}{n} \\
    &= (1-\theta) \mathbb{E} \left[ \norm{\overline{X}_n - \mu} \right] - \sigma_\cB \sqrt{\frac{2x}{n}} - \left(\frac{1}{\theta} + 2 \right) \frac{Mx}{n}
\end{align*}
By Proposition \ref{prop_upper_mean_boot}, $\eta b \mathbb{E} \left[ \norm{\overline{X}_n - \mu} \right] \geq \mathbb{E}[\hat{R}_n]$. It follows from equation \eqref{eq_conf_lbd_boot} that on an event with probability at least $1 - 2e^{-x}$,
\begin{align*}
    \norm{\overline{X}_n - \mu} &\geq (1-\theta)^2 \frac{\hat{R}_n}{\eta b} - M \frac{\kappa x}{\eta b n} \left(\frac{3}{\theta} - 4 - \theta + 2\theta^2 \right) - \sigma_\cB \sqrt{\frac{2x}{n}} - \left(\frac{1}{\theta} + 2 \right) \frac{Mx}{n} \\
    &\geq (1-\theta)^2 \frac{\hat{R}_n}{\eta b} - \sigma_\cB \sqrt{\frac{2x}{n}} - \left(\frac{3\kappa}{\eta b} + 1 \right) \frac{xM}{\theta n} - \left(2 - 4 \frac{\kappa}{\eta b} + \theta^2 \frac{\kappa}{\eta b} \right) \frac{xM}{n}
\end{align*}
which proves the lower bound.

\begin{proof}[Proof of Lemma \ref{lemma_Lp_var}]
Let $\Sigma$ be the variance-covariance matrix of $X$ and let $A$ be a square root of $\Sigma$. For any $k \in \{1,\ldots,d\}$, let $c_k$ be the k-th column of $A$. By definition of $A$,
\[ \sigma_k^2 = \Sigma_{k,k} = \sum_{j = 1}^d A_{k,j} A_{j,k} = \norm{c_k}_2^2.  \]
Let $l \in \cB^*$, there exists $\theta \in \mathbb{R}^d$ such that $\norm{\theta}_q \leq 1$ and $\langle \theta,u \rangle = l(u)$ for all $u \in \mathbb{R}^d$. Then
    \[ \mathrm{Var}(l(X)) = \mathrm{Var}(\langle \theta,X\rangle) = \langle \theta, \Sigma \theta \rangle = \norm{A \theta}^2. \]
Now,
\[ A \theta = \sum_{k = 1}^d \theta_k c_k \]
so for any vector $u$ with $\norm{u}_2 \leq 1$, by Hölder's inequality and the Cauchy-Schwarz inequality,
\begin{align*}
    \langle u, A\theta \rangle &= \sum_{k = 1}^d \theta_k \langle u, c_k \rangle \\
    &\leq \norm{\theta}_q \left( \sum_{k = 1}^d |\langle u, c_k \rangle|^p \right)^{1/p} \\
    &\leq \left( \sum_{k = 1}^d \norm{c_k}_2^p \right)^{1/p} \\
    &= \norm{\sigma}_p.
\end{align*}
Since this is true for any $u$ such that $\norm{u}_2 \leq 1$, it follows that
\[ \sqrt{\mathrm{Var}(l(X))} = \norm{A \theta} \leq \norm{\sigma}_p. \]
Since this is true for any $l \in \cB^*$, it follows that $\sigma_\cB \leq \norm{\sigma}_p$.
\end{proof} 

\subsection{Two-sample test: proof of Theorem \ref{th_test_separation_rate}}\label{ssec_two_sample_proof}


\subsubsection{Main steps of the proof}\label{sssec_two_sample_main_steps}

Grant the notations of Section \ref{sec_two_sample_test}. Recall that we consider being under the alternative, where $P\neq Q$, and that the goal is to control the test power, measured by the quantity $\PP(T_{n,m}(\cF)>\hat{q}_B(\alpha))$, with respect to the value of the distance $d_\cF(P,Q)$.

\begin{itemize}
    \item \underline{Step 1: Relating the empirical quantile $\hat{q}_B(\alpha)$ to the true distribution:}
\end{itemize}

\noindent Denote $T_{\xi}(Z)=\sup_{f \in \cF} \left\{ \sum_{i = 1}^{n+m} \xi_i f(Z_i) \right\}$. Recall also that the quantity $\hat{q}_B(\alpha)$ is the empirical $(1-\alpha)$-quantile, for the left-continuous cumulative distribution function $\hat{F}^{(\ell)}_B$, of the sample $(T_{\xi^{(b)}}(Z))_{b=0}^B$. Recall that $\xi^{(0)}=w$ and $(\xi^{(b)})_{b=1}^B$ is an i.i.d. sample, with the same distribution as $\xi=(w_{\sigma(i)})_{i=1}^{n+m}$ and independent of $Z$ and $\xi$. Hence $\hat{q}_B(\alpha)$ should be close to the $(1-\alpha)$-quantile of $T_{\xi}$. The following lemma, the proof of which can be found in Section \ref{sssec_two_sample_tech_lem}, quantifies the latter assertion.
\begin{lemma}\label{lem_F_hat_q}
Let $\delta\in(0,1)$ and denote $F^{(\ell)}_{T_\xi}$ the left-continuous cumulative distribution function of $T_\xi(Z)$ conditionally on $Z$. 
More explicitely, for any $q\in \R$,
\[F^{(\ell)}_{T_\xi(Z)}(q)=\PP( T_\xi(Z) < q \vert Z) \; .\]
With probability at least $1-\delta$, it holds $\hat{q}_B(\alpha)\leq q_\delta(Z)$, where $q_\delta(Z)$ is such that 
\begin{equation}\label{lower_F_q}
F^{(\ell)}_{T_\xi(Z)}(q_\delta(Z))\geq\left (1+\frac{1}{B}\right)\left(1-\alpha + \sqrt{\frac{3\alpha\log(1/\delta)}{B}}\right)\; .
\end{equation}
\end{lemma}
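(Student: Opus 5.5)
Conditionally on $Z$, the values $T_{\xi^{(1)}}(Z),\ldots,T_{\xi^{(B)}}(Z)$ are i.i.d. with left-continuous c.d.f. $F^{(\ell)}_{T_\xi(Z)}$. The extra value $T_{\xi^{(0)}}(Z)$ is the observed statistic and may sit anywhere. The plan is to choose $q_\delta(Z)$ as a deterministic function of $Z$ satisfying \eqref{lower_F_q}, and then show that with conditional probability at least $1-\delta$ the empirical quantile falls below it.

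\textbf{Choice of $q_\delta(Z)$.} Set $p=\left(1+\frac1B\right)\left(1-\alpha+\sqrt{3\alpha\log(1/\delta)/B}\right)$. If $p>1$ the statement is either vacuous or needs a convention, so assume $p\le 1$. Take
\[
q_\delta(Z)=\inf\{q : \PP(T_\xi(Z)\le q\mid Z)\ge p\}.
\]
Right-continuity gives $\PP(T_\xi\le q_\delta\mid Z)\ge p$. Relation \eqref{lower_F_q} asks for the strict version $\PP(T_\xi<q_\delta\mid Z)\ge p$. To get it I would take $q_\delta$ slightly larger than this infimum, or let it be the smallest atom-compatible point, accepting an $\varepsilon$-perturbation that vanishes at the end. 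This strict-versus-nonstrict bookkeeping has to be handled consistently with the definition of $\hat q_B$.

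\textbf{Reduction to a binomial count.} We have $\hat q_B(\alpha)=T^{[m]}$ with $m=\lceil (B+1)(1-\alpha)\rceil$. Hence $\hat q_B(\alpha)\le q_\delta$ as soon as at least $m$ of the $B+1$ values are $\le q_\delta$. It suffices that at least $m$ of the $B$ i.i.d. values $T_{\xi^{(b)}}$, $b\ge1$, satisfy this, which makes the result valid whatever $T_{\xi^{(0)}}$ is. Let $N=\#\{b\ge1: T_{\xi^{(b)}}(Z)\le q_\delta\}$. Conditionally on $Z$, $N$ is binomial with success probability $\ge p$, or $\ge p$ for the strict event, by the choice above. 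So we need $\PP(N<m\mid Z)\le\delta$.

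\textbf{Concentration step.} The number of failures $B-N$ is binomial with parameter $\le 1-p$. Since $(1+1/B)\ge1$, we have $1-p\le\alpha-\sqrt{3\alpha\log(1/\delta)/B}$, so the mean is small, of order $\alpha B$. I would apply the multiplicative Chernoff bound $\PP(S\ge \mu+t)\le \exp(-t^2/(3\mu))$ for $t\le\mu$, or Bernstein's inequality, to $S=B-N$ with $\mu\le \alpha B$. We need $B-N\le B-m$, and
\[
B-m\ge B-(B+1)(1-\alpha)-1 .
\]
With the factor $(1+1/B)$ in $p$, the expected number of failures is at most $B-(B+1)(1-\alpha)-\sqrt{3\alpha B\log(1/\delta)}$, up to $O(1)$. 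The margin $t\approx\sqrt{3\alpha B\log(1/\delta)}$ then gives $\exp(-t^2/(3\alpha B))=\delta$.

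\textbf{Main obstacle.} The hard part is matching constants exactly, meaning the $+1$ from the ceiling and the $(1+1/B)$ factor. I would verify this by expanding $B(1-p)$ carefully, and if necessary use the slack in the Chernoff exponent or the case $t>\mu$ with the bound $\exp(-t/3)$. Finally, integrate over $Z$ to turn the conditional statement into the unconditional one.
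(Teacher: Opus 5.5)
Your proposal is correct and follows essentially the paper's route: discard the observed value $T_{\xi^{(0)}}(Z)$ (the paper does this via $\hat F^{(\ell)}_B\geq \frac{B}{B+1}\tilde F^{(\ell)}_B$), then apply a multiplicative Chernoff bound, conditionally on $Z$, to the binomial number of resampled values falling on the wrong side of $q_\delta(Z)$. The bookkeeping you flag as the main obstacle closes exactly, with no $O(1)$ loss, and three points complete the sketch:
\begin{itemize}
\item The mean satisfies $B(1-p)=B-(B+1)(1-\alpha)-(B+1)\sqrt{3\alpha\log(1/\delta)/B}$. Failure forces at least $B-m+1>B-(B+1)(1-\alpha)$ exceedances. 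So the margin is $(B+1)\sqrt{3\alpha\log(1/\delta)/B}\geq\sqrt{3\alpha B\log(1/\delta)}$, and with mean at most $\alpha B$ the Chernoff bound already gives $\delta$.
\item Small means are covered either by Bernstein, or by your $t>\mu$ case together with the fact that $p\leq 1$ forces $\alpha B\geq 3\log(1/\delta)$.
\item Your argument only uses $\PP(T_\xi(Z)\leq q_\delta(Z)\mid Z)\geq p$. It therefore applies to every $Z$-measurable $q_\delta(Z)$ satisfying \eqref{lower_F_q}, which is how the lemma is used later in the paper. No special choice of $q_\delta$ and no $\varepsilon$-perturbation is needed.
\end{itemize}
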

Let us assume that $\alpha, n$ and $B$ are such that the right-hand side of Inequality (\ref{lower_F_q}) is smaller than one, and denote $\alpha_B(\delta)\in (0,1)$ such that
\[1-\alpha_B(\delta) = \left (1+\frac{1}{B}\right)\left(1-\alpha + \sqrt{\frac{3\alpha\log(1/\delta)}{B}}\right)\; .\]

\begin{itemize}
    \item \underline{Step 2: Assessing the ``true quantile'' $q_\delta(Z)$ by concentration of $T_\xi(Z)$:}
\end{itemize}
Let us now compute an admissible value for $q_\delta(Z)$. To do so, we control the concentration of $T_\xi(Z)$ in two steps. First, we apply Theorem \ref{Th_Tolsti} to obtain the concentration conditionally on $Z$. Second, we apply Theorem \ref{thm_intro_cond_exp}, giving access to the concentration of $\EE[T_\xi(Z)\vert Z]$ around $\EE[T_\xi(Z)]$.

Note that the vector $w$ has coordinates with two possible values, $1/n$ and $-1/m$. Hence, according to Theorem \ref{Th_Tolsti}, we have, for any $t\geq 0$, 
\begin{align*}
    \PP(T_\xi(Z)-\EE[T_\xi(Z)\, \vert \, Z]\geq t \, \vert \, Z) &\leq \exp\left( 
 -\frac{t^2}{8\left(\frac{1}{n}+\frac{1}{m}\right)^2} \max \left( \frac{1}{v_+(Z)}, \frac{n+m+2}{nm} \right) \right) \label{ineq_concen_g_x} \\
 &\leq \exp \left(- \frac{t^2}{8 \Delta_{n,m}^2} \right) \;,
\end{align*}
where \[ v_+(Z) = \sup_{f \in \cF} \left\{ \sum_{i = 1}^{n+m} \left(f(Z_i) - \frac{1}{n+m} \sum_{j = 1}^{n+m} f(Z_j) \right)^2 \right\} \]
and
\begin{equation} \label{eq_def_Delta-nm}
   \Delta_{n,m} = \min \left(2\left( \frac{1}{n} + \frac{1}{m} \right) \sqrt{2v_+(Z)}, 2\sqrt{2\left( \frac{1}{n} + \frac{1}{m} \right)} \right).  
\end{equation}
By reparametrizing the inequality, we get, for any $\beta\in (0,1)$,
\begin{equation*}\label{concen_T_xi}
    \PP\left(T_\xi(Z) \geq \EE[T_\xi(Z)\, \vert \, Z]+ \Delta_{n,m} \sqrt{\log\left(\frac{1}{\beta}\right)}\Biggl|Z\right) \leq \beta\;.
\end{equation*}
Hence, by taking $\beta=\alpha_B$, we obtain
\[
1-F^{(\ell)}_{T_\xi(Z)}\left(\EE[T_\xi(Z)\, \vert \, Z]+ \Delta_{n,m} \sqrt{\log\left(\frac{1}{\alpha_B}\right)}\right)\leq \alpha_B\;,
\]
which gives that the value
\begin{equation}\label{value_q_delta_Z}
    q_\delta(Z)= \EE[T_\xi(Z)\, \vert \, Z]+ \Delta_{n,m} \sqrt{\log\left(\frac{1}{\alpha_B}\right)}
\end{equation}
works. In order to compare the values of $T_{n,m}(\cF)$ and $q_\delta(Z)$ with high probability, we will control each random variable separately, and in particular, relate each of those quantities to the distance $d_\cF(P,Q)$.

\begin{itemize}
    \item \underline{Step 3: upper deviations of $q_\delta(Z)$:}
\end{itemize}

\noindent Using Theorem \ref{thm_intro_cond_exp}, we have, for any $t>0$,
\begin{equation*}
    \PP\left(\EE[T_\xi(Z)\, \vert \, Z]\geq \EE[T_\xi(Z)]+2\sqrt{3\EE[\vert \xi_1\vert]\EE[T_\xi(Z)]t}+\frac{5\EE[\vert \xi_1\vert]t}{2}\right) \leq \exp(-t).
\end{equation*}
Using the fact that for any $a,b, \kappa>0$, $\sqrt{ab}<\kappa a+b(4\kappa)^{-1}$, together with the identity $\EE[\vert \xi_1\vert]=2/(n+m)$, we obtain,  for any $t,\kappa>0$,
\begin{equation}\label{concen_cond_exp_T_xi}
    \PP\left(\EE[T_\xi(Z)\, \vert \, Z]\geq (1+\kappa)\EE[T_\xi(Z)]+\frac{6}{n+m}\left(1+\frac{1}{\kappa}\right)t\right) \leq \exp(-t).
\end{equation}
Taking $\kappa = 1$ and combining with equation \eqref{value_q_delta_Z} yields
\begin{equation} \label{eq_ubd_q-delta_gen}
   q_\delta(Z) \leq 2\EE[T_\xi(Z)] + \Delta_{n,m} \sqrt{ \log \left( \frac{1}{\alpha_B} \right)} + \frac{12}{n+m} \log \left( \frac{1}{\beta} \right) 
\end{equation}
with probability no less than $1 - \beta$. To bound $\Delta_{n,m}$ with high probability, let us bound $v_+(Z)$.
By noticing that 
\[
v_+(Z)=\frac{1}{2(n+m)}\sup_{f\in \cF}\left\{ \sum_{i,j = 1}^{n+m} \left(f(Z_i) - f(Z_j) \right)^2  \right\}\;,
\]
we prove in Lemma \ref{lem_v_+} below that the function $v_+/4$ is $(2,0)$-self-bounding.
Hence, by \cite[Theorem 6.21]{BoucheronLugosiMassart:2013}, we have for any $t>0$,
\begin{equation*}
    \PP\left(v_{+}(Z) \geq \EE[v_{+}(Z)]+4\sqrt{\EE[v_{+}(Z)]t}+\frac{10t}{3} \right) \leq \exp(-t)\;,
\end{equation*}
which implies, for any $t,\kappa>0$,
\begin{equation}\label{concen_v++}
    \PP\left(v_{+}(Z) \geq (1+\kappa)\EE[v_{+}(Z)]+4\left(1+\frac{1}{\kappa}\right)t\right) \leq \exp(-t)\;.
\end{equation}
Combining Inequalities (\ref{eq_ubd_q-delta_gen}) and (\ref{concen_v++}) with $t=\log(1/\beta)$ and $\kappa=1$ together with Identity \eqref{eq_def_Delta-nm}, we get: for any $\beta \in (0,1)$, with probability larger than $1-2\beta$,
\begin{align*}
     &q_\delta(Z)\leq 2\EE[T_\xi(Z)]+ \frac{12}{n+m}\log\left(\frac{1}{\beta}\right)+4\left(\frac{1}{n}+\frac{1}{m}\right)\sqrt{\left(\EE[v_{+}(Z)]+4\log\left(\frac{1}{\beta}\right)\right) \log\left(\frac{1}{\alpha_B}\right)}\;.
\end{align*}
\begin{itemize}
    \item \underline{Step 4: Control of the expectations:}
\end{itemize}
By denoting $Z^{(2)}_{i,j,f}=(f(Z_i)-f(Z_j))^2$, we have
\begin{equation*}
    \EE[v_{+}(Z)]\leq \frac{1}{2(n+m)} \EE\left[\sup_{f\in \cF}\left\{\sum_{i,j=1}^{n+m} Z^{(2)}_{i,j,f}-\EE\left[Z^{(2)}_{i,j,f}\right]\right\}\right] +  \frac{1}{2(n+m)}\sup_{f\in \cF} \left\{ \sum_{i,j=1}^{n+m} \EE\left[Z^{(2)}_{i,j,f}\right] \right\}\;.
\end{equation*}
Furthermore, $$\EE\left[Z^{(2)}_{i,j,f}\right]=\var(f(Z_i))+\var(f(Z_j))+(\EE[f(Z_i)]-\EE[f(Z_j)])^2\;.$$ 
Consequently, 
\begin{align*}
     \frac{1}{2(n+m)}\sum_{i,j=1}^{n+m}\EE\left[Z^{(2)}_{i,j,f}\right] & \leq \frac{n+m-1}{n+m} (n \var(f(X_1))+ m \var(f(Y_1)))+\frac{nm}{n+m}(Pf-Qf)^2\\
     &\leq n \var(f(X_1))+ m \var(f(Y_1))+\frac{nm}{n+m}(Pf-Qf)^2\;.
\end{align*}   
In addition, let us denote $\bar{f}_i=f(Z_i)-\EE[f(Z_i)]$. Note that for both $i<j\leq n$ and $n<i<j$, \[ Z^{(2)}_{i,j,f}=(\bar{f}_i-\bar{f}_j)^2\;.\] Also, for $i\leq n<j$, \[ Z^{(2)}_{i,j,f}-\EE[Z^{(2)}_{i,j,f}]=(\bar{f}_i-\bar{f}_j)^2-\EE[(\bar{f}_i-\bar{f}_j)^2]-2(Pf-Qf)(\bar{f}_i-\bar{f}_j)\;.\] Using these identities, we obtain
\begin{align}
  &\EE\left[\sup_{f\in \cF}\left\{\sum_{i,j=1}^{n+m} Z^{(2)}_{i,j,f}-\EE\left[Z^{(2)}_{i,j,f}\right]\right\}\right] \nonumber\\  
  \leq& \EE\left[\sup_{f\in \cF}\left\{\sum_{\substack{i,j=1}}^{n+m} (\bar{f}_i-\bar{f}_j)^2-\EE\left[(\bar{f}_i-\bar{f}_j)^2\right]\right\}\right] + 
  4d_\cF(P,Q)\EE\left[ \sup_{f\in \cF}\left\vert \sum_{i=1}^{n} \sum_{j=n+1}^{n+m} (\bar{f}_i-\bar{f}_j)\right\vert \right] \;.\nonumber \label{upper_square_1}\\
\end{align}
Now, for the first term at the right-hand side of Inequality (\ref{upper_square_1}), by expanding
the quantities $(\bar{f}_i-\bar{f}_j)^2$, we obtain
\begin{align*}
    &\EE\left[\sup_{f\in \cF}\left\{\sum_{\substack{i,j=1}}^{n+m} (\bar{f}_i-\bar{f}_j)^2-\EE\left[(\bar{f}_i-\bar{f}_j)^2\right]\right\}\right]\\
    = & \EE\left[\sup_{f\in \cF}\left\{\sum_{\substack{i,j=1}}^{n+m} (\bar{f}^2_i-\EE[\bar{f}_i^2]+ \bar{f}^2_j-\EE[\bar{f}_j^2])-2\sum_{\substack{i,j=1}}^{n+m}\bar{f}_i\bar{f}_j+2\sum_{\substack{i,j=1}}^{n+m}\EE[\bar{f}_i\bar{f}_j]\right\}\right]\\
    = & \EE\left[\sup_{f\in \cF}\left\{\sum_{\substack{i,j=1}}^{n+m} (\bar{f}^2_i-\EE[\bar{f}_i^2]+ \bar{f}^2_j-\EE[\bar{f}_j^2])-2\left(\sum_{\substack{i=1}}^{n+m}\bar{f}_i\right)^2+2\sum_{\substack{i=1}}^{n+m}\EE[\bar{f}^2_i]\right\}\right]\\ 
    \leq & 2(n+m) \EE \left[ \sup_{f\in \cF}\left\{\sum_{i=1}^{n+m} \bar{f}_i^2-\EE[\bar{f}_i^2]\right\} \right]+2(n\var(f(X_1))+m\var(f(Y_1)))\\
    \leq & 4(n+m)\EE \left[ \sup_{f\in \cF}\left\{\sum_{i=1}^{n+m} \varepsilon_i \bar{f}_i^2\right\} \right]+2(n\var(f(X_1))+m\var(f(Y_1)))\;,
\end{align*}
where $(\varepsilon_i)_{i=1}^n$ is a collection of independent Rademacher variables, independent also of the $Z_i$'s. Note that, almost surely, $\bar{f}_i \in [-2,2]$ for any $i\in \left\{ 1,\ldots, n+m\right\}$, which gives by the contraction principle (conditionally to the $Z_i$'s and using that the function $x\rightarrow x^2/4$ is a contraction on $[-2,2]$), 
\begin{align*}
    &\EE\left[\sup_{f\in \cF}\left\{\sum_{\substack{i,j=1}}^{n+m} (\bar{f}_i-\bar{f}_j)^2-\EE\left[(\bar{f}_i-\bar{f}_j)^2\right]\right\}\right]\\
    \leq &  16(n+m)\EE \left[ \sup_{f\in \cF}\left\{\sum_{i=1}^{n+m} \varepsilon_i \bar{f}_i\right\} \right]+2(n\var(f(X_1))+m\var(f(Y_1)))\\
    \leq & 32 (n+m)\EE \left[ \sup_{f\in \cF}\left\vert\sum_{i=1}^{n+m} \bar{f}_i \right\vert \right] +2(n\var(f(X_1))+m\var(f(Y_1)))\\
    \leq & 32 (n+m)(M_n(P)+M_m(Q))+2(n\var(f(X_1))+m\var(f(Y_1)))\;.\\
\end{align*}
As for the second term on the right-hand side of Inequality \eqref{upper_square_1}, it holds
\begin{align*}
    \EE\left[ \sup_{f\in \cF}\left\vert \sum_{i=1}^{n} \sum_{j=n+1}^{n+m} (\bar{f}_i-\bar{f}_j)\right\vert \right] &
\leq m \EE\left[ \sup_{f\in \cF}\left\vert \sum_{i=1}^{n} \bar{f}_i\right\vert \right]+n\EE\left[ \sup_{f\in \cF}\left\vert \sum_{j=n+1}^{n+m} \bar{f}_j\right\vert \right]\\
&=mM_n(P)+nM_m(Q)\;.
\end{align*}
Putting things together and setting $\alpha_k=k/(n+m)$ for $k\in \{ n, m\}$ and $V=\sup_{f\in \cF} \left\{ n \var(f(X_1)) + m\var(f(Y_1)) \right\}$, we get
\begin{align}\label{upper_v_+}
    &\EE\left[v_+(Z)\right]\nonumber\\
\leq & 16 (M_n(P)+M_m(Q))+2d_\cF(P,Q)(\alpha_m M_n(P)+\alpha_n M_m(Q))+\left(1+\frac{1}{n+m}\right)V+2\frac{nm}{n+m}d_\cF^2(P,Q)
\nonumber\\
\leq & 16 (M_n(P)+M_m(Q))+\left(\frac{1}{n}+\frac{1}{m}\right)(\alpha_m M_n(P)+\alpha_n M_m(Q))^2+\frac{3V}{2}+3\frac{nm}{n+m}d_\cF^2(P,Q)\;,
\end{align}
where in the last inequality, we used the fact that for any $a,b,\eta>0,$ $2ab\leq \eta a^2+b^2/\eta$, by choosing $\eta=(nm)/(n+m)$ -- which gives $1/\eta=1/n+1/m$  --, $a=d_\cF(P,Q)$ and $b=\alpha_m M_n(P)+\alpha_n M_m(Q)$. Furthermore, one can note that 
\begin{align*}
     & \left(\frac{1}{n}+\frac{1}{m}\right)(\alpha_m M_n(P)+\alpha_n M_m(Q))^2 \\
     \leq & 2 \left(\frac{1}{n}+\frac{1}{m}\right)(\alpha^2_mM^2_n(P)+\alpha^2_nM^2_m(Q)) \\
     = & \frac{2m}{n(n+m)} M^2_n(P)+ \frac{2n}{m(n+m)} M^2_m(Q)\;.
\end{align*}
Plugging the latter upper-bound in the right-hand side of Inequality \eqref{upper_v_+}, we get
\begin{equation}
    \EE[v_+(Z)] \leq 16 (M_n(P)+M_m(Q))+\frac{2m}{n(n+m)} M^2_n(P)+ \frac{2n}{m(n+m)} M^2_m(Q)+
    \frac{3V}{2}+\frac{3nm}{n+m}d_\cF^2(P,Q)\;.
\end{equation}
We turn now to the control of the expectation of $T_\xi(Z)$ under the alternative, where $P\neq Q$. Define for any distribution $R$ the quantity

\begin{align*}
    \delta_{n,m}(R) & =  \mathbb{E}_{Z \sim R^{\otimes (n+m)}} \left[ \sup_{f \in \cF} \left\{ \sum_{i = 1}^{n+m} \xi_i f(Z_i) \right\} \right]\\
                   & = \mathbb{E}_{Z \sim R^{\otimes (n+m)}} \left[ \sup_{f \in \cF} \left\{ \sum_{i = 1}^{n+m} \xi_i (f(Z_i)-R(f)) \right\} \right] \;,
\end{align*}
where the latter equality comes from the condition $\sum_{i = 1}^{n+m} \xi_i=0$. Define also
\[ \delta_{n,m}(R,S) = \mathbb{E}_{X \sim R^{\otimes n}} \mathbb{E}_{Y \sim S^{\otimes m}} \left[ \sup_{f \in \cF} \left\{ \sum_{i = 1}^{n} \xi_i f(X_i) + \sum_{j = 1}^{m} \xi_{j+n} f(Y_j) \right\} \right], \]
the expected value of the threshold when $X$ follows the distribution $R$ and $Y$ follows a different distribution $S$. We have the following result, proved in Section \ref{sssec_two_sample_tech_lem} below.
\begin{lemma}\label{lemma_expect_delta_alt} It holds
\begin{align*}
    \delta_{n,m}(R,S) &\leq \delta_{n,m}(R) + \delta_{n,m}(S) + \mathbb{E}\left| \sum_{i = 1}^{n+m} \xi_i \right| d_{\cF}(R,S) \\
    &\leq \delta_{n,m}(R) + \delta_{n,m}(S) + \frac{d_{\cF}(R,S)}{\sqrt{n+m-1}}.
\end{align*}
    
\end{lemma}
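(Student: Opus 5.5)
The plan is to split the supremum into a part driven by the $X$-sample, a part driven by the $Y$-sample, and a cross term carrying the mean shift $Rf-Sf$. Each of the first two parts is then compared to the one-distribution quantities $\delta_{n,m}(R)$ and $\delta_{n,m}(S)$ by a Jensen argument. The cross term is controlled through the distribution of $\sum_{i=1}^n \xi_i$. Throughout I read the factor $\mathbb{E}\left|\sum_{i=1}^{n+m}\xi_i\right|$ in the statement as $\mathbb{E}\left|\sum_{i=1}^{n}\xi_i\right|$. The full sum vanishes identically, so this is the only meaningful version, and it is also what the second line of the statement requires.

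\emph{Decomposition.} For $f\in\cF$, write $f(X_i)=(f(X_i)-Rf)+Rf$ and $f(Y_j)=(f(Y_j)-Sf)+Sf$. Since $\sum_{i=1}^{n+m}\xi_i=0$, we have $\sum_{j=1}^m\xi_{n+j}=-\sum_{i=1}^n\xi_i$, hence
\[ \sum_{i=1}^n \xi_i f(X_i)+\sum_{j=1}^m \xi_{n+j}f(Y_j) = A_f + B_f + (Rf-Sf)\sum_{i=1}^n\xi_i , \]
where $A_f=\sum_{i=1}^n \xi_i (f(X_i)-Rf)$ and $B_f=\sum_{j=1}^m\xi_{n+j}(f(Y_j)-Sf)$. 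We bound the supremum of the sum by the sum of the three suprema. Because $\cF$ is symmetric, $\sup_f |Rf-Sf| = d_\cF(R,S)$, so the third supremum is at most $d_\cF(R,S)\left|\sum_{i=1}^n\xi_i\right|$. Taking expectations gives the cross term of the first inequality.

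\emph{Comparison with $\delta_{n,m}(R)$.} Let $X_{n+1},\dots,X_{n+m}$ be i.i.d.\ with law $R$, independent of everything else. Conditionally on $(X_1,\dots,X_n,\xi)$, the added terms $\sum_{i>n}\xi_i(f(X_i)-Rf)$ have mean zero. Jensen's inequality (a supremum of conditional expectations is at most the conditional expectation of the supremum) then gives
\[ \mathbb{E}\sup_f A_f \le \mathbb{E}\sup_f \sum_{i=1}^{n+m}\xi_i(f(X_i)-Rf)=\delta_{n,m}(R), \]
using the centred form of $\delta_{n,m}(R)$. The same argument, completing the $Y$-sample with $n$ extra i.i.d.\ $S$-variables on the first coordinates, gives $\mathbb{E}\sup_f B_f\le\delta_{n,m}(S)$. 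The only care needed is measurability, which is handled as in Lemma \ref{lemma_meas}. This proves the first inequality.

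\emph{Second inequality.} By Cauchy--Schwarz, $\mathbb{E}\left|\sum_{i=1}^n\xi_i\right|\le \sqrt{\var\left(\sum_{i=1}^n\xi_i\right)}$, since the sum has mean zero. The vector $(\xi_1,\dots,\xi_n)$ is a sample without replacement of size $n$ from the population $w$ of size $N=n+m$, which has mean $0$ and population variance $s^2=\frac{1}{N}\left(\frac{1}{n}+\frac{1}{m}\right)$. The finite-population variance formula gives
\[ \var\left(\sum_{i=1}^n\xi_i\right)=n s^2\,\frac{N-n}{N-1}=\frac{nm}{N(N-1)}\left(\frac1n+\frac1m\right)=\frac{1}{N-1}, \]
which yields the bound $d_\cF(R,S)/\sqrt{n+m-1}$. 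No step is a real obstacle; the main point to get right is the Jensen completion step, where the centring by $Rf$ is exactly what makes the added terms conditionally mean zero.
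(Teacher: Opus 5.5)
Your proposal is correct and follows the paper's proof essentially step for step: your $A_f$ and $B_f$ are exactly the paper's first two suprema (since $\sum_{i=n+1}^{n+m}\xi_i Rf = -Rf\sum_{i=1}^{n}\xi_i$), the Jensen completion with extra i.i.d.\ copies is the same, and your finite-population variance formula is the paper's exchangeability computation giving $\mathbb{E}\bigl[(\sum_{i=1}^n\xi_i)^2\bigr]=1/(n+m-1)$. You are also right to read the cross-term factor as $\mathbb{E}\left|\sum_{i=1}^{n}\xi_i\right|$ (the full sum vanishes). That is the quantity the paper's own proof bounds, and like you it uses the symmetry of $\cF$ to control $\sup_{f \in \cF}\,(Rf-Sf)\sum_{i=1}^n\xi_i$ by $d_\cF(R,S)\left|\sum_{i=1}^n\xi_i\right|$.
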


\noindent Note that, under the alternative, we have $\EE[T_\xi(Z)]=\delta_{n,m}(P,Q)$. Hence, Lemma \ref{lemma_expect_delta_alt} gives
\begin{equation} \label{ubd_Esp_T-xi-Z}
    \EE[T_\xi(Z)] \leq \delta_{n,m}(P) + \delta_{n,m}(Q) + \frac{d_{\cF}(P,Q)}{\sqrt{n+m-1}} .
\end{equation}
We also have, 
\begin{align*}
    \delta_{n,m}(R) & = \mathbb{E}_{Z \sim R^{\otimes (n+m)}} \left[ \sup_{f \in \cF} \left\{ \sum_{i = 1}^{n+m} w_{\sigma(i)} (f(Z_i)-R(f)) \right\} \right]\\
    & = \mathbb{E}_{Z \sim R^{\otimes (n+m)}} \left[ \sup_{f \in \cF} \left\{ \sum_{i = 1}^{n+m} w_{i} (f(Z_{\sigma^{-1}(i)})-R(f)) \right\} \right]\\
    & = \mathbb{E}_{R^{\otimes (n+m)}} \left[ \sup_{f \in \cF} \left\{ \sum_{i = 1}^{n+m} w_{i} (f(Z_{i})-R(f))\right\} \right]\; ,
\end{align*}
where the last identity comes from the exchangeability of the i.i.d. vector $(Z_1,\ldots,Z_n)$. 
Hence, by the sub-addivity of the supremum and the symmetry of $\mathcal F$, we get
\begin{align*}
     \delta_{n,m}(R) &\leq \frac{1}{n} \mathbb{E}_{ R^{\otimes (n)}} \left[ \sup_{f \in \cF} \left\{ \sum_{i = 1}^{n} (f(Z_{i})-R(f)) \right\} \right] +  \frac{1}{m} \mathbb{E}_{ R^{\otimes (m)}} \left[ \sup_{f \in \cF} \left\{ \sum_{j = 1}^{m} (f(Z_{i})-R(f)) \right\} \right]\\
    &= \frac{M_n(R)}{n}+\frac{M_m(R)}{m} \;.
\end{align*}

\begin{itemize}
    \item \underline{Step 5: Putting things together to bound $q_\delta(Z)$ from above:}
\end{itemize}

\noindent Putting the previous computations together, we get that, with probability at least $1-2\beta$,
\begin{align}
    &q_\delta(Z) \nonumber\\
    &\leq \frac{2}{n}(M_n(P)+M_n(Q))+\frac{2}{m}(M_m(P)+M_m(Q))+\frac{2d_{\cF}(P,Q)}{\sqrt{n+m-1}}+ \frac{12}{n+m}\log\left(\frac{1}{\beta}\right)+ \nonumber \\
    &+ 4\left(\frac{1}{n}+\frac{1}{m}\right) \sqrt{\log\left(\frac{1}{\alpha_B}\right)} \times\nonumber \\
     &\sqrt{16 (M_n(P)+M_m(Q))+\frac{2m}{n(n+m)} M^2_n(P)+ \frac{2n}{m(n+m)} M^2_m(Q)+\frac{3V}{2}+\frac{3nm}{n+m}d_\cF^2(P,Q)+4\log\left(\frac{1}{\beta}\right)}\;. \label{upper_q_delta}
\end{align}
Moreover, from equations \eqref{eq_ubd_q-delta_gen}, \eqref{ubd_Esp_T-xi-Z} and \eqref{eq_def_Delta-nm}, it follows that
\begin{align}
 q_\delta(Z) &\leq \frac{2}{n}(M_n(P)+M_n(Q))+\frac{2}{m}(M_m(P)+M_m(Q))+\frac{2d_{\cF}(P,Q)}{\sqrt{n+m-1}} \nonumber \\  
 &\quad + 2\sqrt{2\left(\frac{1}{n} + \frac{1}{m} \right) \log \left( \frac{1}{\alpha_B} \right)} + \frac{12}{n+m}\log\left(\frac{1}{\beta}\right) \label{ubd_qdelta_hoeff}
\end{align}

\begin{itemize}
    \item \underline{Step 6: deviations from below for $T_{n,m}(\cF)$:}
\end{itemize}

\noindent To control the power of the test, it remains to bound from below the statistic $T=T_{n,m}(\cF)$. 
Fix some $\varepsilon > 0$. Let $f \in \cF$ be such that $Pf - Qf \geq d_{\cF}(P,Q) - \varepsilon$. By definition of $T,$
\[ T \geq \frac{1}{n} \sum_{i = 1}^n f(X_i) - \frac{1}{m} \sum_{j = 1}^n f(Y_j) \geq 
 \frac{1}{n} \sum_{i = 1}^n (f(X_i) - Pf) - \frac{1}{m} \sum_{j = 1}^m (f(Y_j) - Qf) + d_{\mathcal{F}}(P,Q) - \varepsilon. \]
 By Hoeffding's inequality, with probability greater than $1 - e^{-t}$,
 \[ T \geq d_{\mathcal{F}}(P,Q) - \varepsilon - \sqrt{2t\left(\frac{1}{n} + \frac{1}{m} \right)}. \]
 Since this bound holds for any $\varepsilon > 0$, it follows that with probability at least $1 - \delta$,
 \begin{equation} \label{lbd_test_stat_hoeff}
    T \geq d_{\mathcal{F}}(P,Q) - \sqrt{2\left(\frac{1}{n} + \frac{1}{m} \right)\log\left( \frac{1}{\delta} \right)}. 
 \end{equation}
 Thus, if the right-hand side of Inequality \eqref{lbd_test_stat_hoeff} is greater than the right-hand side of \eqref{ubd_qdelta_hoeff} with $\beta=\delta$, then 
\[
\PP (T>\hat{q}_B(\alpha)) \geq 1-3\delta.
\]
In other words, the type 2 error of the test is at most $3\delta$. A few calculations show that this is ensured if
\begin{align*}
  &\left(1 - \frac{2}{\sqrt{n+m-1}} \right)  d_{\cF}(P,Q) \\
  &\quad \geq  \frac{2}{n}(M_n(P)+M_n(Q))+\frac{2}{m}(M_m(P)+M_m(Q))+ \frac{12}{n+m}\log\left(\frac{1}{\delta}\right) \\
  &\quad + \sqrt{\frac{1}{n} + \frac{1}{m}} \left(2\sqrt{2\log \left( \frac{1}{\alpha_B} \right)} + \sqrt{2\log \left( \frac{1}{\delta} \right)} \right).
\end{align*}
 Let us now control the type 2 error by taking into account the variance terms. By Bernstein's inequality, with probability greater than $1 - e^{-t}$,
 \begin{align*}
   T &\geq d_{\mathcal{F}}(P,Q) - \varepsilon - \sqrt{2t \left( \frac{\var(f(X_1))}{n} + \frac{\var(f(Y_1))}{m} \right)} - t \left( \frac{1}{n} \vee \frac{1}{m} \right) \\
   &\geq d_{\mathcal{F}}(P,Q) - \varepsilon - \sqrt{2t \left( \frac{\sigma^2_P(\cF)}{n} + \frac{\sigma^2_Q(\cF)}{m} \right)} - t \left( \frac{1}{n} \vee \frac{1}{m} \right).
 \end{align*}
Again, this bound holds for any $\varepsilon > 0$, which gives that, with probability at least $1 - \delta$,
\begin{equation} \label{lbd_test_stat_alternative}
   T \geq d_{\mathcal{F}}(P,Q) - \sqrt{2\log\left(\frac{1}{\delta}\right) \left( \frac{\sigma^2_P(\cF)}{n} + \frac{\sigma^2_Q(\cF)}{m} \right)} - \log\left(\frac{1}{\delta}\right)\left( \frac{1}{n} \vee \frac{1}{m} \right). 
\end{equation}
Thus, if the right-hand side of Inequality \eqref{lbd_test_stat_alternative} is greater than the right-hand side of \eqref{upper_q_delta} with $\beta=\delta$, then 
\[
\PP (T>\hat{q}_B(\alpha)) \geq 1-3\delta.
\]
A few basic calculations show that this is ensured if
\begin{align*}
    &\left(1 - \frac{2}{\sqrt{n+m-1}}-4\sqrt{3\left(\frac{1}{n}+\frac{1}{m}\right)\log\left(\frac{1}{\alpha_B}\right)} \right) d_{\cF}(P,Q)\\
    &\quad \geq \left(\frac{1}{n}\vee \frac{1}{m}\right)\log\left( \frac{1}{\delta}\right)+  \frac{2}{n}(M_n(P)+M_n(Q))+\frac{2}{m}(M_m(P)+M_m(Q))+ \frac{12}{n+m}\log\left(\frac{1}{\delta}\right) \\
    &+ \sqrt{2\left( \frac{\sigma^2_P(\cF)}{n} + \frac{\sigma^2_Q(\cF)}{m} \right)\log\left(\frac{1}{\delta}\right)} +2\left(\frac{1}{n}+\frac{1}{m}\right)\sqrt{\log\left(\frac{1}{\alpha_B}\right)} \times  \\ 
    &\times\sqrt{\left(34 (M_n(P)+M_m(Q))+\frac{2m}{n(n+m)} M^2_n(P)+ \frac{2n}{m(n+m)} M^2_m(Q)+V+4\log\left(\frac{1}{\beta}\right)\right)}\;.
\end{align*}
This complete the proof of Theorem \ref{th_test_separation_rate}.
\subsubsection{Technical Lemmas}\label{sssec_two_sample_tech_lem}

\begin{proof}[Proof of Lemma \ref{lem_F_hat_q}]
First note that, for any $q\in \R$,
\begin{equation}
    \hat{F}^{(\ell)}_B(q)=\frac{1}{B+1}\sum_{i=0}^B \mathbb{I}_{\left\{ T_{\xi^{(b)}}(Z) <q \right\}}
\geq \frac{B}{B+1} \tilde{F}_B^{(\ell)}(q)\; ,
\end{equation}
where \[\tilde{F}_B^{(\ell)}(q)=\frac{1}{B} \sum_{i=1}^B \mathbb{I}_{\left\{ T_{\xi^{(b)}}(Z)<q\right\}}\] is the left-continuous version of the empirical cumulative distribution function of the variables $(T_{\xi^{(1)}}(Z),\ldots,T_{\xi^{(B)}}(Z))$.

Now, conditionally on $Z$, $(T_{\xi^{(1)}}(Z),\ldots,T_{\xi^{(B)}}(Z))$ is an i.i.d. sample with the same marginal distribution as $T_{\xi}(Z)$. Note also that 
\[ \tilde{F}_B^{(\ell)}(q)-F_{T_\xi(Z)}^{(\ell)}(q) = (1-F_{T_\xi(Z)}^{(\ell)}(q))-(1-\tilde{F}_B^{(\ell)}(q)) = \PP(T_\xi(Z)\geq q\vert Z)-\frac{1}{B} \sum_{b=0}^B \mathbb{I}_{\left\{ T_{\xi^{(b)}}(Z)\geq q \right\}} \; .\]
Hence, by Chernoff's multiplicative bound (\cite{hagerup1990guided}) applied to the right-hand side of the latter identity conditionally to $Z$, it holds 
\begin{equation}
    \PP\left(\PP(T_\xi(Z)\geq q\vert Z)-\frac{1}{B} \sum_{b=0}^B \mathbb{I}_{\left\{ T_{\xi^{(b)}}(Z)\geq q \right\}} \geq -\sqrt{\frac{3\PP(T_\xi(Z)\geq q\vert Z)\log(1/\delta)}{B}}\Biggl| Z\right)\geq 1-\delta \;.
\end{equation}
Equivalently, with probability at least $1-\delta$,
\[
\frac{1}{B} \sum_{b=0}^B \mathbb{I}_{\left\{ T_{\xi^{(b)}}(Z)\geq q \right\}} \leq
\PP(T_\xi(Z)\geq q\vert Z)+\sqrt{\frac{3\PP(T_\xi(Z)\geq q\vert Z)\log(1/\delta)}{B}} \;.
\]
We look for a value of $q$ such that the right-hand side of the latter inequality is smaller than  $\alpha$. Thus, a sufficient condition is given by
\[
\sqrt{\PP(T_\xi(Z)\geq q\vert Z)} \leq \frac{1}{2}\left( -\sqrt{\frac{3\log(1/\delta)}{B}}+\sqrt{\frac{3\log(1/\delta)}{B}+4\alpha}\right) .
\]
Taking the previous inequality to the square and using the fact that $\PP(T_\xi(Z)\geq q\vert Z)=1-F_{T_\xi(Z)}^{(\ell)}(q)$, we obtain
\[
F_{T_\xi(Z)}^{(\ell)}(q) \geq 1-\alpha-\frac{3\log(1/\delta)}{2B}+\frac{1}{2}\sqrt{\frac{3\log(1/\delta)}{B}}\sqrt{\frac{3\log(1/\delta)}{B}+4\alpha}\;.
\]
By using that, for any $a,b>0$, $\sqrt{a+b}\leq \sqrt{a}+\sqrt{b}$, we finally get the following sufficient condition,
\[
F_{T_\xi(Z)}^{(\ell)}(q) \geq 1-\alpha+\sqrt{\frac{3\alpha\log(1/\delta)}{B}}\;,
\]
which concludes the proof.
\end{proof}



\begin{proof}[Proof of Lemma \ref{lemma_expect_delta_alt}]
Write
\begin{align*}
    &\sum_{i = 1}^n \xi_i f(X_i) + \sum_{j = 1}^m \xi_{j+n} f(Y_j) \\
    & =  \sum_{i = 1}^n \xi_i f(X_i) + \sum_{i = n+1}^{n+m} \xi_i Rf
     + \sum_{j = 1}^{m} \xi_{j+n} f(Y_j) + \sum_{j = 1}^{n} \xi_j Sf \\
     &\quad \quad - \left( \sum_{i = n+1}^{n+m} \xi_i \right) Rf - \left( \sum_{i = 1}^{n} \xi_i \right) Sf.        
\end{align*}
Since the weights $\xi$ sum to $0$, the last term is
\[  - \left( \sum_{i = n+1}^{n+m} \xi_i \right) Rf - \left( \sum_{i = 1}^{n} \xi_i \right) Sf = \left( \sum_{i = 1}^{n} \xi_i \right)(Rf - Sf). \]
By sub-additivity of the supremum, it follows that
\begin{align*}
    &\sup_{f \in \cF} \left\{ \sum_{i = 1}^n \xi_i f(X_i) + \sum_{j = 1}^m \xi_{j+n} f(Y_j)\right\} \\
    &\quad \leq \sup_{f \in \cF} \left\{ \sum_{i = 1}^n \xi_i f(X_i) + \sum_{i = n+1}^{n+m} \xi_i Rf \right\} \\
    &\quad \quad + \sup_{f \in \cF} \left\{ \sum_{j = 1}^{m} \xi_{j+n} f(Y_j) + \sum_{j = 1}^{n} \xi_j Sf \right\} \\
    &\quad \quad + \left| \sum_{i = 1}^{n} \xi_i \right| d_{\cF}(R,S).
\end{align*}
We assume for notational convenience that $X$ and $Y$ extend to i.i.d. sequences of length $n+m$, so that we may write $X_i$ for $i \geq n+1$.  By Jensen's inequality, it follows that
\begin{align*}
    &\mathbb{E} \left[ \sup_{f \in \cF} \left\{ \sum_{i = 1}^n \xi_i f(X_i) + \sum_{j = 1}^m \xi_{j+n} f(Y_j) \right\} \Bigl| \xi \right] \\
    &\quad \leq \mathbb{E} \left[ \sup_{f \in \cF} \left\{ \sum_{i = 1}^n \xi_i f(X_i) + \sum_{i = n+1}^{n+m} \xi_i f(X_i)  \right\} \Bigl| \xi \right] \\ 
     &\quad \quad +\mathbb{E}\left[\sup_{f \in \cF} \left\{ \sum_{j = 1}^{m} \xi_{j+n} f(Y_j) + \sum_{j = 1}^{n} \xi_j f(Y_j) \right\} \Bigl| \xi \right] \\
     &\quad \quad + \left| \sum_{i = 1}^{n} \xi_i \right| d_{\cF}(R,S).
\end{align*}
Taking expectations gives the first inequality. To prove the second inequality, it remains to control the quantity $\EE  [\left| \sum_{i = 1}^{n} \xi_i \right|]$. By the Cauchy-Schwarz inequality,
\[ \mathbb{E} \left[ \left| \sum_{i = 1}^{n} \xi_i \right| \right] \leq \mathbb{E} \left[ \Bigl( \sum_{i = 1}^{n} \xi_i \Bigr)^2 \right]^{1/2}. \]
By exchangeability,
\begin{align*}
  \mathbb{E} \left[ \Bigl( \sum_{i = 1}^{n} \xi_i \Bigr)^2 \right] &= n \mathbb{E}\left[ \xi_1^2 \right] + n(n-1) \mathbb{E}\left[ \xi_1 \xi_2 \right] \\
  &= n \mathbb{E}\left[ w_{\sigma(1)}^2 \right] + n(n-1) \mathbb{E}\left[ w_{\sigma(1)} w_{\sigma(2)} \right] \\
  &= \frac{n}{n+m} \sum_{i = 1}^{n+m} w_i^2 + \frac{n(n-1)}{(n+m)(n+m-1)} \sum_{i = 1}^{n+m} w_i \sum_{j \neq i} w_j \\
  &= \frac{n}{n+m} \sum_{i = 1}^{n+m} w_i^2 - \frac{n(n-1)}{(n+m)(n+m-1)} \sum_{i = 1}^{n+m} w_i^2\\
  &= \frac{n}{n+m} \left(1 - \frac{n-1}{n+m-1} \right) \sum_{i = 1}^n w_i^2 \\
  &= \frac{n}{n+m} \frac{m}{n+m-1} \left( \frac{1}{n} + \frac{1}{m} \right) \\
  &= \frac{1}{n+m-1}\;,
\end{align*}
where, for equality between the third and the fourth line, we used the fact that $\sum_{j\neq i} w_j=-w_i$. The proof is now complete.
\end{proof}

\begin{lemma}\label{lem_v_+}
Recall the following notation:
   \[v_+(Z) = \sup_{f \in \cF} \left\{ \sum_{i = 1}^{n+m} \left(f(Z_i) - \frac{1}{n+m} \sum_{j = 1}^{n+m} f(Z_j) \right)^2 \right\}\;.\]
   The function $v_+/4$ is $(2,0)$-self-bounding.
\end{lemma}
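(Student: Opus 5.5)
We verify Definition \ref{def_self_bounding} directly, taking as proxies the leave-one-out versions of the variance and using the pairwise form of $v_+$. Write $N=n+m$ and use the identity already noted in the proof,
\[ v_+(z)=\frac{1}{2N}\sup_{f\in\cF}\sum_{i,j=1}^{N}(f(z_i)-f(z_j))^2 . \]
For each $k$ define
\[ v_k(z_{(k)})=\frac{1}{2N}\sup_{f\in\cF}\sum_{i,j\neq k}(f(z_i)-f(z_j))^2 . \]
This is the same supremum with all pairs involving index $k$ removed. The normalisation $1/(2N)$ is kept rather than $1/(2(N-1))$, so that $v_k\le v_+$ holds trivially: each summand is nonnegative, so deleting terms can only decrease the value for each $f$, and hence the supremum.

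\emph{Upper bound on each increment.} Let $\hat f$ be an $\varepsilon$-maximiser for $v_+(z)$. Then
\[ v_+(z)-v_k(z_{(k)})\le \varepsilon+\frac{1}{2N}\cdot 2\sum_{j\neq k}(\hat f(z_k)-\hat f(z_j))^2 = \varepsilon+\frac1N\sum_{j}(\hat f(z_k)-\hat f(z_j))^2 . \]
Since $f$ takes values in $[-1,1]$, each square is at most $4$, so the right-hand side is at most $4+\varepsilon$. Dividing by $4$ gives $0\le (v_+-v_k)/4\le 1$.

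\emph{Sum of increments.} Summing the same bound over $k$ gives
\[ \sum_k \bigl(v_+(z)-v_k(z_{(k)})\bigr)\le N\varepsilon+\frac1N\sum_{k,j}(\hat f(z_k)-\hat f(z_j))^2\le N\varepsilon+2\bigl(v_+(z)+\varepsilon\bigr), \]
since $\frac{1}{2N}\sum_{k,j}(\hat f(z_k)-\hat f(z_j))^2\le v_+(z)$. Letting $\varepsilon\to0$ yields $\sum_k (v_+-v_k)\le 2v_+$. Dividing by $4$ shows that $v_+/4$, with proxies $v_k/4$, is $(2,0)$-self-bounding.

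The only delicate point is the choice of normalisation in $v_k$. Using $1/(2(N-1))$, the natural leave-one-out variance, would break monotonicity, so the proxy must retain the factor $1/(2N)$. If the supremum is not attained, the $\varepsilon$-maximiser argument above handles it. Measurability is not needed for the combinatorial property itself; for the subsequent use of \cite[Theorem 6.21]{BoucheronLugosiMassart:2013} it is provided by Lemma \ref{lem_meas}.
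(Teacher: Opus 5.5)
Your proof is correct and follows essentially the same route as the paper: the paper's proxy, $\frac{n+m-1}{n+m}$ times the leave-one-out variance, is exactly your $v_k$ with the $1/(2N)$ normalisation, and both bounds come from the same pairwise decomposition. The only difference is that you handle a possibly non-attained supremum with an $\varepsilon$-maximiser, whereas the paper simply assumes without loss of generality that a maximiser $\hat f$ exists.
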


\begin{proof}
    Define, for all $i\in \left\{ 1,\ldots, n+m \right\}$, $Z^{(i)}=(Z_1,\ldots,Z_{i-1},Z_{i+1},\ldots,Z_{n+m})$ and
    \[
    v_i(Z^{(i)})=\frac{n+m-1}{n+m}\sup_{f \in \cF} \left\{ \sum_{k \neq i} \left(f(Z_k) - \frac{1}{n+m-1} \sum_{l\neq i} f(Z_l) \right)^2 \right\}\;.
    \]
Note that
\[
 v_+(Z) = \frac{1}{2(n+m)}\sup_{f \in \cF} \left\{ \sum_{i,j = 1}^{n+m} \left(f(Z_i) - f(Z_j) \right)^2 \right\}
\]
and, for any $i\in \left\{ 1,\ldots, n+m \right\}$,
\[
 v_i(Z^{(i)})= \frac{1}{2(n+m)}\sup_{f \in \cF} \left\{ {\sum_{\substack{k,\ell = 1\\k,\ell\neq i}}^{n+m}} \left(f(Z_k) - f(Z_\ell) \right)^2 \right\}
\]
Then, we can assume without loss of generality that there exists $\hat{f}\in \cF$ such that
\[
 v_+(Z) = \frac{1}{2(n+m)}\sum_{i,j = 1}^{n+m} \left(\hat{f}(Z_i) - \hat{f}(Z_j) \right)^2 \; .
\]    
This gives that, for any $i\in \left\{ 1,\ldots, n+m \right\}$,
    $$0\leq v_+(Z)-v_i(Z^{(i)}) \leq \frac{1}{n+m} \sum_{j = 1}^{n+m} \left(\hat{f}(Z_i) - \hat{f}(Z_j) \right)^2\leq 4$$ and $$\sum_{i=1}^n (v_+(Z)-v_i(Z^{(i)})) \leq 2 v_+(Z)\;.$$ Finally, the conclusion follows by dividing the functions $v_+$ and $v_i$, $i\in\left\{1,\ldots, n+m \right\}$, by $4$.
\end{proof}

\begin{lemma}
 The quantity $M_k(R)/k$ is non-increasing in $k\in \NN_*$.
\end{lemma}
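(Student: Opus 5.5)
The plan is to show the equivalent statement $M_{k+1}(R)/(k+1)\le M_k(R)/k$ for every $k\ge 1$ by a leave-one-out averaging argument. Let $Z_1,\ldots,Z_{k+1}$ be i.i.d. with law $R$, and write $\bar f_i=f(Z_i)-R(f)$. Since every index $i$ lies in exactly $k$ of the $k+1$ subsets of $\{1,\ldots,k+1\}$ of size $k$, we have the identity
\[ \sum_{i=1}^{k+1}\bar f_i=\frac{1}{k}\sum_{j=1}^{k+1}\sum_{i\neq j}\bar f_i \]
for every $f\in\cF$.

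Next, take the supremum over $f\in\cF$ on both sides. The supremum of a sum is at most the sum of the suprema, so
\[ \sup_{f\in\cF}\Bigl\{\sum_{i=1}^{k+1}\bar f_i\Bigr\}\le\frac{1}{k}\sum_{j=1}^{k+1}\sup_{f\in\cF}\Bigl\{\sum_{i\neq j}\bar f_i\Bigr\}. \]
Each sample $(Z_i)_{i\neq j}$ is i.i.d. of size $k$ with law $R$. Taking expectations, each term on the right therefore equals $M_k(R)$, and we get $M_{k+1}(R)\le\frac{k+1}{k}M_k(R)$. Dividing by $k+1$ gives the claim, and induction on $k$ yields monotonicity on all of $\NN_*$.

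The main point to check is measurability and integrability, which let us take expectations of the suprema and use linearity. Both follow from the standing separability assumption on $\cF$ (Lemma \ref{lemma_meas}) together with boundedness of the class: each supremum is then a bounded measurable random variable. Beyond that there is no real obstacle. The only other thing to verify is the counting identity, namely that $\bar f_i$ appears in the right-hand double sum exactly $k$ times, once for each $j\neq i$.
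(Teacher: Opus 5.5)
Your proof is correct and is essentially the paper's argument. The paper draws a uniformly random size-$k$ subset $J\subset\{1,\ldots,k+1\}$ and applies Jensen's inequality ($\sup\EE\le\EE\sup$) to get $M_k(R)\ge\frac{k}{k+1}M_{k+1}(R)$. That is exactly your deterministic average over the $k+1$ leave-one-out subsets combined with subadditivity of the supremum.
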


\begin{proof}
    Consider an integer $k\geq 1$. Take $J\subset \left\{ 1,\ldots,k+1\right\}$ a random set of indices, with distribution that is uniform among the subsets of $\left\{ 1,\ldots,k+1\right\}$ of length $k$, and independent of the sample $(Z_1,\ldots,Z_{k+1})\sim R^{\otimes (k+1)}$. Then, it holds
    \begin{align*}
        M_k(R)=& \EE_{R^{\otimes k} }\left[\sup_{f\in \cF}\sum_{i=1}^{k}(f(Z_i)-\EE[f(Z_i)])\right]\\
        =& \EE_{J,R^{\otimes (k+1)} }\left[\sup_{f\in \cF}\sum_{i\in J}(f(Z_i)-\EE[f(Z_i)])\right]\\
        =& \EE_{J,R^{\otimes (k+1)} }\left[\sup_{f\in \cF}\sum_{i=1}^{k+1}\mathbb{I}_{i\in J}(f(Z_i)-\EE[f(Z_i)])\right]\\
         \geq & \EE_{R^{\otimes (k+1)} }\left[\sup_{f\in \cF}\EE_{J}\left[\sum_{i=1}^{k+1}\mathbb{I}_{i\in J} (f(Z_i)-\EE[f(Z_i)])\right]\right]\\
        = & \EE_{R^{\otimes (k+1)} }\left[\sup_{f\in \cF}\sum_{i=1}^{k+1}\PP(i\in J)(f(Z_i)-\EE[f(Z_i)])\right]\\
        = & \frac{k}{k+1}\EE_{R^{\otimes (k+1)} }\left[\sup_{f\in \cF}\sum_{i=1}^{k+1}(f(Z_i)-\EE[f(Z_i)])\right]\;,
    \end{align*}
    which gives the result.
\end{proof}

\subsection{Proofs of corollaries of Theorem \ref{th_test_separation_rate}}
\subsubsection{Proof of Corollary \ref{cor_wass_test}}
    It follows from the work of Fournier and Guillin (\cite[Theorem 1]{Fournier2015} with $p = 1$ and $q = r > 2$) that
    \[ \frac{1}{l} M_l(R) \leq C_0 V_r(R) \frac{L_l}{l^\gamma} \]
    for all $l \in \mathbb{N}$, any $r > 2$ and any dimension $d \geq 1$. 
    Moreover, Theorem \ref{th_test_separation_rate} applies since $\cF$ is a pointwise closed class of measurable functions valued in $[-1,1]$. We bound the terms of Equation \eqref{eq_thm_sep_rates} as follows. By Inequalities \eqref{ineq_W_dim2} and \eqref{ineq_W_dimsup3},
    \[ \frac{2}{n}(M_n(P)+M_n(Q))+\frac{2}{m}(M_m(P)+M_m(Q)) \leq 2C_0 \left( \frac{L_n}{n^\gamma} + \frac{L_m}{m^\gamma} \right) (V_r(P) + V_r(Q))\;. \]
    Moreover, we have seen that for two independent random vectors with distribution $R$,
    \[ \sigma_R^2(\cF) \leq \frac{1}{2} \mathbb{E}[d(X,X')^2] \leq \frac{1}{2} \mathbb{E} \left[ (d(X,\mu) + d(\mu,X'))^2 \right] \leq \mathbb{E}\left[ d(X,\mu)^2 + d(X',\mu)^2 \right] \]
    from which it follows that
    \[ \sigma^2_R(\cF) \leq 2V_2(R) \]
   and
    \[ \sqrt{2\left( \frac{\sigma^2_P(\cF)}{n} + \frac{\sigma^2_Q(\cF)}{m} \right)\log\left(\frac{1}{\delta}\right)} \leq 2 \sqrt{ \left(\frac{V_2(P)}{n} + \frac{V_2(Q)}{m} \right) \log\left(\frac{1}{\delta}\right)}\;. \]
    Consider now the sum $S$ under the square root on the last line of Equation \eqref{eq_thm_sep_rates}. Since
    \[ V \leq n\sigma^2_P(\cF) + m\sigma_Q^2(\cF) \leq 2(nV_2(P) + mV_2(Q)) \leq 2n(V_2(P) + \rho V_2(Q)) \]
    and
    \begin{align*}
       \frac{2m}{n(n+m)} M_n^2(P) + \frac{2n}{m(n+m)} M_m^2(Q) &\leq \frac{2m}{n(n+m)} 2n M_n(P) + \frac{2n}{m(n+m)} 2m M_m(Q) \\ 
       &\leq 4(M_n(P) + M_m(Q))\;, 
    \end{align*}
    it follows that
    \begin{align*}
        2\left(\frac{1}{n} + \frac{1}{m}\right)\sqrt{S} &\leq \frac{4}{n} \sqrt{38(M_n(P) + M_m(Q)) + 2n(V_2(P) + \rho V_2(Q)) + 4\log\left( \frac{1}{\delta} \right)} \\
        &\leq \frac{4}{\sqrt{n}} \sqrt{2(V_2(P) + \rho V_2(Q))} + \frac{4}{n} \sqrt{38C_0 \left( L_n n^{1-\gamma} V_r(P) + L_m (m)^{1-\gamma} V_r(Q) \right)} \\ 
        &\quad + \frac{8}{n} \sqrt{\log \left( \frac{1}{\delta} \right)} \\
        &\leq \frac{4}{\sqrt{n}} \sqrt{2(V_2(P) + \rho V_2(Q))} + \frac{C\sqrt{L_n}}{n^{\frac{1+\gamma}{2}}} + \frac{8}{n} \sqrt{\log \left( \frac{1}{\delta} \right)}\;,
    \end{align*}
    where $C$ depends on $C_0,\rho,V_r(P)$ and $V_r(Q)$.
    The remaining terms on the right-hand side of Equation \eqref{eq_thm_sep_rates} are of order
    \[ \frac{1}{n} \log \left( \frac{1}{\delta} \right). \]

\subsubsection{Proof of Corollary \ref{cor_MMD}}
\begin{proof}
Let $k \geq 2$ be an integer, $R$ be a probability distribution on $\cX$ and $Z_1,\ldots,Z_k$ be i.i.d random variables with common distribution $R$. Let $\mu_R$ be the kernel mean embedding of $R$. We apply Theorem \ref{th_test_separation_rate} to the class $\frac{1}{\kappa} \cF$. The result will follow by multiplying both sides of equation \eqref{eq_thm_sep_rates} by $\kappa$.
    A classical computation shows that
    \begin{align*}
        \kappa M_k(R) &= \mathbb{E} \left[ \norm{\sum_{i = 1}^k k(Z_i,\cdot) - \mu_R}_{\cH} \right] \\
        &\leq \mathbb{E} \left[ \norm{\sum_{i = 1}^k k(Z_i,\cdot) - \mu_R}_{\cH}^2 \right]^{1/2} \\
        &= \sqrt{k \mathbb{E} \left[ \norm{k(Z_1,\cdot) - \mu_R}_{\cH}^2 \right]} \\
        &\leq \sqrt{k \mathbb{E} \left[ \norm{k(Z_1,\cdot)}_{\cH}^2 \right]} \\
        &\leq \kappa \sqrt{k} 
    \end{align*}
    Thus,
    \[ \frac{2}{n} \left(M_n(P) + M_n(Q) \right) + \frac{2}{m} \left(M_m(P) + M_m(Q) \right) \leq 4\left( \frac{1}{\sqrt{n}} + \frac{1}{\sqrt{m}} \right). \]
\end{proof}

\subsection{Bounds for the expectation of the supremum of the bootstrap empirical process}\label{ssec_mean}

The following upper and lower bounds for the mean of the supremum of the exchangeable bootstrap empirical process are instrumental in our proofs related to sections \ref{sec_conf_reg} and \ref{sec_two_sample_test}. We believe that they are folklore results. Nonetheless, for the sake of completeness, we provide complete proofs. 

In particular, they are consequences of the most advanced results, that are due to Han and Wellner (\cite{HanWell:19}).


Let us start with a universal lower bound.
\begin{proposition} \label{prop_lower_mean_boot}
    Let $(\xi_i)_{1 \leq i \leq n}$ be a collection of weights of sum $0$, independent of the i.i.d. sample $X = (X_1,\ldots,X_n)$. Assume also that the weights have a common moment of order one, equal to $\EE[\vert \xi_1\vert]$, and a common moment of order one for their positive parts as well, that is $\mathbb{E}\left[ (\xi_1)_+ \right]=\ldots=\mathbb{E}\left[ (\xi_n)_+ \right]$. For any $t \in \cT$, let $\mu_t$ denote the common mean of the random variables $t(X_{i})$. We have that
    \begin{equation}\label{ineq_lower_bound}
         \mathbb{E}\left[ (\xi_1)_+ \right] \mathbb{E}\left[ \sup_{t \in \cT} \sum_{i = 1}^n (t(X_{i}) - \mu_t) \right] \leq  \mathbb{E}\left[ \sup_{t \in \cT} \sum_{i = 1}^n \xi_i t(X_{i}) \right]\;. 
    \end{equation}
    If moreover the process is symmetric in the sense that $(\mu_t - t(X_{1}))_{t \in \cT}$ is equal in distribution to $(t(X_{1}) - \mu_t)_{t \in \cT}$, then
    \begin{equation}\label{ineq_lower_bound_sym}
        \mathbb{E}\left[ |\xi_1| \right] \mathbb{E}\left[ \sup_{t \in \cT} \sum_{i = 1}^n (t(X_{i}) - \mu_t) \right] \leq  \mathbb{E}\left[ \sup_{t \in \cT} \sum_{i = 1}^n \xi_i t(X_{i}) \right] \;.
    \end{equation}
      
\end{proposition}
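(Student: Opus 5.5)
The plan is to prove \eqref{ineq_lower_bound} by a conditioning-and-Jensen argument of the kind used in Fromont's lower bound, and then obtain \eqref{ineq_lower_bound_sym} by splitting the weights into positive and negative parts. I start from a consequence of $\sum_i \xi_i = 0$: for every $t \in \cT$,
\[ \sum_{i=1}^n \xi_i t(X_i) = \sum_{i=1}^n \xi_i (t(X_i) - \mu_t), \]
so I may work with the centered process $\bar t_i = t(X_i) - \mu_t$. Write $\xi_i = (\xi_i)_+ - (\xi_i)_-$, and let $X'$ be an independent copy of $X$, independent of $\xi$.

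For \eqref{ineq_lower_bound}, the first step is to condition on $\xi$ and on the variables $X_i$ with $\xi_i > 0$. For each index $i$ with $\xi_i \le 0$, I replace $X_i$ by the fresh copy $X_i'$. This does not change the joint law of $(\xi,X)$, because $X$ is i.i.d. and independent of $\xi$. Then, for the indices $i$ with $\xi_i<0$, I integrate out $X_i'$ inside the supremum. By Jensen's inequality (the supremum is convex), and since $\mathbb{E}[\bar t(X_i')]=0$, this gives
\[ \mathbb{E}\Bigl[\sup_t \sum_i \xi_i \bar t_i\Bigr] \ge \mathbb{E}\Bigl[\sup_t \sum_i (\xi_i)_+ \bar t_i\Bigr]. \]
The next step is to bring in the deterministic factor $\mathbb{E}[(\xi_1)_+]$, again by Jensen, now applied to the conditional expectation over $\xi$ with $X$ held fixed. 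Because $X$ is independent of $\xi$,
\[ \mathbb{E}_\xi\Bigl[\sup_t \sum_i (\xi_i)_+ \bar t_i\Bigr] \ge \sup_t \sum_i \mathbb{E}[(\xi_i)_+]\,\bar t_i = \mathbb{E}[(\xi_1)_+]\sup_t \sum_i \bar t_i, \]
where the last equality uses the assumption that all the $\mathbb{E}[(\xi_i)_+]$ are equal. Taking expectations in $X$ yields \eqref{ineq_lower_bound}. Measurability of the suprema is handled by the countable-dense-subclass reduction of Lemma \ref{lemma_meas}.

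For the symmetric case \eqref{ineq_lower_bound_sym}, I keep both signs. Conditionally on $\xi$, the symmetry hypothesis lets me flip the sign of the coordinates with $\xi_i<0$: the vector $(\xi_i \bar t_i)_i$ has the same law, as a process indexed by $t$, as $(|\xi_i|\,\bar t_i)_i$. This relies on the independence of the $X_i$ and on $(\bar t(X_1))_t \stackrel{d}{=} (-\bar t(X_1))_t$. Hence
\[ \mathbb{E}\Bigl[\sup_t \sum_i \xi_i \bar t_i\Bigr] = \mathbb{E}\Bigl[\sup_t \sum_i |\xi_i| \bar t_i\Bigr], \]
and the Jensen step above, run with $|\xi_i|$ in place of $(\xi_i)_+$, gives the factor $\mathbb{E}|\xi_1|$.

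I expect the main obstacle to be the first Jensen step, that is, justifying that the coordinates with $\xi_i\le 0$ can be averaged out. Those coordinates are selected by $\xi$, so I have to be explicit that, conditionally on $\xi$, the $X_i$ remain i.i.d. with law $P$. Once the conditioning is set up this way, the selection causes no dependence problem, and the rest of the argument is routine.
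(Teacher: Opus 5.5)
Your proposal is correct and follows essentially the same route as the paper: you center using $\sum_i \xi_i = 0$, and for the general bound you integrate out, by conditional Jensen, the coordinates with $\xi_i \le 0$ given $\xi$ and $(X_i)_{\xi_i>0}$, then apply Jensen over $\xi$ with $X$ fixed. In the symmetric case you flip signs conditionally on $\xi$ to replace $\xi_i$ by $|\xi_i|$ before the same Jensen step, exactly as the paper does; the fresh copy $X'$ is unnecessary, since conditioning already does the job, but it is harmless.
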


Proposition \ref{prop_lower_mean_boot} can be compared, for instance, with the lower bound of Lemma 2.9.1 in \cite{vandervaartWellner:23}. One may indeed notice that the proof of the latter lemma, which proceeds with similar arguments as our proof below, does not require that the weights be independent. One can also notice that exchangeability of the weights is not required in Proposition \ref{prop_lower_mean_boot}. 

\begin{proof}
First remark that, since $\sum_{i = 1}^n \xi_i = 0$, it holds for each $t \in \cT$, 
\[ \sum_{i = 1}^n \xi_i t(X_{i}) = \sum_{i = 1}^n \xi_i (t(X_{i}) - \mu_t), \]
which implies that
\[ \sup_{t \in \cT} \sum_{i = 1}^n \xi_i t(X_{i}) = \sup_{t \in \cT} \sum_{i = 1}^n \xi_i (t(X_{i}) - \mu_t).  \]
Let us now consider the case where the process $(t(X_{1}) - \mu_t)_{t \in \cT}$ is symmetric. Conditionally on $\xi$, for any signs $(\varepsilon_i)_{1 \leq i \leq n}$, the quantity
\[ \sup_{t \in \cT} \sum_{i = 1}^n \xi_i (t(X_{i}) - \mu_t)  \]
is equal in distribution to
\[ \sup_{t \in \cT} \sum_{i = 1}^n \xi_i  \varepsilon_i (t(X_{i}) - \mu_t) \]
and in particular to
\[ \sup_{t \in \cT} \sum_{i = 1}^n |\xi_i| (t(X_{i}) - \mu_t) \; .\]
Consequently, it holds
\[ \mathbb{E} \left[ \sup_{t \in \cT} \sum_{i = 1}^n \xi_i t(X_{i}) \right] = \mathbb{E}\left[ \sup_{t \in \cT} \sum_{i = 1}^n |\xi_i| (t(X_{i}) - \mu_t)  \right]. \]
By Jensen's inequality, it follows that
\begin{align*}
    \mathbb{E} \left[ \sup_{t \in \cT} \sum_{i = 1}^n \xi_i t(X_{i}) \right] &\geq \mathbb{E}\left[ \sup_{t \in \cT} \sum_{i = 1}^n \mathbb{E}[|\xi_i|] (t(X_{i}) - \mu_t)  \right] \\
    &= \mathbb{E}[|\xi_1|] \mathbb{E}\left[ \sup_{t \in \cT} \sum_{i = 1}^n (t(X_{i}) - \mu_t)  \right] \; ,
\end{align*}
thus proving Inequality (\ref{ineq_lower_bound_sym}).
    It remains to consider now the general case.
    Let $A = \{i : \xi_i > 0 \}$. This set depends only on $\xi$,  it is therefore independent of $X$. Conditionally on $\xi$, $ (t(X_{i}))_{i \in A, t\in \cT}$ and $(t(X_{i}))_{i \in A^c}$ are independent collections of i.i.d. random processes with mean $(\mu_t)_{t \in \cT}$. Thus, by Jensen's inequality,
    \begin{align*}
        &\mathbb{E} \left[ \sup_{t \in \cT} \sum_{i = 1}^n \xi_i (t(X_{i}) - \mu_t) \bigl| \xi, (t(X_{i}))_{i \in A, t \in \cT} \right] \\
        &\quad \geq \sup_{t \in \cT} \mathbb{E} \left[  \sum_{i = 1}^n \xi_i (t(X_{i}) - \mu_t) \bigl| \xi, (t(X_{i}))_{i \in A, t \in \cT} \right] \\
        &\quad = \sup_{t \in \cT} \sum_{i \in A} \xi_i (t(X_{i}) - \mu_t) \\
        &\quad = \sup_{t \in \cT} \sum_{i = 1}^n (\xi_i)_+ (t(X_{i}) - \mu_t).
    \end{align*}
 Taking expectations, it follows that
 \[ \mathbb{E}\left[ \sup_{t \in \cT} \sum_{i = 1}^n \xi_i t(X_{i}) \right] \geq \mathbb{E}\left[ \sup_{t \in \cT} \sum_{i = 1}^n (\xi_i)_+ (t(X_{i}) - \mu_t) \right].  \]
 Another application of the conditional version of Jensen's inequality yields
 \begin{align*}
    &\mathbb{E}\left[ \sup_{t \in \cT} \sum_{i = 1}^n (\xi_i)_+ (t(X_{i}) - \mu_t) \bigl| X \right] \\
    &\quad \geq \sup_{t \in \cT}  \mathbb{E}\left[ \sum_{i = 1}^n (\xi_i)_+ (t(X_{i}) - \mu_t) \bigl| X \right] \\
    &\quad = \sup_{t \in \cT} \sum_{i = 1}^n \mathbb{E}[(\xi_i)_+] (t(X_{i}) - \mu_t) \\
    &\quad = \mathbb{E}\left[ (\xi_1)_+ \right] \sup_{t \in \cT} \sum_{i = 1}^n (t(X_{i}) - \mu_t) ,
 \end{align*}
 which gives Inequality (\ref{ineq_lower_bound}) and concludes the proof.
\end{proof}

When the weights are bounded, it is also possible to give simple upper bounds by a symmetrization argument. Such results will be sufficient for our needs. Further results related to unbounded weights can be found in \cite{HanWell:19}.
\begin{proposition}\label{prop_upper_mean_boot}
   Let $(\xi_i)_{1 \leq i \leq n}$ be weights of sum $0$, independent from the i.i.d. sample $X = (X_{1},\ldots,X_{n})$ and bounded by a constant $b$ ($|\xi_i| \leq b$ $a.s$ for each $i$). The weights need not be exchangeable. For any $t \in \cT$, let $\mu_t$ denote the common mean of the random variables $t(X_{i})$. If the process $(t(X_{1}))_{t \in \cT}$ is symmetric in the sense that $(\mu_t - t(X_{1}))_{t \in \cT}$ is equal in distribution to $(t(X_{1}) - \mu_t)_{t \in \cT}$, then
   \begin{equation}\label{ineq_upper_bound_sym}
       \mathbb{E}\left[ \sup_{t \in \cT} \sum_{i = 1}^n \xi_i t(X_{i}) \right] \leq b \mathbb{E}\left[ \sup_{t \in \cT} \sum_{i = 1}^n (t(X_{i}) - \mu_t) \right] \;. 
   \end{equation}
    In the general case, 
   \begin{equation}\label{ineq_upper_bound}
       \mathbb{E}\left[ \sup_{t \in \cT} \sum_{i = 1}^n \xi_i t(X_{i}) \right] \leq 2b \mathbb{E}\left[ \sup_{t \in \cT} \sum_{i = 1}^n (t(X_{i}) - \mu_t) \right]\;,. 
   \end{equation}
    
\end{proposition}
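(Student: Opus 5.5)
The argument is symmetrization with a ghost sample; the bounded weights are then absorbed by a contraction step. First, since $\sum_i \xi_i = 0$, write $\sum_i \xi_i t(X_i) = \sum_i \xi_i (t(X_i)-\mu_t)$. Condition on $\xi$ throughout; $\xi$ is independent of $X$, so it can be treated as a fixed vector $w\in[-b,b]^n$ with $\sum_i w_i=0$. It then suffices to prove, for every such deterministic $w$,
\[ \mathbb{E}\left[\sup_{t\in\cT}\sum_{i=1}^n w_i (t(X_i)-\mu_t)\right] \le c\, b\, \mathbb{E}\left[\sup_{t\in\cT}\sum_{i=1}^n (t(X_i)-\mu_t)\right], \]
with $c=1$ in the symmetric case and $c=2$ in general. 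Integrating over $\xi$ then gives the two statements. Exchangeability plays no role, which matches the remark that the weights need not be exchangeable.

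\emph{Symmetric case.} Let $(\varepsilon_i)$ be i.i.d.\ Rademacher signs independent of $X$. The processes $(t(X_i)-\mu_t)_{t\in\cT}$, $i=1,\dots,n$, are independent and symmetric. Hence for every fixed sign vector $\varepsilon$, the collection $(\varepsilon_i (t(X_i)-\mu_t))_{i,t}$ has the same joint law as the original one, and
\[ \mathbb{E}\sup_t \sum_i w_i (t(X_i)-\mu_t) = \mathbb{E}\sup_t \sum_i \varepsilon_i w_i (t(X_i)-\mu_t). \]
Conditionally on $X$, apply the contraction principle for Rademacher averages. The map $a\mapsto (w_i/b)\, a$ is a contraction vanishing at $0$, or more simply one uses that $\mathbb{E}_\varepsilon \sup_t \sum_i \lambda_i\varepsilon_i a_{i,t}$ is convex in $\lambda\in[-1,1]^n$ and so is maximized at a vertex $\lambda\in\{-1,1\}^n$. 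This gives
\[ \mathbb{E}_\varepsilon\sup_t\sum_i \varepsilon_i w_i a_{i,t}\le b\,\mathbb{E}_\varepsilon\sup_t\sum_i\varepsilon_i a_{i,t}. \]
Using symmetry again to remove the signs yields the bound with constant $b$. (A vertex carries signs that are absorbed into $\varepsilon$, so the vertex argument needs no contraction constant.)

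\emph{General case.} Introduce an independent copy $X'$ of $X$. Since $\mathbb{E}[t(X_i')]=\mu_t$, Jensen's inequality conditional on $X$ gives
\[ \mathbb{E}\sup_t\sum_i w_i(t(X_i)-\mu_t)\le \mathbb{E}\sup_t\sum_i w_i(t(X_i)-t(X_i')). \]
Each difference $t(X_i)-t(X_i')$ is symmetric as a process in $t$, and the differences are independent across $i$. So the symmetric-case argument applies verbatim: the vertex/convexity step bounds the right-hand side by
\[ b\,\mathbb{E}\sup_t\sum_i \big(t(X_i)-t(X_i')\big). \]
Splitting $t(X_i)-t(X_i') = (t(X_i)-\mu_t)-(t(X_i')-\mu_t)$ and using subadditivity of the supremum gives at most
\[ b\,\mathbb{E}\sup_t\sum_i (t(X_i)-\mu_t)+b\,\mathbb{E}\sup_t\sum_i(\mu_t-t(X_i')). \]

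\emph{Main obstacle.} The second term involves $\sup_t$ of the \emph{negated} process, so it equals $\mathbb{E}\sup_t \sum_i(\mu_t - t(X_i))$ rather than $M_n$. To obtain exactly $2b\,\mathbb{E}\sup_t\sum_i(t(X_i)-\mu_t)$, I would need either $\cT$ symmetric under $t\mapsto -t$ (the case in every application in the paper: dual unit balls and symmetric classes $\cF$), or to read the right-hand side with an absolute value, as in $M_n$ of Theorem~\ref{thm_hp_ubd_boot}. I would state this assumption explicitly. Under it the two terms coincide and the bound $2b$ follows. Measurability and integrability are handled by the standing separability assumption of Lemma~\ref{lemma_meas}, which reduces all suprema to countable ones so that Fubini and conditional Jensen apply.
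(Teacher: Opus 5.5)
Your proof follows the paper's route: centering via $\sum_i\xi_i=0$, then Rademacher signs and contraction in the symmetric case, then a ghost sample and conditional Jensen in the general case. The symmetric case is complete, and your convexity/vertex remark justifies the contraction step, which the paper only cites.

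The obstacle you flag is real, and it is a gap in the paper's proof rather than in yours. The final ``sub-additivity of the supremum'' step tacitly identifies $\mathbb{E}\sup_t\sum_i(\mu_t-t(X_i))$ with $\mathbb{E}\sup_t\sum_i(t(X_i)-\mu_t)$. Without $\cT=-\cT$, the bound \eqref{ineq_upper_bound} is in fact false. Take $n=2$ and $X_1,X_2$ i.i.d.\ uniform on $\{1,\ldots,N\}$ with $N\geq 6$. Let $\cT=\{-\mathbb{I}_{\{k\}}:1\leq k\leq N\}$, so $\mu_t=-1/N$, and $\xi=(1,-1)$, so $b=1$. Then
\[ \sup_{t\in\cT}\sum_{i=1}^2\xi_i t(X_i)=\max_{1\leq k\leq N}\bigl(\mathbb{I}\{X_2=k\}-\mathbb{I}\{X_1=k\}\bigr)=\mathbb{I}\{X_1\neq X_2\}, \]
which has expectation $1-1/N$. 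On the other hand, some $k$ is hit by neither point, so
\[ \sup_{t\in\cT}\sum_{i=1}^2\bigl(t(X_i)-\mu_t\bigr)=\max_{1\leq k\leq N}\Bigl(\frac{2}{N}-\#\{i:X_i=k\}\Bigr)=\frac{2}{N}. \]
Thus \eqref{ineq_upper_bound} would require $1-1/N\leq 4/N$, which fails for $N\geq 6$. Replacing $\xi$ by $(\varepsilon,-\varepsilon)$, with $\varepsilon$ a Rademacher sign, gives the same numbers with exchangeable weights.

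So your extra hypothesis is necessary, not a weakness. With no symmetry assumption at all, your argument proves
\[ \mathbb{E}\Bigl[\sup_{t\in\cT}\sum_{i=1}^n\xi_i t(X_i)\Bigr]\leq b\Bigl(\mathbb{E}\Bigl[\sup_{t\in\cT}S_t\Bigr]+\mathbb{E}\Bigl[\sup_{t\in\cT}(-S_t)\Bigr]\Bigr)\leq 2b\,\mathbb{E}\Bigl[\sup_{t\in\cT}|S_t|\Bigr],\qquad S_t=\sum_{i=1}^n\bigl(t(X_i)-\mu_t\bigr). \]
This is the correct form of the general case, and it reduces to \eqref{ineq_upper_bound} when $\cT=-\cT$. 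It also covers the places where the paper invokes the general case. There, $M_n$ in Theorem \ref{thm_hp_ubd_boot} carries an absolute value, and the class built from the dual ball $\cB^*$ in Section \ref{sec_conf_reg} is symmetric.
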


\begin{proof}
   As in the proof of Proposition \ref{prop_lower_mean_boot}, we note that the relation $\sum_{i = 1}^n \xi_i = 0$, for each $t \in \cT$, gives
\[ \sum_{i = 1}^n \xi_i t(X_{i}) = \sum_{i = 1}^n \xi_i (t(X_{i}) - \mu_t)\;, \]
which implies that
\[ \sup_{t \in \cT} \sum_{i = 1}^n \xi_i t(X_{i}) = \sup_{t \in \cT} \sum_{i = 1}^n \xi_i (t(X_{i}) - \mu_t)\;.  \] 
Again, we start with the symmetric case. Let $\varepsilon = (\varepsilon_i)_{1 \leq i \leq n}$ be a vector of i.i.d. Rademacher weights. By independence of $\xi$ and $X$, the processes
\[ (\xi_i (t(X_{i}) - \mu_t) )_{1 \leq i \leq n, t \in \cT}, \ (\xi_i \varepsilon_i (t(X_{i}) - \mu_t) )_{1 \leq i \leq n, t \in \cT}  \]
are equal in distribution. This yields
\[ \mathbb{E}\left[ \sup_{t \in \cT} \sum_{i = 1}^n \xi_i t(X_{i}) \right]=\mathbb{E}\left[ \sup_{t \in \cT} \sum_{i = 1}^n \xi_i (t(X_{i})-\mu_t) \right] = \mathbb{E}\left[ \sup_{t \in \cT} \sum_{i = 1}^n \xi_i \varepsilon_i (t(X_{i}) - \mu_t) \right].  \]
Then, by the contractivity property of the Rademacher complexity and the symmetry assumption again,
\begin{align*}
    \mathbb{E}\left[ \sup_{t \in \cT} \sum_{i = 1}^n \xi_i t(X_{i}) \right] &\leq b \mathbb{E}\left[ \sup_{t \in \cT} \sum_{i = 1}^n \varepsilon_i (t(X_{i}) - \mu_t) \right] \\
    &= b \mathbb{E}\left[ \sup_{t \in \cT} \sum_{i = 1}^n (t(X_{i}) - \mu_t) \right] \;,
\end{align*}
which gives Inequality (\ref{ineq_upper_bound_sym}). Consider now the general case.
Let $X^*$ be an independent copy of $X$, also independent of $\xi$. By Jensen's inequality,
\begin{align*}
   \sup_{t \in \cT} \sum_{i = 1}^n \xi_i (t(X_{i}) - \mu_t) &= \sup_{t \in \cT} \mathbb{E}\left[ \sum_{i = 1}^n \xi_i (t(X_{i}) - t(X^*_{i})) \bigr| X \right] \\
   &\leq \mathbb{E}\left[ \sup_{t \in \cT} \sum_{i = 1}^n \xi_i (t(X_{i}) - t(X^*_{i})) \bigr| X \right]\;.
\end{align*}
The process $(t(X_{1}) - t(X^*_{1}))_{t \in \cT}$ is symmetric and zero-mean, hence by the previous case,
\begin{align*}
   \mathbb{E}\left[ \sup_{t \in \cT} \sum_{i = 1}^n \xi_i t(X_{i}) \right] 
   &\leq  b \mathbb{E}\left[ \sup_{t \in \cT} \sum_{i = 1}^n (t(X_{i}) - t(X^*_{i})) \right] \\
   &\leq 2b\mathbb{E}\left[ \sup_{t \in \cT} \sum_{i = 1}^n (t(X_{i}) - \mu_t) \right],
\end{align*}
using the sub-additivity of the supremum. 
\end{proof}

\bibliography{chern}
\end{document}